\documentclass[11pt]{amsart}

\usepackage[margin=1.25in]{geometry}
\usepackage[T1]{fontenc}
\usepackage{lmodern}
\usepackage{microtype}

\usepackage{amsmath,amssymb,amsthm,mathtools}
\numberwithin{equation}{section}

\usepackage{graphicx}
\usepackage{tikz}
\usetikzlibrary{arrows.meta,positioning,calc}

\usepackage{array,longtable}
\usepackage{enumitem}

\usepackage[hidelinks]{hyperref}
\hypersetup{
pdfauthor={Yin Cai, Guozheng Cheng, Xiang Fang, Menghan Li,
Hongdou Qu, Chengbo Xiao},
pdftitle={Exact Fourier dimensions of dyadic Mandelbrot cascades
under minimal integrability}
}

\newtheorem{theorem}{Theorem}[section]
\newtheorem{proposition}[theorem]{Proposition}
\newtheorem{lemma}[theorem]{Lemma}
\newtheorem{corollary}[theorem]{Corollary}

\theoremstyle{definition}
\newtheorem{definition}[theorem]{Definition}
\newtheorem{example}[theorem]{Example}

\theoremstyle{remark}
\newtheorem{remark}[theorem]{Remark}

\newcommand{\E}{\mathbb{E}}
\newcommand{\Pbb}{\mathbb{P}}
\newcommand{\R}{\mathbb{R}}
\newcommand{\Z}{\mathbb{Z}}

\newcommand{\calE}{\mathcal{E}}

\newcommand{\wh}{\widehat}
\newcommand{\weak}{\xrightarrow{\mathrm{w}}}

\newcommand{\dimF}{\dim_{\mathrm F}}
\newcommand{\dimE}{\dim_{\mathrm E}}
\newcommand{\dimH}{\dim_{\mathrm H}}
\newcommand{\dimtwo}{\dim_2}

\newcommand{\Aloc}{A_{\mathrm{loc}}}
\newcommand{\alphamin}{\alpha_{\min}}

\newcommand{\one}{\mathbf 1}
\newcommand{\spt}{\operatorname{spt}}
\newcommand{\Var}{\operatorname{Var}}

\title[Fourier dimensions of Mandelbrot cascades]
{Exact Fourier dimensions of dyadic Mandelbrot cascades under minimal integrability}

\author[Y. Cai]{Yin Cai}
\address{School of Mathematics\\
Hangzhou Normal University\\
Hangzhou 311121\\
P. R. China}
\email{20253036@hznu.edu.cn}

\author[G. Cheng]{Guozheng Cheng}
\address{School of Mathematical Sciences\\
Dalian University of Technology\\
Dalian 116024\\
P. R. China}
\email{gzhcheng@dlut.edu.cn}

\author[X. Fang]{Xiang Fang}
\address{Department of Applied Mathematics\\
National Yang Ming Chiao Tung University\\
Hsinchu 30010\\
Taiwan}
\email{xfang@nycu.edu.tw}

\author[M. Li]{Menghan Li}
\address{School of Mathematical Sciences\\
Dalian University of Technology\\
Dalian 116024\\
P. R. China}
\email{lmh2025@mail.dlut.edu.cn}

\author[H. Qu]{Hongdou Qu}
\address{School of Mathematical Sciences\\
Dalian University of Technology\\
Dalian 116024\\
P. R. China}
\email{quhongdou0926@mail.dlut.edu.cn}

\author[C. Xiao]{Chengbo Xiao}
\address{School of Mathematical Sciences\\
Dalian University of Technology\\
Dalian 116024\\
P. R. China}
\email{xiaochengbo@mail.dlut.edu.cn}

\keywords{Mandelbrot cascade, Fourier dimension, energy dimension, vector-weight cascade, light-tail and heavy-tail dichotomy, minimum local dimension}

\subjclass[2020]{60G57 (Primary), 42B10, 28A80 (Secondary)}

\date{}

\begin{document}

\begin{abstract}
We determine the Fourier dimension of dyadic Mandelbrot cascades under the
minimal Kahane--Peyri\`ere integrability condition.  The interval theorem is
proved in a vector-valued dyadic cascade model: sibling weights may have
arbitrary dependence.  For every balanced energy-admissible vector law, almost
surely on non-extinction,
\begin{equation*}
    \dim_{\mathrm F}(\mu)=\dim_{\mathrm E}(\mu)=\dim_2(\mu)=D_{\mathrm E}(X).
\end{equation*}
In the canonical scalar case, under
\begin{equation*}
    W\ge 0,\qquad \mathbb E W=1,\qquad
    \mathbb E[W\log_2^+W]<\infty,\qquad
    \mathbb E[W\log_2 W]<1 ,
\end{equation*}
the formula becomes
\begin{equation*}
    \dim_{\mathrm F}(\mu)=\dim_{\mathrm E}(\mu)=\dim_2(\mu)
    =
    \sup_{1<q<2}
    \max\left\{
        0,\,
        2-\frac{2}{q}\bigl(1+\log_2\mathbb E[W^q]\bigr)
    \right\},
\end{equation*}
with the convention that the corresponding term is zero when
\(\mathbb E[W^q]=\infty\).  In particular, this scalar specialization gives the
canonical Mandelbrot--Kahane Fourier-dimension formula under the minimal
integrability condition.

We also prove the endpoint theorem for the dyadic Mandelbrot cascade on the unit
circle.  Under the same scalar assumption, almost surely on non-extinction,
\begin{equation*}
    \dim_{\mathrm F}(\mu_\circ)
    =
    \sup_{q>1}
    \max\left\{
        0,\,
        \frac{q-1-\log_2\mathbb E[W^q]}{q}
    \right\}.
\end{equation*}
The interval and circle formulas share a light-tail/heavy-tail dichotomy
but have different mechanisms: energy dimension for the interval, and
minimum lower local dimension for the circle.  The circle lower bound follows
from a finite-moment annular Fourier theorem.
\end{abstract}

\maketitle

\tableofcontents

\section{Introduction and main results}
\label{sec:introduction-main-results}

The theory of Mandelbrot cascades originates in the study of
\emph{intermittent turbulence}.  Motivated by refinements of Kolmogorov's
phenomenology due to Kolmogorov, Landau--Lifshitz, Obukhov, and Yaglom
\cite{Kolmogorov1962,LandauLifshitz1959,Obukhov1962,Yaglom1966}, Mandelbrot
introduced multiplicative random models for the turbulent energy dissipation
field \cite{Mandelbrot1972,Mandelbrot1974a,Mandelbrot1974b,Mandelbrot1976}.
In its canonical form, the construction produces a random cascade measure by
iteratively redistributing mass according to independent copies of a
nonnegative random weight.  The weight encodes the intermittent transfer of
mass from one scale to the next, and the resulting martingale structure yields a
tractable model for random measures with highly nonuniform local scaling.

Kahane and Peyri\`ere \cite{KahanePeyriere1976}, and subsequently Kahane's
martingale framework \cite{Kahane1987PositiveMartingales}, placed
Mandelbrot's construction on a rigorous probabilistic foundation.  The
resulting measures form a basic class of random multifractals and a central
model in multiplicative chaos \cite{Kahane1985Chaos,RhodesVargas2014}.  They
have also served as a testing ground for branching random walks
\cite{Biggins1977,Biggins1977Martingale}, random measures, and multifractal
analysis.  We refer to
\cite{Barral1999,Barral2000,ColletKoukiou1992,Heurteaux2016,
Kahane1985RandomSeries,Kahane1993,Liu2000,RhodesVargas2014,
WaymireWilliams1994} for classical developments and related perspectives.

The harmonic-analytic side of the theory has proved more elusive.  Exact
Fourier-dimension formulas are rare for multifractal measures.  When available,
they connect random geometry with restriction phenomena, Salem-type behavior,
and fine regularity properties of measures; see also
\cite{Bluhm1999,Salem1951}.  In this context, the canonical Mandelbrot cascade
is a fundamental test case for the principle that random multifractal structure
can force Fourier decay.  The problem of determining its Fourier dimension was
raised by Mandelbrot in the turbulence setting \cite{Mandelbrot1976}, and was
later placed by Kahane within a broader program on Fourier decay for natural
random measures \cite{Kahane1993}.

For a long time, the exact Fourier behavior of Mandelbrot cascades remained
largely open.  Before the recent breakthroughs
\cite{ChenHanQiuWang2025Harmonic,ChenLiSuomala2025,RyouSuomala2026}, the only complete result
of this type in a genuinely multiplicative setting was the fractal percolation
theorem of Shmerkin and Suomala, proved through the theory of spatially
independent martingales \cite{ShmerkinSuomala2018}.  Parallel progress was made
for related models from Gaussian multiplicative chaos: Falconer and Jin obtained
lower bounds for the Fourier dimension of planar Gaussian multiplicative chaos
\cite{FalconerJin2019}, while Garban and Vargas proved Rajchman and polynomial
decay phenomena for Gaussian multiplicative chaos on the circle
\cite{GarbanVargas2023}.  Taken together, these developments placed the
Mandelbrot--Kahane problem among the basic open problems in the Fourier analysis
of random multiplicative measures.

Recent work has changed this picture substantially.  Two contemporaneous lines
of work are especially relevant.  In one direction, Chen, Li, and Suomala
determined the Fourier dimension of the Mandelbrot cascade on the line under a
subexponential tail assumption, and also obtained a Fourier-dimension lower
bound for the corresponding cascade on the unit circle
\cite{ChenLiSuomala2025}.  The latter gives an early result of this type for a
curved support with nonvanishing curvature.  In another direction, Chen, Han, Qiu, and Wang developed a different approach and computed the Fourier dimension
under the all-positive-moment assumption
\[
    \mathbb{E}(W^p)<\infty
    \qquad\text{for every }0<p<\infty,
\]
together with a nonlattice hypothesis
\cite{ChenHanQiuWang2025Harmonic,ChenHanQiuWang2025MandelbrotKahane}.
Subsequently, Lin, Qiu, and Tan presented a unified Fourier-dimension theory
for classical multiplicative chaos \cite{LinQiuTan2025}; in particular, their
framework is announced to reduce the all-moment hypothesis to the second-moment
condition
\[
    \mathbb{E}(W^2)<\infty .
\]
More recently, Ryou and Suomala obtained Fourier-dimension results, under
all-moment assumptions, for cascades supported on curves with nonvanishing
curvature \cite{RyouSuomala2026}.

A common feature of these results is that they rely on tail or moment
assumptions strong enough to support concentration estimates for the relevant
random increments.  In particular, the existing methods do not reach the
infinite-variance regime
\[
    \mathbb{E}(W^2)=\infty,
\]
which is still allowed by the minimal Kahane--Peyri\`ere condition.

The present paper removes this obstruction and provides a complete picture on both
the line and the circle.  The main contributions may be summarized as follows.

\begin{enumerate}[label=\textup{(\roman*)},leftmargin=2.4em]

\item We identify the light-tail regime through the equivalence
\begin{equation}
\label{E:light-tail}\mathbb{E}(W^{1+\varepsilon})<\infty\ \text{ for some } \varepsilon>0\quad\Longleftrightarrow\quad\exists q>1:\ \kappa(q)<1,
\end{equation}
where 
\(
\kappa(q):=2^{1-q}\mathbb{E}(W^q).
\)
The Fourier lower bound is then obtained from a random smoothing argument
governed by the contraction ratio \(\kappa(q)\).  This yields the exact
Fourier-dimension formula throughout the light-tail regime.

\item In the complementary heavy-tail regime,
\begin{equation}
\label{E:heavy-tail}
\mathbb{E}(W^{1+\varepsilon})=\infty
\quad\text{for every } \varepsilon>0,
\end{equation}
the concentration methods used in the earlier works are no longer available.
We instead pass through the energy dimension.  We prove a sharp dichotomy for
the energy dimension and show that the heavy-tail regime is precisely the case
in which this dimension vanishes.  The deterministic inequality
\(\dim_{\mathrm F}\leq \dim_{\mathrm E}\) then forces zero Fourier dimension and
completes the interval case.

\item On the circle, the same light-tail/heavy-tail dichotomy persists, but
through a different mechanism.  The energy dimension is no longer the correct
benchmark.  The curvature of the support creates new analytic obstacles, and
the random smoothing argument does not carry over.  The main replacement is a
finite-\(r\) annular theorem, Theorem~\ref{thm:finite-r-annular}, which provides
Fourier control on curved annuli and supplies the principal analytic input for
the circle lower bound.

\item The interval theorem is proved in a vector-valued form: sibling weights
may have arbitrary dependence.  More precisely, we prove the same
Fourier-dimension formula for a vector-valued cascade in which the two sibling
weights are not assumed independent; see
Theorem~\ref{thm:main-vector-exact-fourier-dimension}.  Thus the canonical
independent Mandelbrot cascade is a special case of a more robust vector-weight
theory.

\end{enumerate}

\subsection{The interval vector theorem}
\label{subsec:intro-interval-vector-theorem}

We begin with the dyadic vector-weight cascade on \([0,1]\).  Let
\(X=(X_0,X_1)\) be a random vector in \([0,\infty)^2\).  At each vertex \(u\)
of the rooted binary tree attach an independent copy
\[X(u)=(X_0(u),X_1(u))\] of \(X\).  The coordinates \(X_0(u)\) and \(X_1(u)\)
may be dependent; independence is assumed only between different vertices.

If \(u=(u_1,\ldots,u_n)\in\{0,1\}^n\), write
\(u|k=(u_1,\ldots,u_k)\), with \(u|0=\varnothing\).  For \(1\le k\le n\),
\[X_{u_k}(u|k-1)\] denotes the \(u_k\)-th coordinate of the random vector
attached to the parent \(u|k-1\).  The path weight is
\begin{equation}\label{eq:vector-path-weight-definition-1}
L_u
=
\prod_{k=1}^n X_{u_k}(u|k-1),
\qquad
L_{\varnothing}=1.
\end{equation}
Let \(I_u\subset[0,1]\) be the dyadic interval corresponding to \(u\).  The level-\(n\) cascade measure is
\begin{equation}\label{eq:intro-vector-level-measure}
d\mu_n(x)=\sum_{\lvert u\rvert=n}2^nL_u\one_{I_u}(x)\,dx.
\end{equation}
Its total mass is \(M_n=\mu_n([0,1])=\sum_{\lvert u \rvert=n}L_u\).  The mean-one condition
\[\E(X_0+X_1)=1\] makes \((M_n)_{n\ge0}\) a nonnegative martingale.  The
stronger balanced condition \(\E X_0=\E X_1=1/2\) is the geometric centering
condition: it implies \(\E\mu_n=\mathcal L\) for every \(n\), and is used in the
Fourier lower bound.

The following is the vectorial version of the minimal Kahane--Peyri\`ere integrability condition.

\begin{definition}\label{def:energy-admissible-vector-law}
The dyadic vector-weight law is called \emph{energy-admissible} if
\begin{equation}\label{eq:energy-admissible-balance}
\E X_0=\E X_1=\frac12,
\end{equation}
\begin{equation}\label{eq:energy-admissible-positive-entropy}
\E\bigl[X_0\log_2^+X_0+X_1\log_2^+X_1\bigr]<\infty,
\end{equation}
and
\begin{equation}\label{eq:energy-admissible-negative-drift}
\E\bigl[X_0\log_2 X_0+X_1\log_2 X_1\bigr]<0,
\end{equation}
where \(0\log_2 0=0\).
\end{definition}

By Theorem~\ref{thm:vector-limiting-measure-construction-paper}, under the
mean-one condition \(\mu_n\) converges weakly almost surely to a finite random
Borel measure \(\mu\), with
\(\mu([0,1])=M:=\lim_{n\to\infty}M_n\).  By
Corollary~\ref{cor:energy-admissibility-nontriviality-paper},
energy-admissibility leads to \(\Pbb(M>0)>0\).  All interval-cascade dimension
statements are made on \(\{M>0\}\).

The parameter in the interval theorem is defined from the vector \(q\)-mass profile.

\begin{definition}\label{def:vector-energy-parameter}
For \(q>0\), define the vector moment profile by
\begin{equation}\label{eq:vector-rho-definition}
\rho(q)=\E[X_0^q+X_1^q]\in[0,\infty].
\end{equation}
The associated energy parameter is 
\begin{equation}\label{eq:vector-DE-definition}
D_E(X)
=
\sup\left\{
s\in(0,1):
\inf_{0<q<2}2^{qs/2}\rho(q)<1
\right\},
\end{equation}
with the convention that the supremum of the empty set is \(0\).
\end{definition}

Equivalently, as proved in Section~\ref{sec:vector-cascades-energy-profile},
\begin{equation}\label{eq:intro-DE-variational}
D_E(X)
=
\sup_{\substack{1<q<2\\ \rho(q)<1}}
\left(-\frac{2}{q}\log_2\rho(q)\right),
\end{equation}
again with the empty supremum interpreted as \(0\).  This is the vector analogue of the usual scalar dyadic exponent.

The first main result implies the exact dimension formula on the interval.

\begin{theorem}
\label{thm:main-vector-exact-fourier-dimension}
Assume that the dyadic vector-weight law is energy-admissible. Let \(\mu\) be the limiting dyadic vector-weight cascade measure on \([0,1]\), and set \(M=\mu([0,1])\). Then, almost surely on \(\{M>0\}\),
\begin{equation}\label{eq:main-vector-exact-formula}
\dimF(\mu)=\dimE(\mu)=\dimtwo(\mu)=D_E(X).
\end{equation}
In particular, if \(D_E(X)>0\), then for every \(0<\sigma<D_E(X)\)
there is a finite random constant \(C_\sigma\) such that
\begin{equation}\label{eq:main-vector-decay}
\lvert\widehat{\mu}(\xi)\rvert
\le
C_\sigma \lvert\xi\rvert^{-\sigma/2}
\end{equation}
for all sufficiently large 
\(\lvert\xi\rvert\).
\end{theorem}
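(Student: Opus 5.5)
The plan is to prove the chain of inequalities
\[
D_E(X)\le \dimF(\mu)\le \dimE(\mu)\le \dimtwo(\mu)\le D_E(X)
\]
almost surely on \(\{M>0\}\). The two inequalities in the middle are deterministic. The first, \(\dimF\le\dimE\), follows from the Fourier representation of the \(s\)-energy,
\[
I_s(\mu)=c_s\int\lvert\wh\mu(\xi)\rvert^2\lvert\xi\rvert^{s-1}\,d\xi ,
\]
which is finite for every \(s<\dimF(\mu)\). The second, \(\dimE\le\dimtwo\), is the standard comparison of energy with the correlation sums \(\sum_{\lvert u\rvert=n}\mu(I_u)^2\) over dyadic intervals. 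This reduces the theorem to two probabilistic statements: the upper bound \(\dimtwo(\mu)\le D_E(X)\) and the lower bound \(\dimF(\mu)\ge D_E(X)\). The decay estimate \eqref{eq:main-vector-decay} is then exactly the quantitative form of the lower bound.

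For the upper bound I would use the variational formula \eqref{eq:intro-DE-variational}. Write \(\mu(I_u)=L_u M^{(u)}\), where \(M^{(u)}\) is an independent copy of \(M\) attached to the subtree at \(u\). A lower bound on \(\sum_{\lvert u\rvert=n}\mu(I_u)^2\) then comes from the branching random walk with displacements \(-\log_2 X_i\). For \(s>D_E(X)\) we have \(\inf_{0<q<2}2^{qs/2}\rho(q)\ge1\), so \(2^{qs/2}\rho(q)\ge1\) for every \(q\in(0,2)\). A Biggins-type (Hammersley--Kingman--Biggins) first-order theorem for the minimal and maximal positions, together with the convex duality between \(\log_2\rho\) and the large-deviation rate of the path weights, should then show that almost surely on non-extinction
\[
\limsup_n 2^{ns}\sum_{\lvert u\rvert=n}L_u^2\,(M^{(u)})^2=\infty .
\]
Heuristically the dominant paths have \(L_u\approx 2^{-n\alpha}\) with multiplicity \(\approx 2^{n\,h(\alpha)}\), and the exponent of \(\sum_u L_u^2\) is given by a Legendre transform that restricts to \(q<2\). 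Independence of the \(M^{(u)}\), together with \(\Pbb(M^{(u)}>c)>0\), handles the random factors. The heavy-tail case, where \(\rho(q)=\infty\) for every \(q>1\), appears here as \(D_E=0\): single huge weights dominate the sums, and no \(s>0\) works.

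For the lower bound, fix \(\sigma<D_E(X)\) and choose \(q\in(1,2)\) with \(\rho(q)<1\) and \(-\tfrac{2}{q}\log_2\rho(q)>\sigma\). I would first decompose
\[
\wh\mu(\xi)=\sum_{n}\bigl(\wh\mu_{n+1}(\xi)-\wh\mu_n(\xi)\bigr)+\wh\mu_{N}(\xi),
\]
with \(N\) chosen so that \(2^N\approx\lvert\xi\rvert\). The \(n\)-th increment is
\[
\sum_{\lvert u\rvert=n}L_u\,\bigl(Z_u(\xi)-\E Z_u(\xi)\bigr),
\]
where \(Z_u\) is built from the single vector \(X(u)\). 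Because of the balance condition, \(\E\mu_n\) is Lebesgue measure, so \(\E\wh\mu_n(\xi)=\wh{\mathcal L}(\xi)\) decays like \(\lvert\xi\rvert^{-1}\). Conditional on \(\mathcal F_n\), the increment is a sum of independent centered terms. This is where the second moment is unavailable. In its place I would use the von Bahr--Esseen inequality in \(L^q\), \(1<q\le2\), which gives
\[
\E\Bigl\lvert\sum_{\lvert u\rvert=n} L_u\bigl(Z_u-\E Z_u\bigr)\Bigr\rvert^q\lesssim \E\sum_{\lvert u\rvert=n}L_u^q\cdot\min\bigl(1,\lvert\xi\rvert 2^{-n}\bigr)^{q}\approx\rho(q)^n\,(\cdots).
\]
The main obstacle is the tail past scale \(N\): for \(n>N\), the factor \(\lvert\xi\rvert 2^{-n}\) is small, but sibling dependence and the oscillation of \(e^{-2\pi i\xi x}\) have to be combined with the exact averaging of Lebesgue measure. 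This is the random smoothing governed by the contraction ratio. Concretely, one shows that
\[
\E\lvert\wh\mu(\xi)\rvert^q\lesssim\lvert\xi\rvert^{\log_2\rho(q)}.
\]
The resulting rate \(\lvert\xi\rvert^{\log_2\rho(q)/q}\) is better than \(\lvert\xi\rvert^{-\sigma/2}\), since \(-\log_2\rho(q)/q>\sigma/2\).

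To pass from moments to an almost sure bound, I would apply Chebyshev on the integer lattice \(\xi\in\Z\), using the summability of \(\lvert\xi\rvert^{\log_2\rho(q)+q\sigma'/2}\) for \(\sigma<\sigma'<D_E(X)\), and then Borel--Cantelli. If this sum is not summable directly, I would sum over dyadic blocks using a polynomial moment bound on \(\sup\lvert\wh\mu'\rvert\), which holds since \(\E M\) is finite. Finally, a Lipschitz estimate in \(\xi\) interpolates between lattice points, giving \eqref{eq:main-vector-decay} and hence \(\dimF\ge D_E\).
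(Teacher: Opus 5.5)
Your reduction is correct. Since \(\dimF\le\dimE\) and \(\dimE\le\dimtwo\) are deterministic (the latter from \(\Sigma_n(\mu)\le 2^{-ns}I_s(\mu)\)), only \(\dimtwo(\mu)\le D_E(X)\) and \(\dimF(\mu)\ge D_E(X)\) remain, and the chain makes a separate proof of \(\dimtwo(\mu)\ge D_E(X)\) unnecessary. Your upper-bound sketch is the paper's mechanism phrased in Biggins language. Note, however, that \(s>D_E(X)\) only gives \(\inf_q 2^{qs/2}\rho(q)\ge 1\). You need the strict bound \(\inf_{0\le\theta\le1}\E[(2^sX_0^2)^\theta+(2^sX_1^2)^\theta]>1\). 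The paper gets it by passing to \(s_0\in(D_E(X),s)\) and using \(\E N_X>1\) near \(\theta=0\) (Lemma~\ref{lem:no-plateau-above-energy-threshold-paper}). It then amplifies from positive probability to all of \(\{M>0\}\) (Proposition~\ref{prop:subtree-amplification-supercritical-divergence-paper}).

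The genuine gap is the last step of the lower bound. First, a correction: \(\lvert 2^n\int_{I_{uj}}e^{-2\pi i\xi x}\,dx\rvert\lesssim\min(1,2^n/\lvert\xi\rvert)\), the reciprocal of your factor. So coarse levels gain from oscillation, and fine levels gain from \(\rho(q)^n\to0\). Since \(2\rho(q)^{1/q}>1\), von Bahr--Esseen plus Minkowski do give \(\E\lvert\wh\mu(\xi)-\wh{\mathcal L}(\xi)\rvert^q\lesssim\lvert\xi\rvert^{\log_2\rho(q)}\).

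That pointwise bound cannot be upgraded by Chebyshev and Borel--Cantelli. Balance and Jensen give \(\rho(q)\ge2^{1-q}\) (Lemma~\ref{lem:beta-X-upper-bound-paper}), so \(-\log_2\rho(q)\le q-1<1\). Hence \(\sum_{\xi\in\Z}\lvert\xi\rvert^{\log_2\rho(q)+q\sigma'/2}\) diverges for every admissible \(q\), not just sometimes. Dyadic blocks do not rescue this. Controlling \(2^n\le\lvert\xi\rvert\le2^{n+1}\) needs at least \(\asymp2^n\) sample frequencies. With the non-decaying Lipschitz constant \(2\pi M\) it needs a mesh \(\lesssim2^{-\sigma n/2}\), i.e. \(\gtrsim2^{(1+\sigma/2)n}\) points; unit spacing leaves an \(O(M)\) error. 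Yet each sample gains only \(2^{-n(-\log_2\rho(q)-q\sigma'/2)}\), with exponent \(<1\). A bound on \(\sup\lvert\wh\mu'\rvert\) controls oscillation between samples but does not reduce how many tail events must be summed.

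The missing idea is a simultaneous estimate over the frequency grid with only polynomial loss. The paper measures the centered profile in \(\ell^{r_n}(G)\), with \(\#G\lesssim2^{\alpha n}\) and \(r_n\approx n\), so that \((\#G)^{1/r_n}=O(1)\). It replaces scalar von Bahr--Esseen by the type-\(q\) contraction in \(\ell^r\) with constant \(C_qr^{q/2}\) (Lemma~\ref{lem:rademacher-type-ellr-paper}, Proposition~\ref{prop:vector-valued-contraction-paper}). This is iterated through the star equation and the ladder (Proposition~\ref{prop:ladder-estimate-paper}).

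Your increment decomposition can be repaired with the same device. At annular scale \(N\), write \(\wh\mu_{k+1}-\wh\mu_k=\sum_{\lvert u\rvert=k}L_uV_u\) on the grid, where \(V_u(\xi)=\sum_{j\in\{0,1\}}(2X_j(u)-1)\,2^k\int_{I_{uj}}e^{-2\pi i\xi x}\,dx\). These are conditionally independent, centered \(\mathbb C^G\)-valued vectors with \(\lVert V_u\rVert_{\ell^r(G)}\le(\#G)^{1/r}\min(1,2^{k-N})\sum_j\lvert2X_j(u)-1\rvert\). The vector-valued contraction and Minkowski in \(L^q(\Omega;\ell^N(G))\) then give \(\E\max_G\lvert\wh\mu-\wh{\mathcal L}\rvert^q\lesssim N^{q/2}\rho(q)^N\). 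This is summable against \(2^{-q\beta N}\) for \(\beta<\beta_X(q)\). Without such a step, the proposal does not establish \(\dimF(\mu)\ge D_E(X)\) or the decay bound.
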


The proof of Theorem~\ref{thm:main-vector-exact-fourier-dimension} has two main parts. The energy part proves that, almost surely on \(\{M>0\}\),
\[
\dimE(\mu)=\dimtwo(\mu)=D_E(X).
\]
It uses dyadic square sums, a subcritical fractional-moment estimate, a no-plateau lemma for the moment profile, a finite-block weighted-growth obstruction in the supercritical regime, and an alive-tree amplification argument.  The Fourier part proves that, almost surely on \(\{M>0\}\),
\[
\dimF(\mu)\ge D_E(X).
\]
It uses the balanced centering \(\E\mu_n=\mathcal L\), a vector-valued \(\ell^r\) contraction, a mesoscopic Fourier decomposition, and a ladder iteration on dense frequency grids.  The deterministic inequality
\[
\dimF(\nu)\le\dimE(\nu)
\]
for finite Borel measures then yields the reverse Fourier inequality.

The scalar dyadic Mandelbrot cascade is recovered by taking independent copies
\(W_0,W_1\) of a scalar weight \(W\), with \(\E W=1\), and setting
\(X_i=W_i/2\), \(i=0,1\).  Then
\(\rho(q)=2^{1-q}\E[W^q]\), and scalar energy-admissibility is exactly the
minimal Kahane--Peyri\`ere condition.  Hence the vector theorem gives, almost
surely on the non-extinction event \(\{M>0\}\),
\[
\dimF(\mu)=\dimE(\mu)=\dimtwo(\mu)
=
\sup_{1<q<2}
\max\left\{
0,\,
2-\frac2q\bigl(1+\log_2\E[W^q]\bigr)
\right\},
\]
where the contribution of a given \(q\) is taken to be \(0\) whenever
\(\E[W^q]=\infty\).  Thus, in the canonical scalar setting, the theorem settles
the Mandelbrot--Kahane Fourier-dimension problem under the minimal
Kahane--Peyri\`ere integrability condition.  This scalar formula is obtained as
a specialization of the vector-valued dyadic theorem, which allows arbitrary
dependence between sibling weights.

The \(b\)-adic and higher-dimensional canonical versions are treated separately
in the companion paper  \cite{Fang2026}.  For cascades on \([0,1]^d\subset\mathbb{R}^d\), the formula
takes the form
\[
    \dim_{\mathrm F}(\mu)=\min\{2,\dim_{\mathrm E}(\mu)\}
\]
almost surely on non-extinction. 
The reduction from the dyadic interval case is largely formal, except for the
additional ambient upper bound \(\dim_{\mathrm F}(\mu)\leq 2\), which is proved
in \cite{Fang2026} through a minimal-integrability version
of the Chen--Li--Suomala upper-bound argument.
We
work here with the dyadic interval model in order to keep the main probabilistic
and Fourier-recursive mechanisms in their simplest form.

\subsection{The circle endpoint theorem}
\label{subsec:intro-circle-endpoint-theorem}

The second model is a scalar dyadic cascade on
\[\mathbb S^1=\{x\in\R^2:\lvert x \rvert=1\}.\]  Let
\(f:[0,1)\to\mathbb S^1\) be given by
\(f(t)=(\cos 2\pi t,\sin 2\pi t)\), and use \(f\) only to impose the dyadic
filtration on the circle.  Let \(W\ge0\), \(\E W=1\), and attach independent copies \(W_v\) of \(W\) to
non-empty binary words \(v\).  If
\(Q_v=\prod_{j=1}^{\lvert v \rvert}W_{v|j}\), then the level-\(n\) circle cascade is
\[
d\mu_{\circ,n}(x)=\sum_{\lvert v \rvert=n}Q_v\one_{\mathcal I_v}(x)\,d\sigma(x),
\]
where \(\sigma\) is normalized arclength and
\(\mathcal I_v=f(J_v)\) is the dyadic arc associated to \(v\).  The limiting
circle cascade is denoted by \(\mu_\circ\).

\begin{definition}[Minimal Kahane--Peyri\`ere regime]
\label{def:minimal-KP-circle}
We say that \(W\) is in the minimal Kahane--Peyri\`ere regime if
\begin{equation}\label{eq:minimal-KP-circle}
W\ge0,
\qquad
\E W=1,
\qquad
\E[W\log_2^+ W]<\infty,
\qquad
\E[W\log_2 W]<1.
\end{equation}
\end{definition}

Under this condition the circle cascade is nondegenerate in the
Kahane--Peyri\`ere sense:
\(\Pbb(\mu_\circ(\mathbb S^1)>0)>0\).  The event
\(\{\mu_\circ(\mathbb S^1)>0\}\) is the non-extinction event.

The endpoint exponent is a local-mass exponent, not an interval energy exponent.

\begin{definition}[Endpoint local exponent]
\label{def:circle-endpoint-exponent}
Define
\begin{equation}\label{eq:circle-Aloc-definition}
\Aloc(W)
=
\sup_{q>1}
\max\left\{
0,\,
\frac{q-1-\log_2\E[W^q]}{q}
\right\},
\end{equation}
where the term corresponding to \(q\) is interpreted as \(0\) whenever \(\E[W^q]=\infty\).
\end{definition}

\begin{remark}
\label{rem:scalar-interval-circle-exponent-comparison}
In the scalar interval specialization, the corresponding interval exponent is
\[
D^+(W)
=
\sup_{1<q<2}
\max\left\{
0,\,
2-\frac2q\bigl(1+\log_2\E[W^q]\bigr)
\right\}.
\]
Define
\[
A_{<2}(W)
=
\sup_{1<q<2}
\max\left\{
0,\,
\frac{q-1-\log_2\E[W^q]}{q}
\right\},
\]
with the convention in both displays that the corresponding term is \(0\) when
\(\E[W^q]=\infty\).  Then
\[
D^+(W)=2A_{<2}(W)
\qquad \text{and} \qquad 
A_{<2}(W)\le \Aloc(W),
\]
since \(\Aloc(W)\) optimizes over all \(q>1\).  Thus the same scalar moment
expression appears in the interval and circle formulas, but with different
normalizations and different geometric obstructions.  The factor \(2\) in the
interval formula reflects the Fourier-dimension convention relative to the
one-dimensional energy exponent, whereas the circle formula is governed by the
minimum lower local dimension on a curved support.
\end{remark}

The second main result establishes the endpoint Fourier-dimension formula on the circle, the prototypical curved-support case with nonzero curvature.

\begin{theorem}
\label{thm:main-circle-endpoint-formula}
Assume that \(W\) is in the minimal Kahane--Peyri\`ere regime. Let \(\mu_{\circ}\) be the dyadic Mandelbrot cascade measure on \(\mathbb{S}^{1}\) generated by \(W\). Then, almost surely on \(\{\mu_{\circ}(\mathbb{S}^{1})>0\}\),
\begin{equation}\label{eq:main-circle-endpoint-formula}
\dimF(\mu_{\circ})=\Aloc(W).
\end{equation}
In particular, if \(\Aloc(W)>0\), then for every
\(0<\sigma<\Aloc(W)\) there is a finite random constant \(C_{\sigma}\)
such that
\begin{equation}\label{eq:main-circle-decay}
\lvert\widehat{\mu_{\circ}}(\xi)\rvert
\leq
C_{\sigma}\lvert\xi\rvert^{-\sigma/2}
\end{equation}
for all sufficiently large \(\lvert\xi\rvert\), with
\(\xi\in\mathbb{R}^{2}\).
\end{theorem}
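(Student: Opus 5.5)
We prove the two inequalities \(\dimF(\mu_\circ)\le\Aloc(W)\) and \(\dimF(\mu_\circ)\ge\Aloc(W)\) separately. Both reduce to the local-mass exponent
\[
\alpha_{\min}:=\liminf_{n\to\infty}\ \min_{\lvert v\rvert=n}\frac{-\log_2\mu_\circ(\mathcal I_v)}{n}.
\]
The first step is to show that, almost surely on non-extinction, \(\alpha_{\min}=\Aloc(W)\), where \(\Aloc(W)\) is identified as the Legendre-type quantity \(\sup_{q>1}(q-1-\log_2\E W^q)/q\).

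For the inequality \(\alpha_{\min}\ge\Aloc(W)\) we use a union bound with fractional moments. For each \(q>1\) with \(\E W^q<\infty\), write \(\mu_\circ(\mathcal I_v)=2^{-n}Q_vZ_v\), where \(Z_v\) is an independent copy of the total mass. Then
\[
\E\sum_{\lvert v\rvert=n}\mu_\circ(\mathcal I_v)^q=2^{n(1-q)}(\E W^q)^n\,\E Z^q .
\]
The moment \(\E Z^q\) is not finite for every \(q\). In the heavy range we therefore replace \(Z_v\) by a truncation: we use the Kahane--Peyri\`ere fact that \(\E Z^q<\infty\) exactly when \(2^{1-q}\E W^q<1\), and for the remaining \(q\) we work with the level-\(n\) masses together with a Borel--Cantelli argument on slightly coarser levels. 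Markov's inequality then gives \(\mu_\circ(\mathcal I_v)\le 2^{-n(a-\varepsilon)}\) uniformly in \(v\) for every \(a<\Aloc\).

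For the reverse inequality \(\alpha_{\min}\le\Aloc\) we exhibit heavy arcs. This is done by a Peyri\`ere-measure (size-biased spine) argument: tilting along \(q\) produces, at every level, arcs whose mass is about \(2^{-n\alpha}\) with \(\alpha\) approaching the infimum. In the case \(\E W^q=\infty\) for all \(q>1\), a single large weight near the root already gives \(\alpha_{\min}=0\).

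\textbf{Upper bound on \(\dimF\).} The plan is a deterministic lemma for measures on \(\mathbb S^1\subset\R^2\): if \(\lvert\wh\nu(\xi)\rvert\lesssim\lvert\xi\rvert^{-s/2}\), then \(\nu(B(x,r))\lesssim r^{s}\) for all \(x\) and \(r\). To prove it, take a Schwartz bump \(\phi_r\) adapted to \(B(x,r)\) in the plane, so that \(\nu(B(x,r))\le\int\lvert\wh\phi_r\rvert\lvert\wh\nu\rvert\,d\xi\). Since \(\lvert\xi\rvert^{-s/2}\) is integrated over a ball of radius \(r^{-1}\) in \(\R^2\), the bound obtained is \(r^{2}\cdot r^{-2}\cdot r^{s/2}\). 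This is not the desired estimate, so the lemma must be sharpened: one uses the curvature, i.e. that the arc lies in an \(r\times r^2\) neighborhood. Testing against a bump adapted to the \(r\times r^{2}\) rectangle tangent to the circle gives \(\nu(\text{arc of length } r)\lesssim r^{s}\) in the correct normalization, because the rectangle's dual has area \(r^{-3}\). We will calibrate the exponents so that Fourier dimension \(s\) forces lower local dimension at least \(s\) everywhere, which gives \(\dimF\le\alpha_{\min}=\Aloc\).

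\textbf{Lower bound on \(\dimF\).} This is the main obstacle. Here we invoke the finite-\(r\) annular theorem, Theorem~\ref{thm:finite-r-annular}. Decompose \(\lvert\xi\rvert\sim 2^{k}\) dyadically and split \(\mu_\circ\) at level \(m\approx k/2\) (arcs of length \(2^{-k/2}\), which are essentially flat at frequency \(2^k\)):
\[
\wh{\mu_\circ}(\xi)=\sum_{\lvert v\rvert=m}2^{-m}Q_v\,\wh{\mu^{(v)}}(\xi),
\]
where each piece is an independent rescaled cascade on a near-flat arc. Conditioned on \(\mathcal F_m\), the sum has independent mean-controlled terms. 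Their means, coming from \(\E\mu^{(v)}=\) arclength on the arc, sum to \(\wh\sigma\) weighted by \(Q_v\). That expression decays like \(\lvert\xi\rvert^{-1/2}\) times the smoothness of the level-\(m\) density, which is handled by stationary phase. The fluctuations are controlled in \(L^r\) for some \(1<r\le 2\) with \(\E W^r<\infty\) via von Bahr--Esseen, with bound \(\bigl(\sum_v(2^{-m}Q_v)^r\bigr)^{1/r}\lesssim 2^{-m a}\) for \(a<\Aloc\) by the local-mass step. Applying the annular theorem on a \(2^{-\varepsilon k}\)-net of each annulus and then Borel--Cantelli upgrades this to the almost-sure bound \(\lvert\wh{\mu_\circ}(\xi)\rvert\lesssim\lvert\xi\rvert^{-\sigma/2}\) for every \(\sigma<\Aloc\).

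The delicate points in this step are twofold. The first is matching the exponent \(a\) obtained with \(r\) close to the optimal \(q\) when the optimal \(q>2\), where higher moments are needed and the \(L^r\) bound must be replaced by a Rosenthal-type inequality. The second is checking that the number of net points, \(2^{O(k)}\), is absorbed by the moment gain.
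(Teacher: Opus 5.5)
Your skeleton matches the paper's: an upper bound through a local-mass exponent plus a curvature lemma, and a lower bound through the finite-$r$ annular theorem. There are gaps in both directions, however.

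\textbf{Lower bound.} The first sentence of your paragraph already contains the fix. Citing Theorem~\ref{thm:finite-r-annular} is exactly the paper's route, and as a black box it finishes at once. For $0<\sigma<\Aloc(W)$, choose $r>1$ and $\delta>0$ with $\E[W^r]<\infty$ and $2^{1-r}\E[W^r]\le 2^{-r(\sigma+\delta)}$; the witness $r$ may exceed $2$, since the theorem allows any $r>1$. Apply the theorem with $U=W$ and $s=\sigma$. It already bounds the supremum over the whole annulus $2^n\le\lvert\xi\rvert\le 2^{n+1}$ by a quantity summable in $n$, so Borel--Cantelli and a countable sequence $\sigma\uparrow\Aloc(W)$ finish. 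No net, von Bahr--Esseen or Rosenthal step is involved.

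What you sketch around the citation (a single split at $m\approx k/2$, $L^r$ bounds on the fluctuation, a union bound over a net) does not prove that estimate.
\begin{enumerate}
\item Per-frequency moment bounds give only polynomial tails, of order $2^{-kr\delta/2}$. The net needs mesh $\lesssim 2^{-\sigma k/2}$, because the Lipschitz constant of $\wh{\mu_\circ}$ is $2\pi$ times the total mass; a $2^{-\varepsilon k}$-net is too coarse. Hence the net has $\gtrsim 2^{(2+\sigma)k}$ points. For $r\le 2$ one has $r\delta<r-1\le 1$, so the union bound cannot close.
\item Treating each level-$m$ descendant's fluctuation as $O(1)$ discards the oscillation of a derivative band $d$ between level $k/2$ and its phase-bin scale $\log_2(\lvert\xi\rvert d)$. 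Even with exponential concentration, the conditional variance is then only bounded by $\sum_{\lvert v\rvert=m}\mu_{\circ,m}(\mathcal I_v)^2\lesssim\max_{\lvert v\rvert=m}\mu_{\circ,m}(\mathcal I_v)\approx\lvert\xi\rvert^{-a/2}$, which suffices only for $\sigma<a/2$.
\item For $r>2$, the Rosenthal square-function term is stuck at the $q=2$ term $(1-\log_2\E[W^2])/2$ of $\Aloc(W)$.
\end{enumerate}
Your two ``delicate points'' are therefore the entire difficulty. The proof of Theorem~\ref{thm:finite-r-annular} resolves them with three devices: a generation-by-generation martingale array with band-dependent phase-bin coefficients, giving a variance budget $\lvert\xi\rvert^{-a}$; predictable caps, so that Freedman's inequality gives stretched-exponential tails that absorb the net; and an $r$-tail compensator dominated by a $\xi$-independent envelope, so the heavy-tailed part needs no union bound.

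\textbf{Upper bound.} This is a genuinely different route, and most of it is sound. The $r\times r^2$ rectangle test function is a correct $L^1$ counterpart of the paper's radial Gaussian $L^2$ slice (Lemma~\ref{lem:radial-L2-local-mass-paper}). Indeed $\lVert\wh\phi\rVert_\infty\approx r^3$ and $\int(1+\lvert\xi\rvert)^{-s/2}\,d\xi\approx r^{-1}\cdot r^{-2+s}$ over the dual box, so $\nu(B(x,r))\lesssim r^s$ uniformly in $x$. Because this bound is uniform, your uniform dyadic exponent suffices. You then need heavy arcs only at infinitely many levels, not a point carrying a nested sequence of them, which the paper obtains via its witness-set dimension theorem. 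The moment half needs no truncation: $\tau(q)/q>\beta\ge0$ already forces $2^{1-q}\E[W^q]<1$, hence $\E[Y^q]<\infty$.

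The heavy-arc half has a hole. A Peyri\`ere spine tilted at $q$ has speed $\varphi'(q)$, where $\varphi(q)=\log_2\E[W^q]$, but you need speeds approaching $1-\Aloc(W)=\inf_{q>0}(1+\varphi(q))/q$. Suppose this infimum is attained at the right endpoint $q_{\max}$ of $\{q:\E[W^q]<\infty\}$ with negative left derivative there; for instance, $W$ concentrated near $1$ with $\E[W^3]<\infty$ but $\E[W^{3+\epsilon}]=\infty$ for all $\epsilon>0$. Then every admissible tilt has speed at most $\varphi'(q_{\max}-)<1-\Aloc(W)$. The heavy arcs instead come from single large weights at depth comparable to $n$.

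The pure heavy-tail case needs the same mechanism. A large weight near the root changes masses by a bounded factor and cannot make the exponent vanish. You must also keep $Y^{(v)}$ bounded below on the heavy arcs, and upgrade positive probability to almost sure on non-extinction. The paper covers every regime at once using only Biggins' first-moment exponent $\Lambda(a)$, an $h$-block Galton--Watson process with side witnesses $W_{v1}Y^{(v1)}\ge 2c$, and alive-tree amplification.
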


We emphasize that the circle theorem is proved only for the canonical
Mandelbrot cascade.  At present our method does not extend to the vector-valued
cascade setting of Theorem~\ref{thm:main-vector-exact-fourier-dimension}.
The extension from the circle to fixed \(C^2\) planar Jordan curves with
nonvanishing curvature is carried out in the companion paper \cite{CF-curve}.
We keep the present paper focused on the circle, where the curvature mechanism
appears in its simplest form.

The upper bound in Theorem~\ref{thm:main-circle-endpoint-formula} is an endpoint
obstruction.  For a finite Borel measure \(\nu\) on \(\mathbb S^1\), define the minimum lower local dimension
\[
\alphamin(\nu)
=
\inf_{x\in\operatorname{spt}\nu}
\liminf_{r\downarrow0}
\frac{\log_2\nu(B(x,r))}{\log_2 r}.
\]
The local-dimension theorem establishes 
\[\alphamin(\mu_\circ)=\Aloc(W)\]
on
non-extinction, while the deterministic curved-support estimate yields
\(\dimF(\nu)\le\alphamin(\nu)\) for every nonzero finite measure supported on
\(\mathbb S^1\).  Hence \(\dimF(\mu_\circ)\le\Aloc(W)\).

The lower bound is the main Fourier-analytic part of the circle theorem.  It
follows from the finite-\(r\) annular theorem stated below.  If
\(s<\Aloc(W)\), then some \(r>1\) witnesses \(s\), in the sense that
\(2^{1-r}\E[W^r]\le 2^{-r(s+\delta)}\) for some \(\delta>0\).  The annular
theorem then yields almost sure Fourier decay at exponent \(s\).  Letting
\(s\uparrow\Aloc(W)\) through a countable sequence yields
\(
\dimF(\mu_\circ)\ge \Aloc(W)\).

\subsection{The finite-moment annular theorem}
\label{subsec:intro-finite-r-annular-theorem}

The finite-moment annular theorem, proved in
Section~\ref{sec:finite-r-annular-theorem}, is the main input for the circle
lower bound.  It is formulated for an auxiliary scalar weight \(U\), since in
the endpoint argument \(U\) will be chosen as a finite-moment witness for a
strict subendpoint exponent \(s<\Aloc(W)\).  This separates the annular Fourier
estimate from the endpoint optimization and makes the role of the finite moment
transparent.

\begin{definition}\label{def:finite-r-witnessed-hypothesis}
Let \(0<s<1\).  A nonnegative random variable \(U\) with \(\E U=1\) satisfies
the finite-\(r\) witnessed hypothesis at exponent \(s\) if there exist \(r>1\)
and \(\delta>0\) such that
\begin{equation}\label{eq:finite-r-witnessed-gap}
\E[U^r]<\infty,
\qquad
2^{1-r}\E[U^r]\le 2^{-r(s+\delta)}.
\end{equation}
\end{definition}

The strict gap \(\delta>0\) is used only inside the annular estimate; it provides
the surplus needed for the local-mass bounds, predictable capping, and
compensator estimates.  The endpoint theorem is recovered afterwards by applying
the result to a countable sequence of exponents \(s\uparrow\Aloc(W)\).

Let \(\widetilde{\nu}\) be the dyadic scalar cascade on \([0,1)\) generated by
\(U\), and let \(\nu_\circ=f_\#\widetilde{\nu}\) be its pushforward to the
circle.

\begin{theorem}[Finite-\(r\) annular theorem]
\label{thm:main-finite-r-annular}
Let \(0<s<1\), \(r>1\), and \(\delta>0\).  Let \(U\ge0\) satisfy \(\E U=1\) and \eqref{eq:finite-r-witnessed-gap}.  Then there exist constants \(C,c,\eta>0\), depending only on \(s,r,\delta\) and the law of \(U\), such that, for every \(n\ge1\),
\begin{equation}\label{eq:finite-r-annular-estimate}
\Pbb\left(
\sup_{2^n\le \lvert\xi\rvert\le 2^{n+1}}
\lvert\wh{\nu_\circ}(\xi)\rvert>C2^{-sn/2}
\right)
\le
C\exp(-c2^{\eta n})+C2^{-cn}.
\end{equation}
Consequently,
\begin{equation}\label{eq:finite-r-annular-as-decay}
\lvert\wh{\nu_\circ}(\xi)\rvert=O(\lvert\xi\rvert^{-s/2})
\qquad (\lvert\xi\rvert\to\infty)
\end{equation}
almost surely.
\end{theorem}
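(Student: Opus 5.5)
The plan is to write $\wh{\nu_\circ}(\xi)$ as a sum over dyadic arcs at a mesoscopic level $m=m(n)\approx \theta n$ and to use the cascade's self-similarity. Write $\nu_\circ=\sum_{\lvert v\rvert=m}Q_v\,(f\circ\phi_v)_\#\widetilde{\nu}^{(v)}\,2^{-m}$, where $\phi_v$ is the affine map onto $J_v$ and the $\widetilde{\nu}^{(v)}$ are i.i.d.\ copies of $\widetilde{\nu}$, independent of $\mathcal F_m=\sigma(W_u:\lvert u\rvert\le m)$. Then
\[
\wh{\nu_\circ}(\xi)=\sum_{\lvert v\rvert=m}2^{-m}Q_v\,Z_v(\xi),\qquad
Z_v(\xi)=\int e^{-2\pi i\xi\cdot f(\phi_v(t))}\,d\widetilde{\nu}^{(v)}(t).
\]
We split this into the conditional mean $\sum_v 2^{-m}Q_v\,\E[Z_v(\xi)\mid\mathcal F_m]$ and a martingale-difference part. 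Since $\E U=1$, the conditional mean equals $\int_{J_v}e^{-2\pi i\xi\cdot f(t)}\,dt$ times $2^m Q_v 2^{-m}$, so it is the Fourier transform of the level-$m$ density $\nu_{\circ,m}$. On each arc of length $2^{-m}$ the curvature of $\mathbb S^1$ makes the phase $\xi\cdot f(t)$ nonstationary except on an arc of angular width $\sim\lvert\xi\rvert^{-1/2}$ around the two points where $f(t)\parallel\xi$. Van der Corput / stationary phase then bounds the mean term by $\lvert\xi\rvert^{-1/2}$ times the $\nu_{\circ,m}$-mass of $O(1)$ arcs of length $\sim\lvert\xi\rvert^{-1/2}$, plus boundary terms of size $\lvert\xi\rvert^{-1}\sum_v Q_v$ controlled by $M_m$. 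The local-mass estimate needed here is that every dyadic arc of length $2^{-k}$ has mass at most $2^{-k(s+\delta/2)}$ with high probability. This follows from a union bound and the moment $\E[(2^{-k}Q_v)^r]=2^{-kr}(\E U^r)^k$: summing over $2^k$ arcs gives $(2^{1-r}\E U^r)^k\le 2^{-kr(s+\delta)}$. Combined with the stationary-phase factor (with $k\approx n/2$, so the mass of a $\lvert\xi\rvert^{-1/2}$-arc is roughly $2^{-ns/2}$), this yields a mean term of order $2^{-sn/2}$ up to the $\delta$-surplus.

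For the fluctuation part, set $Y_v=Z_v-\E Z_v$; these are independent and mean zero given $\mathcal F_m$, with $\lvert Y_v\rvert\le \widetilde{\nu}^{(v)}([0,1])+1$. Since only $r$ moments are available, we do not attempt a sub-Gaussian bound. Instead we use a predictable capping. Truncate each summand at level $\lambda_v=2^{-m(s+\delta/4)}$, i.e.\ replace $2^{-m}Q_vY_v$ by its capped version and compensate the conditional mean. For the capped bounded part we apply Bernstein/Hoeffding conditionally, with variance $\sum_v 2^{-2m}Q_v^2\,\E\lvert Y_v\rvert^2\wedge\ldots$. This variance is at most $\max_v 2^{-m}Q_v\cdot M_m\lesssim 2^{-m(s+\delta/2)}$ by the local-mass estimate, which gives a tail of the form $\exp(-c2^{\eta n})$ once $m=\theta n$ with $\theta>1/2$ chosen so that $2^{-m(s+\delta/2)}\ll 2^{-sn}$. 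The uncapped remainder and the compensator are handled by Markov's inequality with the $r$-th moment (using $\E[\widetilde\nu([0,1])^{r'}]<\infty$ for some $1<r'\le r$, by Kahane--Peyri\`ere under the witnessed gap), producing the polynomial term $C2^{-cn}$.

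To pass from a fixed $\xi$ to the supremum over the annulus, we take a grid of $2^{O(n)}$ frequencies of mesh $2^{-n}$ and use the Lipschitz bound $\lvert\nabla\wh{\nu_\circ}\rvert\lesssim \nu_\circ(\mathbb S^1)$; the exponential term absorbs the union bound. The polynomial term must be made uniform over the grid as well, which is why capping and the compensator estimate should be proved for a $\xi$-independent event, namely the $\mathcal F_m$-measurable local-mass event together with $\max_v\widetilde\nu^{(v)}([0,1])$ bounds. Borel--Cantelli over $n$ then gives \eqref{eq:finite-r-annular-as-decay}. The main obstacle I expect is balancing the parameters: $\theta$ must be large enough for the concentration step but small enough that the stationary-phase mean term and the capping loss stay below $2^{-sn/2}$. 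The gap $\delta$ is what must make this window nonempty. If a single level does not suffice, I would first try iterating the decomposition over a ladder of levels $m_1<m_2<\dots$.
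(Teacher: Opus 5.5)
There is a genuine gap: with a single global level $m=\theta n$, the two constraints on $\theta$ cannot be met at the same time.

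\emph{The mean term.} In the conditional mean $\sum_{|v|=m}Q_v\int_{J_v}e^{i\phi_\xi(t)}\,dt$, consider an arc $J_v$ at distance $d_v$ from the stationary set. When $|\xi|d_v\lesssim 2^m$, the phase changes by only $O(1)$ across $J_v$, so there is no cancellation on that region. On the remaining arcs, integration by parts gives $\sum_vQ_v(|\xi|d_v)^{-1}$, not $|\xi|^{-1}\sum_vQ_v$. (In any case $\sum_vQ_v=2^m\widetilde Y_m$ is $2^m$ times the total mass, not the total mass.) With a local-mass exponent $a<s+\delta$, both pieces are of size $2^{\varepsilon n}(2^m/|\xi|)^a$. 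So your mean-term bound needs $(1-\theta)a>s/2$; the estimate you state ``with $k\approx n/2$'' is valid only near $\theta=\tfrac12$.

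\emph{The fluctuation.} Your variance bound $\max_v2^{-m}Q_v\cdot\sum_v 2^{-m}Q_v\lesssim 2^{-am}$ needs $\theta a>s$. Together with the mean-term condition this forces $a>3s/2$, hence $\delta>s/2$. Since $\delta>0$ is arbitrary in the hypothesis, the window you hope $\delta$ opens is empty in general.

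This is not merely a crude variance estimate. For $r\le 2$ you could rescue $\theta=\tfrac12$: the inequality $\E[\min(x|Y|,\lambda)^2]\le x^r\lambda^{2-r}\E|Y|^r$ bounds the capped conditional variance by $C\lambda^{2-r}\sum_v(2^{-m}Q_v)^r$. Its expectation is $\lesssim 2^{-n(s+r\delta/2)}$ when $m=n/2$ and $\lambda=2^{-sn/2-\eta n}$.

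The theorem, and its use for $\Aloc$, needs $r>2$, and there the obstruction is real. Take the paper's Bernoulli example with $\alpha=1+\log_2p$. Then $(q-1-\log_2\E W^q)/q=\alpha(1-1/q)$, so every $s>\alpha/2$ requires a witness $r>2$. Meanwhile $\sum_{|v|=m}(2^{-m}Q_v)^2$ is of order $2^{-\alpha m}$ on survival. Using only $|Y_v|\le\widetilde\nu^{(v)}([0,1])+1$, you cannot certify a conditional variance below that, so you need $\theta>s/\alpha$ against $\theta<1-s/(2\alpha)$. This is impossible once $s>2\alpha/3$.

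The missing idea is to extract oscillation \emph{inside} the descendant contributions, generation by generation, at a scale adapted to the distance from the stationary points. The paper does this as follows.
\begin{itemize}
\item It splits the integral into a stationary tube and derivative bands $d$, and gives each band its own phase-bin scale $2^{m_{\xi,d}}\asymp|\xi|d$.
\item It writes each band integral exactly as $F^{(0)}_{\xi,d}+\sum_{\ell\ge0}\sum_J c_JM_I(U_J-1)$, with $c_J=2^\ell\int_Je^{i\phi_\xi}\chi_{\xi,d}$.
\item Integration by parts gives $|c_J|\lesssim 2^{\ell-m_{\xi,d}}$ for $\ell<m_{\xi,d}$, and local mass handles $\ell\ge m_{\xi,d}$.
\item After predictable capping at $2L_nT_{\ell+1,n}/M_I$, every band has quadratic variation $\lesssim 2^{(2\varepsilon+\kappa)n}|\xi|^{-a}$. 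This beats $2^{-sn}$ using only $a>s$, and the capping drift is paid by the $r$-tail compensator.
\end{itemize}
The paper remarks that the interval-style random-smoothing iteration does not carry over to the curved phase, so a ladder of global levels is not an obvious substitute.

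Separately, your local-mass bound $2^{-k(s+\delta/2)}$ fails with non-negligible probability for small $k$. It needs an $n$-dependent slack such as $T_{k,n}=2^{\varepsilon n}2^{-ak}$, and it must account for the descendant mass factor and for later prelimit masses.
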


The two terms in \eqref{eq:finite-r-annular-estimate} come from different parts of the argument. For the almost sure consequence, only the summability of the right-hand side is used.

We emphasize two features of the theorem.  First, \(U\) is not assumed bounded.  The proof uses predictable capping before applying martingale concentration, and the cost of the cap is paid by an \(r\)-tail compensator.  Second, the conclusion is annular and summable in \(n\).  This is stronger than a pointwise-in-frequency estimate and is exactly what is needed for almost sure Fourier decay.

The proof decomposes the phase
\(
t\mapsto \xi\cdot f(t)
\)
into a stationary tube and dyadic derivative bands.  The stationary tube and very small derivative bands are controlled by local mass estimates.  On the remaining bands the Fourier integral is written as a sum of exact dyadic martingale arrays.  The pre-bin part gains from oscillation before the natural phase-bin scale, while the post-bin part is controlled by the local-mass decay after that scale.  Predictable capping makes Freedman's inequality applicable without boundedness assumptions.  The resulting martingale estimates have stretched-exponential tails, and the \(r\)-tail compensator is summably small uniformly over the annulus.

For the extension from the circle to general curves with nonvanishing curvature,
the main additional issue is deterministic.  The explicit circular phase must be
replaced by a fixed-curve phase-geometry package: stationary tubes, derivative
bands, and phase-bin coefficient estimates uniform over frequency annuli.  Once
these inputs are available, the probabilistic part of the annular argument is
largely unchanged. 
This extension is carried out in  \cite{CF-curve}.

\begin{figure}[htbp]
\centering
\resizebox{0.95\textwidth}{!}{%
\begin{tikzpicture}[
  font=\small,
  main/.style={
    draw,
    rounded corners,
    thick,
    align=center,
    text width=5.2cm,
    minimum height=1.1cm
  },
  branch/.style={
    draw,
    rounded corners,
    thick,
    align=center,
    text width=4.4cm,
    minimum height=1.0cm
  },
  branch1/.style={
    draw,
    rounded corners,
    thick,
    align=center,
    text width=4.6cm,
    minimum height=1.0cm
  },
  branch2/.style={
    draw,
    rounded corners,
    thick,
    align=center,
    text width=3.6cm,
    minimum height=1.0cm
  },
  branch3/.style={
    draw,
    rounded corners,
    thick,
    align=center,
    text width=4.9cm,
    minimum height=1.0cm
  },
  tool/.style={
    draw,
    rounded corners,
    thick,
    align=center,
    text width=4.4cm,
    minimum height=0.95cm
  },
  arrow/.style={
    -{Latex[length=2mm]},
    thick
  }
]

\node[branch1] (lefttool) at (-7.7,0) {
\textbf{Local-dimension package}\\
{\scriptsize Lem.~\ref{lem:circle-uniform-cylinder-upper-bound-paper}--\ref{lem:circle-cylinder-to-local-dimension-paper};
Thm.~\ref{thm:exact-dimension-support-mass-witnesses}}};

\node[branch2] (radial) at (-2.6,0) {
\textbf{Radial \(L^2\) bound}\\
{\scriptsize Lem.~\ref{lem:radial-L2-local-mass-paper}}};

\node[tool] (decomp) at (2.6,0) {
\textbf{Annular reduction}\\
{\scriptsize Prop.~\ref{prop:local-mass-good-events-paper};
Prop.~\ref{prop:stationary-tube-phase-bin-reduction-paper}}};

\node[tool] (martingale) at (7.5,0) {
\textbf{Capping \& tail control}\\
{\scriptsize Cor.~\ref{cor:one-band-centered-capped-estimate-paper};
Prop.~\ref{prop:r-tail-compensator-paper}}};

\node[branch3] (localdim) at (-7.7,-2.3) {
\textbf{Minimum local dimension}\\
\(\alpha_{\min}(\mu_\circ)=A_{\mathrm{loc}}(W)\)};

\node[branch] (obstruction) at (-2.6,-2.3) {
\textbf{Curvature obstruction}\\
\(\dim_{\mathrm F}(\nu)\le \alpha_{\min}(\nu)\)};

\node[branch2] (witness) at (2.6,-2.3) {
\textbf{Finite-\(r\) witness}\\
{\scriptsize Def.~\ref{def:finite-r-witnessed-hypothesis}}};

\node[branch] (annular) at (7.5,-2.3) {
\textbf{Finite-\(r\) annular theorem}\\
{\scriptsize Thm.~\ref{thm:main-finite-r-annular}}};

\node[branch] (upper) at (-5.15,-4.7) {
\textbf{Endpoint upper bound}\\
\(\dim_{\mathrm F}(\mu_\circ)\le A_{\mathrm{loc}}(W)\)};

\node[branch] (lower) at (5.15,-4.7) {
\textbf{Endpoint lower bound}\\
\(\dim_{\mathrm F}(\mu_\circ)\ge A_{\mathrm{loc}}(W)\)};

\node[main] (main) at (0,-7.0) {
\textbf{Circle endpoint formula}\\
\(\dim_{\mathrm F}(\mu_\circ)=A_{\mathrm{loc}}(W)\)};

\draw[arrow] (lefttool) -- (localdim);
\draw[arrow] (radial) -- (obstruction);

\draw[arrow] (localdim) -- (upper);
\draw[arrow] (obstruction) -- (upper);

\draw[arrow] (decomp) -- (annular);
\draw[arrow] (martingale) -- (annular);
\draw[arrow] (witness) -- (annular);

\draw[arrow] (annular) -- (lower);

\draw[arrow] (upper) -- (main);
\draw[arrow] (lower) -- (main);

\end{tikzpicture}%
}
\caption{Proof roadmap for the circle endpoint theorem.}
\label{fig:roadmap-circle-simplified}
\end{figure}

\noindent \textbf{Organization.}
Section~\ref{sec:preliminaries-notation} fixes common notations for binary trees, dyadic intervals, circle arcs, Fourier dimension, energy dimension, and descendant cascades.

Section~\ref{sec:vector-cascades-energy-profile} constructs the balanced vector cascade on \([0,1]\), records the Kahane--Peyri\`ere nondegeneracy input, and develops the moment profile \(\rho(q)\) and the parameter \(D_E(X)\).

Section~\ref{sec:vector-square-sums-energy-theorem} proves the vector square-sum
and energy theorem.  The subcritical side is obtained from fractional-moment
decay of the normalized dyadic square sums. The supercritical side uses a no-plateau lemma, a finite-block weighted-growth argument, and alive-tree
amplification.  It proves that, almost surely on \(\{M>0\}\), one has 
\(
\dimE(\mu)=\dimtwo(\mu)=D_E(X).
\)

Section~\ref{sec:vector-dense-grid-fourier-lower-bound} proves the vector Fourier lower bound.  The centered profile is compared on dense grids; a vector-valued \(\ell^r\) contraction yields the basic recursive estimate; a ladder iteration yields \(q\)-level annular decay; and optimization over \(q\) yields
\(
\dimF(\mu)\ge D_E(X)
\).

Section~\ref{sec:exact-interval-theorem} combines the energy theorem, the Fourier lower bound, and the deterministic inequality 
\(
\dimF\le\dimE
\)
to prove Theorem~\ref{thm:main-vector-exact-fourier-dimension}.

Section~\ref{sec:circle-endpoint-obstruction} constructs the circle cascade, proves the minimum lower local dimension theorem 
\(
\alphamin(\mu_\circ)=\Aloc(W)
\), 
and proves the curved-support upper bound 
\(
\dimF(\nu)\le\alphamin(\nu)
\) 
for finite measures supported on \(\mathbb{S}^{1}\).  This yields the upper bound 
\(
\dimF(\mu_\circ)\le\Aloc(W)
\) 
on non-extinction.

Section~\ref{sec:finite-r-annular-theorem} proves Theorem~\ref{thm:main-finite-r-annular}.  It contains the local-mass good events, the stationary-tube and phase-bin reduction, the predictable capping argument, the \(r\)-tail compensator, and the final annular grid assembly.

Section~\ref{sec:endpoint-lower-bound-circle-theorem} uses the finite-\(r\) annular theorem to prove 
\(
\dimF(\mu_\circ)\ge\Aloc(W)
\), and then combines this with the endpoint obstruction to prove Theorem~\ref{thm:main-circle-endpoint-formula}.

\section{Preliminaries and notation}
\label{sec:preliminaries-notation}

This section establishes notation used throughout the paper. We use a single
rooted binary tree both for the interval cascade and for the dyadic
parametrization of the circle. The interval model is defined on \([0,1]\).
The circle model is defined on
\[
\mathbb{S}^{1}=\{x\in\mathbb{R}^{2}: \lvert x\rvert=1\},
\]
with its dyadic structure induced by the standard parametrization
\[
f(t)=(\cos 2\pi t,\sin 2\pi t),
\qquad 0\leq t<1.
\]
\subsection{Binary tree, dyadic intervals, and circle arcs}
\label{subsec:binary-tree-dyadic-intervals-circle-arcs}

For \(n\ge 0\), let \(\{0,1\}^n\) denote the set of binary words of length \(n\). The unique word of length \(0\) is denoted by \(\varnothing\). We write
\[
\mathcal T=\bigcup_{n=0}^{\infty}\{0,1\}^n
\]
for the rooted binary tree.

For \(u\in\mathcal T\), its length is denoted by \(\lvert u \rvert\). If
\[
u=(u_1,\ldots,u_n)\in\{0,1\}^n,
\]
then, for \(0\le k\le n\), we set
\[
u|k=(u_1,\ldots,u_k),
\]
with 
\(
u|0=\varnothing
\).
If 
\(
u\in\mathcal T
\)
and \(i\in\{0,1\}\), then \(ui\) denotes the word obtained by appending \(i\) to \(u\). If 
\(
u\in\{0,1\}^m
\)
and 
\(
v\in\{0,1\}^n
\),
their concatenation is denoted by 
\(
uv\in\{0,1\}^{m+n}
\).

We write \(u\preceq v\) if \(u\) is a prefix of \(v\). Equivalently, \(u\preceq v\) if and only if there exists 
\(
w\in\mathcal T
\)
such that \(v=uw\). If \(u\preceq v\) and \(u\neq v\), we write \(u\prec v\).

For 
\(
u=(u_1,\ldots,u_n)\in\{0,1\}^n
\),
define
\[
a_u=\sum_{j=1}^n u_j2^{-j},
\qquad
a_{\varnothing}=0.
\]
The dyadic interval associated with \(u\) is denoted by \(I_u\). We use the standard half-open convention on \([0,1]\): namely,
\[
I_{\varnothing}=[0,1),
\]
and, for \(\lvert u \rvert=n\ge 1\),
\[
I_u
=
\begin{cases}
\displaystyle [a_u,a_u+2^{-n}), & u\neq(1,\ldots,1),\\[4pt]
\displaystyle [a_u,1], & u=(1,\ldots,1).
\end{cases}
\]

Let
\[
\mathcal D_n=\{I_u:\lvert u \rvert=n\},
\qquad
\mathcal D=\bigcup_{n=0}^{\infty}\mathcal D_n.
\]
Then \(\mathcal D_n\) is a partition of \([0,1]\) into \(2^n\) intervals of length \(2^{-n}\), and
\[
I_u=I_{u0}\sqcup I_{u1}.
\]
The endpoint convention is immaterial for Lebesgue-density computations, but it ensures that every \(x\in[0,1]\) belongs to a unique level-\(n\) interval.

For the circle model, we use the same binary words, but reserve a separate notation for the corresponding dyadic intervals in the parameter space. Define
\(
J_{\varnothing}=[0,1)
\),
and, for \(u\in\{0,1\}^n\) with \(n\ge 1\),
\[
J_u=[a_u,a_u+2^{-n})\subset[0,1).
\]
Thus \(J_u\) is the parameter interval associated with \(u\). Although \(J_u\) is the same subset of \([0,1)\) as \(I_u\), the notation \(J_u\) will be used for the circle parametrization in order to distinguish it from the interval cascade.

The corresponding dyadic arc on the circle is
\[
\mathcal I_u=f(J_u)\subset\mathbb S^1,
\]
where
\(
f(t)=(\cos 2\pi t,\sin 2\pi t).
\)
Let \(\sigma\) denote normalized arclength measure on \(\mathbb S^1\). Then
\(
\sigma(\mathcal I_u)=2^{-\lvert u \rvert}.
\)
For \(n\ge 0\), set
\[
\mathcal D_{\circ,n}=\{\mathcal I_u:\lvert u \rvert=n\}.
\]
The family 
\(
\mathcal D_{\circ,n}
\)
is a half-open dyadic partition of \(\mathbb S^1\).

For \(x\in\mathbb S^1\), let \(\mathcal I_n(x)\) denote the unique level-\(n\) dyadic arc containing \(x\), with respect to the above half-open convention. Similarly, for \(x\in[0,1]\), let \(I_n(x)\) denote the unique level-\(n\) dyadic interval containing \(x\).

\subsection{Fourier dimension and energy dimension}
\label{subsec:fourier-energy-dimension-definitions}

We recall the notions of Fourier dimension and energy dimension for finite Borel measures, using standard terminology; see also \cite{FanLauRao2002}. If \(\nu\) is a finite Borel measure on \(\R^d\), its Fourier transform is defined by
\[
\wh\nu(\xi)=\int_{\R^d}e^{-2\pi i x\cdot\xi}\,d\nu(x),
\qquad
\xi\in\R^d.
\]
The Fourier dimension of a nonzero finite Borel measure \(\nu\) on \(\R^d\) is
\[
\dimF(\nu)
=
\sup\left\{
0\le s\le d:
\lvert\wh\nu(\xi)\rvert=O(\lvert\xi\rvert^{-s/2})
\text{ as }\lvert\xi\rvert\to\infty
\right\}.
\]
Equivalently, \(s\) is admissible in the above supremum if there exist constants \(C<\infty\) and \(R<\infty\) such that
\[
\lvert\wh\nu(\xi)\rvert\le C\lvert\xi\rvert^{-s/2}
\qquad\text{whenever } \lvert\xi\rvert\ge R.
\]
For measures on the interval we take \(d=1\). For measures on the circle we use the ambient Fourier transform in \(\R^2\), so that the frequency variable is \(\xi\in\R^2\). Since the circle itself is one-dimensional, the endpoint theorem will identify a value in \([0,1]\), although the ambient definition permits \(s\le 2\).

For a nonzero finite Borel measure \(\nu\) on \([0,1]\) and \(0<s<1\), its \(s\)-energy is
\[
I_s(\nu)
=
\iint_{[0,1]^2}\lvert x-y \rvert^{-s}\,d\nu(x)d\nu(y).
\]
The energy dimension of \(\nu\) is
\[
\dimE(\nu)
=
\sup\{0<s<1:I_s(\nu)<\infty\}.
\]
We also define the dyadic correlation dimension by
\[
\dimtwo(\nu)
=
\liminf_{n\to\infty}
\frac{\log_2 \Sigma_n(\nu)}{-n},
\]
where
\[
\Sigma_n(\nu)=\sum_{\lvert u \rvert=n}\nu(I_u)^2.
\]
In the interval part of the paper we prove that, for the cascade measure \(\mu\),
\[
\dimE(\mu)=\dimtwo(\mu)=D_E(X)
\]
on the non-extinction event.

We shall use the following deterministic comparison; for a proof, see Mattila~\cite[Theorem~3.10]{Mattila2015}.

\begin{proposition}\label{prop:fourier-energy-upper-bound-paper}
Let \(\nu\) be a nonzero finite Borel measure on \([0,1]\). Then 
\[\dimF(\nu)\le \dimE(\nu).\]
\end{proposition}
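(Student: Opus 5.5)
The plan is to go through the energy integral and its Fourier representation. This is the classical argument (Mattila, Theorem~3.10), and I would reproduce it in the following order.

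\emph{Step 1: reduction.} If \(\dimF(\nu)=0\) there is nothing to prove. Otherwise fix \(0<t<s<\dimF(\nu)\) with \(t<1\). Then \(\lvert\wh\nu(\xi)\rvert\le C\lvert\xi\rvert^{-s/2}\) for \(\lvert\xi\rvert\ge R\), and \(\lvert\wh\nu\rvert\le\nu([0,1])\) everywhere. It suffices to show \(I_t(\nu)<\infty\): then \(\dimE(\nu)\ge t\), and letting \(t\uparrow\min\{\dimF(\nu),1\}\) gives the claim. Note that \(\dimE\le1\) by definition, so when \(\dimF(\nu)>1\) the conclusion still needs the convention; but \(\dimF(\nu)\le1\) anyway, since the estimate below would otherwise give finite \(t\)-energy for \(t\) close to \(1\) and beyond. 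For the stated range \(0<t<1\), this is consistent.

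\emph{Step 2: Fourier formula for the energy.} I would establish
\[
I_t(\nu)=c_t\int_{\R}\lvert\wh\nu(\xi)\rvert^2\lvert\xi\rvert^{t-1}\,d\xi
\qquad(0<t<1),
\]
with \(c_t>0\). This is the step I expect to be the main (though standard) obstacle, because \(\lvert x\rvert^{-t}\) is not integrable at infinity, so Plancherel cannot be applied naively. I would mollify. Take \(\nu_\varepsilon=\nu*\phi_\varepsilon\) with a nonnegative smooth approximate identity \(\phi_\varepsilon\), and use the Gaussian-average representation
\[
\lvert x\rvert^{-t}=c\int_0^\infty e^{-\pi\lambda x^2}\lambda^{t/2-1}\,d\lambda .
\]
Each Gaussian has positive Fourier transform, so Fubini with nonnegative integrands gives the identity for \(\nu_\varepsilon\). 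Then pass \(\varepsilon\to0\): use Fatou on the spatial side together with \(\lvert\wh{\phi_\varepsilon}\rvert\le1\) and monotone convergence on the frequency side. Only the inequality \(I_t(\nu)\le c_t\int\lvert\wh\nu\rvert^2\lvert\xi\rvert^{t-1}\,d\xi\) is actually needed, and Fatou delivers it directly.

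\emph{Step 3: conclude.} Split the frequency integral at \(\lvert\xi\rvert=R\). On the low-frequency part,
\[
\int_{\lvert\xi\rvert<R}\nu([0,1])^2\lvert\xi\rvert^{t-1}\,d\xi<\infty,
\]
since \(t>0\). On the high-frequency part,
\[
\int_{\lvert\xi\rvert\ge R}C^2\lvert\xi\rvert^{-s+t-1}\,d\xi<\infty,
\]
since \(t<s\). Hence \(I_t(\nu)<\infty\), which completes the argument.
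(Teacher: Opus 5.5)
Your argument is correct and is exactly the classical proof behind Mattila's Theorem~3.10, which the paper simply cites. You use the Fourier representation $I_t(\nu)=c_t\int_{\R}\lvert\wh\nu(\xi)\rvert^2\lvert\xi\rvert^{t-1}\,d\xi$ for $0<t<1$ (only the inequality $\le$ is needed), then split the frequency integral at $\lvert\xi\rvert=R$; the mollification is not even necessary, since the Gaussian subordination with Tonelli already gives the identity directly for $\nu$.

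The one flaw is your parenthetical justification of $\dimF(\nu)\le 1$. It does not follow from the energy estimate: a smooth density on $[0,1]$ has arbitrarily fast Fourier decay, while the formula only covers $t<1$. It holds trivially because the paper's definition takes the supremum over $0\le s\le d=1$, so letting $t\uparrow\min\{\dimF(\nu),1\}=\dimF(\nu)$ completes the proof.
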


For measures supported on \(\mathbb S^1\), we use a different deterministic upper bound, namely the curved-support estimate
\[
\dimF(\nu)\le\alphamin(\nu),
\]
proved in Section~\ref{sec:circle-endpoint-obstruction}. This estimate is the upper-bound mechanism in the circle endpoint theorem.

\subsection{Probability conventions and descendant copies}
\label{subsec:probability-conventions-descendants}

All random variables are defined on a common probability space
\(
(\Omega,\mathcal F,\Pbb).
\)
Expectation is denoted by \(\E\), and conditional expectation with respect to a sub-\(\sigma\)-field \(
\mathcal G\subset\mathcal F
\) 
is denoted by
\(
\E[\cdot\mid\mathcal G].
\)

For the dyadic vector cascade, to each vertex \(u\in\mathcal T\) we attach an independent copy
\[
X(u)=(X_0(u),X_1(u))
\]
of the vector \(X=(X_0,X_1)\). The vectors assigned to distinct vertices are independent. The two coordinates \(X_0(u)\) and \(X_1(u)\) at a fixed vertex are not assumed to be independent.

The natural filtration for the vector cascade is
\[
\mathcal F^X_n
=
\sigma\{X(u):\lvert u \rvert\le n-1\},
\qquad n\ge 0,
\]
with \(\mathcal F^X_0\) trivial. Thus \(\mathcal F^X_n\) contains precisely the weights needed to form all level-\(n\) path weights.

For \(u\in\mathcal T\), the descendant environment rooted at \(u\) is
\[
\mathcal E^{(u)}
=
\{X(uv):v\in\mathcal T\}.
\]
For 
\(
v=(v_1,\ldots,v_m)\in\{0,1\}^m
\), 
define the descendant path weight by
\[
L_v^{(u)}
=
\prod_{k=1}^m X_{v_k}(u(v|k-1)),
\qquad
L_{\varnothing}^{(u)}=1.
\]
Then
\(
L_{uv}=L_uL_v^{(u)}
\).
The descendant cascade rooted at \(u\) has the same law as the original cascade and is independent of \(\mathcal F^X_{\lvert u \rvert}\). Moreover, descendant cascades rooted at distinct vertices of the same generation are conditionally independent given \(\mathcal F^X_n\).

For the scalar circle cascade, independent copies \(W_v\) of the scalar weight \(W\) are attached to non-empty binary words \(v\). The natural filtration is
\[
\mathcal F^W_n
=
\sigma\{W_v:1\le \lvert v \rvert\le n\},
\qquad n\ge 0,
\]
with \(\mathcal F^W_0\) trivial. If \(v\in\mathcal T\), the descendant weights below \(v\) are
\[
\{W_{vu}:u\in\mathcal T,\ u\neq\varnothing\}.
\]
They form an independent copy of the original scalar environment and are independent of 
\(
\mathcal F^W_{\lvert v \rvert}
\).

We use superscripts to denote descendant copies. Thus \(M^{(u)}\), \(\mu^{(u)}\), \(Y^{(v)}\), and \(\mu_\circ^{(v)}\) denote terminal masses and limiting measures constructed from the corresponding descendant environments. The precise constructions are given in the relevant sections. Throughout, a superscript \((u)\) or \((v)\) indicates that the object is constructed inside the subtree rooted at the corresponding word.

\section{Dyadic vector cascades and the energy profile}
\label{sec:vector-cascades-energy-profile}

This section settles the preliminary issues concerning the dyadic vector cascade used in
Theorem~\ref{thm:main-vector-exact-fourier-dimension}. Vector-valued
multiplicative cascades have been studied in more general settings; see, for
example, \cite{Barral2001}. In the dyadic setting considered here, let
\[
X=(X_0,X_1)\in[0,\infty)^2.
\]
The two coordinates of \(X\) are allowed to be dependent, while independent
copies of the vector \(X\) are assigned to distinct vertices of the binary tree.

\subsection{The balanced dyadic vector cascade}
\label{subsec:balanced-dyadic-vector-cascade}

Let
\[
\{X(u)=(X_0(u),X_1(u)):u\in\mathcal T\}
\]
be an independent family of copies of \(X\). For 
\(
u=(u_1,\ldots,u_n)\in\{0,1\}^n
\),
define
\begin{equation}\label{eq:vector-path-weight-definition}
L_u=\prod_{k=1}^n X_{u_k}(u|k-1),
\qquad
L_{\varnothing}=1.
\end{equation}
Equivalently,
\[
L_{ui}=L_uX_i(u),
\qquad
u\in\mathcal T,\ i\in\{0,1\}.
\]

For \(n\ge0\), define the level-\(n\) random measure on \([0,1]\) by
\begin{equation}\label{eq:vector-level-measure-definition}
d\mu_n(x)=\sum_{\lvert u \rvert=n}2^nL_u\one_{I_u}(x)\,dx.
\end{equation}
Thus
\(
\mu_n(I_u)=L_u
\ (\lvert u \rvert=n),
\)
and the total mass at level \(n\) is
\begin{equation}\label{eq:vector-total-mass-definition}
M_n=\mu_n([0,1])=\sum_{\lvert u \rvert=n}L_u.
\end{equation}

\begin{definition}
\label{def:mean-one-balanced-vector-law-paper}
The vector law is called mean-one if
\(
\E(X_0+X_1)=1.
\)
It is called balanced if 
\(
\E X_0=\E X_1=\frac12
\).
\end{definition}

We shall repeatedly use the following centering observation. It records
that, under the balanced assumption, the expected level-\(n\) density is precisely
the constant density \(1\) on \([0,1]\); thus the approximating measures are
centered at Lebesgue measure.
\begin{lemma}
\label{lem:expected-path-weights-lebesgue-centering}
For 
\(
u=(u_1,\ldots,u_n)\in\{0,1\}^n
\),
\[
\E L_u=\prod_{k=1}^n \E X_{u_k}.
\]
In particular, if the law is balanced, then
\[
\E L_u=2^{-n}
\qquad(\lvert u \rvert=n),
\qquad
\text{and} \qquad
\E\mu_n=\mathcal L
\qquad(n\ge0).
\]
\end{lemma}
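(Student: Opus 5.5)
The plan is a direct computation from independence along a path. Write
\[
L_u=\prod_{k=1}^n X_{u_k}(u|k-1).
\]
The factors are coordinates of the vectors \(X(u|0),X(u|1),\dots,X(u|n-1)\), which are attached to the \(n\) distinct vertices \(u|0\prec u|1\prec\cdots\prec u|n-1\) of \(\mathcal T\). Since vectors at distinct vertices are independent, and each factor is a measurable function of exactly one of these vectors, the \(n\) factors are independent nonnegative random variables. Moreover, each vertex contributes only one coordinate, so the possible dependence between \(X_0(v)\) and \(X_1(v)\) at a single vertex never enters.

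Each factor has the law of the corresponding coordinate of \(X\). Independence of nonnegative variables then gives
\[
\E L_u=\prod_{k=1}^n\E X_{u_k},
\]
valid in \([0,\infty]\) with no integrability issue. Under the balanced hypothesis every factor has mean \(1/2\), so \(\E L_u=2^{-n}\). The root case \(n=0\) is trivial, since \(L_\varnothing=1\).

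For the centering statement, we interpret \(\E\mu_n\) as the intensity measure \(A\mapsto\E\mu_n(A)\). By \eqref{eq:vector-level-measure-definition} and Tonelli,
\[
\E\mu_n(A)=\sum_{\lvert u\rvert=n}2^n\,\E L_u\,\lvert A\cap I_u\rvert .
\]
Substituting \(\E L_u=2^{-n}\) gives
\[
\E\mu_n(A)=\sum_{\lvert u\rvert=n}\lvert A\cap I_u\rvert=\mathcal L(A\cap[0,1]),
\]
because \(\mathcal D_n\) partitions \([0,1]\). Equivalently, the expected density is
\[
\sum_{\lvert u\rvert=n}\one_{I_u}=1 \quad\text{on }[0,1].
\]

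There is no real obstacle in this argument. The only point needing care is the first step: justifying that the factors of \(L_u\) are independent because they come from distinct vertices.
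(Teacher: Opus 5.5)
Your proposal is correct and is the same routine verification the paper relies on: the paper states this lemma without proof, treating it as immediate from the independence of the vectors attached to the distinct vertices \(u|0,\dots,u|n-1\) and from the definition of \(\mu_n\). Reading \(\E\mu_n\) as the intensity measure and carrying out the Tonelli computation fills in exactly that omitted step.
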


\begin{lemma}
\label{lem:vector-total-mass-martingale-paper}
If the vector law is mean-one, then \((M_n)_{n\ge0}\) is a nonnegative martingale with respect to
\[
\mathcal F^X_n=\sigma\{X(u):\lvert u \rvert\le n-1\},
\]
where \(\mathcal F^X_0\) is trivial. Consequently, there exists a finite nonnegative random variable \(M\) such that
\(M_n\to M\) almost surely.
Moreover,
\(
\E M\le1.
\)
\end{lemma}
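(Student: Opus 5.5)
The claim is that $(M_n)$ is a nonnegative martingale, so the plan is to verify adaptedness, integrability and the martingale identity, and then quote the martingale convergence theorem. Adaptedness comes first. By the definition of $L_u$ in \eqref{eq:vector-path-weight-definition}, for $\lvert u\rvert=n$ the weight $L_u$ is a product of coordinates of $X(u|k-1)$ with $0\le k-1\le n-1$. Hence $L_u$ is $\mathcal F^X_n$-measurable, and so is the finite sum $M_n$. Nonnegativity is immediate since $X\in[0,\infty)^2$.

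Integrability comes next. By Lemma~\ref{lem:expected-path-weights-lebesgue-centering}, $\E L_u=\prod_{k}\E X_{u_k}$; this uses that the factors sit at distinct vertices and are therefore independent. Summing over $u\in\{0,1\}^n$ gives
\[
\E M_n=\Bigl(\E X_0+\E X_1\Bigr)^n=1
\]
by the mean-one assumption. Only the mean-one condition is needed here, not balance.

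For the martingale identity, use $L_{ui}=L_uX_i(u)$ to write
\[
M_{n+1}=\sum_{\lvert u\rvert=n}L_u\bigl(X_0(u)+X_1(u)\bigr).
\]
Each $L_u$ is $\mathcal F^X_n$-measurable. Each vector $X(u)$ with $\lvert u\rvert=n$ is independent of $\mathcal F^X_n$, which is generated by the vectors at levels $\le n-1$. Taking out what is known, and using $\E[X_0(u)+X_1(u)\mid\mathcal F^X_n]=\E(X_0+X_1)=1$, gives $\E[M_{n+1}\mid\mathcal F^X_n]=M_n$. The sibling dependence between $X_0(u)$ and $X_1(u)$ plays no role, since only the sum of their expectations enters. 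To avoid integrability subtleties, one can do the conditioning for nonnegative variables, where it is always valid, and then conclude using $\E M_n=1$.

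Finally, a nonnegative martingale is bounded in $L^1$ because $\E\lvert M_n\rvert=\E M_n=1$, so Doob's convergence theorem gives an almost sure finite limit $M$. Fatou's lemma then yields $\E M\le\liminf_n\E M_n=1$. There is no real obstacle in this argument. The only points needing care are the indexing convention, namely that $\mathcal F^X_n$ contains levels up to $n-1$, and the independence of the level-$n$ vectors from $\mathcal F^X_n$.
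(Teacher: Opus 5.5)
Your proposal is correct and follows the same route as the paper: the paper also obtains \(\E[M_{n+1}\mid\mathcal F^X_n]=M_n\) from \(L_{ui}=L_uX_i(u)\), the independence of the level-\(n\) vectors from \(\mathcal F^X_n\), and \(\E(X_0+X_1)=1\). The paper leaves adaptedness, integrability (\(\E M_n=1\)), the Doob convergence and the Fatou bound \(\E M\le 1\) implicit, and you simply write these standard steps out.
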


\begin{proof}
Using \(L_{ui}=L_uX_i(u)\), independence of the level-\(n\) descendants from \(\mathcal F_n^X\), and the mean-one condition \(\mathbb E(X_0+X_1)=1\), we get \(\mathbb E[M_{n+1}\mid\mathcal F_n^X]=M_n\).
\end{proof}

We next record the weak convergence of the level measures.  The proof is
included not only for completeness, but also to fix the descendant notation used
throughout the sequel.

For \(u\in\mathcal T\), set
\[
L_v^{(u)}
=
\prod_{k=1}^{\lvert v \rvert}
X_{v_k}\bigl(u(v|k-1)\bigr),
\qquad
L_{\varnothing}^{(u)}=1,
\qquad
M_m^{(u)}=\sum_{\lvert v \rvert=m}L_v^{(u)}.
\]
Then
\(
L_{uv}=L_uL_v^{(u)}
\).
Under the assumption
\(
\E(X_0+X_1)=1
\),
each process
\(
(M_m^{(u)})_{m\ge0}
\)
is a copy of the total-mass martingale built in the subtree rooted at \(u\).
These descendant martingales are independent of the environment above \(u\),
and descendant martingales rooted at distinct vertices of the same generation
are conditionally independent.  Since \(\mathcal T\) is countable, we may work
on an event of probability one on which
\(
M_m^{(u)}\to M^{(u)}
\)
for every \(u\in\mathcal T\).

\begin{theorem}[Existence of the vector cascade measure]
\label{thm:vector-limiting-measure-construction-paper}
Assume the mean-one condition 
\(
\E(X_0+X_1)=1
\).
Then there exists a finite
random Borel measure \(\mu\) on \([0,1]\) such that
\[
\mu_n\xrightarrow{\mathrm{w}}\mu
\qquad
\text{almost surely}.
\]
Moreover,
\(
\mu([0,1])=M
\)
almost surely. In particular, the events
\(\{\mu\neq0\}\) and \(\{M>0\}\) agree modulo null events.
\end{theorem}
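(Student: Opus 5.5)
The plan is to build \(\mu\) directly from the terminal masses of the descendant martingales, defining it on dyadic intervals and extending it to a Borel measure. We work on the full-probability event \(\Omega_0\) on which \(M_m^{(u)}\to M^{(u)}\) for every \(u\in\mathcal T\). On \(\Omega_0\), for each word \(u\) we set
\[
\mu(I_u):=L_uM^{(u)} .
\]
The key identity is additivity along the tree. From \(L_{uv}=L_uL^{(u)}_v\) we get, for every \(m\ge1\),
\[
M_m^{(u)}=X_0(u)M_{m-1}^{(u0)}+X_1(u)M_{m-1}^{(u1)} .
\]
Letting \(m\to\infty\) on \(\Omega_0\) gives \(M^{(u)}=X_0(u)M^{(u0)}+X_1(u)M^{(u1)}\). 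Multiplying by \(L_u\) yields \(\mu(I_u)=\mu(I_{u0})+\mu(I_{u1})\). So \(\mu\) is a finitely additive, nonnegative set function on the dyadic algebra, with total mass \(\mu(I_\varnothing)=M\).

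Next we check that \(\mu_n(I_u)\to\mu(I_u)\) for every fixed \(u\). For \(n\ge|u|\),
\[
\mu_n(I_u)=\sum_{|v|=n-|u|}L_{uv}=L_uM^{(u)}_{n-|u|}\to L_uM^{(u)} .
\]
In particular \(M_n\to M\). Since the dyadic intervals generate the Borel \(\sigma\)-field and form a net with mesh tending to zero, any subsequential weak limit \(\nu\) of \((\mu_n)\) must satisfy \(\nu(I_u)=\mu(I_u)\) whenever \(\nu(\partial I_u)=0\). Tightness is automatic on the compact set \([0,1]\), because the masses \(M_n\) are bounded along the convergent sequence.

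The main obstacle is the boundary issue: atoms of \(\nu\) at dyadic endpoints must be excluded. I would avoid it with a distribution-function argument. Define \(F(x)\) for dyadic rationals \(x=a_u+2^{-|u|}\) as the sum of \(\mu(I_w)\) over the dyadic intervals covering \([0,x)\); this is consistent by additivity. I would then show that \(F\) has no jumps at dyadic points. This holds because \(\mu(I_{u1\cdots1})\to0\) and \(\mu(I_{u0\cdots0})\to0\) as the number of appended digits grows: the set function \(\mu(I_{w})\) is decreasing along a nested sequence, so it has a limit, and I would argue that this limit vanishes.

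I would try to prove vanishing as follows. The limit equals \(L_u\lim_k\prod_{j<k}X_1(\cdot)\,M^{(\cdot)}\), and its expectation is bounded by \(\E L_u\cdot(\E X_1)^k\E M\le(\E X_1)^k\) via independence and Fatou. If \(\E X_1<1\), the bound tends to \(0\). If instead \(\E X_1=1\), then \(X_0=0\) almost surely, the relevant path is a single martingale product, and the argument has to be handled separately; a countable union over \(u\) keeps the exceptional set null. Alternatively, I would simply define \(\mu\) via Carathéodory extension from the semiring of half-open dyadic intervals, where countable additivity follows from the same nested-limit argument. Weak convergence then follows from the convergence of \(\int\phi\,d\mu_n\) for continuous \(\phi\), approximating \(\phi\) uniformly by dyadic step functions and using \(\sup_n M_n<\infty\).

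Finally, \(\mu([0,1])=M\) holds by construction, so \(\{\mu\neq0\}=\{M>0\}\) on \(\Omega_0\).
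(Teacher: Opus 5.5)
Your proof is correct, but it reaches \(\mu\) by a different and heavier route than the paper.

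The paper never builds \(\mu\) as a set function. On the event where all descendant martingales converge, \(\int g\,d\mu_n\) converges for every dyadic step function \(g\), since \(\mu_n(I_u)=L_uM^{(u)}_{n-|u|}\). Uniform approximation together with \(\sup_nM_n<\infty\) extends this to all \(g\in C([0,1])\). The Riesz representation theorem then produces \(\mu\), and testing with \(g\equiv1\) gives \(\mu([0,1])=M\). Because the step functions are only integrated against the absolutely continuous \(\mu_n\), nothing about endpoint atoms of the limit is needed. So the ``main obstacle'' you identify does not arise for the theorem as stated. Your final sentence, with \(\mu\) defined through Riesz rather than Carath\'eodory, is already the whole proof.

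Your construction instead insists on \(\mu(I_u)=L_uM^{(u)}\) for the Euclidean half-open intervals. That requirement is what forces you to exclude boundary mass at the eventually-\(1\) codings of interior dyadic points. Your estimate
\(\E\bigl[\lim_k L_{u1^k}M^{(u1^k)}\bigr]\le \E L_u\,(\E X_1)^k\,\E M\),
with \(1^k\) the word of \(k\) ones, does this when \(\E X_1<1\). The case you leave open is trivial. If \(\E X_1=1\), then \(X_0=0\) a.s., so \(L_w=0\) whenever \(w\) contains a \(0\). All mass then sits on the path \(111\cdots\), i.e.\ at the point \(1\), which lies in every \(I_{1\cdots1}\) under the paper's closed-at-\(1\) convention.

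The payoff of your route is a stronger statement under the mean-one hypothesis alone: \(\mu\) charges no interior dyadic point, and the tree-cylinder masses equal the Euclidean masses \(\mu(I_u)\). The paper deliberately refrains from this identification here (Remark~\ref{rem:tree-cylinder-masses-versus-euclidean-intervals}). It obtains it only later, under the balanced condition, via \(\mu_\partial\) (Lemma~\ref{lem:boundary-representation-paper} and Remark~\ref{rem:no-dyadic-endpoint-mass-paper}). The price of your route is an extra probabilistic estimate and a case split, in what is otherwise a purely deterministic consequence of cylinder-mass convergence.
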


\begin{proof}
Work on the full-probability event on which \(M_n\to M<\infty\) and
\(M_m^{(u)}\to M^{(u)}\) for every \(u\in\mathcal T\).  Then
\[
\mu_n(I_u)=L_uM_{n-|u|}^{(u)}\longrightarrow L_uM^{(u)}
\]
for every dyadic cylinder \(I_u\).  Thus the integrals of dyadic step functions
against \(\mu_n\) have limits.  Since \(\sup_n\mu_n([0,1])<\infty\), uniform
approximation of continuous functions by dyadic step functions extends these
limits to all \(g\in C([0,1])\).  The resulting linear functional is positive
and bounded by \(\|g\|_\infty\sup_n\mu_n([0,1])\).
By the Riesz representation theorem, this positive bounded functional is
integration against a finite Borel measure \(\mu\).  Hence
\(\mu_n\weak\mu\), and evaluating the convergence at the constant function
\(g\equiv1\) gives \(\mu([0,1])=M\).
\end{proof}

\begin{remark}
\label{rem:tree-cylinder-masses-versus-euclidean-intervals}
The quantities 
\(
C_u=L_uM^{(u)}
\)
are the limiting tree-cylinder masses. They are the limits of the finite-level
masses \(\mu_n(I_u)\), and they are adapted to the exact branching recursion.
If the limiting measure charges dyadic endpoints, then \(C_u\) should not be
identified with the Euclidean half-open interval mass \(\mu(I_u)\) without
accounting for endpoint contributions. The energy theorem below is formulated
in terms of the tree-cylinder square sums precisely to avoid this endpoint
bookkeeping.
\end{remark}

\subsection{Nondegeneracy and the Kahane--Peyri\`ere input}
\label{subsec:vector-nondegeneracy-KP-input}

We now record the nondegeneracy criterion in the notation of the vector cascade.
This is the dyadic vector version of the Kahane--Peyri\`ere theorem, interpreted
through the associated smoothing-transform fixed point; see also
\cite{BigginsKyprianou2005}.

For a nonnegative vector weight 
\(
X=(X_0,X_1)
\), 
define
\[
H^+(X)
=
\E\bigl[X_0\log_2^+X_0+X_1\log_2^+X_1\bigr],
\]
and
\[
\gamma(X)
=
\E\bigl[X_0\log_2 X_0+X_1\log_2 X_1\bigr],
\]
with \(0\log_2 0=0\). Since \(x\log_2 x\) has bounded negative part on
\([0,\infty)\), the quantity \(\gamma(X)\) is a finite real number whenever
\(
H^+(X)<\infty
\).

Let
\[
\kappa_X=\Pbb\bigl((X_0,X_1)\in\{0,1\}^2\bigr).
\]

\begin{theorem}\label{thm:vector-KP-nondegeneracy-paper}
Assume that 
\(
\E(X_0+X_1)=1
\), 
and let 
\(
M=\lim_n M_n
\).

\begin{enumerate}
\item If \(\kappa_X<1\), then the following are equivalent:

\begin{enumerate}
\item \(\Pbb(M>0)>0\);

\item \(\E M=1\);

\item 
\(
H^+(X)<\infty
\)
and 
\(
\gamma(X)<0
\).
\end{enumerate}

\item If 
\(
\kappa_X=1
\),
then the following boundary dichotomy holds. If
\[
X_0+X_1=1
\qquad\text{almost surely},
\]
then
\[
M_n=1\quad(n\ge0),
\qquad
M=1
\quad\text{almost surely}.
\]
If instead
\[
\Pbb(X_0+X_1=1)<1,
\]
then
\[
M=0
\qquad
\text{almost surely}.
\]
\end{enumerate}
\end{theorem}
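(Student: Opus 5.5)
The plan is to reduce Theorem~\ref{thm:vector-KP-nondegeneracy-paper} to the branching random walk / smoothing transform framework. Let \(N=\#\{i:X_i>0\}\le 2\) and consider the point process \(\{-\log_2 X_i: X_i>0\}\). This defines a branching random walk with at most two children per individual. In this language \(M_n=\sum_{|u|=n}L_u\) is exactly the additive (Biggins) martingale at parameter \(1\), because \(\E(X_0+X_1)=1\). The Biggins martingale convergence theorem (in the Lyons / Alsmeyer--Iksanov form, see \cite{Biggins1977Martingale,BigginsKyprianou2005}) then gives, whenever the process is non-degenerate, that \(M\) is nondegenerate, equivalently \(\E M=1\), if and only if
\[
\E\Bigl[\sum_i X_i\log^+\sum_i X_i\Bigr]<\infty \quad\text{and}\quad \E\Bigl[\sum_i X_i\log X_i\Bigr]<0 .
\]
The hypothesis \(\kappa_X<1\) is precisely this non-degeneracy condition: it excludes the case where every displacement is zero.

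For part (1), the steps are as follows.

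(i) The Biggins condition involves \(\sum_i X_i\log^+(X_0+X_1)\), whereas \(H^+(X)\) involves \(\sum_i X_i\log^+X_i\). I compare them using
\[
\max_i X_i\le X_0+X_1\le 2\max_i X_i .
\]
This gives \(\sum_i X_i\log_2^+X_i\le \sum_i X_i\log_2^+(X_0+X_1)\le \sum_i X_i(1+\log_2^+\max_j X_j)\). The last quantity is bounded by \(1+2\,\E\max_jX_j\log_2^+\max_jX_j\le 1+2H^+(X)\). So the two integrability conditions are equivalent.

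(ii) Under \(H^+<\infty\), \(\gamma(X)\) is finite, and the condition \(\gamma(X)<0\) is exactly the drift condition \(\psi'(1)<0\) for the log-Laplace transform
\[
\psi(\theta)=\log_2\E\Bigl[\sum_i X_i^\theta\Bigr].
\]

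(iii) I handle the excluded boundary case \(\gamma=0\) (critical), where \(M=0\) a.s. This is part of the standard theorem, provided \(\kappa_X<1\).

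(iv) The implication (b)\(\Rightarrow\)(a) is trivial. For (a)\(\Rightarrow\)(b), I use the fixed-point equation \(M=\sum_i X_i M^{(i)}\) and the zero–one law for \(\{M=0\}\). The probability \(p=\Pbb(M=0)\) satisfies \(p=\E[p^{N}]\). Since \(\E N\ge1\) with \(\kappa_X<1\), either \(p<1\) forces \(p\) to be the extinction probability and \(\E M=1\) by uniform integrability, or one uses the known equivalence directly from the cited theorem.

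For part (2), \(\kappa_X=1\) means each \(X_i\in\{0,1\}\) a.s., so the \(L_u\) are \(\{0,1\}\)-valued. In this case \(M_n\) is the generation size \(Z_n\) of a Galton--Watson process with offspring \(N=X_0+X_1\) and mean \(1\). If \(N=1\) a.s., then \(Z_n\equiv1\). Otherwise the process is a critical non-degenerate Galton--Watson process, hence dies out a.s., and so \(M=0\).

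The main obstacle is step (i), together with making sure the cited Biggins theorem applies with possibly dependent, possibly vanishing coordinates and a finite reproduction law. The non-degeneracy (non-lattice-at-zero) hypothesis of that theorem must be matched exactly with \(\kappa_X<1\). My first move is to check that \(\kappa_X<1\) rules out \(\sum_i X_i\log X_i=0\) a.s. jointly with \(\sum_i X_i=1\). This is the case where \(W_n\) is constant.
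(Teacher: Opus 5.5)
Your plan is the paper's proof: the paper simply invokes the binary case of Alsmeyer--Kuhlbusch's Theorem~1.1, the Biggins--Lyons equivalence for weighted branching processes with \(T=(X_0,X_1,0,\dots)\), and notes that the \(L\log L\) condition on \(Z_1=X_0+X_1\) is equivalent to \(H^+(X)<\infty\), which your step (i) verifies explicitly; since that theorem's nontriviality hypothesis is precisely the exclusion of \(X_i\in\{0,1\}\) a.s., i.e.\ \(\kappa_X<1\), no further degeneracy check is needed, and your Galton--Watson argument correctly handles \(\kappa_X=1\). Just drop the fixed-point alternative in step (iv), because \(\Pbb(M>0)>0\) alone does not give uniform integrability; take (a)\(\Rightarrow\)(b) from the cited dichotomy (if (c) fails then \(M=0\) a.s.), as in your fallback.
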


\begin{remark}
\label{rem:AK-dyadic-identification}
This is the binary specialization of Alsmeyer--Kuhlbusch
\cite[Theorem~1.1]{AlsmeyerKuhlbusch2010}, with
\(
T=(X_0,X_1,0,\ldots)
\)
and 
\(
Z_1=X_0+X_1
\).
Since only two weights are
nonzero and \(\E Z_1=1\), their \(L\log_2L\) condition is equivalent to
\(
H^+(X)<\infty
\).
\end{remark}

\begin{corollary}
\label{cor:energy-admissibility-nontriviality-paper}
If the vector law is energy-admissible in the sense of
Definition~\ref{def:energy-admissible-vector-law}, then
\[
\E M=1
\qquad \text{and}
\qquad
\Pbb(M>0)>0.
\]
In particular, the limiting measure \(\mu\) satisfies
\(
\Pbb(\mu\neq0)>0
\).
\end{corollary}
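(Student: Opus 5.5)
The plan is to apply Theorem~\ref{thm:vector-KP-nondegeneracy-paper}. First I rule out its boundary case, so that case~(1) applies. Then I check that energy-admissibility is exactly condition~(c) there.

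\emph{Step 1: \(\kappa_X<1\).} Suppose instead that \(\kappa_X=1\), i.e.\ \((X_0,X_1)\in\{0,1\}^2\) almost surely. Then each \(X_i\in\{0,1\}\), so \(X_i\log_2X_i=0\) almost surely. Hence \(\gamma(X)=0\), which contradicts the negative-drift condition \eqref{eq:energy-admissible-negative-drift}, namely \(\gamma(X)<0\). So \(\kappa_X<1\).

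This step is the only point needing care, and it disposes of the degenerate alternative. Note that balance alone would not suffice here: for example, \(X=(1,0)\) or \((0,1)\) with probability \(1/2\) each is balanced but has \(\kappa_X=1\). It is exactly the negative drift that excludes such laws.

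\emph{Step 2: verify (c).} The law is balanced, so it is mean-one: \(\E(X_0+X_1)=\tfrac12+\tfrac12=1\). Condition \eqref{eq:energy-admissible-positive-entropy} is precisely \(H^+(X)<\infty\). Because \(H^+(X)<\infty\), the quantity \(\gamma(X)\) is a finite real number, as recorded before Theorem~\ref{thm:vector-KP-nondegeneracy-paper}. Condition \eqref{eq:energy-admissible-negative-drift} then says \(\gamma(X)<0\). Thus (c) holds, and case~(1) of Theorem~\ref{thm:vector-KP-nondegeneracy-paper} gives (a) and (b): \(\Pbb(M>0)>0\) and \(\E M=1\).

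\emph{Step 3: the measure.} By Theorem~\ref{thm:vector-limiting-measure-construction-paper}, \(\mu([0,1])=M\) almost surely. Hence \(\{\mu\neq0\}=\{M>0\}\) up to null events, and therefore \(\Pbb(\mu\neq0)=\Pbb(M>0)>0\).
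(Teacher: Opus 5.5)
Your proof is correct and follows the paper's implicit route: the corollary is recorded without proof as a direct consequence of case~(1) of Theorem~\ref{thm:vector-KP-nondegeneracy-paper} together with \(\mu([0,1])=M\) from Theorem~\ref{thm:vector-limiting-measure-construction-paper}. Your explicit check that \(\gamma(X)<0\) rules out \(\kappa_X=1\) (since \(\kappa_X=1\) would force \(\gamma(X)=0\)) is a detail the paper leaves unstated, and it is exactly what puts you in case~(1).
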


The following example shows that the balanced condition alone does not imply
nontriviality.

\begin{example}
Let
\[
(X_0,X_1)=
\begin{cases}
(1,1),& \text{with probability }1/2,\\
(0,0),& \text{with probability }1/2.
\end{cases}
\]
Then 
\(
\E X_0=\E X_1=1/2
\),
but
\[
\E[X_0\log_2X_0+X_1\log_2X_1]=0,
\]
so the strict entropy condition fails. The surviving vertices form a critical
Galton--Watson tree with offspring distribution
\[
\Pbb(N=2)=\Pbb(N=0)=1/2,
\]
and hence become extinct almost surely. Thus the limiting cascade is trivial.
\end{example}

\subsection{The vector moment profile}
\label{subsec:vector-moment-profile}

We next record the moment profile associated with the dyadic vector cascade.
Moment profiles of this type are closely related to martingale and growth-rate
methods in branching random walks; see, for example, \cite{Biggins1992}.

\begin{lemma}
\label{lem:vector-q-mass-identity-paper}
For every \(q>0\) and every \(n\ge0\),
\begin{equation}\label{eq:q-mass-identity-paper}
\E\left[\sum_{\lvert u \rvert=n}L_u^q\right]=\rho(q)^n,
\end{equation}
where the identity is understood in the extended sense, and where
\(
\rho(q)^0=1
\).
\end{lemma}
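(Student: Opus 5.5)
The identity \eqref{eq:q-mass-identity-paper} will be proved by induction on \(n\), conditioning on the level-\(n\) environment at each step. For \(n=0\) the sum has the single term \(L_\varnothing^q=1\), which matches \(\rho(q)^0=1\). Because \(\rho(q)\) may equal \(+\infty\), all quantities are treated as elements of \([0,\infty]\). I will use only operations that are valid for nonnegative variables: Tonelli for sums, and the product rule for expectations of independent nonnegative variables. Under the conventions \(0\cdot\infty=0\) and \(a\cdot\infty=\infty\) for \(a>0\), these rules hold without any finiteness assumption.

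For the inductive step I will use the recursion \(L_{ui}=L_uX_i(u)\), which gives
\[
\sum_{\lvert w\rvert=n+1}L_w^q=\sum_{\lvert u\rvert=n}L_u^q\bigl(X_0(u)^q+X_1(u)^q\bigr).
\]
The path weight \(L_u\) is \(\mathcal F^X_n\)-measurable. The vector \(X(u)\) with \(\lvert u\rvert=n\) is independent of \(\mathcal F^X_n=\sigma\{X(v):\lvert v\rvert\le n-1\}\), because the vectors at distinct vertices are independent. Each summand is therefore a product of two independent nonnegative factors, and its expectation is \(\E[L_u^q]\,\E[X_0^q+X_1^q]=\E[L_u^q]\,\rho(q)\). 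Summing the finitely many terms gives
\[
\E\sum_{\lvert w\rvert=n+1}L_w^q=\rho(q)\,\E\sum_{\lvert u\rvert=n}L_u^q=\rho(q)^{n+1}
\]
by the induction hypothesis.

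The only delicate point is the extended-value bookkeeping, and it resolves as follows. The expected level-\(n\) sum cannot vanish unless \(\rho(q)=0\), and then both sides of the identity are \(0\) for \(n\ge1\). The convention \(0\cdot\infty=0\) never arises in an inconsistent way, since \(\rho(q)\) is a single constant multiplying the whole sum. The expansion \(\rho(q)^{n+1}=\rho(q)\rho(q)^n\) is consistent in \([0,\infty]\).

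A slightly more direct alternative avoids the induction. Expand \(L_u^q=\prod_{k=1}^n X_{u_k}(u|k-1)^q\) as a product of independent factors, one from each vertex on the path, so that \(\E L_u^q=\prod_{k=1}^n\E[X_{u_k}^q]\). Summing over \(u\in\{0,1\}^n\) factorizes as \(\prod_{k=1}^n\bigl(\E X_0^q+\E X_1^q\bigr)=\rho(q)^n\). I would present this second version, since it mirrors Lemma~\ref{lem:expected-path-weights-lebesgue-centering}. I do not expect a genuine obstacle; the only step needing care is justifying the product rule for expectations when moments are infinite, which holds for independent nonnegative variables.
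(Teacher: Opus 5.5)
Your proof is correct and follows the paper's approach: the paper also proves the identity by induction on \(n\) from the branching independence, i.e.\ the independence of \(X(u)\), \(\lvert u\rvert=n\), from \(\mathcal F^X_n\). The only difference is in the extended-valued case: the paper first proves the finite case, then truncates \(X_i\) and applies monotone convergence, whereas you use the product rule \(\E[YZ]=\E Y\,\E Z\) in \([0,\infty]\) (with \(0\cdot\infty=0\)) for independent nonnegative variables directly --- a rule that is itself proved by that same truncation and monotone-convergence argument, so the two treatments are equivalent.
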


\begin{remark}
\label{rem:q-mass-identity-extended-value}
The finite case follows by induction on \(n\) from the branching
independence; the extended-valued case follows by truncating
\(X_i\) and applying monotone convergence.
\end{remark}

We next record the elementary convexity facts needed to interpret \(D_E(X)\).

\begin{lemma}
\label{lem:rho-convexity-subcritical-interval-paper}
Assume the vector law is energy-admissible. Then the following hold.

\begin{enumerate}[label=\textup{(\roman*)}]
\item For \(0<q\le1\), one has 
\(
\rho(q)<\infty
\).

\item The function 
\(
q\mapsto \log_2\rho(q)
\)
is convex on every interval on which 
\(
\rho(q)<\infty
\).

\item One has
\[
\rho(1)=1,
\qquad
\left.\frac{d}{dq}\right|_{q=1-}\log_2\rho(q)=\gamma(X)<0.
\]

\item For every \(0<q<1\), one has \(\rho(q)>1.\)

\item The set
\[
I_X=\{q\in(1,2):\rho(q)<1\}
\]
is either empty or an initial interval of \((1,2)\), namely one of
\[
(1,q_*),\qquad (1,q_*],\qquad (1,2),
\]
where \(q_*\in(1,2)\) in the first two cases.
\end{enumerate}
\end{lemma}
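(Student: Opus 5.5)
We prove the five items in order, using only elementary properties of \(x\mapsto x^q\) and of the balanced, energy-admissible law.

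For (i), fix \(0<q\le1\) and use the pointwise bound \(x^q\le 1+x\) for \(x\ge0\). This gives
\[
\rho(q)\le 2+\E(X_0+X_1)=3.
\]

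For (ii), apply Hölder's inequality to the random function
\[
q\mapsto X_0^q+X_1^q=\sum_i e^{q\ln X_i}\mathbf 1_{\{X_i>0\}}.
\]
Take \(q=\theta q_1+(1-\theta)q_2\) with both \(\rho(q_1)\) and \(\rho(q_2)\) finite. Hölder with exponents \(1/\theta\) and \(1/(1-\theta)\), applied on the product space \(\Omega\times\{0,1\}\) with counting measure on the second factor, yields
\[
\rho(q)\le\rho(q_1)^\theta\rho(q_2)^{1-\theta}.
\]
This is exactly log-convexity. Finiteness on the interval between \(q_1\) and \(q_2\) is part of the same inequality.

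For (iii), \(\rho(1)=\E(X_0+X_1)=1\) is immediate from the balanced condition. For the left derivative at \(q=1\), write
\[
\frac{\rho(1)-\rho(q)}{1-q}=\E\sum_i \frac{X_i-X_i^q}{1-q},\qquad q\uparrow1 .
\]
For fixed \(x>0\), the map \(q\mapsto x^q\) is convex, so the difference quotient \((x-x^q)/(1-q)\) is monotone in \(q\). It increases to \(x\ln x\) as \(q\uparrow1\). Moreover it is bounded below uniformly for \(q\in[1/2,1)\): on \(x\le1\) its value lies in \([-1/e,0]\) approximately, and on \(x\ge 1\) it is nonnegative. Monotone convergence, applied after adding a constant, then gives
\[
\rho'(1-)=\E\sum_i X_i\ln X_i .
\]
This limit is finite by the positive-entropy condition \eqref{eq:energy-admissible-positive-entropy}. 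Dividing by \(\rho(1)\ln2\) converts it to the base-2 statement \(\frac{d}{dq}\big|_{q=1-}\log_2\rho=\gamma(X)<0\), using \eqref{eq:energy-admissible-negative-drift}. The delicate point is justifying monotone convergence with the mixed sign of \(x\ln x\); the monotonicity of the chord slopes of a convex function handles it.

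For (iv), use that \(g(q)=\log_2\rho(q)\) is convex and finite on \((0,1]\) by (i) and (ii). A convex function lies above its one-sided tangent line at \(q=1\), so for \(q<1\),
\[
g(q)\ge g(1)+g'(1-)(q-1)=\gamma(X)(q-1)>0 .
\]
Hence \(\rho(q)>1\).

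For (v), suppose \(q_0\in I_X\) and \(1<q<q_0\). Then \(\rho(q)\) is finite. Indeed, for \(X_i\ge1\) we have \(X_i^q\le X_i^{q_0}\), and for \(X_i<1\) we have \(X_i^q\le X_i\), so \(\rho(q)\le\rho(q_0)+1<\infty\). Log-convexity on \([1,q_0]\) together with \(g(1)=0\) and \(g(q_0)<0\) then gives \(g(q)<0\). Thus \(I_X\) is an initial segment of \((1,2)\). Setting \(q_*=\sup I_X\), it is one of the three listed forms, according to whether \(q_*=2\) or, when \(q_*<2\), whether the endpoint \(q_*\) belongs to \(I_X\). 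We do not need to determine which. The only step needing real care is the derivative computation in (iii); everything else follows from Hölder's inequality and convexity.
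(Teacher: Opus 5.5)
Your proof is correct and follows the paper's argument essentially step for step: the bound $x^q\le 1+x$ for (i), H\"older for log-convexity in (ii), balance plus a limit interchange for the left derivative in (iii), the supporting line at $q=1$ for (iv), and convexity on $[1,q_0]$ for (v). The only deviation is in (iii), where you justify the interchange by monotone convergence of the convex chord slopes $(x-x^q)/(1-q)$, whereas the paper uses dominated convergence with majorant $x\log_2^+x+C\mathbf 1_{\{0<x\le1\}}$. Your version works, with one small correction: for $x\le 1$ and $q\in[1/2,1)$ the uniform lower bound is $-1/2$, given by the $q=1/2$ chord, not $-1/e$; this is harmless, since any uniform lower bound suffices.
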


\begin{proof}
For \(0<q\le1\), 
\(
x^q\le1+x
\),
and hence 
\(
\rho(q)<\infty
\).
Writing
\[
\rho(q)=\int x^q\,d\nu_X(x),\qquad
\nu_X=\E(\delta_{X_0}+\delta_{X_1}),
\]
Hölder's inequality implies the log-convexity of \(\rho\) on its finiteness
intervals.

The balanced assumption implies 
\(
\rho(1)=1
\).
Since 
\(
H^+(X)<\infty
\),
the family
\(
x^q\log_2 x
\),
\(
q\uparrow1
\), 
is dominated by
\(
x\log_2^+x+C\mathbf 1_{\{0<x\le1\}}
\), 
and hence
\[
\left.\frac{d}{dq}\right|_{q=1-}\log_2\rho(q)
=\E[X_0\log_2X_0+X_1\log_2X_1]
=\gamma(X)<0 .
\]
Convexity then implies
\[
\log_2\rho(q)\ge \gamma(X)(q-1)>0,
\]
so 
\(
\rho(q)>1
\).

If 
\(
q_0\in I_X
\), 
convexity between \(1\) and \(q_0\) implies
\(
\rho(q)<1
\)
for all 
\(
1<q\le q_0
\).
Hence \(I_X\) is an initial interval of
\((1,2)\), with the stated endpoint alternatives.
\end{proof}

\begin{proposition}
\label{prop:vector-DE-variational-formula-paper}
Assume the vector law is energy-admissible. Then
\begin{equation}\label{eq:vector-DE-variational-formula-paper}
D_E(X)
=
\sup_{\substack{1<q<2\\ \rho(q)<1}}
\left(-\frac{2}{q}\log_2\rho(q)\right),
\end{equation}
with the empty supremum interpreted as \(0\). In particular,
\(
D_E(X)>0
\)
if and only if
\(
\rho(q)<1
\)
for some 
\(
q\in(1,2)
\).

Equivalently,
\(
D_E(X)>0
\)
if and only if 
\(
\rho(q)<\infty
\) for some \(q>1\).
\end{proposition}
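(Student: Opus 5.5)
The plan is to unwind the definition of \(D_E(X)\) directly. For \(s\in(0,1)\), the condition \(\inf_{0<q<2}2^{qs/2}\rho(q)<1\) says that some \(q\in(0,2)\) has \(\rho(q)<\infty\) and
\[
\tfrac{qs}{2}+\log_2\rho(q)<0,
\qquad\text{equivalently}\qquad
s<-\tfrac{2}{q}\log_2\rho(q).
\]
By Lemma~\ref{lem:rho-convexity-subcritical-interval-paper}(iv), \(\rho(q)>1\) for \(0<q<1\). Also \(\rho(1)=1\). So for \(q\le 1\) the right-hand side is \(\le 0\), and such \(q\) can never witness a positive \(s\). Hence \(s\) is admissible if and only if there is \(q\in(1,2)\) with \(\rho(q)<1\) and \(s<-\frac2q\log_2\rho(q)\).

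Taking the supremum over \(s\in(0,1)\) then gives
\[
D_E(X)=\min\Bigl\{1,\ \sup_{q\in I_X}\bigl(-\tfrac2q\log_2\rho(q)\bigr)\Bigr\},
\]
with both sides \(0\) when \(I_X=\emptyset\). The variational formula follows once the truncation at \(1\) is shown to be inactive. This needs \(-\frac2q\log_2\rho(q)\le1\) on \((1,2)\), i.e.\ \(\rho(q)\ge 2^{-q/2}\).

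To prove that bound, I use the elementary inequality \(x^q+y^q\ge 2^{1-q}(x+y)^q\), valid since \(q>1\). This gives \(\rho(q)\ge 2^{1-q}\E[(X_0+X_1)^q]\). Jensen and \(\E(X_0+X_1)=1\) then give \(\rho(q)\ge 2^{1-q}\). Since \(1-q\ge -q/2\) exactly when \(q\le 2\), we get \(\rho(q)\ge 2^{1-q}\ge 2^{-q/2}\) on \((1,2)\), as needed.

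The first "in particular" is then immediate: each term in the supremum is strictly positive, so \(D_E(X)>0\) if and only if \(I_X\neq\emptyset\).

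For the last equivalence, one direction is trivial: if \(\rho(q)<1\) for some \(q\in(1,2)\), then \(\rho(q)<\infty\) there. For the converse, suppose \(\rho(q_0)<\infty\) for some \(q_0>1\). Since \(x^q\le 1+x^{q_0}\) for \(q\le q_0\), \(\rho\) is finite on \((0,q_0]\). By Lemma~\ref{lem:rho-convexity-subcritical-interval-paper}(ii), \(\log_2\rho\) is convex there.

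The key step is to show \(\log_2\rho(q)<0\) for \(q\) slightly larger than \(1\). Convexity alone does not give this; I need the right derivative at \(1\) to be negative. The plan is to show \(\frac{d}{dq}\big|_{q=1+}\log_2\rho(q)=\gamma(X)\), using dominated convergence. On \(q\in[1,q_1]\) with \(q_1<q_0\), the difference quotients of \(x^q\) are dominated by \(|x^{q}\log x|\le C(1+x^{q_0})\) (with a bounded contribution near \(0\)). This domination is integrable because \(\rho(q_0)<\infty\). So \(\log_2\rho\) is differentiable at \(1\) with derivative \(\gamma(X)<0\), using \(\rho(1)=1\) and \eqref{eq:energy-admissible-negative-drift}. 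Hence \(\rho(q)<1\) for \(q\in(1,1+\epsilon)\), which is the converse.

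I expect this step, getting differentiability at \(q=1\) from the right from finiteness at a single \(q_0>1\), to be the main technical point; the rest is bookkeeping.
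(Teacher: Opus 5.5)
Your proposal is correct and follows essentially the same route as the paper: you restrict to $q\in(1,2)$ using $\rho>1$ on $(0,1)$ and $\rho(1)=1$, read off the supremum, and obtain the last equivalence from the fact that the right derivative of $\log_2\rho$ at $1$ equals $\gamma(X)<0$. You also fill in two steps the paper's proof leaves implicit: the check that the cap $s<1$ in the definition of $D_E(X)$ is inactive, via $\rho(q)\ge 2^{1-q}$ (which the paper only proves later, as Lemma~\ref{lem:beta-X-upper-bound-paper}), and the dominated-convergence justification of the one-sided derivative.
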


\begin{proof}
Let \(0<s<1\). By Lemma~\ref{lem:rho-convexity-subcritical-interval-paper},
\(
2^{qs/2}\rho(q)>1
\)
for 
\(
0<q\le1
\). 
Hence only 
\(
q\in(1,2)
\)
can satisfy
\(
2^{qs/2}\rho(q)<1
\). 
For such \(q\), this inequality is equivalent to
\(\rho(q)<1\) and 
\(
s<-\frac2q\log_2\rho(q)
\).
Taking the supremum over \(s\) and \(q\) implies the formula and the equivalence
\[D_E(X)>0\iff \rho(q)<1 \quad\text{for some}\quad q\in(1,2).\]

 If 
\(
\rho(q)<1
\),
then 
\(
\rho(q)<\infty
\).
Conversely, if
\(
\rho(q_0)<\infty
\)
for some \(q_0>1\), choose 
\(
r\in(1,2)
\) 
with
\(
r\le q_0
\). 
The right derivative of 
\(
\log_2\rho
\) 
at \(1\) along \([1,r]\)
equals 
\(
\gamma(X)<0
\),
so 
\(
\rho(q)<1
\) 
for all \(q>1\) sufficiently close to
\(1\).
\end{proof}

\begin{corollary}
\label{cor:scalar-reduction-vector-profile-paper}
Let 
\(
W\ge0
\)
satisfy 
\(
\E W=1
\),
let 
\(
W_0,W_1
\)
be independent copies of
\(W\), and set 
\(
X_i=\frac{W_i}{2}
\),
\(i=0,1.\) Then the vector law is balanced
and, for every \(q>0\), we have 
\[
\rho(q)=2^{1-q}\E[W^q],
\]
with the identity
understood in the extended sense.

If, in addition, the resulting vector law is energy-admissible, then
\[
D_E(X)
=
\sup_{1<q<2}
\max\left\{
0,\,
2-\frac2q\bigl(1+\log_2\E[W^q]\bigr)
\right\},
\]
where the corresponding term is interpreted as \(0\) whenever
\(
\E[W^q]=\infty
\).
In particular, this applies under the scalar assumptions
\eqref{eq:minimal-KP-circle}.
\end{corollary}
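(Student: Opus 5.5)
The plan is to verify balance and the moment identity directly, and then read off the formula for \(D_E(X)\) from Proposition~\ref{prop:vector-DE-variational-formula-paper}.

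\textbf{Balance and the moment profile.} Balance is immediate: \(\E X_i=\E W_i/2=1/2\). For the moment profile, fix \(q>0\) and use linearity of expectation for nonnegative variables in the extended sense. Since \(W_0\) and \(W_1\) are each distributed as \(W\),
\[
\rho(q)=\E[(W_0/2)^q+(W_1/2)^q]=2\cdot2^{-q}\E[W^q]=2^{1-q}\E[W^q].
\]
Both sides are simultaneously infinite when \(\E[W^q]=\infty\). No independence between \(W_0\) and \(W_1\) is needed here; only equality of the marginals is used.

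\textbf{Formula for \(D_E(X)\).} Assume energy-admissibility and apply Proposition~\ref{prop:vector-DE-variational-formula-paper}. This gives
\[
D_E(X)=\sup\Bigl\{-\tfrac2q\log_2\rho(q):1<q<2,\ \rho(q)<1\Bigr\},
\]
with the empty supremum equal to \(0\). For \(q\in(1,2)\) with \(\E[W^q]<\infty\),
\[
-\tfrac2q\log_2\rho(q)=-\tfrac2q\bigl(1-q+\log_2\E[W^q]\bigr)=2-\tfrac2q\bigl(1+\log_2\E[W^q]\bigr),
\]
and this quantity is positive exactly when \(\rho(q)<1\). Consequently, each term \(\max\{0,\,2-\tfrac2q(1+\log_2\E[W^q])\}\) equals the \(q\)-contribution in the variational formula when \(\rho(q)<1\), and equals \(0\) otherwise. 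By the stated convention, this includes the case \(\E[W^q]=\infty\).

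\textbf{Comparing the two suprema.}
\begin{itemize}[leftmargin=2em]
\item If the set \(\{q\in(1,2):\rho(q)<1\}\) is nonempty, the supremum of the max-expressions equals the supremum over that set of positive values, because the additional zeros do not change a positive supremum.
\item If the set is empty, both sides are \(0\).
\end{itemize}

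\textbf{Scalar assumptions.} It remains to check that \eqref{eq:minimal-KP-circle} implies energy-admissibility.
\begin{itemize}[leftmargin=2em]
\item Balance was verified above.
\item For \eqref{eq:energy-admissible-positive-entropy}, use \(x\log_2^+x\) applied to \(x=W/2\), which is at most \((W/2)\log_2^+W\). This gives \(\E[X_0\log_2^+X_0+X_1\log_2^+X_1]\le\E[W\log_2^+W]<\infty\).
\item For \eqref{eq:energy-admissible-negative-drift}, compute
\[
\E\bigl[\tfrac{W_i}{2}\log_2\tfrac{W_i}{2}\bigr]=\tfrac12\bigl(\E[W\log_2W]-1\bigr).
\]
Summing over \(i=0,1\) gives \(\E[W\log_2W]-1<0\).
\end{itemize}
These expectations are finite because the negative part of \(x\log_2 x\) is bounded.

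\textbf{Main obstacle.} Nothing in this argument is genuinely hard. The only delicate point is bookkeeping with the extended-value and empty-supremum conventions, so that the terms where \(\rho(q)\ge1\) or \(\E[W^q]=\infty\) are correctly matched with zero contributions.
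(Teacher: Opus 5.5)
Your proposal is correct and follows essentially the route the paper intends. The corollary is read off from Proposition~\ref{prop:vector-DE-variational-formula-paper} after the direct computation \(\rho(q)=2^{1-q}\E[W^q]\), and your check of energy-admissibility from \eqref{eq:minimal-KP-circle}, via \(\E[X_0\log_2X_0+X_1\log_2X_1]=\E[W\log_2W]-1\), is the same check the paper carries out in the proof of Corollary~\ref{cor:scalar-dyadic-interval-formula-paper}.
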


\begin{remark}
\label{rem:sibling-dependence-rho-paper}
The profile 
\(
\rho(q)=\E[X_0^q+X_1^q]
\)
contains all sibling dependence relevant
to the energy and Fourier lower-bound arguments. The coordinates \(X_0\) and
\(X_1\) may be dependent. What is used repeatedly is independence between the
descendant environments rooted at distinct vertices, together with the identity
\eqref{eq:q-mass-identity-paper}.
\end{remark}

\begin{remark}\label{rem:comparison-microcanonical-vector-model}
A different vector-valued model appears in
\cite{ChenHanQiuWang2025Microcanonical}, where the conservative constraint
\(X_0+X_1=1\) is imposed almost surely.  This is distinct from the balanced
model considered here: we assume only
\(\mathbb E X_0=\mathbb E X_1=1/2\), and do not impose a pointwise conservation
law.
\end{remark}

\section{Square sums and the vector energy theorem}
\label{sec:vector-square-sums-energy-theorem}

This section records the energy theorem in the tree-cylinder form needed for
the Fourier argument.  Throughout this section, the dyadic vector-weight law is
assumed to be energy-admissible.  Thus the limiting measure \(\mu\) exists,
\[
    \mathbb P(M>0)>0,
\]
and the moment profile
\[
    \rho(q)=\E[X_0^q+X_1^q]
\]
has the properties established in
Section~\ref{sec:vector-cascades-energy-profile}.

The proof is divided into two regimes: a subcritical regime, based on
fractional moments of tree-cylinder square sums, and a supercritical regime,
based on a no-plateau lemma and alive-tree amplification.

The exponent appearing here is closely related to the classical scaling-exponent
theory of independent random cascades.  In Molchan's notation
\cite{Molchan1996}, take \(c=2\) and \(w_i=2X_i\).  If \(w^{(r)}\) denotes a
uniformly chosen component of the generator vector, then
\[
    \mathbb E[(w^{(r)})^q]
    =
    2^{q-1}\rho(q),
    \qquad
    \rho(q)=\mathbb E(X_0^q+X_1^q).
\]
Hence Molchan's exponent
\[
    \Phi^{(r)}(q)
    =
    \log_2\mathbb E[(w^{(r)})^q]-q+1
\]
is exactly \(\log_2\rho(q)\).  Thus \(D_{\mathrm E}(X)\) is the
energy/correlation exponent associated with this scaling formalism.  We give
the proof in the tree-cylinder normalization because the Fourier argument below
uses the exact branching recursion for
\[
    \Sigma_n=\sum_{|u|=n}C_u^2,
\]
together with conditioning on non-extinction and the transfer from
tree-cylinder masses to Euclidean dyadic square sums.

\subsection{Deterministic square sums and energy}
\label{subsec:deterministic-square-sums-energy}

We first record the deterministic identifications needed to pass between
tree-cylinder square sums, Euclidean dyadic square sums, and Riesz energy.  The
relation between one-dimensional Riesz energy and dyadic square sums is standard
in the theory of correlation dimensions; see, for example, \cite{Pesin1993}.

\begin{proposition} 
\label{prop:dyadic-square-sum-energy-criterion-paper}
Let \(\nu\) be a nonzero finite Borel measure on \([0,1]\). Then 
\(
\dimE(\nu)=\dimtwo(\nu)
\). 
Equivalently,
\[
\dimE(\nu)
=
\liminf_{n\to\infty}
\frac{\log_2 \Sigma_n(\nu)}{-n}.
\]
\end{proposition}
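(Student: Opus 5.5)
We prove both inequalities \(\dimE(\nu)\ge\dimtwo(\nu)\) and \(\dimE(\nu)\le\dimtwo(\nu)\) by comparing the Riesz kernel with a dyadic kernel. Write \(\beta=\dimtwo(\nu)=\liminf_n(-\tfrac1n\log_2\Sigma_n(\nu))\). First note \(\Sigma_n(\nu)\ge \nu([0,1])^2 2^{-n}\) by Cauchy--Schwarz, so \(\beta\le1\). Since \(\dimE\) is defined only over \(0<s<1\), the statement is understood with the usual conventions that both sides lie in \([0,1]\).

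\textbf{Lower bound \(\dimE\ge\dimtwo\).} Fix \(0<s<\beta\). For \(x\ne y\), let \(n(x,y)\) be the largest \(n\) with \(\lvert x-y\rvert\le 2^{-n}\), so \(\lvert x-y\rvert^{-s}\le 2^{s(n+1)}\). If \(\lvert x-y\rvert\le2^{-n}\), then \(x\) and \(y\) lie in the same or in adjacent level-\(n\) dyadic intervals. Hence
\[
\nu\otimes\nu\{\lvert x-y\rvert\le2^{-n}\}\le \sum_{\lvert u\rvert=n}\nu(I_u)\bigl(\nu(I_{u^-})+\nu(I_u)+\nu(I_{u^+})\bigr)\le 3\Sigma_n(\nu),
\]
using \(ab\le(a^2+b^2)/2\) for the neighbour terms \(I_{u^\pm}\). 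The diagonal has \(\nu\otimes\nu\)-measure at most \(\lim_n 3\Sigma_n\); this limit is zero because \(s<\beta\) forces \(\Sigma_n\to0\). Summing over shells,
\[
I_s(\nu)\le \sum_{n\ge0}2^{s(n+1)}\,\nu\otimes\nu\{2^{-n-1}<\lvert x-y\rvert\le2^{-n}\}\le C\sum_n 2^{sn}\Sigma_n(\nu)<\infty,
\]
since \(\Sigma_n\le 2^{-n(s+\epsilon)}\) for large \(n\). Thus \(s\le\dimE(\nu)\).

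\textbf{Upper bound \(\dimE\le\dimtwo\).} Suppose \(I_s(\nu)<\infty\). If \(x,y\in I_u\) with \(\lvert u\rvert=n\), then \(\lvert x-y\rvert\le2^{-n}\), so \(\lvert x-y\rvert^{-s}\ge 2^{sn}\). Therefore
\[
2^{sn}\Sigma_n(\nu)\le\iint_{\lvert x-y\rvert\le 2^{-n}}\lvert x-y\rvert^{-s}\,d\nu\,d\nu\le I_s(\nu).
\]
Hence \(\Sigma_n\le I_s 2^{-sn}\), which gives \(\beta\ge s\). Taking the supremum over \(s\) yields \(\dimE\le\dimtwo\).

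The only delicate point is the neighbour bookkeeping in the lower bound, together with the possibility of atoms (the diagonal). Atoms are handled as above: if \(\nu\) has an atom, then \(\Sigma_n\) stays bounded below, so \(\beta=0\), and also \(I_s=\infty\) for every \(s\), so \(\dimE=0\). With the empty-supremum convention, both dimensions are then \(0\) and the identity holds. The endpoint convention for \(I_u\) plays no role, since the \(\mathcal D_n\) partition \([0,1]\).
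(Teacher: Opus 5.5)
Your proof is correct. The paper does not prove this proposition itself. It calls the identification of the Riesz energy dimension with the lower correlation dimension standard, cites Pesin, and spells out only the atomic case in Remark~\ref{rem:atoms-zero-energy-dimension-paper}. Your argument is the usual self-contained comparison of the kernel $|x-y|^{-s}$ with a dyadic kernel, in two steps:
\begin{itemize}
\item Restricting the energy integral to the disjoint squares $I_u\times I_u$ gives $2^{sn}\Sigma_n(\nu)\le I_s(\nu)$ for every $n$.
\item Split the off-diagonal set into shells $\{2^{-n-1}<|x-y|\le 2^{-n}\}$. Such pairs lie in the same or adjacent level-$n$ intervals, so each shell has $\nu\otimes\nu$-mass at most $3\Sigma_n(\nu)$. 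Once the diagonal is known to be null, this gives $I_s(\nu)\le 3\cdot 2^{s}\sum_n 2^{sn}\Sigma_n(\nu)$.
\end{itemize}

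Compared with the citation, your version makes explicit two points that are otherwise left implicit. First, the half-open endpoint convention is harmless, because the families $\mathcal D_n$ partition $[0,1]$ and the adjacent-interval cross terms are absorbed by $ab\le(a^2+b^2)/2$. Second, atoms are automatically excluded when $\dimtwo(\nu)>0$, since the diagonal has $\nu\otimes\nu$-mass at most $3\Sigma_n(\nu)\to0$; this agrees with the paper's remark on atoms.

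The only step you leave unstated is $\dimtwo(\nu)\ge 0$, which follows at once from $\Sigma_n(\nu)\le\nu([0,1])^2$.
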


\begin{remark}
\label{rem:atoms-zero-energy-dimension-paper}
If \(\nu\) has an atom of mass \(a>0\), then 
\(
\Sigma_n(\nu)\ge a^2
\)
for every \(n\), and hence 
\(
\dimtwo(\nu)=0
\).
The diagonal contribution of this atom forces 
\(
I_t(\nu)=\infty
\)
for every \(0<t<1\), so that 
\(
\dimE(\nu)=0
\).
This is consistent with Proposition~\ref{prop:dyadic-square-sum-energy-criterion-paper}.
\end{remark}

For the cascade argument, the square sums with an exact branching recursion are
the tree-cylinder square sums, rather than the Euclidean dyadic sums.

On the event of simultaneous descendant convergence, set
\[
C_u=L_uM^{(u)}
\qquad(u\in\mathcal T).
\]
Then 
\[
C_{\varnothing}=M\quad \text{and}\quad C_u=C_{u0}+C_{u1}\quad (u\in\mathcal T).
\]

\begin{definition}[Tree-cylinder square sums]
\label{def:tree-cylinder-square-sums-paper}
For \(n\ge0\), define 
\[
\Sigma_n
=
\sum_{\lvert u \rvert=n}C_u^2.
\]

More generally, for a vertex 
\(
u\in\mathcal T
\),
define the descendant tree-cylinder square sum by 
\[
\Sigma^{(u)}_n
=
\sum_{\lvert v \rvert=n}\left(C^{(u)}_v\right)^2,
\qquad
C^{(u)}_v=L_v^{(u)}M^{(uv)}.
\]

Thus
\[
\Sigma_n=\Sigma^{(\varnothing)}_n
\qquad\text{and}\qquad
\Sigma_0=M^2.
\]
\end{definition}

\begin{lemma} 
\label{lem:tree-cylinder-square-sum-branching-identity-paper}
For every 
\(
m,n\ge0
\),
\begin{equation}\label{eq:tree-square-sum-branching-identity-paper}
\Sigma_{m+n}
=
\sum_{\lvert u \rvert=m}L_u^2\Sigma^{(u)}_n.
\end{equation}
In particular, 
\[
\Sigma_{n+1}
=
X_0(\varnothing)^2\Sigma^{(0)}_n
+
X_1(\varnothing)^2\Sigma^{(1)}_n.
\]

The descendant square sums \(\Sigma^{(u)}_n\), \(
\lvert u \rvert=m\), are conditionally independent given \(\mathcal F^X_m\), have the same law as \(\Sigma_n\), and are independent of \(\mathcal F^X_m\). 
\end{lemma}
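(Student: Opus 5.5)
The plan is to derive the identity \eqref{eq:tree-square-sum-branching-identity-paper} directly from the multiplicativity of path weights and the definition of the descendant limits. The distributional statements then follow from the independence structure fixed in Section~\ref{subsec:probability-conventions-descendants}.

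\textbf{The identity.} Work on the full-probability event on which \(M^{(w)}_k\to M^{(w)}\) for every \(w\in\mathcal T\).

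\emph{Step 1.} Every word of length \(m+n\) factors uniquely as \(uv\) with \(\lvert u\rvert=m\) and \(\lvert v\rvert=n\). This gives
\[
\Sigma_{m+n}=\sum_{\lvert u\rvert=m}\sum_{\lvert v\rvert=n}C_{uv}^2 .
\]

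\emph{Step 2.} By the concatenation rule \(L_{uv}=L_uL^{(u)}_v\),
\[
C_{uv}=L_{uv}M^{(uv)}=L_u\,L^{(u)}_vM^{(uv)}=L_u\,C^{(u)}_v .
\]
Here \(M^{(uv)}\) is the same random variable whether we regard \(uv\) as a vertex of the whole tree or as the vertex \(v\) of the subtree rooted at \(u\). This holds because the descendant environment of \(v\) inside \(\mathcal E^{(u)}\) is literally \(\{X(uvw):w\in\mathcal T\}\).

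\emph{Step 3.} Squaring and summing over \(v\) gives \(L_u^2\Sigma^{(u)}_n\), which proves the identity. The case \(m=1\) gives the displayed one-step recursion, since \(L_i=X_i(\varnothing)\).

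\textbf{The distributional claims.} \(\Sigma^{(u)}_n\) is a fixed measurable functional of the environment \(\mathcal E^{(u)}\); more precisely, it is a limit of measurable functions of the finite-level descendant masses. Moreover, \(\Sigma_n\) is the same functional applied to \(\mathcal E^{(\varnothing)}\).

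\emph{Equality in law.} The family \(\mathcal E^{(u)}\) consists of i.i.d.\ copies of \(X\) indexed by \(\mathcal T\), so it has the same law as \(\mathcal E^{(\varnothing)}\). Hence \(\Sigma^{(u)}_n\) has the same law as \(\Sigma_n\).

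\emph{Independence from \(\mathcal F^X_m\).} For \(\lvert u\rvert=m\), the environment \(\mathcal E^{(u)}\) involves only vertices of length \(\ge m\). The \(\sigma\)-field \(\mathcal F^X_m\) involves only vertices of length \(\le m-1\). These index sets are disjoint, so independence follows from independence across vertices.

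\emph{Conditional independence.} For distinct \(u,u'\) of length \(m\), the subtrees rooted at them are disjoint. Therefore the environments \(\mathcal E^{(u)}\), \(\lvert u\rvert=m\), together with \(\mathcal F^X_m\), are jointly independent. This yields mutual independence of the \(\Sigma^{(u)}_n\), their independence from \(\mathcal F^X_m\), and hence their conditional independence given \(\mathcal F^X_m\).

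\textbf{Expected obstacle.} The only delicate point is measurability. We must be sure that \(M^{(uv)}\), defined as an almost sure limit, is a version measurable with respect to \(\mathcal E^{(u)}\) alone. To handle this, I would define \(M^{(w)}:=\limsup_k M^{(w)}_k\) pointwise, which makes it \(\sigma(\mathcal E^{(w)})\)-measurable. This version agrees with the limit on the full-probability event, and the argument above then runs without any null-set ambiguity.
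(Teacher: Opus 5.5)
Your proposal is correct and follows the paper's route: the identity comes from $C_{uv}=L_uC^{(u)}_v$, and the distributional claims come from the independence of the descendant environments below distinct level-$m$ vertices, both from each other and from $\mathcal F^X_m$. Your choice of the version $M^{(w)}=\limsup_k M^{(w)}_k$ only makes explicit a measurability point that the paper leaves implicit.
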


\begin{proof}
Equation~\eqref{eq:tree-square-sum-branching-identity-paper} follows
immediately from the identity \(C_{uv}=L_uC_v^{(u)}\). The asserted independence and equality in law follow from the independence of the descendant environments below distinct level-\(m\) vertices. 
\end{proof}

\begin{lemma}
\label{lem:elementary-tree-square-sum-bounds-paper}
Almost surely, on the event 
\(
\{M>0\}
\),
one has
\begin{equation}\label{eq:elementary-tree-square-sum-bounds-paper}
2^{-n}M^2\le \Sigma_n\le M^2
\qquad(n\ge0).
\end{equation}
Consequently,
\[
0\le
\liminf_{n\to\infty}
\frac{\log_2\Sigma_n}{-n}
\le1
\qquad
\text{almost surely on }\{M>0\}.
\]
\end{lemma}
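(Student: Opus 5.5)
The plan is to use only the additivity $C_u=C_{u0}+C_{u1}$ and nonnegativity of the tree-cylinder masses; no probabilistic input is needed beyond working on the full-probability event of simultaneous descendant convergence. On that event every $C_u=L_uM^{(u)}$ is a finite nonnegative number. Iterating the additivity identity from the root down to level $n$ gives
\[
\sum_{\lvert u\rvert=n}C_u=C_\varnothing=M \qquad (n\ge0),
\]
which I will prove by induction on $n$.

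For the upper bound, each $C_u$ is nonnegative and at most $M$, since it is one of the terms in the sum above. Hence $C_u^2\le C_uM$, and summing over $\lvert u\rvert=n$ gives
\[
\Sigma_n\le M\sum_{\lvert u\rvert=n}C_u=M^2 .
\]
Equivalently, one can use $\sum a_i^2\le(\sum a_i)^2$ for nonnegative $a_i$.

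For the lower bound, I will apply Cauchy--Schwarz to the $2^n$ numbers $C_u$:
\[
M^2=\Bigl(\sum_{\lvert u\rvert=n}C_u\Bigr)^2\le 2^n\sum_{\lvert u\rvert=n}C_u^2=2^n\Sigma_n ,
\]
so that $\Sigma_n\ge 2^{-n}M^2$. Both bounds actually hold almost surely without restricting to $\{M>0\}$. The restriction matters only for the consequence, because there $\Sigma_n>0$ and $\log_2\Sigma_n$ is finite.

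On $\{M>0\}$, I will take $\log_2$ of the two bounds:
\[
-n+2\log_2M\le\log_2\Sigma_n\le 2\log_2M .
\]
Dividing by $-n$ reverses the inequalities:
\[
\frac{-2\log_2M}{n}\le\frac{\log_2\Sigma_n}{-n}\le 1-\frac{2\log_2 M}{n}.
\]
Since $0<M<\infty$, both correction terms tend to zero as $n\to\infty$, and taking $\liminf$ yields $0\le\liminf_n\frac{\log_2\Sigma_n}{-n}\le1$.

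The only point requiring care is the first step, the identity $\sum_{\lvert u\rvert=n}C_u=M$ on a single full-measure event for all $n$ simultaneously. This follows because $\mathcal T$ is countable, so the descendant limits $M^{(u)}$ and the relations $M^{(u)}=X_0(u)M^{(u0)}+X_1(u)M^{(u1)}$ (obtained by passing to the limit in the finite-level branching identity for $M^{(u)}_m$) hold for all $u$ at once.
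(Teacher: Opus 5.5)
Your proposal is correct and follows essentially the same route as the paper: iterate $C_u=C_{u0}+C_{u1}$ to get $\sum_{\lvert u\rvert=n}C_u=M$, apply Cauchy--Schwarz for the lower bound and $\sum a_i^2\le(\sum a_i)^2$ for the upper bound, then take logarithms on $\{M>0\}$. You also spell out two details the paper leaves implicit: the division by $-n$ with its sign reversal, and why the additivity relations hold simultaneously for all $u$ on a single full-probability event.
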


\begin{proof}
We work on the event where all \(C_u\) are defined and consistent. Iterating \(C_u=C_{u0}+C_{u1}\) implies \(\sum_{\lvert u \rvert=n}C_u=M\). 
Cauchy's inequality implies \eqref{eq:elementary-tree-square-sum-bounds-paper}. On \(\{M>0\}\), taking logarithms and passing to the liminf yields the asserted bound.
\end{proof}

We now relate the tree-cylinder masses to the Euclidean limiting measure.
Let
\[
\partial\mathcal T=\{0,1\}^{\mathbb N},
\qquad
[u]_\partial=\{\omega\in\partial\mathcal T:\omega|_{\lvert u \rvert}=u\}.
\]
The consistent masses \(C_u\) define a finite Borel measure \(\mu_\partial\) on \(\partial\mathcal T\) by
\[
\mu_\partial([u]_\partial)=C_u.
\]
Let
\[
\pi:\partial\mathcal T\to[0,1],
\qquad
\pi(\omega)=\sum_{j=1}^{\infty}\omega_j2^{-j}
\]
be the binary coding map.

\begin{lemma}
\label{lem:boundary-representation-paper}
Almost surely,
\begin{equation}\label{eq:boundary-representation-paper}
\mu=\pi_\#\mu_\partial.
\end{equation}
\end{lemma}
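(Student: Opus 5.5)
The plan is to show that \(\mu\) and \(\pi_\#\mu_\partial\) give the same integral to every continuous function on \([0,1]\). The argument is deterministic and runs on the full-probability event of Theorem~\ref{thm:vector-limiting-measure-construction-paper}, where \(M_m^{(u)}\to M^{(u)}\) for every \(u\in\mathcal T\).

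First, \(\mu_\partial\) must be well defined. On the good event, the masses \(C_u\) are nonnegative, finite, and satisfy \(C_u=C_{u0}+C_{u1}\). The cylinders \([u]_\partial\) form a semiring of clopen sets that generates the Borel \(\sigma\)-field of the compact space \(\partial\mathcal T\). Finite additivity on cylinders is therefore automatic. Countable additivity also follows: every cylinder is compact and open, so a countable disjoint union of cylinders that equals a cylinder must be a finite union. Carathéodory's extension theorem then produces a unique finite Borel measure \(\mu_\partial\) with total mass \(C_\varnothing=M\).

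Next, fix \(g\in C([0,1])\) and, for each \(n\), pick points \(x_u\in\overline{I_u}\) for \(\lvert u\rvert=n\). Set \(S_n(g)=\sum_{\lvert u\rvert=n}g(x_u)C_u\). Two estimates are needed.

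\begin{enumerate}
\item \emph{Boundary side.} The map \(\pi\) sends \([u]_\partial\) into \(\overline{I_u}\), which has diameter \(2^{-n}\). Hence
\[
\Bigl\lvert\int g\circ\pi\,d\mu_\partial-S_n(g)\Bigr\rvert\le \omega_g(2^{-n})\,M,
\]
where \(\omega_g\) denotes the modulus of continuity of \(g\).
\item \emph{Euclidean side.} For \(m\ge n\), the density of \(\mu_m\) places mass \(\mu_m(I_u)=L_uM^{(u)}_{m-n}\) on \(I_u\). This gives
\[
\Bigl\lvert\int g\,d\mu_m-\sum_{\lvert u\rvert=n}g(x_u)L_uM^{(u)}_{m-n}\Bigr\rvert\le\omega_g(2^{-n})\sup_k M_k .
\]
Letting \(m\to\infty\) makes the finite sum converge to \(S_n(g)\), and \(\int g\,d\mu_m\to\int g\,d\mu\) by weak convergence.
\end{enumerate}

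Combining the two estimates yields \(\lvert\int g\,d\mu-\int g\circ\pi\,d\mu_\partial\rvert\le 2\,\omega_g(2^{-n})\sup_k M_k\). The right-hand side tends to \(0\) as \(n\to\infty\), since \(\sup_k M_k<\infty\) for a convergent martingale. Therefore \(\int g\,d\mu=\int g\,d(\pi_\#\mu_\partial)\) for all continuous \(g\), and the Riesz representation theorem gives \(\mu=\pi_\#\mu_\partial\).

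The only delicate point is endpoint bookkeeping. One might be tempted to argue \(\mu(I_u)=C_u\), but this is false whenever \(\mu\) charges dyadic endpoints, as Remark~\ref{rem:tree-cylinder-masses-versus-euclidean-intervals} warns. Testing only against continuous functions, and using the closed intervals \(\overline{I_u}\) so that \(\pi([u]_\partial)\subset\overline{I_u}\) holds for every cylinder, avoids the issue entirely. The level-\(m\) step-density computation is exact on the half-open intervals for Lebesgue-absolutely-continuous \(\mu_m\), so no endpoint issue arises there either.
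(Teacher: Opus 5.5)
Your proof is correct and follows essentially the same route as the paper: you compare both measures on continuous test functions through level-$n$ dyadic approximations, using $\mu_m(I_u)=L_uM^{(u)}_{m-n}\to C_u$ together with the uniform bound $\sup_k M_k<\infty$. Working with the closed intervals $\overline{I_u}$ and the modulus of continuity is slightly more careful than the paper's phrasing via dyadic step functions, which glosses over possible endpoint mass of $\pi_\#\mu_\partial$; you also justify the existence of $\mu_\partial$ via Carath\'eodory, which the paper takes for granted.
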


\begin{proof}
For every dyadic cylinder \([u]_\partial\), the measures \(\mu_n\) assign
asymptotic mass \(C_u\) to \(I_u\), while \(\mu_\partial([u]_\partial)=C_u\).
Thus \(\mu_n\) and \(\pi_\#\mu_\partial\) have the same limits on dyadic
step functions.  Uniform approximation of \(C([0,1])\) by dyadic step functions, together
with \(\sup_n\mu_n([0,1])<\infty\), implies
\(\mu=\pi_\#\mu_\partial\).
\end{proof}

\begin{lemma} 
\label{lem:endpoint-transfer-square-sums-paper}
Almost surely, on the event \(\{M>0\}\),
\begin{equation}\label{eq:endpoint-transfer-square-sums-paper}
\liminf_{n\to\infty}
\frac{\log_2\Sigma_n}{-n}
=
\dimtwo(\mu)
=
\liminf_{n\to\infty}
\frac{\log_2\Sigma_n(\mu)}{-n}.
\end{equation}
\end{lemma}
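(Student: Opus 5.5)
The second equality in \eqref{eq:endpoint-transfer-square-sums-paper} is just the definition of \(\dimtwo(\mu)\). So the content is the first equality: tree-cylinder square sums \(\Sigma_n\) and Euclidean dyadic square sums \(\Sigma_n(\mu)=\sum_{|u|=n}\mu(I_u)^2\) have the same exponential rate. I will prove two-sided comparisons with at most polynomial or shifted-index losses, which do not affect the \(\liminf\) of \(\log_2(\cdot)/(-n)\).

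By Lemma~\ref{lem:boundary-representation-paper}, \(\mu=\pi_\#\mu_\partial\). The fibre \(\pi^{-1}(I_u)\) contains \([u]_\partial\) up to the countable set of sequences coding dyadic rationals. It can additionally contain only points coding the right endpoint of \(I_u\) by the "other" expansion, and it lacks only the left-endpoint sequence coded with trailing ones. Since \(\pi\) is injective off dyadic rationals, every dyadic rational has exactly two preimages.

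\textbf{Upper bound for \(\Sigma_n(\mu)\).} I bound \(\mu(I_u)\le C_u+C_{u^+}\), where \(u^+\) is the level-\(n\) word whose cylinder contains the trailing-zero-free representation of the point \(a_u\). More robustly, \(\pi^{-1}(I_u)\subset[u]_\partial\cup[u']_\partial\), where \(u'\) is the neighbouring level-\(n\) word (left neighbour) whose cylinder may contain the sequence \(u'111\ldots\) mapping to \(a_u\). Then
\[
\Sigma_n(\mu)\le\sum_u (C_u+C_{u'})^2\le 2\Sigma_n+2\Sigma_n=4\Sigma_n,
\]
since \(u\mapsto u'\) is at most one-to-one. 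This gives \(\liminf\frac{\log_2\Sigma_n(\mu)}{-n}\ge\liminf\frac{\log_2\Sigma_n}{-n}\).

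\textbf{Lower bound for \(\Sigma_n(\mu)\).} Conversely, \([u]_\partial\setminus\pi^{-1}(I_u)\) is contained in the single sequence \(u0^\infty\) or \(u1^\infty\) (only endpoint issues), so \(C_u\le\mu(I_u)+\mu(\{x_u\})\) for a dyadic point \(x_u\in\partial I_u\). Two cases arise.

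\emph{No atoms.} If \(\mu\) has no atoms, then \(C_u=\mu(I_u)\) for all \(u\), and \(\Sigma_n=\Sigma_n(\mu)\).

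\emph{\(\mu\) has an atom.} Then Remark~\ref{rem:atoms-zero-energy-dimension-paper} gives \(\dimtwo(\mu)=0\). It remains to show the tree rate is also \(0\). Suppose \(\mu(\{x\})=a>0\); then \(\mu_\partial\) charges one of the (at most two) preimages of \(x\), say the sequence \(\omega\). Then \(C_{\omega|n}\ge\mu_\partial(\{\omega\})\ge a/2\) for all \(n\), so \(\Sigma_n\ge a^2/4\) and the tree rate is \(0\). Together with the general bound \(\ge0\) from Lemma~\ref{lem:elementary-tree-square-sum-bounds-paper}, both sides vanish.

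Alternatively, the bound \(\Sigma_n\le 2\Sigma_n(\mu)+2\sum_u\mu(\{x_u\})^2\) handles both cases at once, since the atom sum is bounded below by a constant exactly when atoms exist.

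\textbf{Main obstacle.} The delicate step is the bookkeeping of the endpoint convention, including the closed last interval \(I_{1\cdots1}\) and the point \(1\), and checking that the map \(u\mapsto u'\) has bounded multiplicity. This is routine once each dyadic rational is recognised to have exactly two codings. Finally, the statement is restricted to \(\{M>0\}\) so that the logarithms are finite.
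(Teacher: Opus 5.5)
Your proof is correct and follows essentially the same route as the paper. You split into two cases. In the atomic case, a boundary path $\omega$ with $\mu_\partial(\{\omega\})\ge a/2$ forces $\Sigma_n\ge a^2/4$, so both exponents vanish; in the non-atomic case, $\mu(I_u)=C_u$ exactly, so $\Sigma_n(\mu)=\Sigma_n$. Your extra neighbour comparison $\Sigma_n(\mu)\le 4\Sigma_n$ is valid but not needed. Note also that $0$ and $1$ each have only one binary coding, so ``at most two'' preimages is the accurate statement.
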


\begin{proof}
The second equality is the definition of \(\dimtwo(\mu)\). It remains to compare the tree and Euclidean square sums.

If \(\mu\) has an atom of mass \(a>0\), then by Lemma~\ref{lem:boundary-representation-paper}, \(\mu_\partial(\pi^{-1}\{x\})=a\) for some \(x\). Since \(\pi^{-1}\{x\}\) has at most two points, there exists a boundary path \(\omega\) such that \(\mu_\partial(\{\omega\})\ge a/2\).
Thus
\[
C_{\omega|n}\ge a/2
\qquad(n\ge0),
\]
and hence \(\Sigma_n\ge a^2/4\). 
Together with \(\Sigma_n\le M^2\), this yields
\[
\liminf_{n\to\infty}\frac{\log_2\Sigma_n}{-n}=0.
\]
By Remark~\ref{rem:atoms-zero-energy-dimension-paper}, \(\dimtwo(\mu)=0\). 

If \(\mu\) is non-atomic, then so is
\(\mu_\partial\); since \(\pi^{-1}(I_u)\) and \([u]_\partial\) differ only
by finitely many endpoint codings, \(\mu(I_u)=C_u\) for every \(u\).  Hence
\(\Sigma_n(\mu)=\Sigma_n\) for all \(n\).
\end{proof}

\subsection{Subcritical square-sum decay}
\label{subsec:subcritical-square-sum-decay}

For \(0<s<1\) and \(0<q<2\), set
\begin{equation}\label{eq:ms-profile}
    m_s(q)=2^{qs/2}\rho(q).
\end{equation}
The subcritical regime is characterized by the condition \(m_s(q)<1\) for some
\(q\in(1,2)\).  We first record a terminal-mass moment criterion in the present
vector notation.  Although only the range \(1<q<2\) is needed for the
square-sum estimate below, the formulation for all \(q>1\) will also be used in
the annular theorem.

\begin{lemma}[Terminal-mass moment criterion]
\label{lem:terminal-mass-moment-criterion}
Let \(q>1\).  Assume \(\E(X_0+X_1)=1\).  If
\[
    \rho(q)=\E[X_0^q+X_1^q]<1,
\]
then
\[
    \sup_{n\ge0}\E[M_n^q]<\infty,
    \qquad
    \E[M^q]<\infty .
\]
\end{lemma}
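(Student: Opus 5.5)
The plan is to bound $\E[M_n^q]$ uniformly in $n$ by a martingale-difference argument, and then pass to the limit. Fatou's lemma gives $\E[M^q]\le\liminf_n\E[M_n^q]$, so only the uniform bound needs work. Throughout, $\E(X_0+X_1)=1$ makes $(M_n)$ a martingale by Lemma~\ref{lem:vector-total-mass-martingale-paper}. The first step is to reduce to $1<q\le2$. For general $q>1$, log-convexity of $\rho$ (Lemma~\ref{lem:rho-convexity-subcritical-interval-paper}(ii), whose proof uses only H\"older) together with $\rho(1)=1$ and $\rho(q)<1$ gives $\rho(p)<1$ for every $1<p\le q$. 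This suggests an induction on $\lceil q\rceil$, handling the range $(1,2]$ first.

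\emph{Case $1<q\le2$.} Write the increment as
\[
M_{n+1}-M_n=\sum_{|u|=n}L_u\bigl(X_0(u)+X_1(u)-1\bigr)=:\sum_{|u|=n}L_u\xi_u .
\]
Conditionally on $\mathcal F^X_n$, the $\xi_u$ are independent, centered, and distributed as $Z-1$ with $Z=X_0+X_1$. We need $\E Z^q<\infty$; since $q\le 2$, we have $(x+y)^q\le 2^{q-1}(x^q+y^q)$, hence $\E Z^q\le 2^{q-1}\rho(q)<\infty$.

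The von Bahr--Esseen inequality for independent centered summands and $1\le q\le2$ then gives
\[
\E\bigl[|M_{n+1}-M_n|^q\mid\mathcal F^X_n\bigr]\le 2\,\E|Z-1|^q\sum_{|u|=n}L_u^q .
\]
Taking expectations and using Lemma~\ref{lem:vector-q-mass-identity-paper}, this is at most $C\rho(q)^n$.

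To sum the increments, apply the Burkholder inequality followed by the subadditivity of $x\mapsto x^{q/2}$ (valid because $q/2\le1$):
\[
\E M_n^q\le C_q\Bigl(1+\sum_k\E|M_{k+1}-M_k|^q\Bigr)\le C\sum_k\rho(q)^k<\infty .
\]
This settles the range $1<q\le 2$.

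\emph{Case $q>2$.} Assume inductively that $\sup_n\E M_n^{p}<\infty$ for $p=q/2$, using $\rho(q/2)<1$. Burkholder gives
\[
\E M_n^q\le C\Bigl(1+\E\Bigl[\Bigl(\sum_k (M_{k+1}-M_k)^2\Bigr)^{q/2}\Bigr]\Bigr).
\]
The Minkowski inequality in $L^{q/2}$ then reduces this to summing $\|M_{k+1}-M_k\|_q^2$. For each increment, apply Rosenthal's inequality conditionally on $\mathcal F^X_k$:
\[
\E\bigl[|M_{k+1}-M_k|^q\mid\mathcal F^X_k\bigr]\le C\Bigl(\sum_u L_u^q\,\E|\xi|^q+\Bigl(\sum_u L_u^2\,\E\xi^2\Bigr)^{q/2}\Bigr).
\]
Here $\E Z^q<\infty$ because $\rho(q)<\infty$ and $Z^q\le2^{q-1}(X_0^q+X_1^q)$. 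The first term has expectation $C\rho(q)^k$. For the second, note that $\sum_u L_u^2=\Sigma$-type sums at the level of weights. Using $\bigl(\sum_u L_u^2\bigr)^{q/2}\le\sum_u L_u^q$ would fail in general, since the exponent is $\ge1$. Instead, I would write it as
\[
\Bigl(\sum_u L_u^2\Bigr)^{q/2}\le\Bigl(\max_u L_u\Bigr)^{q-2}\Bigl(\sum_u L_u\Bigr)\Bigl(\sum_u L_u^2\Bigr)^{(q-2)/2}\cdots,
\]
which becomes messy. A cleaner route is the classical one: define $f(p)=\E[(\sum_{|u|=k}L_u^2)^{p}]$ and bound it by $\rho(2p)^{k}$-type growth times polynomial factors, using the branching identity and induction on the integer part.

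I expect this bookkeeping for $q>2$ to be the main obstacle. The first fallback is Kahane--Peyri\`ere's original inductive argument: expand $M_{n+1}^q=\bigl(\sum_i X_iM_n^{(i)}\bigr)^q$ for integer or half-integer steps, isolate the diagonal term $\rho(q)\E M_n^q$, and bound the cross terms by lower moments, which are already bounded by induction. This yields $\E M_{n+1}^q\le\rho(q)\E M_n^q+C$, hence a uniform bound since $\rho(q)<1$. This recursion is actually the simplest path for all $q>1$. For the cross terms one uses $(a+b)^q\le a^q+b^q+C(a^{q-1}b+ab^{q-1})$ and independence of $M_n^{(0)},M_n^{(1)}$ from $X$, with the mixed expectations controlled by H\"older and the $(q-1)$-moments from the induction on $\lceil q\rceil$. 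I would adopt this recursion as the main proof and keep the martingale bounds as a cross-check for $q\le2$.
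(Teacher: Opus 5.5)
Your proof is correct and is essentially the paper's. For $1<q\le 2$ the paper also uses the conditional von Bahr--Esseen bound $\E|M_{n+1}-M_n|^q\le 2\E|S-1|^q\rho(q)^n$; it then sums the increments by Minkowski, via $\|M_{n+1}-M_n\|_q\le C\rho(q)^{n/q}$, rather than by your Burkholder-plus-subadditivity step. For $q>2$ it runs exactly the recursion you adopt, $\E[M_{n+1}^q]\le\rho(q)\E[M_n^q]+C_q\,\E[X_0^{q-1}X_1+X_0X_1^{q-1}]\,\E[M_n^{q-1}]$, with induction over $(m,m+1]$ and $\rho(q-1)<1$ obtained from H\"older.

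The Rosenthal detour is unnecessary. Your remark that the recursion alone also covers $1<q\le 2$, where $\E[M_n^{q-1}]\le 1$ by Jensen, is correct, and it would let one bypass von Bahr--Esseen entirely.
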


\begin{proof}
Set \(S=X_0+X_1\). 
We first treat \(1<q\le2\). 
Since
\(S^q\le 2^{q-1}(X_0^q+X_1^q)\), it follows that
\(\E|S-1|^q<\infty\). 
Let \(D_{n+1}=M_{n+1}-M_n\). 
Writing
\(S(u)=X_0(u)+X_1(u)\),
\[
D_{n+1}
=
\sum_{|u|=n}L_u\bigl(S(u)-1\bigr).
\]
Conditionally on \(\mathcal F_n^X\), the variables
\(L_u(S(u)-1)\) \((|u|=n)\), are independent and centered. 
Hence the
conditional von Bahr--Esseen inequality yields
\[
\E\bigl[|D_{n+1}|^q\mid\mathcal F_n^X\bigr]
\le
2\E|S-1|^q\sum_{|u|=n}L_u^q .
\]
Taking expectations and using Lemma~\ref{lem:vector-q-mass-identity-paper},
we obtain
\[
\E|D_{n+1}|^q
\le
2\E|S-1|^q\,\rho(q)^n,
\qquad
\|D_{n+1}\|_q
\le
\bigl(2\E|S-1|^q\bigr)^{1/q}\rho(q)^{n/q}.
\]
Since \(M_n=1+\sum_{k=0}^{n-1}D_{k+1}\), Minkowski's inequality implies
\[
\sup_{n\ge0}\|M_n\|_q
\le
1+
\frac{\bigl(2\E|S-1|^q\bigr)^{1/q}}
{1-\rho(q)^{1/q}}.
\]
Thus \(\sup_{n\ge0}\E[M_n^q]<\infty\). 
Since \(M_n\to M\) a.s., 
Fatou's lemma implies \(\E[M^q]<\infty\).

It remains to consider \(q>2\). 
We first note that \(\rho(q)<1\) implies
\(\rho(p)<1\) for every \(1<p\le q\). 
Let \(\nu_X=\E(\delta_{X_0}+\delta_{X_1})\).  If
\(p=(1-\theta)+\theta q\), \(0<\theta\le1\), then H\"older's inequality gives
\[
    \rho(p)
    =
    \int x^p\,d\nu_X(x)
    \le
    \left(\int x\,d\nu_X(x)\right)^{1-\theta}
    \left(\int x^q\,d\nu_X(x)\right)^\theta
    =
    \rho(q)^\theta<1 .
\]
We argue by induction over \((m,m+1]\), \(m\ge2\). 
Assume the assertion holds at exponent \(q-1\). 
Let \(M_n^{(0)}\) and \(M_n^{(1)}\) be independent copies of \(M_n\), independent of \(X=(X_0,X_1)\). 
By the branching construction,
\[
M_{n+1}
\stackrel{d}=
X_0M_n^{(0)}+X_1M_n^{(1)}.
\]
Using
\[
(x+y)^q
\le
x^q+y^q+C_q\bigl(x^{q-1}y+xy^{q-1}\bigr),
\qquad x,y\ge0,
\]
together with independence and \(\E M_n=1\), 
we obtain
\[
\E[M_{n+1}^q]
\le
\rho(q)\E[M_n^q]
+
C_q\E\!\left[X_0^{q-1}X_1+X_0X_1^{q-1}\right]\E[M_n^{q-1}].
\]
Young's inequality implies
\[
\E\!\left[X_0^{q-1}X_1+X_0X_1^{q-1}\right]
\le
\rho(q)<\infty .
\]
Since \(\rho(q-1)<1\), the induction hypothesis yields
\[
\sup_{n\ge0}\E[M_n^{q-1}]<\infty .
\]
Consequently, for some finite \(C_q'\),
\[
\E[M_{n+1}^q]
\le
\rho(q)\E[M_n^q]+C_q',
\qquad n\ge0 .
\]
Since \(\rho(q)<1\), iteration yields
\[
\sup_{n\ge0}\E[M_n^q]<\infty .
\]
Finally, Fatou's lemma and \(M_n\to M\) a.s. imply
\[
\E[M^q]
\le
\liminf_{n\to\infty}\E[M_n^q]
<\infty.
\]
\end{proof}

\begin{lemma}
\label{lem:terminal-mass-moment-bound-paper}
Let 
\(
q>1
\),
and suppose that 
\(
\rho(q)<1
\). 
Then 
\(
\E[M^q]<\infty
\).
Moreover, for every 
\(
u\in\mathcal T
\),
the descendant terminal mass \(M^{(u)}\) has the same law as \(M\). In particular,
\[
\E[(M^{(u)})^q]<\infty.
\]
\end{lemma}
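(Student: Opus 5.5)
The plan is to read off the first assertion from Lemma~\ref{lem:terminal-mass-moment-criterion}, and to get the second from the self-similar structure of the cascade. In order to invoke the criterion we need the mean-one condition \(\E(X_0+X_1)=1\). This holds automatically in the present section, where the law is assumed energy-admissible (balanced). Since \(q>1\) and \(\rho(q)<1\), Lemma~\ref{lem:terminal-mass-moment-criterion} then gives \(\sup_n\E[M_n^q]<\infty\), and by Fatou's lemma \(\E[M^q]<\infty\).

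For the descendant statement, fix \(u\in\mathcal T\) with \(\lvert u\rvert=m\). The descendant environment \(\calE^{(u)}=\{X(uv):v\in\mathcal T\}\) is a family of independent copies of \(X\) indexed by \(\mathcal T\) through \(v\mapsto uv\). Hence it has the same joint law as the original environment \(\{X(v):v\in\mathcal T\}\).

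Each descendant martingale is obtained by applying the same measurable map to its environment: \(M_k^{(u)}=\sum_{\lvert v\rvert=k}L_v^{(u)}\), where \(L_v^{(u)}\) is the same product functional of \(\calE^{(u)}\) that \(L_v\) is of the original environment. It follows that the whole process \((M_k^{(u)})_{k\ge0}\) has the same law as \((M_k)_{k\ge0}\).

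To pass to the limits, set \(M^{(u)}:=\lim_k M_k^{(u)}\), which exists almost surely on the full-measure event fixed before Theorem~\ref{thm:vector-limiting-measure-construction-paper}. Take \(\limsup_k M_k^{(u)}\) as a measurable version of this limit, so that \(M^{(u)}\) and \(M\) are the same measurable functional of equally distributed sequences. Therefore \(M^{(u)}\overset{d}{=}M\), and consequently \(\E[(M^{(u)})^q]=\E[M^q]<\infty\).

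No step here is a genuine obstacle. The only care needed is in the last step: the limit must be defined through a fixed measurable functional such as \(\limsup\), rather than pathwise on an environment-dependent event, so that equality in law transfers from the processes to their limits.
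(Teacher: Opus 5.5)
Your proposal is correct and is essentially the paper's proof. The paper applies Lemma~\ref{lem:terminal-mass-moment-criterion}, with mean-one supplied by the standing energy-admissibility assumption, and notes that descendant cascades have the same law as the original one; your realization of \(M^{(u)}\) as the fixed measurable functional \(\limsup_k M_k^{(u)}\) of an environment equal in law to \(\{X(v):v\in\mathcal T\}\) spells out why \(M^{(u)}\stackrel{d}{=}M\), and your extra Fatou step is redundant because the criterion already asserts \(\E[M^q]<\infty\).
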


\begin{proof}
Apply Lemma~\ref{lem:terminal-mass-moment-criterion} to the binary vector \((X_0,X_1)\); descendant cascades have the same law as the original one.
\end{proof}

By Lemma~\ref{lem:terminal-mass-moment-bound-paper}, whenever 
\(
0<s<1
\),
\(
1<q<2
\),
and 
\(
m_s(q)<1
\), 
we have 
\(
\E[M^q]<\infty
\).

\begin{theorem}[Subcritical tree-cylinder square-sum decay]
\label{thm:subcritical-tree-square-sum-decay-paper}
Let \(0<s<1\), \(
1<q<2\), and set \(p=\frac q2\in(0,1)\). 
Assume
\[
m_s(q)=2^{qs/2}\rho(q)<1.
\]
Then, for every \(n\ge0\), 
\[
\E\left[(2^{sn}\Sigma_n)^p\right]
\le
\E[M^q]\,m_s(q)^n .
\]

In particular, \(2^{sn}\Sigma_n\to0\) almost surely.
\end{theorem}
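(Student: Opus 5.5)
The plan is to combine the branching identity of Lemma~\ref{lem:tree-cylinder-square-sum-branching-identity-paper} at \(m=n\), \(n\mapsto 0\), with the subadditivity of \(x\mapsto x^p\) for \(0<p<1\). Taking the descendant level to be \(0\) gives
\[
\Sigma_n=\sum_{\lvert u\rvert=n}L_u^2\,\Sigma^{(u)}_0=\sum_{\lvert u\rvert=n}L_u^2\,(M^{(u)})^2,
\]
which is simply the definition \(C_u=L_uM^{(u)}\). Since \(p=q/2\in(0,1)\), the map \(x\mapsto x^p\) is subadditive on \([0,\infty)\). Therefore
\[
\Sigma_n^p\le\sum_{\lvert u\rvert=n}L_u^{q}\,(M^{(u)})^{q}.
\]

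Next I take expectations and condition on \(\mathcal F^X_n\). The path weights \(L_u\) with \(\lvert u\rvert=n\) are \(\mathcal F^X_n\)-measurable. Each \(M^{(u)}\) is independent of \(\mathcal F^X_n\) and has the law of \(M\), by Lemma~\ref{lem:terminal-mass-moment-bound-paper}. This gives
\[
\E[\Sigma_n^p]\le\E[M^q]\,\E\Bigl[\sum_{\lvert u\rvert=n}L_u^q\Bigr]=\E[M^q]\,\rho(q)^n,
\]
where the last equality is Lemma~\ref{lem:vector-q-mass-identity-paper}. Multiplying by \(2^{snp}=2^{qsn/2}\) then yields \(\E[(2^{sn}\Sigma_n)^p]\le\E[M^q]\,m_s(q)^n\).

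I still need \(\E[M^q]<\infty\). The hypothesis \(m_s(q)<1\) gives \(\rho(q)<2^{-qs/2}<1\), so Lemma~\ref{lem:terminal-mass-moment-criterion} applies. The mean-one condition it requires holds because the law is balanced.

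For the almost sure statement, the bound above gives \(\sum_n\E[(2^{sn}\Sigma_n)^p]<\infty\), a convergent geometric series. By monotone convergence \(\sum_n(2^{sn}\Sigma_n)^p<\infty\) almost surely, so its terms tend to zero and \(2^{sn}\Sigma_n\to0\) almost surely. The only delicate point is the conditional-independence bookkeeping: the \(M^{(u)}\) must be the limits on the full-probability event where all descendant martingales converge, so that the identity holds almost surely. The argument does not require those limits to be independent of one another, since only linearity of expectation is used after conditioning.
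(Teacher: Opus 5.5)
Your proof is correct and follows the paper's argument: you write \(\Sigma_n=\sum_{|u|=n}L_u^2(M^{(u)})^2\), apply subadditivity of \(x\mapsto x^p\), use independence of \(M^{(u)}\) from \(\mathcal F^X_n\) together with the \(q\)-mass identity, and get \(\E[M^q]<\infty\) from the terminal-mass moment criterion. The only difference is cosmetic and comes in the almost sure step: you sum the expectations and use monotone convergence, whereas the paper uses Markov's inequality plus Borel--Cantelli with \(\varepsilon=1/k\), and the two are equivalent here.
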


\begin{proof}
By the preceding observation, \(\E[M^q]<\infty\). 
Using \(C_u=L_uM^{(u)}\), independence, \(M^{(u)}\stackrel d=M\), and subadditivity of \(x^p\) for \(p=q/2<1\), we get \(\mathbb E\Sigma_n^p\le \mathbb E[M^q]\rho(q)^n\).
Multiplying by 
\(
2^{snp}=2^{snq/2}
\), 
we obtain
\[
\E\left[(2^{sn}\Sigma_n)^p\right]
\le
\E[M^q]\bigl(2^{sq/2}\rho(q)\bigr)^n
=
\E[M^q]m_s(q)^n.
\]

Since 
\(
m_s(q)<1
\),
Markov's inequality implies, for every 
\(
\varepsilon>0
\),
\[
\sum_{n=0}^\infty
\Pbb(2^{sn}\Sigma_n>\varepsilon)
\le
\varepsilon^{-p}\E[M^q]\sum_{n=0}^\infty m_s(q)^n
<\infty.
\]
By Borel--Cantelli, applied with 
\(
\varepsilon=1/k
\), 
it follows that 
\(
2^{sn}\Sigma_n\to0
\)
almost surely.
\end{proof}

\begin{corollary}
\label{cor:subcritical-tree-square-sum-lower-bound-paper}
If 
\(
0<s<D_E(X)
\), 
then
\begin{equation}\label{eq:subcritical-tree-exponent-lower-bound-paper}
\liminf_{n\to\infty}
\frac{\log_2\Sigma_n}{-n}
\ge s
\qquad
\text{almost surely on }\{M>0\}.
\end{equation}
Consequently,
\begin{equation}\label{eq:tree-exponent-lower-DE-paper}
\liminf_{n\to\infty}
\frac{\log_2\Sigma_n}{-n}
\ge D_E(X)
\qquad
\text{almost surely on }\{M>0\}.
\end{equation}
\end{corollary}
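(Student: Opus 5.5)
The argument combines the definition of \(D_E(X)\) with Theorem~\ref{thm:subcritical-tree-square-sum-decay-paper}. First I handle the degenerate case: if \(D_E(X)=0\), then \eqref{eq:tree-exponent-lower-DE-paper} is trivial, because Lemma~\ref{lem:elementary-tree-square-sum-bounds-paper} shows the liminf is nonnegative on \(\{M>0\}\). So assume \(D_E(X)>0\) and fix \(0<s<D_E(X)\).

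By Definition~\ref{def:vector-energy-parameter} (the supremum is over a set that is downward closed in \(s\), since \(2^{qs/2}\) increases in \(s\)), there is some \(s'\in(s,D_E(X))\) and some \(q\in(0,2)\) with \(m_{s'}(q)=2^{qs'/2}\rho(q)<1\). The proof of Proposition~\ref{prop:vector-DE-variational-formula-paper} shows that such a \(q\) must lie in \((1,2)\), because \(\rho(q)>1\) for \(q\le1\). It then suffices to prove the claim at \(s'\), and monotonicity in \(s\) gives it at \(s\); alternatively one can work at \(s\) directly, since \(m_s(q)\le m_{s'}(q)<1\).

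With \(1<q<2\) and \(m_s(q)<1\), Theorem~\ref{thm:subcritical-tree-square-sum-decay-paper} gives \(2^{sn}\Sigma_n\to0\) almost surely. Hence, almost surely, \(\Sigma_n\le 2^{-sn}\) for all large \(n\). On \(\{M>0\}\) we have \(\Sigma_n>0\) by Lemma~\ref{lem:elementary-tree-square-sum-bounds-paper}, so \(\log_2\Sigma_n\le -sn\), which is \(\log_2\Sigma_n/(-n)\ge s\) for large \(n\). This yields \eqref{eq:subcritical-tree-exponent-lower-bound-paper}.

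For \eqref{eq:tree-exponent-lower-DE-paper}, take a sequence \(s_k\uparrow D_E(X)\) and intersect the countably many full-probability events, then let \(k\to\infty\). The only delicate step is extracting an admissible \(q\in(1,2)\) with \(m_s(q)<1\) from the definition of \(D_E(X)\), and the convexity facts in Lemma~\ref{lem:rho-convexity-subcritical-interval-paper} settle it.
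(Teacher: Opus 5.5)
Your proposal is correct and follows the paper's proof step for step. The steps are: the trivial case \(D_E(X)=0\) via Lemma~\ref{lem:elementary-tree-square-sum-bounds-paper}; extracting \(q\in(1,2)\) with \(m_s(q)<1\), which the paper does by citing Proposition~\ref{prop:vector-DE-variational-formula-paper} directly; Theorem~\ref{thm:subcritical-tree-square-sum-decay-paper}; positivity of \(\Sigma_n\) on \(\{M>0\}\); and a countable sequence \(s\uparrow D_E(X)\).

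One small slip: \(\rho(1)=1\), not \(\rho(1)>1\). So \(q=1\) is excluded by the factor \(2^{s/2}>1\) in \(m_s(1)=2^{s/2}\rho(1)\), not by \(\rho\) alone.
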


\begin{proof}
If 
\(
D_E(X)=0
\),
the final bound follows from Lemma~\ref{lem:elementary-tree-square-sum-bounds-paper}. Assume that 
\(
D_E(X)>0
\),
and let 
\(
0<s<D_E(X)
\).
By Proposition~\ref{prop:vector-DE-variational-formula-paper}, there exists 
\(
q\in(1,2)
\)
such that 
\(
m_s(q)<1
\).
Hence Theorem~\ref{thm:subcritical-tree-square-sum-decay-paper} implies 
\(
2^{sn}\Sigma_n\to0
\)
almost surely. Thus, almost surely, 
\(
\Sigma_n\le 2^{-sn}
\)
for all sufficiently large \(n\). On 
\(
\{M>0\}
\), Lemma~\ref{lem:elementary-tree-square-sum-bounds-paper} ensures that 
\(
\Sigma_n>0
\),
and therefore
\[
\liminf_{n\to\infty}
\frac{\log_2\Sigma_n}{-n}
\ge s.
\]
Letting 
\(
s\uparrow D_E(X)
\)
along a countable sequence yields \eqref{eq:tree-exponent-lower-DE-paper}.
\end{proof}

\begin{corollary}
\label{cor:subcritical-energy-correlation-lower-bound-paper}
Almost surely on 
\(
\{M>0\}
\), 
\[
\dimtwo(\mu)=\dimE(\mu)\ge D_E(X).
\]
\end{corollary}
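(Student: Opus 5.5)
This corollary is a direct assembly of three earlier results: Corollary~\ref{cor:subcritical-tree-square-sum-lower-bound-paper}, Lemma~\ref{lem:endpoint-transfer-square-sums-paper}, and Proposition~\ref{prop:dyadic-square-sum-energy-criterion-paper}. We work on a single full-probability event. On it, all descendant limits $M^{(u)}$ exist, the consistency relations $C_u=C_{u0}+C_{u1}$ hold, and the boundary representation $\mu=\pi_\#\mu_\partial$ of Lemma~\ref{lem:boundary-representation-paper} is valid. We also intersect with the full-probability events supplied by Corollary~\ref{cor:subcritical-tree-square-sum-lower-bound-paper} and Lemma~\ref{lem:endpoint-transfer-square-sums-paper}. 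Since each of these is a countable intersection of almost sure events, the resulting event still has full probability.

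First, on this event intersected with $\{M>0\}$, Corollary~\ref{cor:subcritical-tree-square-sum-lower-bound-paper} gives
\[
\liminf_{n\to\infty}\frac{\log_2\Sigma_n}{-n}\ge D_E(X)
\]
for the tree-cylinder square sums. Second, Lemma~\ref{lem:endpoint-transfer-square-sums-paper} identifies this tree liminf with $\dimtwo(\mu)$ on $\{M>0\}$. That lemma handles both cases needed here. In the atomic case both quantities equal $0$, and the inequality is then trivial because $D_E(X)$ must be $0$ by the first step. In the non-atomic case $\Sigma_n=\Sigma_n(\mu)$ for every $n$. Together, the two steps give $\dimtwo(\mu)\ge D_E(X)$.

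Third, on $\{M>0\}$ the measure $\mu$ is a nonzero finite Borel measure on $[0,1]$, because $\mu([0,1])=M$ by Theorem~\ref{thm:vector-limiting-measure-construction-paper}. The deterministic Proposition~\ref{prop:dyadic-square-sum-energy-criterion-paper} therefore applies pathwise and gives $\dimE(\mu)=\dimtwo(\mu)$. Combining this with the previous paragraph yields the corollary.

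None of these steps is substantive. The only point requiring care is the bookkeeping of null sets, so that the three almost sure statements hold simultaneously on one event of full probability restricted to $\{M>0\}$. Beyond that, it suffices to note that the deterministic criterion is applied to each sample measure separately, so it raises no measurability issues.
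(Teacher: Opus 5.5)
Your proposal is correct and follows the paper's proof. The paper likewise combines Corollary~\ref{cor:subcritical-tree-square-sum-lower-bound-paper} with Lemma~\ref{lem:endpoint-transfer-square-sums-paper} to get $\dimtwo(\mu)\ge D_E(X)$ on $\{M>0\}$, and then applies Proposition~\ref{prop:dyadic-square-sum-energy-criterion-paper} pathwise to the nonzero measure $\mu$. Your atomic/non-atomic case split is redundant but harmless, since the transfer lemma already gives the identity in both cases.
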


\begin{proof}
By Lemma~\ref{lem:endpoint-transfer-square-sums-paper} and Corollary~\ref{cor:subcritical-tree-square-sum-lower-bound-paper},
\[
\dimtwo(\mu)
=
\liminf_{n\to\infty}\frac{\log_2\Sigma_n}{-n}
\ge D_E(X)
\]
almost surely on 
\(
\{M>0\}
\).
Since \(\mu\) is then a nonzero finite Borel measure, Proposition~\ref{prop:dyadic-square-sum-energy-criterion-paper} applies and implies 
\(
\dimE(\mu)=\dimtwo(\mu)
\).
\end{proof}

\subsection{The no-plateau lemma and the supercritical obstruction}
\label{subsec:no-plateau-supercritical-obstruction}

We now prove the supercritical half of the tree-cylinder square-sum estimate.
This is the part of the energy argument closest to the classical
scaling-exponent theory.  The proof below isolates the finite-block obstruction
needed in the present normalization.

Let
\[
    N_X=\one_{\{X_0>0\}}+\one_{\{X_1>0\}}
\]
denote the number of positive children of a single vertex.  The first step is
a no-plateau statement for
\[
    m_s(q)=2^{qs/2}\rho(q),
\]
showing that, above \(D_{\mathrm E}(X)\), this profile is uniformly separated
from \(1\).

\begin{lemma}
\label{lem:alive-offspring-supercritical-paper}
Under energy-admissibility,
\(
\E N_X>1
\).
\end{lemma}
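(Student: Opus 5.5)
The plan is to compare \(\E N_X\) with the limit of the moment profile \(\rho(q)\) as \(q\downarrow0\), and then use the convexity information from Lemma~\ref{lem:rho-convexity-subcritical-interval-paper} to make the comparison strict.

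\textbf{Step 1: \(\rho(q)\to\E N_X\) as \(q\downarrow0\).} For each \(x\ge0\) we have \(x^q\to\one_{\{x>0\}}\) as \(q\downarrow0\). For \(0<q\le1\) we also have \(x^q\le 1+x\). Since \(\E(X_0+X_1)=1<\infty\), dominated convergence gives
\[
\lim_{q\downarrow0}\rho(q)=\E\bigl[\one_{\{X_0>0\}}+\one_{\{X_1>0\}}\bigr]=\E N_X .
\]

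\textbf{Step 2: a quantitative lower bound on \(\log_2\rho\) on \((0,1)\).} Set \(f(q)=\log_2\rho(q)\), which is finite on \((0,1]\) by Lemma~\ref{lem:rho-convexity-subcritical-interval-paper}(i). By parts (ii) and (iii) of that lemma, \(f\) is convex on \((0,1]\), \(f(1)=0\), and its left derivative at \(1\) equals \(\gamma(X)<0\). The supporting-line inequality for a convex function at the endpoint \(1\) gives, for \(0<q<1\),
\[
f(q)\ge f(1)+\gamma(X)(q-1)=\lvert\gamma(X)\rvert\,(1-q).
\]
This is exactly the inequality already used in the proof of part (iv).

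\textbf{Step 3: pass to the limit.} Let \(q\downarrow0\) in Step~2 and use Step~1 together with continuity of \(\log_2\) on \((0,\infty)\). This yields
\[
\log_2\E N_X\ge\lvert\gamma(X)\rvert>0,
\qquad\text{so}\qquad
\E N_X\ge 2^{\lvert\gamma(X)\rvert}>1 .
\]
The limit in Step~1 is at least \(1\) by Lemma~\ref{lem:rho-convexity-subcritical-interval-paper}(iv), so it is positive and the logarithm is legitimate.

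\textbf{Main point of care.} Strictness is the only delicate issue. Part (iv) alone gives only \(\rho(q)>1\) for each fixed \(q\), and hence only \(\E N_X\ge1\) in the limit. The strict inequality comes from the uniform bound \(\lvert\gamma(X)\rvert(1-q)\) in Step~2, which survives the limit \(q\downarrow0\). The remaining technical point, justifying the interchange of limit and expectation, is handled by the domination \(x^q\le1+x\) in Step~1.
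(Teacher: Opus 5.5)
Your proof is correct, and it takes a genuinely different route from the paper's. The paper argues through survival of the cascade:
\begin{itemize}
\item energy-admissibility gives \(\Pbb(M>0)>0\), via the Kahane--Peyri\`ere/Alsmeyer--Kuhlbusch nondegeneracy input;
\item hence the alive tree \(\{u:L_u>0\}\), a Galton--Watson tree with offspring law \(N_X\), survives with positive probability, which rules out \(\E N_X<1\);
\item \(\E N_X=1\) would force \(N_X=1\) almost surely, so \(X_0\log_2X_0+X_1\log_2X_1=S\log_2S\) with \(S=X_0+X_1\), and Jensen gives \(\gamma(X)=\E[S\log_2S]\ge0\), a contradiction.
\end{itemize}

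Your argument is purely analytic. It uses only the log-convexity of \(\rho\) on \((0,1]\), \(\rho(1)=1\), the left derivative \(\gamma(X)\) at \(1\) from Lemma~\ref{lem:rho-convexity-subcritical-interval-paper}, and the dominated-convergence limit \(\rho(q)\to\E N_X\). The paper itself uses that same limit afterwards, in Lemma~\ref{lem:small-q-gap-paper}.

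What your route buys:
\begin{itemize}
\item It bypasses the nondegeneracy theorem and Galton--Watson extinction theory altogether.
\item It is quantitative: \(\E N_X\ge 2^{-\gamma(X)}\).
\item This bound is sharp. For the Bernoulli law of Example~\ref{ex:bernoulli-zero-weight-cascade}, with \(X_i=W_i/2\), both sides equal \(2p\).
\end{itemize}
The paper's route instead makes the conceptual reason visible: a cascade that survives must have a supercritical alive tree. That is the picture used next in Lemma~\ref{lem:alive-tree-growth-paper}.

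One small simplification: exponentiate Step~2 first, \(\rho(q)\ge 2^{\lvert\gamma(X)\rvert(1-q)}\) for \(0<q<1\), and then let \(q\downarrow0\). You then need neither part (iv) of the lemma nor a logarithm of the limit.
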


\begin{proof}
Let 
\(
\mathcal T_+=\{u:L_u>0\}
\).
Its offspring law is \(N_X\). Since energy-admissibility implies 
\(
\mathbb P(M>0)>0
\),
the alive tree survives with positive probability.

If 
\(
\E N_X<1
\),
then, for the alive population \(Z_n\),
\[
\mathbb P(Z_n>0)\le \E Z_n=(\E N_X)^n\to0,
\]
so the survival probability is zero. If 
\(
\E N_X=1
\),
positive survival probability forces the critical Galton--Watson law to be degenerate, namely 
\(
N_X=1
\)
almost surely. Thus exactly one coordinate of \(X\) is positive almost surely. With 
\(
S=X_0+X_1
\),
balance implies 
\(
\E S=1
\),
and
\[
X_0\log_2X_0+X_1\log_2X_1=S\log_2S.
\]
By Jensen's inequality,
\[
\E[S\log_2S]\ge \E S\log_2\E S=0,
\]
contradicting the strict entropy condition in energy-admissibility. Hence 
\(
\E N_X>1
\).
\end{proof}

\begin{lemma}
\label{lem:small-q-gap-paper}
For every compact interval \(K\subset(0,1)\), there exist \(q_0>0\) and
\(\eta>0\) such that
\[
    m_s(q)\ge 1+\eta
    \qquad
    (s\in K,\;0<q\le q_0).
\]
\end{lemma}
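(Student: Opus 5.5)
The plan is to reduce the statement to the behaviour of \(\rho(q)\) as \(q\downarrow0\), using Lemma~\ref{lem:alive-offspring-supercritical-paper}. Since \(s\in K\subset(0,1)\) and \(q>0\), we have \(2^{qs/2}\ge1\). Hence
\[
m_s(q)\ge\rho(q)\qquad(s\in K,\ q>0),
\]
and it suffices to find \(q_0>0\) and \(\eta>0\) with \(\rho(q)\ge1+\eta\) for all \(0<q\le q_0\). The resulting bound is uniform in \(s\) because the factor \(2^{qs/2}\) is simply discarded. Compactness of \(K\) is therefore only cosmetic here.

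The key step is the limit
\[
\lim_{q\downarrow0}\rho(q)=\E N_X=\Pbb(X_0>0)+\Pbb(X_1>0).
\]
For each \(i\), the quantity \(X_i^q\) tends to \(\one_{\{X_i>0\}}\) pointwise as \(q\downarrow0\), with the convention \(0^q=0\). For \(0<q\le1\) it is dominated by \(1+X_i\), which is integrable because \(\E X_i=1/2\). Dominated convergence then gives the displayed limit. By Lemma~\ref{lem:alive-offspring-supercritical-paper}, \(\E N_X>1\).

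To finish, set \(\eta=(\E N_X-1)/2>0\). By the limit, there is \(q_0\in(0,1]\) such that \(\rho(q)\ge\E N_X-\eta=1+\eta\) for all \(0<q\le q_0\). Combined with \(m_s(q)\ge\rho(q)\), this gives \(m_s(q)\ge1+\eta\) uniformly in \(s\in K\) and \(0<q\le q_0\).

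The only substantive input is \(\E N_X>1\), which was established in Lemma~\ref{lem:alive-offspring-supercritical-paper}. No monotonicity of \(\rho\) near \(0\) is needed, since the limit argument suffices. The one point to check is that the domination by \(1+X_i\) is valid on all of \(0<q\le1\); it is, because \(x^q\le1+x\) for \(x\ge0\) and \(0<q\le1\), as already used in Lemma~\ref{lem:rho-convexity-subcritical-interval-paper}.
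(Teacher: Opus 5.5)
Your proof is correct and is essentially the paper's argument: dominated convergence (with the bound \(x^q\le 1+x\)) gives \(\rho(q)\to\E N_X\), and \(\E N_X>1\) by Lemma~\ref{lem:alive-offspring-supercritical-paper}. The only difference is that you discard the factor \(2^{qs/2}\ge1\), whereas the paper uses its uniform convergence to \(1\) on \(K\). Your version is a slight simplification and shows that compactness of \(K\) is not actually needed.
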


\begin{proof}
For 
\(
0<q\le1
\),
one has 
\(
X_0^q+X_1^q\le 2+X_0+X_1,
\)
which is integrable. Since 
\(
X_i^q\to\one_{\{X_i>0\}}
\)
as 
\(
q\downarrow0
\),
dominated convergence and Lemma~\ref{lem:alive-offspring-supercritical-paper} imply
\[
\rho(q)\to \E N_X>1.
\]
Moreover, 
\(
2^{qs/2}\to1
\)
uniformly for 
\(
s\in K
\).
Hence 
\(
m_s(q)=2^{qs/2}\rho(q)\to\E N_X>1
\)
uniformly on \(K\), establishing the claimed gap.
\end{proof}

\begin{lemma}[No plateau above the energy threshold]
\label{lem:no-plateau-above-energy-threshold-paper}
If
\(
D_E(X)<s<1
\),
then 
\[
\inf_{0<q<2}m_s(q)>1.
\]
\end{lemma}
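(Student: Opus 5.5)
The plan is to split \((0,2)\) into three pieces and show that \(m_s\) is bounded away from \(1\) from above on each. Fix \(s\in(D_E(X),1)\). Throughout, \(m_s(q)=2^{qs/2}\rho(q)\) takes values in \((0,\infty]\), and it is lower semicontinuous in \(q\). Lower semicontinuity comes from Fatou's lemma applied to \(X_0^q+X_1^q\), which converges pointwise as \(q\) varies.

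The first piece is small \(q\). Applying Lemma~\ref{lem:small-q-gap-paper} with \(K=\{s\}\) gives \(q_0>0\) and \(\eta>0\) such that \(m_s(q)\ge1+\eta\) for \(0<q\le q_0\).

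The second piece is \([q_0,1]\). Lemma~\ref{lem:rho-convexity-subcritical-interval-paper}(iii)--(iv) gives \(\rho(q)\ge 1\) on \((0,1]\). Hence \(m_s(q)\ge 2^{qs/2}\ge 2^{q_0s/2}>1\) for \(q\in[q_0,1]\).

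The third piece is \((1,2)\), and here the definition of \(D_E(X)\) does the work. Since \(s>D_E(X)\), the infimum \(\inf_{0<q<2}m_s(q)\) cannot be \(<1\). Indeed, if \(\inf_{0<q<2}m_{s'}(q)<1\) for some \(s'\ge s\), then \(s'\) would belong to the set whose supremum is \(D_E(X)\). Monotonicity of \(m_s\) in \(s\) then shows \(m_{s}\ge1\) pointwise on \((0,2)\). The content of the lemma is therefore to exclude \(\inf=1\), that is, a plateau approached along \(q\in(1,2)\).

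To exclude it, pick \(s'\in(D_E(X),s)\). The previous paragraph applied to \(s'\) gives \(m_{s'}(q)\ge1\) for all \(q\in(0,2)\). Then, for \(q\in[1,2)\),
\[
m_s(q)=2^{q(s-s')/2}\,m_{s'}(q)\ge 2^{q(s-s')/2}\ge 2^{(s-s')/2}>1 .
\]
This uses \(q\ge1\) and requires no compactness argument near \(q=2\), which is the only place a plateau could have hidden, since \(\rho\) may be infinite or badly behaved there. Combining the three ranges,
\[
\inf_{0<q<2}m_s(q)\ \ge\ \min\bigl\{1+\eta,\ 2^{q_0s/2},\ 2^{(s-s')/2}\bigr\}>1 .
\]

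The main subtle step is the first one, where the infimum could escape to \(q\downarrow0\) with \(m_s\to\E N_X\). This is exactly why Lemma~\ref{lem:small-q-gap-paper} is needed. The factor \(2^{qs/2}\) alone degenerates to \(1\) as \(q\downarrow0\), so it is the supercriticality \(\E N_X>1\) of the alive tree that prevents the plateau there. On the remaining range \([q_0,2)\), the exponential factor gains strictly once \(q\) is bounded below, either via \(\rho\ge1\) on \([q_0,1]\) or via the slack \(s-s'\) on \([1,2)\).
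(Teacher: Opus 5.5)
Your proof is correct and is essentially the paper's argument. Both pick an intermediate exponent $s'\in(D_E(X),s)$ to get $m_{s'}\ge 1$ on $(0,2)$, use Lemma~\ref{lem:small-q-gap-paper} to handle $q\le q_0$, and then use $m_s(q)=2^{q(s-s')/2}m_{s'}(q)$ for the strict gain; the paper runs this as a contradiction along a minimizing sequence $q_j\in[q_0,2)$, whereas you argue directly. Your middle piece using $\rho\ge 1$ on $[q_0,1]$ is harmless but redundant, since the $s'$-argument already gives $m_s(q)\ge 2^{q_0(s-s')/2}$ on all of $[q_0,2)$.
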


\begin{proof}
Suppose that 
\(
D_E(X)<s<1
\)
and 
\(
\inf_{0<q<2}m_s(q)=1
\). 
Set
\[
s_0=\frac{D_E(X)+s}{2}.
\]
Then 
\(
D_E(X)<s_0<s
\),
and by the definition of \(D_E(X)\), 
\(
\inf_{0<q<2}m_{s_0}(q)\ge1
\). 
By Lemma~\ref{lem:small-q-gap-paper}, there exist 
\(
q_0>0
\)
and 
\(
\eta>0
\)
such that
\[
m_s(q)\ge1+\eta
\qquad(0<q\le q_0).
\]
Thus there exists a sequence 
\(
q_j\in[q_0,2)
\) 
such that 
\(
m_s(q_j)\to1
\).
Since 
\(
m_{s_0}(q)=2^{-q(s-s_0)/2}m_s(q)
\),
we obtain
\[
\inf_{0<q<2}m_{s_0}(q)
\le
\liminf_j m_{s_0}(q_j)
\le
2^{-q_0(s-s_0)/2}<1,
\]
contradicting the preceding lower bound. Hence 
\(
\inf_{0<q<2}m_s(q)>1
\).
\end{proof}

For 
\(
s\in(0,1)
\),
set
\[
A_i^{(s)}=2^sX_i^2,
\quad i=0,1, \qquad \text{and}\qquad Z_n^{(s)}=2^{sn}\Sigma_n.
\]
The one-step branching identity yields
\begin{equation}\label{eq:normalized-square-sum-recursion-paper}
Z_{n+1}^{(s)}
=
A_0^{(s)} Z_n^{(0,s)}
+
A_1^{(s)} Z_n^{(1,s)},
\end{equation}
where 
\(
Z_n^{(0,s)}
\)
and 
\(
Z_n^{(1,s)}
\)
are independent copies of \(Z_n^{(s)}\), independent of \(X\).

\begin{lemma}
\label{lem:kappa-supercritical-gap-paper}
Let \(D_{\mathrm E}(X)<s<1\).  For \(0\le\theta\le1\), define
\[
\kappa_s(\theta)
=
\begin{cases}
\E\bigl[\one_{\{A^{(s)}_0>0\}}+\one_{\{A^{(s)}_1>0\}}\bigr],
& \theta=0,\\[2mm]
\E\bigl[(A^{(s)}_0)^\theta+(A^{(s)}_1)^\theta\bigr],
& 0<\theta\le1 .
\end{cases}
\]
Then
\[
    \inf_{0\le\theta\le1}\kappa_s(\theta)>1 .
\]
\end{lemma}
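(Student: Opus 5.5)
The key observation is that \(\kappa_s\) is a reparametrization of the profile \(m_s\).
For \(0<\theta\le1\), put \(q=2\theta\in(0,2]\). Then
\[
\kappa_s(\theta)=\E\bigl[2^{s\theta}(X_0^{2\theta}+X_1^{2\theta})\bigr]
=2^{qs/2}\rho(q)=m_s(q).
\]
For \(\theta=0\), \(\{A_i^{(s)}>0\}=\{X_i>0\}\), so \(\kappa_s(0)=\E N_X\). By Lemma~\ref{lem:alive-offspring-supercritical-paper}, \(\kappa_s(0)>1\).

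The plan is to bound \(\kappa_s\) on three pieces of \([0,1]\).

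\emph{Near \(\theta=0\).} Fix the compact interval \(K=\{s\}\subset(0,1)\). Lemma~\ref{lem:small-q-gap-paper} gives \(q_0>0\) and \(\eta>0\) with \(m_s(q)\ge1+\eta\) for \(0<q\le q_0\). This yields \(\kappa_s(\theta)\ge1+\eta\) for \(0<\theta\le q_0/2\).

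\emph{The open middle range.} On \(q_0/2\le\theta<1\), i.e. \(q_0\le q<2\), Lemma~\ref{lem:no-plateau-above-energy-threshold-paper} gives \(\inf_{0<q<2}m_s(q)>1\) because \(s>D_E(X)\). So \(\kappa_s\) is bounded below by a constant \(>1\) on \((0,1)\).

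\emph{The endpoint \(\theta=1\), i.e. \(q=2\).} This is the main obstacle, because \(q=2\) lies outside the range of the no-plateau lemma. I would use lower semicontinuity. By Fatou's lemma, since \(X_i^{q}\to X_i^2\) pointwise as \(q\uparrow2\),
\[
\rho(2)\le\liminf_{q\uparrow2}\rho(q).
\]
Hence
\[
m_s(2)=2^{s}\rho(2)\ge\lim_{q\uparrow 2}2^{qs/2}\,\inf_{q'<2}\rho(q')\cdots
\]
More cleanly, Fatou gives \(m_s(2)\le\liminf_{q\uparrow2} m_s(q)\), which is the wrong direction for a lower bound.

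So the endpoint needs a different argument. The first thing I would try is log-convexity, Lemma~\ref{lem:rho-convexity-subcritical-interval-paper}(ii), which passes to \(\log m_s\) by adding the linear term \(qs/2\log 2\). Two cases arise:

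\begin{itemize}
\item[(a)] If \(\rho(2)=\infty\), then \(\kappa_s(1)=\infty>1\) and there is nothing to prove.
\item[(b)] If \(\rho(2)<\infty\), then \(\rho\) is finite on \((0,2]\) by the Hölder interpolation in Lemma~\ref{lem:terminal-mass-moment-criterion}. On this interval \(\log m_s\) is convex and finite, hence continuous on \((0,2)\). Dominated convergence, with dominating function \(1+X_i^2\), gives continuity at \(q=2\) from the left. Then \(m_s(2)=\lim_{q\uparrow2}m_s(q)\ge\inf_{0<q<2}m_s(q)>1\).
\end{itemize}

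Combining the three ranges with \(\kappa_s(0)=\E N_X>1\) gives \(\inf_{0\le\theta\le1}\kappa_s(\theta)>1\).
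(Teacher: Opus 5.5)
Your proposal is correct and follows the paper's proof. You identify \(\kappa_s(\theta)=m_s(2\theta)\), use Lemma~\ref{lem:alive-offspring-supercritical-paper} at \(\theta=0\), use Lemma~\ref{lem:no-plateau-above-energy-threshold-paper} on \(0<\theta<1\), and let \(q\uparrow2\) at \(\theta=1\); your split into \(\rho(2)=\infty\) versus dominated convergence with \(1+X_i^2\) is a precise version of the paper's ``truncation if necessary.'' The separate small-\(q\) step is redundant, since the no-plateau lemma already covers all \(0<q<2\), and the abandoned Fatou lines should simply be deleted.
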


\begin{proof}
For \(0<\theta<1\), setting \(q=2\theta\) gives
\(\kappa_s(\theta)=m_s(q)\), so the interior bound follows from
Lemma~\ref{lem:no-plateau-above-energy-threshold-paper}.  The endpoint
\(\theta=0\) follows from Lemma~\ref{lem:alive-offspring-supercritical-paper}.
For \(\theta=1\), we pass to the limit \(q\uparrow2\) in \(m_s(q)\), using
truncation if necessary.  Hence the same strict lower bound persists at the
right endpoint.
\end{proof}

The next lemma isolates the finite-block weighted-growth input used in the
supercritical argument.  It is a standard change-of-measure consequence of the
branching-random-walk formalism, but we record the proof in the present notation
in order to keep track of possible zero weights and of the explicit parameters
\(N\), \(\tau\), and \(\beta\), in particular the product condition
\(\beta\tau>1\), which is used below in the tree-cylinder argument.

\begin{lemma}[Finite-block weighted-growth lemma]
\label{lem:finite-block-weighted-growth-paper}
Let 
\(
(A_0,A_1)
\)
be a pair of almost surely finite nonnegative random variables, and define the possibly extended-valued function 
\(
\kappa:[0,1]\to[0,\infty]
\)
by
\[
\kappa(\theta)
=
\begin{cases}
\displaystyle
\mathbb{E}\left[\one_{\{A_0>0\}}+\one_{\{A_1>0\}}\right],
& \theta=0,\\[8pt]
\displaystyle
\mathbb{E}\left[A_0^\theta+A_1^\theta\right],
& 0<\theta\le1.
\end{cases}
\]
Assume that 
\[
\inf_{0\le\theta\le1}\kappa(\theta)>1.
\]

Let 
\(
\mathcal T=\bigcup_{n\ge0}\{0,1\}^n
\)
be the full binary tree, with root \(\varnothing\). Let
\[
\bigl\{(A_0(v),A_1(v)):v\in\mathcal T\bigr\}
\]
be an i.i.d. family of copies of 
\(
(A_0,A_1)
\).
For 
\(
u=(u_1,\ldots,u_n)\in\{0,1\}^n
\),
write 
\(
u|_{j-1}=(u_1,\ldots,u_{j-1})
\),
with 
\(
u|_0=\varnothing
\),
and define
\[
A_u
=
\prod_{j=1}^{n}A_{u_j}(u|_{j-1}),
\qquad
A_\varnothing=1.
\]
Then there exist 
\(
N\ge1
\), 
\(
\tau>0
\),
and 
\(
\beta>1
\)
such that 
\[
\beta\tau>1 \quad \text{and} \quad \mathbb{E}\#\{u\in\{0,1\}^N:A_u\ge\tau\}>\beta.
\]
\end{lemma}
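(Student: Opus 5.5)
The plan is to read $\log A_u$ as a branching random walk and pick the threshold $\tau=2^{-\varepsilon N}$ to decay exponentially in $N$. The count $\E\#\{|u|=N: A_u\ge\tau\}$ will then grow like $2^{cN}$ with $c>\varepsilon$, and we take $\beta$ slightly below $2^{cN}$. Both requirements $\beta>1$ and $\beta\tau>1$ reduce to the single rate inequality $c>\varepsilon$.

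\textbf{Step 1: reductions.} First I would replace $(A_0,A_1)$ by the truncations $(A_0\wedge K, A_1\wedge K)$. This only decreases the counts $\#\{A_u\ge\tau\}$, so it suffices to prove the lemma for the truncated pair. The functions $\theta\mapsto\E[(A_i\wedge K)^\theta]$ are continuous on $[0,1]$, including at $\theta=0$ where the limit is $\Pbb(A_i>0)$, by dominated convergence. They increase in $K$ to the original $\kappa$, with values in $(1,\infty]$. By a Dini/compactness argument the truncated infimum over $[0,1]$ is still $>1$ for $K$ large. So assume $A_i\le K$. Then $\kappa$ is finite on $[0,\infty)$ and $\log\kappa$ is convex there by H\"older, exactly as in Lemma~\ref{lem:rho-convexity-subcritical-interval-paper}.

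\textbf{Step 2: the many-to-one structure and the rate.} Let $\nu$ be the finite measure on $\R$ given by
\[
\nu(B)=\E\bigl[\one_{\{A_0>0,\ \log_2A_0\in B\}}+\one_{\{A_1>0,\ \log_2A_1\in B\}}\bigr],
\]
which has total mass $\kappa(0)>1$. By branching independence, as in Lemma~\ref{lem:vector-q-mass-identity-paper}, the mean counting measure of $\{\log_2A_u:|u|=N,\ A_u>0\}$ is the $N$-fold convolution $\nu^{*N}$. Normalize $\nu$ to a probability law and apply Cram\'er's theorem to the corresponding i.i.d.\ sum $S_N$. This gives
\[
\lim_{N\to\infty}\tfrac1N\log_2\E\#\{|u|=N:A_u\ge2^{-\varepsilon N}\}
=\inf_{\theta\ge0}\bigl(\log_2\kappa(\theta)+\theta\varepsilon\bigr)=:c(\varepsilon).
\]
Only the lower bound is needed. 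It holds because the law of $\log_2 A$ is bounded above, so the Cram\'er lower bound via exponential tilting applies. The degenerate case, where $S_N\ge-\varepsilon N$ holds with probability bounded below, is easier still.

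\textbf{Step 3: choosing $\varepsilon$ so that $c(\varepsilon)>\varepsilon$.} This is equivalent to
\[
\inf_{\theta\ge0}\bigl(\log_2\kappa(\theta)+(\theta-1)\varepsilon\bigr)>0.
\]
This is the main obstacle: for $\theta>1$ the function $\kappa$ may drop below $1$, so we cannot simply take $\tau=1$. Convexity resolves it. Let $g_-\le g_+$ be the one-sided derivatives of $\log_2\kappa$ at $\theta=1$; they are finite after truncation. Set $\varepsilon=\max(-g_-,\varepsilon_0)$ with $\varepsilon_0>0$ small.
\begin{itemize}
\item For $\theta\le1$, the supporting line gives
\[
\log_2\kappa(\theta)+(\theta-1)\varepsilon\ge\log_2\kappa(1)+(1-\theta)\bigl(-g_--\varepsilon\bigr).
\]
If $g_-<0$ the last term vanishes and the bound is $\log_2\kappa(1)>0$. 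If $g_-\ge0$, use $\log_2\kappa(\theta)\ge\log_2\inf_{[0,1]}\kappa>0$ directly, which dominates $(1-\theta)\varepsilon_0$ once $\varepsilon_0$ is small.
\item For $\theta>1$,
\[
\log_2\kappa(\theta)+(\theta-1)\varepsilon\ge\log_2\kappa(1)+(\theta-1)(g_++\varepsilon)\ge\log_2\kappa(1)>0,
\]
since $g_++\varepsilon\ge g_--g_-\ge0$ in the first case, and in the second case $g_+\ge g_-\ge0$.
\end{itemize}
The infimum over $\theta\ge0$ is therefore positive; call it $2\eta>0$, so $c(\varepsilon)\ge\varepsilon+2\eta$.

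\textbf{Step 4: choosing $N$, $\tau$, $\beta$.} Take $N$ large enough that $\E\#\{|u|=N:A_u\ge2^{-\varepsilon N}\}>2^{(\varepsilon+\eta)N}$. Then set $\tau=2^{-\varepsilon N}$ and $\beta=2^{(\varepsilon+\eta)N}$. This gives $\beta>1$, $\beta\tau=2^{\eta N}>1$, and the required strict inequality $\E\#\{u\in\{0,1\}^N:A_u\ge\tau\}>\beta$.
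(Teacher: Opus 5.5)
Your proof is correct. It shares the paper's skeleton: truncation, identification of the mean counting measure of $\{\log A_u\}$ with $\nu^{*N}$, exponential tilting, and convexity of $\log\kappa$. The threshold, however, is chosen by a different mechanism.

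\textbf{The paper's route.} The paper truncates on both sides, $A_i\one_{\{m^{-1}\le A_i\le m\}}$, so $\Lambda(\theta)=\ln\int e^{\theta x}\,\nu(dx)$ is smooth on all of $\mathbb R$. It then picks an interior $\theta_0\in(0,1)$ whose tangent line to $\Lambda$ is positive at both $0$ and $1$, and tilts at that fixed $\theta_0$. From there it needs only the strong law under the tilted law, with $\tau=e^{N(r-\eta)}$ and $r=\Lambda'(\theta_0)$. The two tangent values $\gamma>0$ and $\gamma+r>0$ give $\beta>1$ and $\beta\tau>1$. Because it never uses the optimality of a Chernoff bound, the paper never needs $\kappa$ outside $[0,1]$.

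\textbf{Your route.} You use the exact half-line rate $\inf_{\theta\ge0}\bigl(\log_2\kappa(\theta)+\theta\varepsilon\bigr)$. That is what creates your ``main obstacle'' at $\theta>1$. It also makes you rely on the full Chernoff--Cram\'er lower bound, including its boundary cases: threshold equal to the untilted mean (possibly with infinite variance), or equal to the essential supremum. These are covered by the classical half-line theorem, but it is a heavier black box than the paper's SLLN. In exchange you get a one-parameter threshold $\tau=2^{-\varepsilon N}<1$, for which $\beta>1$ is automatic from $\beta\tau>1$. Your choice $\varepsilon=-g_-$ is simply tilting at the endpoint $\theta=1$, where the growth rate exceeds $\varepsilon$ by exactly $\log_2\kappa(1)>0$.

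\textbf{A small slip in Step~3.} If $0<-g_-<\varepsilon_0$, then $\varepsilon=\varepsilon_0\neq-g_-$, so the term $(1-\theta)(-g_--\varepsilon)$ does not vanish. For $\theta\le1$ the case split should be on whether $\varepsilon=-g_-$ or $\varepsilon=\varepsilon_0$. In the latter case your direct bound $\log_2\inf_{[0,1]}\kappa-\varepsilon_0>0$ applies whatever the sign of $g_-$.
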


\begin{proof}
Let
\[
\alpha_0=\inf_{0\le\theta\le1}\kappa(\theta)>1.
\]
Choose 
\(
\delta_0>0
\)
such that
\[
1+2\delta_0<\alpha_0 .
\]

For 
\(
m\ge2
\),
set
\[
A_i^{[m]}
=
A_i\one_{\{m^{-1}\le A_i\le m\}},
\qquad i=0,1,
\]
and define
\[
\kappa_m(\theta)
=
\begin{cases}
\displaystyle
\mathbb{E}\left[
\one_{\{A_0^{[m]}>0\}}+\one_{\{A_1^{[m]}>0\}}
\right],
& \theta=0,\\[8pt]
\displaystyle
\mathbb{E}\left[
\bigl(A_0^{[m]}\bigr)^\theta+
\bigl(A_1^{[m]}\bigr)^\theta
\right],
& 0<\theta\le1.
\end{cases}
\]
For each fixed \(\theta\in[0,1]\), monotone convergence yields \(\kappa_m(\theta)\uparrow\kappa(\theta)\), with the case \(\theta=0\) understood through the indicators of \(\{A_i>0\}\). Since \(\inf_{0\le\theta\le1}\kappa(\theta)>1+2\delta_0\), continuity of the truncated functions \(\kappa_m\) and compactness of \([0,1]\) allow us to choose a single \(m_0\) such that
\[
\inf_{0\le\vartheta\le1}\kappa_{m_0}(\vartheta)>1+\delta_0.
\]

We claim that it is enough to prove the lemma for the truncated pair
\(
(A_0^{[m_0]},A_1^{[m_0]}).
\)
Indeed, for each vertex 
\(
v\in\mathcal T
\),
set
\[
A_i^{[m_0]}(v)
=
A_i(v)\one_{\{m_0^{-1}\le A_i(v)\le m_0\}},
\qquad i=0,1.
\]
Then
\(
\{(A_0^{[m_0]}(v),A_1^{[m_0]}(v)):v\in\mathcal T\}
\)
is an i.i.d. family of copies of 
\(
(A_0^{[m_0]},A_1^{[m_0]})
\).
If
\[
A_u^{[m_0]}
=
\prod_{j=1}^{\lvert u \rvert}A_{u_j}^{[m_0]}(u|_{j-1}),
\]
then
\[
0\le A_u^{[m_0]}\le A_u
\]
for every 
\(
u\in\mathcal T
\).
Consequently, for every 
\(
\tau>0
\),
\[
\{u:A_u^{[m_0]}\ge\tau\}
\subseteq
\{u:A_u\ge\tau\}.
\]
Thus any lower bound for the expected number of truncated products exceeding \(\tau\) is also a lower bound for the corresponding original products.

Therefore, from now on, replace 
\(
(A_0,A_1)
\) 
by the truncated pair 
\(
(A_0^{[m_0]},A_1^{[m_0]})
\), 
and write it again as 
\(
(A_0,A_1)
\).
We also write \(\kappa\) for the corresponding function associated with this new pair. Then
\[
A_i\in\{0\}\cup[m_0^{-1},m_0],
\qquad i=0,1,
\]
and, by the preceding display,
\begin{equation}\label{eq:truncated-uniform-supercritical-paper}
\inf_{0\le\theta\le1}\kappa(\theta)>1.
\end{equation}

Define a finite measure \(\nu\) on \(\mathbb R\) by
\[
\nu(B)
=
\mathbb{E}\sum_{i=0}^1
\one_{\{A_i>0,\ \ln A_i\in B\}},
\qquad B\in\mathcal B(\mathbb R).
\]
Since 
\(
A_i\in\{0\}\cup[m_0^{-1},m_0]
\), 
the measure \(\nu\) is supported on 
\(
[-\ln m_0,\ln m_0]
\).
For 
\(
\theta\in\mathbb R
\), 
define
\[
\Lambda(\theta)
=
\ln\int e^{\theta x}\,\nu(dx).
\]
This is well-defined because \(\nu\) has compact support and positive total mass. Moreover, for 
\(
0\le\theta\le1
\),
\[
\int e^{\theta x}\,\nu(dx)
=
\mathbb{E}\sum_{i=0}^1
\one_{\{A_i>0\}}e^{\theta\ln A_i}.
\]
For 
\(
0<\theta\le1
\),
this equals 
\(
\mathbb{E}\left[A_0^\theta+A_1^\theta\right]
\), 
while for 
\(
\theta=0
\)
it equals 
\(
\mathbb{E}\left[\one_{\{A_0>0\}}+\one_{\{A_1>0\}}\right]
\).

By \eqref{eq:truncated-uniform-supercritical-paper},
\[
\Lambda(\theta)=\ln\kappa(\theta)>0,
\qquad 0\le\theta\le1.
\]

Since \(\nu\) has compact support and positive total mass, \(\Lambda\) is finite, convex, and \(C^\infty\) on \(\mathbb R\).

We next use a simple geometric consequence of convexity. Since \(
\Lambda>0
\)
on \([0,1]\), one can choose 
\(
\theta_0\in(0,1)
\)
such that the tangent line to \(\Lambda\) at \(\theta_0\) is positive at both endpoints \(0\) and \(1\). Indeed, this is immediate by taking a minimizer of \(\Lambda\) on \([0,1]\): if the minimizer lies in the interior, the tangent is horizontal, while if it lies at an endpoint, one moves slightly into the interval.

Set
\[
r=\Lambda'(\theta_0),
\qquad
\gamma=\Lambda(\theta_0)-\theta_0\Lambda'(\theta_0).
\]
The values of the tangent line
\[
t\mapsto \Lambda(\theta_0)+(t-\theta_0)\Lambda'(\theta_0)
\]
at \(t=0\) and \(t=1\) are respectively \(\gamma\) and 
\(
\gamma+r
\).
Thus
\(
\gamma>0
\),
\(
\gamma+r>0
\).
Define a probability measure \(Q\) on \(\mathbb R\) by
\[
Q(dx)
=
e^{\theta_0x-\Lambda(\theta_0)}\,\nu(dx).
\]
Indeed,
\[
Q(\mathbb R)
=
e^{-\Lambda(\theta_0)}
\int e^{\theta_0x}\,\nu(dx)
=
1.
\]
Let 
\(
Y_1,Y_2,\ldots
\)
be i.i.d. random variables with law \(Q\), and write
\(
S_N=Y_1+\cdots+Y_N
\).
Since \(Q\) has compact support, \(Y_1\) is integrable, and by the strong law of large numbers,
\[
\frac{S_N}{N}\longrightarrow \mathbb E_QY_1
\qquad Q\text{-a.s.}
\]
Moreover,
\[
\mathbb E_QY_1
=
\int x\,Q(dx)
=
e^{-\Lambda(\theta_0)}\int xe^{\theta_0x}\,\nu(dx)
=
\Lambda'(\theta_0)
=
r.
\]
Therefore
\[
\frac{S_N}{N}\longrightarrow r
\qquad Q\text{-a.s.}
\]

Choose \(\eta>0\) sufficiently small that 
\[
\gamma-\theta_0\eta>0,
\qquad
\gamma+r-\eta-\theta_0\eta>0.
\]

Then choose 
\(
\delta>0
\) 
sufficiently small that
\begin{equation}\label{eq:delta-choice-finite-block-paper}
\gamma-\theta_0\eta-2\delta>0,
\qquad
\gamma+r-\eta-\theta_0\eta-2\delta>0.
\end{equation}

For 
\(
N\ge1
\), 
define
\[
Z_N
=
\#\left\{
u\in\{0,1\}^N:
A_u\ge e^{N(r-\eta)}
\right\}.
\]
Equivalently,
\[
Z_N
=
\sum_{\lvert u \rvert=N}
\one_{\{A_u>0\}}
\one_{[N(r-\eta),\infty)}(\ln A_u).
\]

We shall use the following many-to-one identity: for every nonnegative Borel function 
\(
f:\mathbb R\to[0,\infty]
\)
and every 
\(
N\ge1
\),
\begin{equation}\label{eq:many-to-one-nu-convolution-paper}
\mathbb{E}\sum_{\lvert u \rvert=N}
\one_{\{A_u>0\}}f(\ln A_u)
=
\int f(x)\,\nu^{*N}(dx).
\end{equation}
Indeed, the case 
\(
N=1
\)
is precisely the definition of \(\nu\), and the general case follows by induction on \(N\), using the independence of the weights on different vertices and the definition of convolution.

Taking
\[
f(x)=\one_{[N(r-\eta),\infty)}(x)
\]
in \eqref{eq:many-to-one-nu-convolution-paper}, we obtain
\[
\mathbb{E}Z_N
=
\nu^{*N}\bigl([N(r-\eta),\infty)\bigr).
\]

Let 
\(
I_N=[N(r-\eta),N(r+\eta)]
\). 
Then
\[
\mathbb{E}Z_N
\ge
\nu^{*N}(I_N).
\]
We next estimate 
\(
\nu^{*N}(I_N)
\).
Since 
\(
Q(dx)=e^{\theta_0x-\Lambda(\theta_0)}\,\nu(dx)
\), 
we have
\[
\nu(dx)=e^{\Lambda(\theta_0)-\theta_0x}\,Q(dx).
\]
Hence, by the definition of convolution and by the fact that 
\(
(Y_1,\ldots,Y_N)
\)
has law 
\(
Q^{\otimes N}
\),
\[
\begin{aligned}
\nu^{*N}(I_N)
&=
e^{N\Lambda(\theta_0)}
\mathbb E_Q\left[
e^{-\theta_0S_N}\one_{\{S_N\in I_N\}}
\right] \\
&\ge
e^{N\Lambda(\theta_0)}
e^{-\theta_0N(r+\eta)}
Q(S_N\in I_N).
\end{aligned}
\]
By the strong law,
\[
Q(S_N\in I_N)\longrightarrow1.
\]
Consequently, for all sufficiently large \(N\), 
\(
Q(S_N\in I_N)\ge e^{-N\delta}
\).
For such \(N\),
\[
\mathbb{E}Z_N
\ge
\exp\left(N(\Lambda(\theta_0)-\theta_0r-\theta_0\eta-\delta)\right).
\]
Since 
\(
\gamma=\Lambda(\theta_0)-\theta_0r
\),
this becomes
\[
\mathbb{E}Z_N
\ge
\exp\left(N(\gamma-\theta_0\eta-\delta)\right).
\]

Choose such an \(N\), and set
\[
\tau=e^{N(r-\eta)},
\qquad
\beta=\exp\left(N(\gamma-\theta_0\eta-2\delta)\right).
\]
By \eqref{eq:delta-choice-finite-block-paper}, 
\(
\beta>1
\).
Moreover, 
\(
\mathbb{E}Z_N>\beta
\).
Since
\[
Z_N=\#\{u\in\{0,1\}^N:A_u\ge\tau\},
\]
we have
\[
\mathbb{E}\#\{u\in\{0,1\}^N:A_u\ge\tau\}>\beta.
\]
Finally,
\[
\beta\tau
=
\exp\left(
N(\gamma+r-\eta-\theta_0\eta-2\delta)
\right)>1
\]
by \eqref{eq:delta-choice-finite-block-paper}. This proves the lemma for the truncated pair, and hence for the original pair.
\end{proof}

\begin{lemma}
\label{lem:bounded-gw-growth-paper}
Let 
\(
(Z_k)_{k\ge0}
\)
be a Galton--Watson process with 
\(
Z_0=1
\),
offspring distribution \(\xi\), and mean \(m=\mathbb{E}\xi>1\).
Assume that \(\xi\) is bounded. Then, for every 
\(
1<\beta<m
\),
\(
Z_k\ge \beta^k
\)
for all sufficiently large \(k\), almost surely on the survival event
\[
S=\{Z_k>0\text{ for all }k\ge0\}.
\]
\end{lemma}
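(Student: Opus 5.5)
The plan is to use the Kesten--Stigum theorem, which applies because bounded offspring gives \(\E[\xi\log^+\xi]<\infty\). Set \(W_k=Z_k/m^k\). This is a nonnegative martingale, so it converges almost surely to a limit \(W\). Since the \(L\log L\) condition holds, Kesten--Stigum gives \(\E W=1\) and, more to the point, \(\{W>0\}=S\) up to a null set. Both events have probability \(1-q_{\mathrm{ext}}\): the event \(\{W=0\}\) is hereditary, so its probability solves the fixed-point equation of the generating function, and it is not \(1\) because \(\E W=1\).

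On \(\{W>0\}\) we get \(Z_k/m^k\to W>0\). Hence \(Z_k\ge \tfrac12 W m^k\) for all large \(k\). Since \(m>\beta\), we have \(\tfrac12 W (m/\beta)^k\to\infty\), and therefore \(Z_k\ge\beta^k\) eventually. Combined with \(S=\{W>0\}\) almost surely, this gives the claim.

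The main obstacle is the identification \(S=\{W>0\}\) almost surely. If we want to avoid quoting Kesten--Stigum, there is a self-contained route that uses boundedness directly. Since \(\xi\le K\), the martingale increments satisfy
\[
\Var(W_{k+1}\mid Z_k)=Z_k\Var(\xi)/m^{2k+2}.
\]
Summing, \(\sup_k\E W_k^2\le 1+\Var(\xi)\sum_k m^{-k-2}<\infty\). So \(W_k\to W\) in \(L^2\), and \(\E W=1\), which gives \(\Pbb(W>0)>0\).

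Next let \(\phi\) be the offspring generating function and \(p=\Pbb(W=0)\). Conditioning on the first generation, \(W=m^{-1}\sum_{i=1}^{Z_1}W^{(i)}\) with independent copies \(W^{(i)}\). This gives \(p=\phi(p)\). Since \(p<1\), \(p\) must equal the extinction probability, the unique fixed point of \(\phi\) in \([0,1)\) (assuming \(m>1\)). Because extinction is contained in \(\{W=0\}\) and the two events have equal probability, they agree almost surely, which is exactly \(S=\{W>0\}\).
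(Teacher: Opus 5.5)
Your proposal is correct and follows the paper's proof. Bounded offspring gives the $L\log L$ condition, Kesten--Stigum yields $Z_k/m^k\to W$ with $W>0$ almost surely on $S$, and $(m/\beta)^k\to\infty$ finishes the argument. Your optional self-contained route is also valid: $L^2$-boundedness of $Z_k/m^k$, together with the identity $\Pbb(W=0)=\phi(\Pbb(W=0))$ and the uniqueness of the fixed point of $\phi$ in $[0,1)$, uses only finite offspring variance, so it could replace the Kesten--Stigum citation if you wanted.
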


\begin{proof}
Since \(\xi\) is bounded, \(\mathbb E[\xi\log^+\xi]<\infty\). By the
Kesten--Stigum theorem for supercritical Galton--Watson processes,
\[
\frac{Z_k}{m^k}\longrightarrow Y_\infty
\]
almost surely, where \(Y_\infty>0\) almost surely on the survival event \(S\);
see \cite[Theorem~A]{LyonsPemantlePeres1995}, or the original theorem of
Kesten and Stigum~\cite{KestenStigum1966}. Hence, on \(S\),
\[
\frac{Z_k}{\beta^k}
=
\frac{Z_k}{m^k}\left(\frac{m}{\beta}\right)^k
\longrightarrow \infty. 
\]
Therefore \(Z_k\ge \beta^k\) for all sufficiently large
\(k\), almost surely on \(S\).
\end{proof}

\begin{proposition}[Positive-probability supercritical divergence]
\label{prop:positive-probability-supercritical-divergence-paper}
Let 
\(
D_E(X)<s<1
\).
Then
\begin{equation}\label{eq:positive-probability-supercritical-divergence-paper}
\mathbb P\left(
\limsup_{n\to\infty}2^{sn}\Sigma_n=\infty
\right)>0.
\end{equation}
\end{proposition}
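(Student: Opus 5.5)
Since \(D_E(X)<s<1\), Lemma~\ref{lem:kappa-supercritical-gap-paper} gives \(\inf_{0\le\theta\le1}\kappa_s(\theta)>1\) for the pair \(A_i^{(s)}=2^sX_i^2\). We then apply the finite-block weighted-growth Lemma~\ref{lem:finite-block-weighted-growth-paper} to \((A_0^{(s)},A_1^{(s)})\), with path products \(A_u=2^{s|u|}L_u^2\). This yields \(N\ge1\), \(\tau>0\) and \(\beta>1\) with \(\beta\tau>1\) and
\[
\E\#\{u\in\{0,1\}^N:2^{sN}L_u^2\ge\tau\}>\beta .
\]
The strategy is to build an embedded Galton--Watson process in blocks of \(N\) generations. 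Its growth beats the per-block loss \(\tau\), so that the square sum \(2^{sn}\Sigma_n\) is bounded below by a quantity growing like \((\beta\tau)^k\) along \(n=kN\), up to factors involving terminal masses.

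\textbf{Embedded process.} Say a vertex \(v\) with \(|v|=kN\) is \emph{good} if every block step along its ancestry is good. That is, for each \(j<k\), the block word \(w\) from level \(jN\) to \((j+1)N\) satisfies \(2^{sN}(L^{(v|jN)}_{w})^2\ge\tau\). By independence of disjoint descendant environments, the good vertices form a Galton--Watson process \((G_k)\) whose offspring \(\xi\) is at most \(2^N\), hence bounded, and whose mean satisfies \(m=\E\xi>\beta\). Pick \(\beta'\) with \(1<\beta'<m\) and \(\beta'\tau>1\); we may take \(\beta'\) close to \(m\), using \(m\tau>\beta\tau>1\). Lemma~\ref{lem:bounded-gw-growth-paper} then gives \(G_k\ge\beta'^k\) eventually, almost surely on survival. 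Since \(m>1\), survival has positive probability. Each good \(v\) at level \(kN\) satisfies \(2^{skN}L_v^2\ge\tau^k\).

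\textbf{Passing to \(\Sigma_n\).} By the branching identity (Lemma~\ref{lem:tree-cylinder-square-sum-branching-identity-paper}) with \(n=0\), we have \(\Sigma_{kN}=\sum_{|v|=kN}L_v^2(M^{(v)})^2\). Hence
\[
2^{skN}\Sigma_{kN}\ge \tau^k\sum_{v\text{ good},\,|v|=kN}(M^{(v)})^2 .
\]
The remaining issue is that the \(M^{(v)}\) are not measurable with respect to the good-tree filtration \(\mathcal F^X_{kN}\). However, they are i.i.d.\ copies of \(M\), independent of \(\mathcal F^X_{kN}\), and \(\Pbb(M>0)>0\) by Corollary~\ref{cor:energy-admissibility-nontriviality-paper}. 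So there is \(c>0\) with \(p:=\Pbb(M^2\ge c)>0\). Conditionally on \(\mathcal F^X_{kN}\) and on \(G_k\ge\beta'^k\), the number of good \(v\) with \((M^{(v)})^2\ge c\) is Binomial\((G_k,p)\). It is therefore at least \(pG_k/2\) except with probability \(\le e^{-c'\beta'^k}\), which is summable in \(k\). Borel--Cantelli then gives, almost surely on survival of \((G_k)\),
\[
2^{skN}\Sigma_{kN}\ge \tfrac{cp}{2}(\beta'\tau)^k\to\infty
\]
for all large \(k\). This yields \eqref{eq:positive-probability-supercritical-divergence-paper}, because survival has positive probability.

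\textbf{Main obstacle.} The delicate step is the decoupling in the last paragraph. The event is defined on the embedded tree, while the masses \(M^{(v)}\) sit below it, and one must make sure the conditional independence is legitimate. It is, because goodness at level \(kN\) depends only on \(\mathcal F^X_{kN}\). The Binomial concentration then handles the rest. A secondary point is the choice of \(\beta'\): it must satisfy both \(\beta'<m\) for Lemma~\ref{lem:bounded-gw-growth-paper} and \(\beta'\tau>1\), which is exactly what the product condition \(\beta\tau>1\) in Lemma~\ref{lem:finite-block-weighted-growth-paper} secures.
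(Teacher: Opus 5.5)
Your proposal is correct and follows the paper's proof essentially step for step. Both arguments use the same $N$-block good-vertex Galton--Watson process from Lemma~\ref{lem:finite-block-weighted-growth-paper}, growth via Lemma~\ref{lem:bounded-gw-growth-paper}, a binomial Chernoff bound on the conditionally independent masses \(M^{(v)}\), and Borel--Cantelli along \(n=kN\). The only differences are cosmetic: your auxiliary \(\beta'\) is unnecessary, since \(\beta\) itself already satisfies \(1<\beta<m\) and \(\beta\tau>1\). Also, you apply Borel--Cantelli directly to \(B_k\cap\{G_k\ge\beta'^k\}\) rather than through the paper's events \(C_L\), which is equally valid.
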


\begin{proof}
Recall that 
\(
A_i^{(s)}=2^sX_i^2
\), 
\(i=0,1\), and, for 
\(
u\in\mathcal T
\),
\[
A_u^{(s)}
=
\prod_{j=1}^{\lvert u \rvert}
A_{u_j}^{(s)}(u|_{j-1}).
\]
Thus 
\(
A_u^{(s)}=2^{s\lvert u \rvert}L_u^2
\),
and
\begin{equation}\label{eq:scaled-square-sum-as-A-sum-paper}
2^{sn}\Sigma_n
=
\sum_{\lvert u \rvert=n}
A_u^{(s)}\bigl(M^{(u)}\bigr)^2.
\end{equation}

Using the convention \(a^0=\one_{\{a>0\}}\), Lemma~\ref{lem:kappa-supercritical-gap-paper}
gives
\[
    \inf_{0\le\theta\le1}
    \E\bigl[(A^{(s)}_0)^\theta+(A^{(s)}_1)^\theta\bigr]>1 .
\]
Applying Lemma~\ref{lem:finite-block-weighted-growth-paper} to
\((A^{(s)}_0,A^{(s)}_1)\), we obtain \(N\ge1\), \(\tau>0\), and \(\beta>1\)
such that
\[
    \beta\tau>1,
    \qquad
    m:=\E\#\{u\in\{0,1\}^N:A^{(s)}_u\ge\tau\}>\beta .
\]

Define an embedded \(N\)-block Galton--Watson process. The root is good; if \(u\) is good, then \(uv\), 
\(
v\in\{0,1\}^N
\), 
is good if and only if
\[
\frac{A_{uv}^{(s)}}{A_u^{(s)}}\ge\tau.
\]
Let \(\mathcal G_k\) be the set of good vertices in generation \(k\). The offspring distribution has mean 
\(
m>\beta>1
\)
and is bounded by \(2^N\); hence the process survives with positive probability. Denote its survival event by \(S_{\mathrm{good}}\). By Lemma~\ref{lem:bounded-gw-growth-paper}, on \(S_{\mathrm{good}}\),
\begin{equation}\label{eq:good-tree-growth-eventual-paper}
\#\mathcal G_k\ge\beta^k
\end{equation}
for all sufficiently large \(k\), almost surely. Moreover, every 
\(
u\in\mathcal G_k
\)
satisfies
\[
\lvert u \rvert=kN,
\qquad
A_u^{(s)}\ge\tau^k.
\]

Since 
\(
\mathbb P(M>0)>0
\),
choose \(a>0\) with
\[
p_a:=\mathbb P(M^2\ge a)>0.
\]
For each \(k\), conditionally on \(\mathcal F^X_{kN}\), the variables
\[
\one_{\{(M^{(u)})^2\ge a\}},
\qquad u\in\mathcal G_k,
\]
are independent Bernoulli variables with parameter \(p_a\). Hence, by the binomial Chernoff bound, there exists 
\(
c_a>0
\)
such that
\[
\mathbb P(B_k\mid\mathcal F^X_{kN})
\le
\exp(-c_a\#\mathcal G_k),
\]
where
\[
B_k
=
\left\{
\#\left\{
u\in\mathcal G_k:
(M^{(u)})^2\ge a
\right\}
<
\frac{p_a}{2}\#\mathcal G_k
\right\}.
\]

For 
\(
L\ge1
\),
set
\[
C_L
=
\left\{
\#\mathcal G_k\ge\beta^k
\text{ for every } k\ge L
\right\}.
\]
For 
\(
k\ge L
\),
\[
\mathbb P(B_k\cap C_L)
\le
\mathbb E\left[
\one_{\{\#\mathcal G_k\ge\beta^k\}}
\mathbb P(B_k\mid\mathcal F^X_{kN})
\right]
\le
\exp(-c_a\beta^k).
\]
Thus 
\(
\sum_{k=L}^{\infty}\mathbb P(B_k\cap C_L)<\infty
\),
and Borel--Cantelli implies
\[
\mathbb P(B_k\ \text{i.o.}\ \cap C_L)=0.
\]
Since, by \eqref{eq:good-tree-growth-eventual-paper},
\[
S_{\mathrm{good}}
\subseteq
\bigcup_{L=1}^{\infty}C_L
\]
up to a null event, we have
\[
\mathbb P(B_k\ \text{i.o.}\ \cap S_{\mathrm{good}})=0.
\]
Consequently, on 
\(
S_{\mathrm{good}}
\),
almost surely, for all sufficiently large \(k\),
\[
\#\left\{
u\in\mathcal G_k:
(M^{(u)})^2\ge a
\right\}
\ge
\frac{p_a}{2}\#\mathcal G_k
\ge
\frac{p_a}{2}\beta^k.
\]

Along \(n=kN\), from \eqref{eq:scaled-square-sum-as-A-sum-paper}, we obtain
\[
\begin{aligned}
2^{skN}\Sigma_{kN}
&=
\sum_{\lvert u \rvert=kN}
A_u^{(s)}\bigl(M^{(u)}\bigr)^2  \\
&\ge
a\tau^k
\#\left\{
u\in\mathcal G_k:
(M^{(u)})^2\ge a
\right\}
\ge
\frac{a p_a}{2}(\beta\tau)^k .
\end{aligned}
\]
Since \(\beta\tau>1\), the right-hand side tends to \(+\infty\). Therefore
\[
\limsup_{n\to\infty}2^{sn}\Sigma_n=\infty
\]
on \(S_{\mathrm{good}}\), up to a null event. Since 
\(
\mathbb P(S_{\mathrm{good}})>0
\),
\eqref{eq:positive-probability-supercritical-divergence-paper} follows.
\end{proof}

\subsection{Alive-tree amplification}
\label{subsec:alive-tree-amplification-paper}

The finite-block argument above gives divergence with positive probability.  We
now amplify this to an almost sure statement on the non-extinction event
\(\{M>0\}\).  This step uses only the alive tree and the independence of
descendant cascades, and is included explicitly because zero weights are
allowed and the final energy theorem is conditioned on non-extinction.

Let
\[
\mathcal T_+=\{u\in\mathcal T:L_u>0\}
\]
be the alive tree. Its generation-\(n\) population size is
\[
Z_n^+=\#\{u\in\{0,1\}^n:L_u>0\}.
\]

\begin{lemma}[Alive-tree growth]
\label{lem:alive-tree-growth-paper}
The alive tree is a supercritical Galton--Watson tree. On its survival event,
\(
Z_n^+\to\infty
\)
almost surely.
Moreover,
\[
\{M>0\}\subseteq\{\mathcal T_+\text{ survives}\}
\]
up to null events.
\end{lemma}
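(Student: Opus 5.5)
The plan has three parts: identify the alive tree as a Galton--Watson tree, show its population diverges on survival, and show that positive terminal mass forces survival.

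\emph{Galton--Watson structure.} Each vertex \(u\) with \(L_u>0\) has alive children \(ui\) exactly when \(X_i(u)>0\), since \(L_{ui}=L_uX_i(u)\). Hence the number of alive children of \(u\) is \(N_X(u)=\one_{\{X_0(u)>0\}}+\one_{\{X_1(u)>0\}}\). These counts are i.i.d.\ copies of \(N_X\), because distinct vertices carry independent vectors \(X(u)\). The offspring count of \(u\) is measurable with respect to \(X(u)\), and \(X(u)\) is independent of \(\mathcal F^X_{\lvert u\rvert}\), which determines whether \(u\) is alive. Therefore \((Z_n^+)\) is a Galton--Watson process with \(Z_0^+=1\) and offspring law \(N_X\in\{0,1,2\}\). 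Lemma~\ref{lem:alive-offspring-supercritical-paper} gives \(\E N_X>1\), so the process is supercritical.

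\emph{Divergence on survival.} The offspring law is bounded by \(2\) and has mean \(\E N_X>1\), so Lemma~\ref{lem:bounded-gw-growth-paper} applies. Fixing any \(1<\beta<\E N_X\), it gives \(Z_n^+\ge\beta^n\) eventually, almost surely on survival. In particular \(Z_n^+\to\infty\). Alternatively one can use the classical fact that a non-degenerate Galton--Watson process with \(\Pbb(\xi=1)<1\) either dies out or tends to infinity. Here \(\Pbb(N_X=1)<1\) because \(\E N_X>1\).

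\emph{Inclusion \(\{M>0\}\subseteq\{\mathcal T_+\text{ survives}\}\).} Suppose the alive tree dies out, so \(Z_n^+=0\) for some \(n\). Then \(L_u=0\) for every \(\lvert u\rvert=n\), hence \(M_m=\sum_{\lvert u\rvert=m}L_u=0\) for all \(m\ge n\), because descendants of dead vertices are dead. Therefore \(M=\lim M_m=0\). Taking complements, on the full-probability event where \(M_n\to M\) (Lemma~\ref{lem:vector-total-mass-martingale-paper}) we get \(\{M>0\}\subseteq\{\mathcal T_+\text{ survives}\}\).

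The argument is essentially routine. The only point needing care is the independence claim that makes \((Z_n^+)\) a genuine Galton--Watson process. That is, conditionally on \(\mathcal F^X_n\), the offspring counts of the alive level-\(n\) vertices are i.i.d.\ with law \(N_X\), even though the coordinates \(X_0(u),X_1(u)\) at a single vertex may be dependent. This holds because the dependence stays within one vertex's offspring count, and distinct vertices are independent.
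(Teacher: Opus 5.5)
Your proof is correct and follows the paper's argument: you identify \(\mathcal T_+\) as a Galton--Watson tree with offspring law \(N_X\), get supercriticality from Lemma~\ref{lem:alive-offspring-supercritical-paper}, and observe that extinction forces \(M_n=0\) eventually and hence \(M=0\). The only cosmetic difference is that you justify \(Z_n^+\to\infty\) on survival via Lemma~\ref{lem:bounded-gw-growth-paper} (or the classical extinction-or-explosion dichotomy), whereas the paper simply asserts this standard fact.
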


\begin{proof}
Each alive vertex has offspring law
\[
N_X=\one_{\{X_0>0\}}+\one_{\{X_1>0\}},
\]
and offspring numbers at distinct vertices are independent. Hence \(\mathcal T_+\) is a Galton--Watson tree. By Lemma~\ref{lem:alive-offspring-supercritical-paper}, 
\(
\E N_X>1
\);
thus it is supercritical, and its population tends to infinity almost surely on its survival event.

If the alive tree becomes extinct, then \(L_u=0\) for all sufficiently deep vertices \(u\). Hence \(M_n=0\) eventually, and therefore \(M=0\). Thus
\[
\{M>0\}\subseteq\{\mathcal T_+\text{ survives}\}
\]
up to null events.
\end{proof}

For a vertex \(u\), define the descendant normalized square-sum limsup by
\[
Y_s^{(u)}
=
\limsup_{n\to\infty}2^{sn}\Sigma_n^{(u)}.
\]
For each fixed \(m\), conditionally on \(\mathcal F_m^X\), the family
\[
\{Y_s^{(u)}:\lvert u \rvert=m\}
\]
is independent, and each member has the same law as
\[
Y_s=\limsup_{n\to\infty}2^{sn}\Sigma_n.
\]

\begin{proposition}[Subtree amplification for supercritical divergence]
\label{prop:subtree-amplification-supercritical-divergence-paper}
Let 

\(
D_E(X)<s<1
\).
Then \[\limsup_{n\to\infty}2^{sn}\Sigma_n=\infty\qquad \text{almost surely on } \{M>0\}.\]
\end{proposition}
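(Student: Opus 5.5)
The plan is to combine the positive-probability divergence of Proposition~\ref{prop:positive-probability-supercritical-divergence-paper} with the growth of the alive tree (Lemma~\ref{lem:alive-tree-growth-paper}) by a standard zero--one amplification. Set \(p:=\Pbb(Y_s=\infty)>0\), where \(Y_s=\limsup_n 2^{sn}\Sigma_n\).

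\textbf{Step 1: the branching inequality transfers divergence upward.} For every \(m\) and every \(u\) with \(|u|=m\), Lemma~\ref{lem:tree-cylinder-square-sum-branching-identity-paper} gives
\[
2^{s(m+n)}\Sigma_{m+n}\ \ge\ 2^{sm}L_u^2\,2^{sn}\Sigma^{(u)}_n ,
\]
since all other terms are nonnegative. Hence, if \(L_u>0\) and \(Y^{(u)}_s=\infty\), then \(Y_s=\infty\). Consequently
\[
\{Y_s<\infty\}\subseteq\bigcap_{|u|=m,\ L_u>0}\{Y_s^{(u)}<\infty\}
\qquad\text{for every } m .
\]

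\textbf{Step 2: conditional independence.} Given \(\mathcal F^X_m\), the variables \(Y^{(u)}_s\) with \(|u|=m\) are independent and distributed as \(Y_s\), and they are independent of the alive set \(\{u: L_u>0\}\), which is \(\mathcal F^X_m\)-measurable. Therefore
\[
\Pbb\bigl(Y_s<\infty \mid \mathcal F^X_m\bigr)\le (1-p)^{Z_m^+}.
\]
For any \(K\), this yields
\[
\Pbb\bigl(\{Y_s<\infty\}\cap\{Z_m^+\ge K\}\bigr)\le (1-p)^K .
\]

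\textbf{Step 3: pass to the limit.} By Lemma~\ref{lem:alive-tree-growth-paper}, \(Z_m^+\to\infty\) almost surely on the survival event of \(\mathcal T_+\), and \(\{M>0\}\) is contained in this event up to null sets. Fix \(K\). Then \(\one_{\{Z_m^+\ge K\}}\to 1\) almost surely on \(\{M>0\}\), so by dominated convergence
\[
\Pbb(\{Y_s<\infty\}\cap\{M>0\})=\lim_{m\to\infty}\Pbb(\{Y_s<\infty\}\cap\{M>0\}\cap\{Z^+_m\ge K\})\le(1-p)^K .
\]
Letting \(K\to\infty\) gives probability zero, which is the claim.

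\textbf{Main obstacle.} The only delicate point is measurability and the simultaneous definition of the \(Y^{(u)}_s\). We work on the full-probability event on which all descendant limits \(M^{(uv)}\) exist, so that \(\Sigma^{(u)}_n\) and the identity \(\Sigma_{m+n}=\sum_u L_u^2\Sigma_n^{(u)}\) hold for all \(u,m,n\). Each \(Y_s^{(u)}\) is then a measurable function of the descendant environment \(\calE^{(u)}\), and this gives the conditional independence used in Step~2.
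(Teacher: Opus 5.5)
Your proposal is correct and follows the paper's proof essentially step for step: the same inclusion \(\{Y_s<\infty\}\subseteq\bigcap_{|u|=m,\,L_u>0}\{Y_s^{(u)}<\infty\}\) from the branching identity, the same conditional bound \(\Pbb(Y_s<\infty\mid\mathcal F^X_m)\le \Pbb(Y_s<\infty)^{Z_m^+}\), and the same two limits (\(m\to\infty\), then \(K\to\infty\)) using alive-tree growth on \(\{M>0\}\). The only cosmetic difference is in the \(m\to\infty\) step: you use dominated convergence on \(\{M>0\}\cap\{Z_m^+\ge K\}\), whereas the paper uses the split \(\Pbb(A\cap B)\le \Pbb(A\cap\{Z_m^+\ge k\})+\Pbb(B\cap\{Z_m^+<k\})\).
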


\begin{proof}
Set
\[
Y_s=\limsup_{n\to\infty}2^{sn}\Sigma_n .
\]
By Proposition~\ref{prop:positive-probability-supercritical-divergence-paper},
\[
p_s:=\mathbb P(Y_s=\infty)>0,
\qquad
q_s:=\mathbb P(Y_s<\infty)<1.
\]

Fix \(m\ge0\).  For every \(n\ge0\), the branching identity gives
\[
2^{s(m+n)}\Sigma_{m+n}
=
\sum_{|u|=m}
2^{sm}L_u^2
\left(
2^{sn}\Sigma_n^{(u)}
\right).
\]
For \(|u|=m\), set
\[
    Y_s^{(u)}=\limsup_{n\to\infty}2^{sn}\Sigma_n^{(u)} .
\]
Conditionally on \(\mathcal F^X_m\), the variables
\[
    \{Y_s^{(u)}: |u|=m\}
\]
are independent copies of \(Y_s\).  Moreover, if \(L_u>0\) and
\(Y_s^{(u)}=\infty\), then the corresponding term in the preceding branching
identity has positive coefficient \(2^{sm}L_u^2\), and hence \(Y_s=\infty\).
Therefore
\[
\{Y_s<\infty\}
\subset
\bigcap_{\substack{|u|=m\\ L_u>0}}
\{Y_s^{(u)}<\infty\}.
\]
Since the alive set
\[
    \{u: |u|=m,\ L_u>0\}
\]
is \(\mathcal F^X_m\)-measurable and has cardinality \(Z_m^+\), conditional
independence gives
\[
    \mathbb P(Y_s<\infty \mid \mathcal F^X_m)\le q_s^{Z_m^+}.
\]

Let 
\(
A=\{Y_s<\infty\}
\)
and 
\(
B=\{M>0\}
\).
For \(k\ge1\),
\[
\mathbb P(A\cap B)
\le
\mathbb P(A\cap\{Z_m^+\ge k\})
+
\mathbb P(B\cap\{Z_m^+<k\})
\le
q_s^k+\mathbb P(B\cap\{Z_m^+<k\}).
\]
By Lemma~\ref{lem:alive-tree-growth-paper}, 
\(
Z_m^+\to\infty
\) 
almost surely on \(B\). Letting 
\(
m\to\infty
\),
we obtain
\[
\mathbb P(A\cap B)\le q_s^k.
\]
Finally, let 
\(
k\to\infty
\).
Since 
\(
q_s<1
\),
\[
\mathbb P(Y_s<\infty,\ M>0)=0.
\]
Thus 
\(
Y_s=\infty
\)
almost surely on 
\(
\{M>0\}
\).
\end{proof}

\begin{corollary}
\label{cor:supercritical-tree-square-sum-upper-bound-paper}
Almost surely on \(\{M>0\}\), 
\[
\liminf_{n\to\infty}
\frac{\log_2\Sigma_n}{-n}
\le
D_E(X).
\]
\end{corollary}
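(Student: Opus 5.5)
Since the corollary is the supercritical counterpart of Corollary~\ref{cor:subcritical-tree-square-sum-lower-bound-paper}, the plan is to convert Proposition~\ref{prop:subtree-amplification-supercritical-divergence-paper} into an exponent bound, working along a countable sequence of exponents. If \(D_E(X)\ge 1\) there is nothing to prove, because Lemma~\ref{lem:elementary-tree-square-sum-bounds-paper} already gives \(\liminf_n \log_2\Sigma_n/(-n)\le 1\) on \(\{M>0\}\). We may therefore assume \(D_E(X)<1\). Fix a sequence \(s_j\downarrow D_E(X)\) with \(D_E(X)<s_j<1\).

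For each \(j\), Proposition~\ref{prop:subtree-amplification-supercritical-divergence-paper} produces a null set \(N_j\) such that, off \(N_j\) and on \(\{M>0\}\), we have \(\limsup_n 2^{s_j n}\Sigma_n=\infty\). Put \(N=\bigcup_j N_j\), which is still null. Now take \(\omega\in\{M>0\}\setminus N\). For every \(j\) there are infinitely many \(n\) with \(2^{s_j n}\Sigma_n\ge 1\). Along those \(n\) we get \(\log_2\Sigma_n\ge -s_j n\), which rearranges to \(\log_2\Sigma_n/(-n)\le s_j\). Note that \(\Sigma_n>0\) on \(\{M>0\}\) by Lemma~\ref{lem:elementary-tree-square-sum-bounds-paper}, so the logarithms are finite. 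Since the inequality holds along infinitely many \(n\),
\[
\liminf_{n\to\infty}\frac{\log_2\Sigma_n}{-n}\le s_j .
\]
Letting \(j\to\infty\) then gives the bound \(D_E(X)\).

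All of the real work, namely the no-plateau lemma, the finite-block growth argument and the alive-tree amplification, has already been done in Proposition~\ref{prop:subtree-amplification-supercritical-divergence-paper}. What remains here is bookkeeping: take a countable union of null sets over the \(s_j\), and handle the boundary case \(D_E(X)=1\) separately, because there the sequence \(s_j\) cannot be chosen inside \((D_E(X),1)\). So the only step that needs any care is this boundary case, and Lemma~\ref{lem:elementary-tree-square-sum-bounds-paper} disposes of it.
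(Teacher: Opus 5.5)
Your proposal is correct and follows the paper's proof essentially verbatim. The paper likewise disposes of the case \(D_E(X)=1\) via Lemma~\ref{lem:elementary-tree-square-sum-bounds-paper}. It then applies Proposition~\ref{prop:subtree-amplification-supercritical-divergence-paper} for \(s\in(D_E(X),1)\) to get infinitely many \(n\) with \(\Sigma_n\ge 2^{-sn}\), and lets \(s\downarrow D_E(X)\) along a countable sequence.
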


\begin{proof}
If 
\(
D_E(X)=1
\), 
the claim follows from Lemma~\ref{lem:elementary-tree-square-sum-bounds-paper}. Assume that 
\(
D_E(X)<1
\),
and let
\(
s\in(D_E(X),1)
\).
By Proposition~\ref{prop:subtree-amplification-supercritical-divergence-paper},
\[
\limsup_{n\to\infty}2^{sn}\Sigma_n=\infty
\qquad\text{almost surely on }\{M>0\}.
\]
Hence, on this event, infinitely many \(n\) satisfy 
\(
\Sigma_n\ge2^{-sn}
\),
and therefore
\(
\liminf_{n\to\infty}
\frac{\log_2\Sigma_n}{-n}
\le s
\).
Letting 
\(
s\downarrow D_E(X)
\)
along a countable sequence proves the claim.
\end{proof}

\subsection{The vector energy theorem}
\label{subsec:vector-energy-theorem-proof}

We now combine the subcritical and supercritical estimates.  The result is first
stated in the tree-cylinder normalization and then transferred to the Euclidean
dyadic square sums and Riesz energy.

\begin{theorem}[Tree-cylinder square-sum theorem]
\label{thm:tree-cylinder-square-sum-theorem-paper}
Assume that the dyadic vector-weight law is energy-admissible.  Then, almost
surely on \(\{M>0\}\),
\begin{equation}\label{eq:tree-cylinder-square-sum-formula-paper}
    \liminf_{n\to\infty}
    \frac{\log_2\Sigma_n}{-n}
    =
    D_{\mathrm E}(X).
\end{equation}
\end{theorem}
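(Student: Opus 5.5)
The identity \eqref{eq:tree-cylinder-square-sum-formula-paper} will follow by combining the two one-sided estimates already proved in this section. Work on the full-probability event on which all descendant limits \(M^{(u)}\) exist, so that the tree-cylinder masses \(C_u=L_uM^{(u)}\) and the square sums \(\Sigma_n\) are well defined. On \(\{M>0\}\), Lemma~\ref{lem:elementary-tree-square-sum-bounds-paper} gives \(\Sigma_n>0\), so the quantity
\[
\liminf_{n\to\infty}\frac{\log_2\Sigma_n}{-n}
\]
is a well-defined element of \([0,1]\).

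For the lower bound, invoke Corollary~\ref{cor:subcritical-tree-square-sum-lower-bound-paper}. If \(D_E(X)>0\), pick \(s_k\uparrow D_E(X)\). For each \(s_k\), Proposition~\ref{prop:vector-DE-variational-formula-paper} supplies some \(q\in(1,2)\) with \(m_{s_k}(q)<1\), and Theorem~\ref{thm:subcritical-tree-square-sum-decay-paper} then yields \(2^{s_kn}\Sigma_n\to0\) almost surely. Intersecting these countably many almost-sure events gives the liminf \(\ge D_E(X)\) on \(\{M>0\}\). If \(D_E(X)=0\), the bound is trivial.

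For the upper bound, invoke Corollary~\ref{cor:supercritical-tree-square-sum-upper-bound-paper}. If \(D_E(X)=1\), Lemma~\ref{lem:elementary-tree-square-sum-bounds-paper} already gives the liminf \(\le1\). Otherwise, choose \(s_k\downarrow D_E(X)\) with \(s_k<1\). Proposition~\ref{prop:subtree-amplification-supercritical-divergence-paper} gives \(\limsup_n 2^{s_kn}\Sigma_n=\infty\) almost surely on \(\{M>0\}\). Hence \(\Sigma_n\ge 2^{-s_kn}\) infinitely often, so the liminf is \(\le s_k\); a countable intersection over \(k\) finishes the argument.

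The only real content lies upstream, in the supercritical divergence: the no-plateau Lemma~\ref{lem:no-plateau-above-energy-threshold-paper} feeding the finite-block Lemma~\ref{lem:finite-block-weighted-growth-paper}, and then the alive-tree amplification. At this stage the one point needing care is bookkeeping. Both corollaries must be applied on the same full-probability event where the \(C_u\) are consistent, and the exceptional null sets for the countably many exponents \(s_k\) must be collected into a single null set. Combining the two inequalities on \(\{M>0\}\) then yields equality with \(D_E(X)\).
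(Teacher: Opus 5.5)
Your proposal is correct and matches the paper's proof: the lower bound comes from Corollary~\ref{cor:subcritical-tree-square-sum-lower-bound-paper}, the upper bound from Corollary~\ref{cor:supercritical-tree-square-sum-upper-bound-paper}, and the two full-probability events are intersected on \(\{M>0\}\). Your unpacking of those corollaries also follows how they are proved upstream, including the countable sequences \(s_k\) and the trivial cases \(D_E(X)\in\{0,1\}\) via Lemma~\ref{lem:elementary-tree-square-sum-bounds-paper}.
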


\begin{proof}
The lower bound
\[
    \liminf_{n\to\infty}
    \frac{\log_2\Sigma_n}{-n}
    \ge D_{\mathrm E}(X)
\]
is Corollary~\ref{cor:subcritical-tree-square-sum-lower-bound-paper}.  The reverse
bound
\[
    \liminf_{n\to\infty}
    \frac{\log_2\Sigma_n}{-n}
    \le D_{\mathrm E}(X)
\]
is Corollary~\ref{cor:supercritical-tree-square-sum-upper-bound-paper}.  Intersecting
the corresponding full-probability events on \(\{M>0\}\) gives
\eqref{eq:tree-cylinder-square-sum-formula-paper}.
\end{proof}

\begin{theorem}[Vector energy theorem]
\label{thm:vector-energy-theorem-paper}
Assume that the dyadic vector-weight law is energy-admissible.  Then, almost
surely on \(\{M>0\}\),
\begin{equation}\label{eq:vector-energy-formula-paper}
    \dim_{\mathrm E}(\mu)=\dim_2(\mu)=D_{\mathrm E}(X).
\end{equation}
\end{theorem}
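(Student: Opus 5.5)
The theorem follows by combining the tree-cylinder square-sum theorem with the two deterministic transfer results already established. The plan is to work on a single full-probability event $\Omega_0$ on which three things hold at once. First, all descendant martingales converge, so that the masses $C_u=L_uM^{(u)}$ are defined and consistent. Second, the boundary representation $\mu=\pi_\#\mu_\partial$ of Lemma~\ref{lem:boundary-representation-paper} holds. Third, on $\Omega_0\cap\{M>0\}$, the conclusion of Theorem~\ref{thm:tree-cylinder-square-sum-theorem-paper} holds, namely $\liminf_n \log_2\Sigma_n/(-n)=D_{\mathrm E}(X)$. Since each of these is an almost sure statement (or an almost sure statement on $\{M>0\}$), countably many intersections keep full probability.

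On $\Omega_0\cap\{M>0\}$, Lemma~\ref{lem:endpoint-transfer-square-sums-paper} identifies the tree-cylinder exponent with the Euclidean correlation dimension:
\[
\dimtwo(\mu)=\liminf_{n\to\infty}\frac{\log_2\Sigma_n}{-n}.
\]
Here I would recall the two cases in its proof. If $\mu$ has an atom, both sides vanish: the tree side is pinned by the bound $a^2/4\le\Sigma_n\le M^2$, and the Euclidean side vanishes by Remark~\ref{rem:atoms-zero-energy-dimension-paper}. If $\mu$ is non-atomic, then $\Sigma_n(\mu)=\Sigma_n$ for every $n$. Combining this identification with Theorem~\ref{thm:tree-cylinder-square-sum-theorem-paper} gives $\dimtwo(\mu)=D_{\mathrm E}(X)$.

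Finally, on $\{M>0\}$ the measure $\mu$ is a nonzero finite Borel measure on $[0,1]$, because $\mu([0,1])=M$ by Theorem~\ref{thm:vector-limiting-measure-construction-paper}. The deterministic criterion Proposition~\ref{prop:dyadic-square-sum-energy-criterion-paper} therefore applies pathwise and yields $\dimE(\mu)=\dimtwo(\mu)$. This completes the chain.

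There is no analytic obstacle left at this stage. The only point needing care is bookkeeping: the lower bound (Corollary~\ref{cor:subcritical-tree-square-sum-lower-bound-paper}) and the upper bound (Corollary~\ref{cor:supercritical-tree-square-sum-upper-bound-paper}) come from countable families of exponents $s$, so the exceptional null sets must be taken as countable unions. The endpoint transfer must also be applied on the same event where the $C_u$ are consistent. I would state these choices explicitly rather than re-derive anything.
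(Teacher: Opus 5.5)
Your proposal is correct and is essentially the paper's own proof: on \(\{M>0\}\), Lemma~\ref{lem:endpoint-transfer-square-sums-paper} identifies the tree-cylinder exponent with \(\dimtwo(\mu)\), Theorem~\ref{thm:tree-cylinder-square-sum-theorem-paper} identifies that exponent with \(D_{\mathrm E}(X)\), and Proposition~\ref{prop:dyadic-square-sum-energy-criterion-paper}, applied pathwise to the nonzero measure \(\mu\), gives \(\dimE(\mu)=\dimtwo(\mu)\). Your extra bookkeeping about intersecting countably many full-probability events is harmless; it only makes explicit what the paper leaves implicit.
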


\begin{proof}
On \(\{M>0\}\), Lemma~\ref{lem:endpoint-transfer-square-sums-paper}
identifies the tree-cylinder square-sum exponent with the Euclidean dyadic
correlation dimension:
\[
    \dim_2(\mu)
    =
    \liminf_{n\to\infty}
    \frac{\log_2\Sigma_n}{-n}.
\]
Theorem~\ref{thm:tree-cylinder-square-sum-theorem-paper} therefore gives
\[
    \dim_2(\mu)=D_{\mathrm E}(X).
\]
Since \(\mu\) is a nonzero finite Borel measure on \([0,1]\) on this event,
Proposition~\ref{prop:dyadic-square-sum-energy-criterion-paper} yields
\[
    \dim_{\mathrm E}(\mu)=\dim_2(\mu).
\]
This proves \eqref{eq:vector-energy-formula-paper}.
\end{proof}

\begin{corollary}
\label{cor:energy-finiteness-infinite-paper}
Assume that the dyadic vector-weight law is energy-admissible.  Then, almost
surely on \(\{M>0\}\), the following assertions hold:
\[
    I_t(\mu)<\infty
    \qquad\text{for every }0<t<D_{\mathrm E}(X),
\]
and
\[
    I_t(\mu)=\infty
    \qquad\text{for every }D_{\mathrm E}(X)<t<1.
\]
If \(D_{\mathrm E}(X)=0\), the first assertion is vacuous.  If
\(D_{\mathrm E}(X)=1\), the second assertion is vacuous.
\end{corollary}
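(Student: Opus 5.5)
The plan is to read the corollary off Theorem~\ref{thm:vector-energy-theorem-paper} together with the definition of \(\dimE\) and the monotonicity of Riesz energies in the exponent. First fix a full-probability event \(\Omega_0\) on which \(\dimE(\mu)=D_E(X)\) holds on \(\{M>0\}\). Everything below is deterministic on \(\Omega_0\cap\{M>0\}\). The statement claims a single full-probability event that works for all \(t\) simultaneously, and this event will serve.

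The key observation is monotonicity of \(t\mapsto I_t(\nu)\) for a finite measure \(\nu\) on \([0,1]\). Since \(\lvert x-y\rvert\le1\) on \([0,1]^2\), we have \(\lvert x-y\rvert^{-t}\le\lvert x-y\rvert^{-t'}\) whenever \(t\le t'\), and hence \(I_t(\nu)\le I_{t'}(\nu)\). So the set \(\{t\in(0,1):I_t(\nu)<\infty\}\) is an initial segment of \((0,1)\), and its supremum is \(\dimE(\nu)\).

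For the first assertion, let \(0<t<D_E(X)=\dimE(\mu)\). By the definition of the supremum there is some \(t'\in(t,1)\) with \(I_{t'}(\mu)<\infty\). Monotonicity then gives \(I_t(\mu)\le I_{t'}(\mu)<\infty\).

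For the second assertion, let \(D_E(X)<t<1\). If \(I_t(\mu)\) were finite, then \(t\) would belong to the set whose supremum defines \(\dimE(\mu)\). That would force \(\dimE(\mu)\ge t>D_E(X)\), which is a contradiction. Hence \(I_t(\mu)=\infty\).

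The two vacuous cases are immediate because the corresponding ranges of \(t\) are empty. No step is a real obstacle. The only point needing care is that the quantifier "for every \(t\)" sits inside the almost-sure statement, and this is handled by deriving both claims deterministically from the single event \(\Omega_0\) rather than taking a union over uncountably many \(t\).
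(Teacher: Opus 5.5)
Your proposal is correct and matches the paper's proof: both take \(\dimE(\mu)=D_{\mathrm E}(X)\) on \(\{M>0\}\) from Theorem~\ref{thm:vector-energy-theorem-paper} and then argue deterministically. The first assertion uses monotonicity of \(I_t\) in \(t\), via \(\lvert x-y\rvert\le1\) on \([0,1]^2\), applied to a larger exponent with finite energy; the second uses the definition of \(\dimE\) as a supremum. Your remark that a single full-probability event serves for all \(t\) at once is a correct explicit version of what the paper leaves implicit.
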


\begin{proof}
By Theorem~\ref{thm:vector-energy-theorem-paper},
\(\dim_{\mathrm E}(\mu)=D_{\mathrm E}(X)\) almost surely on \(\{M>0\}\).
If \(0<t<D_{\mathrm E}(X)\), choose \(u\) with
\(t<u<D_{\mathrm E}(X)\) and \(I_u(\mu)<\infty\).  Since
\(|x-y|^{-t}\le |x-y|^{-u}\) on \([0,1]^2\), it follows that
\(I_t(\mu)<\infty\).  Conversely, if \(D_{\mathrm E}(X)<t<1\) and
\(I_t(\mu)<\infty\), then the definition of energy dimension would give
\(\dim_{\mathrm E}(\mu)\ge t>D_{\mathrm E}(X)\), a contradiction.
\end{proof}

\section{Dense-grid Fourier lower bound for vector cascades}
\label{sec:vector-dense-grid-fourier-lower-bound}

This section proves the Fourier lower bound for the interval vector cascade.  The
energy theorem of Section~\ref{sec:vector-square-sums-energy-theorem} enters
only through the moment profile: for each \(q\in(1,2)\) with \(\rho(q)<1\), we
prove a Fourier decay estimate at every exponent
\[
    0<\beta<-\frac1q\log_2\rho(q).
\]
More precisely, almost surely on \(\{M>0\}\), there exists a finite random
constant \(C_\beta\) such that
\(
    \lvert\widehat{\mu}(\xi)\rvert
    \le C_\beta\lvert\xi\rvert^{-\beta}
\)
for all sufficiently large \(\lvert\xi\rvert\).  Optimizing over \(q\) then gives
\(
    \dim_{\mathrm F}(\mu)\ge D_{\mathrm E}(X).
\)

The argument is directly vector-valued.  It uses the balanced centering
\(\mathbb E\mu_n=\mathcal L\), centered Fourier profiles on dense frequency
grids, a mesoscopic star equation, and a vector-valued
\(\ell^r\)-contraction estimate.  The \(q\)-level annular decay theorem,
Theorem~\ref{thm:q-level-annular-decay-paper}, is obtained by iterating the
resulting grid-norm recursion through a ladder argument.

\subsection{Centered profiles and dense grids}
\label{subsec:centered-profiles-dense-grids-paper}

Let \(\mathcal L\) denote Lebesgue measure on \([0,1]\), and write
\[
    \widehat{\mathcal L}(\xi)
    =
    \int_0^1 e^{-2\pi i\xi x}\,dx .
\]
For \(n\ge0\) and \(\eta\in[1,2]\), define the centered Fourier profile
\[
    H_n(\eta)
    =
    \widehat\mu(2^n\eta)-\widehat{\mathcal L}(2^n\eta).
\]
The balance condition identifies \(\mathcal L\) as the deterministic centering:
\[
    \mathbb E\mu_n=\mathcal L .
\]

For a vertex \(u\), let \(\mu^{(u)}\) denote the descendant limiting cascade rooted
at \(u\), and set
\[
H_n^{(u)}(\eta)
=
\widehat{\mu^{(u)}}(2^n\eta)-\wh{\mathcal L}(2^n\eta).
\]
Then \(H_n^{(u)}\) has the same distribution as \(H_n\), and descendant profiles
rooted at distinct vertices in the same generation are conditionally independent
given the environment up to that generation.

For 
\(
u=(u_1,\ldots,u_m)
\),
let
\[
S_u(x)=a_u+2^{-m}x,\qquad x\in[0,1],
\]
be the affine branch corresponding to the dyadic word \(u\).

\begin{remark}
\label{rem:no-dyadic-endpoint-mass-paper}
The only coding ambiguity in the Euclidean realization occurs at dyadic
endpoints,
\[
    E_{\mathrm{dyad}}
    =
    \{k2^{-n}:n\ge0,\ 0\le k\le 2^n\}.
\]
These points carry no limiting cascade mass.  Indeed, for a fixed
\(\omega\in\partial\mathcal T\),
\[
    \mu_\partial(\{\omega\})
    =
    \lim_{n\to\infty} C_{\omega|n}
    \le C_{\omega|n},
\]
and
\[
    \mathbb E C_{\omega|n}
    =
    \mathbb E L_{\omega|n}\,\mathbb E M
    \le 2^{-n}.
\]
Hence \(\mu_\partial(\{\omega\})=0\) almost surely.  Since each dyadic endpoint
has at most two binary codings and \(E_{\mathrm{dyad}}\) is countable, we have
\[
    \mu(E_{\mathrm{dyad}})=0
\]
almost surely.

Henceforth we work on the full-probability event on which
\(\mu(E_{\mathrm{dyad}})=0\) and the simultaneous descendant construction holds;
on this event dyadic endpoint ambiguities may be ignored in Euclidean interval
notation.
\end{remark}

\begin{lemma}[Star equation in Fourier form]
\label{lem:star-equation-fourier-paper}
For every 
\(
m\ge0
\)
and every 
\(
\xi\in\R
\),
\begin{equation}\label{eq:fourier-star-equation-paper}
\wh\mu(\xi)
=
\sum_{\lvert u \rvert=m}
L_u e^{-2\pi i a_u\xi}
\widehat{\mu^{(u)}}(2^{-m}\xi)
\end{equation}
almost surely on the simultaneous descendant construction event. Moreover,
\begin{equation}\label{eq:fourier-lebesgue-star-equation-paper}
\wh{\mathcal L}(\xi)
=
\sum_{\lvert u \rvert=m}
2^{-m}e^{-2\pi i a_u\xi}
\wh{\mathcal L}(2^{-m}\xi).
\end{equation}
Consequently, for \(n\ge m\) and \(\eta\in[1,2]\),
\begin{equation}\label{eq:centered-profile-mesoscopic-identity-paper}
H_n(\eta)
=
\sum_{\lvert u \rvert=m}
L_u e^{-2\pi i a_u2^n\eta}
H_{n-m}^{(u)}(\eta)
+
\wh{\mathcal L}(2^{n-m}\eta)
\sum_{\lvert u \rvert=m}
\bigl(L_u-2^{-m}\bigr)e^{-2\pi i a_u2^n\eta}.
\end{equation}
\end{lemma}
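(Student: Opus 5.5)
The plan is to derive the star equation from the self-similar decomposition of \(\mu\) over level-\(m\) cylinders, and then obtain the centered identity by adding and subtracting the Lebesgue term.

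\textbf{Step 1: cylinder decomposition.} The first task is to show that, on the simultaneous descendant construction event,
\[
\mu=\sum_{\lvert u\rvert=m}L_u\,(S_u)_\#\mu^{(u)}.
\]
I would prove this at finite level first. For \(n\ge m\), the identity \(L_{uv}=L_uL_v^{(u)}\) together with \(I_{uv}=S_u(I_v)\) gives
\[
\mu_n=\sum_{\lvert u\rvert=m}L_u\,(S_u)_\#\mu^{(u)}_{n-m},
\]
where \(\mu^{(u)}_k\) is the level-\(k\) measure built from the descendant environment. This is a direct check on densities: the density \(2^nL_{uv}\) on \(S_u(I_v)\) equals \(L_u\) times the pushforward of the density \(2^{n-m}L^{(u)}_v\) on \(I_v\) under \(S_u\), since \(S_u\) contracts by \(2^{-m}\). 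Letting \(n\to\infty\), Theorem~\ref{thm:vector-limiting-measure-construction-paper} applied to each descendant environment gives \(\mu^{(u)}_{n-m}\weak\mu^{(u)}\). Pushforward by the continuous map \(S_u\) preserves weak convergence, and the sum is finite, so the identity passes to the limit. An alternative route is to compare the two sides on dyadic cylinders via \(C_{uv}=L_uC^{(u)}_v\) and Lemma~\ref{lem:boundary-representation-paper}. Either way, no endpoint issue arises, because weak limits are taken against continuous test functions.

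\textbf{Step 2: the Fourier form.} Integrating \(e^{-2\pi i x\xi}\) against \((S_u)_\#\mu^{(u)}\) gives
\[
\int e^{-2\pi i(a_u+2^{-m}y)\xi}\,d\mu^{(u)}(y)=e^{-2\pi i a_u\xi}\,\widehat{\mu^{(u)}}(2^{-m}\xi),
\]
which is \eqref{eq:fourier-star-equation-paper}. The Lebesgue identity \eqref{eq:fourier-lebesgue-star-equation-paper} is the same computation applied to
\[
\mathcal L=\sum_{\lvert u\rvert=m}2^{-m}(S_u)_\#\mathcal L,
\]
a decomposition which is valid because the intervals \([a_u,a_u+2^{-m}]\) overlap only at endpoints.

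\textbf{Step 3: the centered identity.} Set \(\xi=2^n\eta\), so that \(2^{-m}\xi=2^{n-m}\eta\). Subtracting the two star equations and, in each term, adding and subtracting \(L_u e^{-2\pi i a_u2^n\eta}\widehat{\mathcal L}(2^{n-m}\eta)\) gives
\[
\widehat{\mu^{(u)}}(2^{n-m}\eta)-\widehat{\mathcal L}(2^{n-m}\eta)=H^{(u)}_{n-m}(\eta).
\]
The leftover terms collect into \(\widehat{\mathcal L}(2^{n-m}\eta)\sum_{\lvert u\rvert=m}(L_u-2^{-m})e^{-2\pi i a_u2^n\eta}\), which is exactly \eqref{eq:centered-profile-mesoscopic-identity-paper}.

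\textbf{Main obstacle.} The only delicate point is justifying the limit passage in Step 1 simultaneously for all \(u\). This is handled by working on the countable intersection of full-probability events on which every descendant martingale \(M^{(u)}_k\) converges, which the paper already fixes before Theorem~\ref{thm:vector-limiting-measure-construction-paper}.
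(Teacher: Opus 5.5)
Your proposal is correct and follows the paper's own proof: the paper likewise integrates \(e^{-2\pi i\xi x}\) against the descendant decomposition \(\mu=\sum_{\lvert u\rvert=m}L_u(S_u)_\#\mu^{(u)}\), applies the same computation to \(\mathcal L=\sum_{\lvert u\rvert=m}2^{-m}(S_u)_\#\mathcal L\), and subtracts the two identities at \(\xi=2^n\eta\). The only difference is that you prove the descendant decomposition explicitly, via the finite-level density identity and the weak-limit passage on the simultaneous convergence event, whereas the paper simply invokes it.
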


\begin{proof}
For a continuous function 
\(
e^{-2\pi i\xi x}
\),
the descendant decomposition of the limiting
measure implies
\[
\int e^{-2\pi i\xi x}\,d\mu(x)
=
\sum_{\lvert u \rvert=m}
L_u\int e^{-2\pi i\xi(a_u+2^{-m}y)}\,d\mu^{(u)}(y),
\]
which is precisely \eqref{eq:fourier-star-equation-paper}.

The identity \eqref{eq:fourier-lebesgue-star-equation-paper} follows from the
same computation applied to Lebesgue measure, using the deterministic
decomposition
\[
\mathcal L=\sum_{\lvert u \rvert=m}2^{-m}(S_u)_\#\mathcal L.
\]
Subtracting \eqref{eq:fourier-lebesgue-star-equation-paper} from
\eqref{eq:fourier-star-equation-paper} with \(\xi=2^n\eta\), we arrive at
\eqref{eq:centered-profile-mesoscopic-identity-paper}.
\end{proof}

\begin{lemma}
\label{lem:lebesgue-profile-estimates-paper}
There exists an absolute constant \(C<\infty\) such that, for every \(n\ge0\)
and every \(\eta,\eta'\in[1,2]\),
\[
    |\widehat{\mathcal L}(2^n\eta)|\le C2^{-n}
\]
and
\[
    |\widehat{\mathcal L}(2^n\eta)
      -\widehat{\mathcal L}(2^n\eta')|
    \le C|\eta-\eta'|.
\]
\end{lemma}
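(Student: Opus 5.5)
The plan is to compute \(\widehat{\mathcal L}\) explicitly and read off both estimates. For \(\xi\neq0\),
\[
    \widehat{\mathcal L}(\xi)=\int_0^1 e^{-2\pi i\xi x}\,dx
    =\frac{1-e^{-2\pi i\xi}}{2\pi i\xi},
\]
so \(|\widehat{\mathcal L}(\xi)|\le \frac{2}{2\pi|\xi|}=\frac{1}{\pi|\xi|}\). Taking \(\xi=2^n\eta\) with \(\eta\in[1,2]\), so that \(|\xi|\ge 2^n\), gives \(|\widehat{\mathcal L}(2^n\eta)|\le \pi^{-1}2^{-n}\). This is the first estimate with \(C=1/\pi\). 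The case \(n=0\) is covered by the same bound, and also by the trivial bound \(|\widehat{\mathcal L}|\le1\).

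For the Lipschitz estimate, I would not differentiate in \(\eta\) naively, because the \(\xi\)-derivative of \(\widehat{\mathcal L}\) is bounded only by \(2\pi\). The chain rule then produces a factor \(2^n\), which is not uniform in \(n\). Instead I would differentiate the closed form in \(\xi\). Writing \(g(\xi)=(1-e^{-2\pi i\xi})/(2\pi i\xi)\), one has
\[
    g'(\xi)=\frac{e^{-2\pi i\xi}}{\xi}-\frac{1-e^{-2\pi i\xi}}{2\pi i\xi^2},
\]
so \(|g'(\xi)|\le \frac{1}{|\xi|}+\frac{1}{\pi|\xi|^2}\le \frac{2}{|\xi|}\) for \(|\xi|\ge1\).

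Set \(F(\eta)=\widehat{\mathcal L}(2^n\eta)\). Then \(F'(\eta)=2^n g'(2^n\eta)\), and since \(|2^n\eta|\ge 2^n\ge1\),
\[
    |F'(\eta)|\le 2^n\cdot\frac{2}{2^n\eta}\le 2 .
\]
This bound is uniform in \(n\). The mean value inequality on \([1,2]\), applied separately to the real and imaginary parts (or to \(F\) as a \(C^1\) complex-valued function through \(|F(\eta)-F(\eta')|\le\int|F'|\)), then gives \(|F(\eta)-F(\eta')|\le 2|\eta-\eta'|\).

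Taking \(C=2\) covers both estimates. The only point needing care is exactly this scale cancellation: the factor \(2^n\) from the chain rule must be absorbed by the \(1/|\xi|\) decay of \(g'\). I expect no other obstacle.
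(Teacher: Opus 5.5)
Your proposal is correct and follows essentially the same route as the paper: the explicit formula for \(\widehat{\mathcal L}\), the derivative bound \(|\widehat{\mathcal L}'(\xi)|\le C|\xi|^{-1}\) for \(|\xi|\ge1\), and the mean value theorem in \(\eta\), with the chain-rule factor \(2^n\) cancelled by the \(|\xi|^{-1}\) decay. You simply make the constants explicit (\(C=2\)), which the paper leaves implicit.
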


\begin{proof}
For \(\xi\neq0\),
\[
    \widehat{\mathcal L}(\xi)
    =
    \frac{e^{-2\pi i\xi}-1}{-2\pi i\xi},
\]
which gives the first estimate.  Differentiating this expression gives
\[
    |\widehat{\mathcal L}'(\xi)|\le C|\xi|^{-1}
    \qquad (|\xi|\ge1).
\]
Since \(2^n\eta\ge1\) for \(n\ge0\) and \(\eta\in[1,2]\), the mean-value theorem
gives the second estimate.
\end{proof}

\begin{lemma}
\label{lem:centered-profile-lipschitz-paper}
There exists an absolute constant \(C<\infty\) such that, almost surely, for
every \(n\ge0\) and every \(\eta,\eta'\in[1,2]\),
\[
    |H_n(\eta)-H_n(\eta')|
    \le C(M+1)2^n|\eta-\eta'|.
\]
\end{lemma}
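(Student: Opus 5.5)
The plan is to split \(H_n\) into its two constituent pieces and estimate each separately:
\[
H_n(\eta)-H_n(\eta')
=
\bigl(\wh\mu(2^n\eta)-\wh\mu(2^n\eta')\bigr)
-
\bigl(\wh{\mathcal L}(2^n\eta)-\wh{\mathcal L}(2^n\eta')\bigr).
\]
Everything takes place on the full-probability event of Theorem~\ref{thm:vector-limiting-measure-construction-paper}, where \(\mu\) is a finite Borel measure on \([0,1]\) with \(\mu([0,1])=M\). This event does not depend on \(n,\eta,\eta'\), so the estimate holds simultaneously for all of them.

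\emph{Measure term.} I would use the pointwise bound
\[
\lvert e^{-2\pi i a x}-e^{-2\pi i b x}\rvert\le 2\pi\lvert a-b\rvert\lvert x\rvert,
\]
together with \(\lvert x\rvert\le 1\) on \(\spt\mu\subset[0,1]\). Integrating against \(\mu\) then gives
\[
\lvert\wh\mu(2^n\eta)-\wh\mu(2^n\eta')\rvert
\le
2\pi\, 2^n\lvert\eta-\eta'\rvert\,\mu([0,1])
=
2\pi M\, 2^n\lvert\eta-\eta'\rvert.
\]

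\emph{Lebesgue term.} This comes directly from Lemma~\ref{lem:lebesgue-profile-estimates-paper}, which gives a bound \(C\lvert\eta-\eta'\rvert\le C2^n\lvert\eta-\eta'\rvert\). Alternatively, the same direct computation as in the measure term gives \(2\pi 2^n\lvert\eta-\eta'\rvert\), because \(\mathcal L([0,1])=1\).

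\emph{Conclusion.} Adding the two bounds yields \(C(M+1)2^n\lvert\eta-\eta'\rvert\) with \(C=2\pi\). I see no real obstacle here. The only point to check is that \(\mu\) is supported in \([0,1]\) with total mass exactly \(M\), and Theorem~\ref{thm:vector-limiting-measure-construction-paper} provides this almost surely.
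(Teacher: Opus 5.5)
Your proposal is correct and follows the paper's proof essentially verbatim. Like the paper, you bound the $\wh\mu$ increment by integrating the pointwise Lipschitz bound for the exponential against $\mu$, which is supported on $[0,1]$ with total mass $M$, and you bound the $\wh{\mathcal L}$ increment either trivially or via Lemma~\ref{lem:lebesgue-profile-estimates-paper}; your explicit constant $C=2\pi$ and the observation that the full-probability event does not depend on $n,\eta,\eta'$ are fine.
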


\begin{proof}
Since \(\mu\) is supported on \([0,1]\),
\[
\begin{aligned}
|\widehat\mu(2^n\eta)-\widehat\mu(2^n\eta')|
&\le
\int_0^1
\left|
e^{-2\pi i2^n\eta x}
-
e^{-2\pi i2^n\eta' x}
\right|\,d\mu(x)  \\
&\le
C M 2^n|\eta-\eta'|.
\end{aligned}
\]
The same bound for \(\widehat{\mathcal L}\) follows from the trivial Lipschitz
estimate, or from Lemma~\ref{lem:lebesgue-profile-estimates-paper}.  Combining
the two estimates proves the claim.
\end{proof}

We now fix the dense grids used in the annular argument.

\begin{definition}[Dense grids]
\label{def:dense-grids-paper}
Let \(\alpha>1\). For \(n\ge1\), let 
\(
\Gamma_n(\alpha)\subset[1,2]
\)
be a
finite set with mesh at most \(2^{-\alpha n}\):
\[
\sup_{\eta\in[1,2]}\operatorname{dist}(\eta,\Gamma_n(\alpha))
\le
2^{-\alpha n},
\]
and with cardinality
\[
\#\Gamma_n(\alpha)\le C2^{\alpha n}
\]
for an absolute constant \(C\).
For a function 
\(
f:\Gamma_n(\alpha)\to\mathbb C
\)
and 
\(
1\le r<\infty
\),
write
\[
\lVert f\rVert_{\ell^r(\Gamma_n(\alpha))}
=
\left(\sum_{\eta\in\Gamma_n(\alpha)}\lvert f(\eta)\rvert^r\right)^{1/r}.
\]
\end{definition}

\begin{lemma}\label{lem:dense-grid-passage-paper}
Let 
\(
0<\beta<1
\)
and 
\(
\varepsilon>0
\).
Assume that, almost surely, there is a
finite random constant \(A_\beta\) such that
\begin{equation}\label{eq:grid-centered-bound-assumption-paper}
\max_{\eta\in\Gamma_n(1+\beta+\varepsilon)}\lvert H_n(\eta) \rvert
\le
A_\beta2^{-\beta n}
\end{equation}
for all sufficiently large \(n\). Then, almost surely, there is a finite
random constant \(C_\beta\) such that
\[
\sup_{2^n\le\lvert\xi\rvert\le2^{n+1}}\lvert\wh\mu(\xi)\rvert
\le
C_\beta2^{-\beta n}
\]
for all sufficiently large \(n\).
\end{lemma}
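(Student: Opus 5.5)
The plan is to pass from the grid to the whole annulus using the Lipschitz estimate of Lemma~\ref{lem:centered-profile-lipschitz-paper}. The Lebesgue part is then removed using the decay bound in Lemma~\ref{lem:lebesgue-profile-estimates-paper}. Negative frequencies are handled by conjugation: since $\mu$ is a real measure, $\wh\mu(-\xi)=\overline{\wh\mu(\xi)}$, so it suffices to treat $2^n\le \xi\le 2^{n+1}$. Write such $\xi$ as $\xi=2^n\eta$ with $\eta\in[1,2]$. Then
\[
\wh\mu(\xi)=H_n(\eta)+\wh{\mathcal L}(2^n\eta).
\]

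Fix the full-probability event on which \eqref{eq:grid-centered-bound-assumption-paper} holds for all $n\ge n_0(\omega)$ and on which Lemma~\ref{lem:centered-profile-lipschitz-paper} holds. Set $\alpha=1+\beta+\varepsilon$. For $\eta\in[1,2]$, choose $\eta'\in\Gamma_n(\alpha)$ with $|\eta-\eta'|\le 2^{-\alpha n}$. Lemma~\ref{lem:centered-profile-lipschitz-paper} then gives
\[
|H_n(\eta)|\le |H_n(\eta')|+C(M+1)2^n2^{-(1+\beta+\varepsilon)n}\le A_\beta 2^{-\beta n}+C(M+1)2^{-(\beta+\varepsilon)n}.
\]
The second term is at most $C(M+1)2^{-\beta n}$, since $M<\infty$ almost surely. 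The Lebesgue term satisfies $|\wh{\mathcal L}(2^n\eta)|\le C2^{-n}\le C2^{-\beta n}$ because $\beta<1$.

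Adding the three bounds gives
\[
\sup_{2^n\le|\xi|\le2^{n+1}}|\wh\mu(\xi)|\le \bigl(A_\beta+C(M+1)+C\bigr)2^{-\beta n}
\]
for all $n\ge n_0$. This is the claim with $C_\beta=A_\beta+C(M+2)$, which is finite almost surely.

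There is no real obstacle here; the point is the bookkeeping. The grid mesh exponent $\alpha=1+\beta+\varepsilon$ is chosen so that the factor $2^n$ in the Lipschitz constant is absorbed, which is why the dense grids of Definition~\ref{def:dense-grids-paper} are needed. The only probabilistic input is that the random constants $A_\beta$ and $M$ are finite almost surely.
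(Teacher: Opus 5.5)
Your proof is correct and follows essentially the same route as the paper. You pass from the grid to the full annulus via the Lipschitz bound of Lemma~\ref{lem:centered-profile-lipschitz-paper}, with mesh exponent \(1+\beta+\varepsilon\) absorbing the factor \(2^n\). You then remove the Lebesgue term using Lemma~\ref{lem:lebesgue-profile-estimates-paper} and \(\beta<1\), and handle negative frequencies by symmetry.
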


\begin{proof}
Work on an event where 
\(
M<\infty
\),
where Lemma~\ref{lem:centered-profile-lipschitz-paper} holds for all 
\(
n,\eta,\eta'
\),
and
where \eqref{eq:grid-centered-bound-assumption-paper} holds eventually. Let
\(
2^n\le \xi\le 2^{n+1}
\),
write 
\(
\xi=2^n\eta
\)
with 
\(
\eta\in[1,2]
\),
and
choose 
\(
\eta_n\in\Gamma_n(1+\beta+\varepsilon)
\)
with
\[
\lvert\eta-\eta_n\rvert\le 2^{-(1+\beta+\varepsilon)n}.
\]
By Lemma~\ref{lem:centered-profile-lipschitz-paper},
\[
\lvert H_n(\eta)-H_n(\eta_n)\rvert
\le
C(M+1)2^n2^{-(1+\beta+\varepsilon)n}
=
C(M+1)2^{-(\beta+\varepsilon)n}.
\]
Hence, eventually,
\[
\lvert H_n(\eta) \rvert
\le
A_\beta2^{-\beta n}
+
C(M+1)2^{-(\beta+\varepsilon)n}
\le
C_\beta2^{-\beta n}.
\]
Moreover,
\[
\lvert\widehat\mu(2^n\eta)\rvert
\le
\lvert H_n(\eta) \rvert+\lvert\widehat{\mathcal L}(2^n\eta)\rvert
\le
C_\beta2^{-\beta n}+C2^{-n}
\le
C_\beta2^{-\beta n},
\]
where the last inequality uses \(\beta<1\). This proves the bound for positive frequencies.  Negative frequencies follow
from
\(
    |\widehat\mu(-\xi)|=|\widehat\mu(\xi)|.
\)
\end{proof}

\subsection{Vector-valued contraction}
\label{subsec:vector-valued-contraction-paper}

The following estimate is the only Banach-space input in the interval Fourier
argument.  It is a standard Rademacher-type contraction estimate in finite
\(\ell^r\) spaces; see, for instance, \cite{Pisier2016}.  We record it in the
form needed below, where \(1<q<2\) is the moment exponent and \(r\ge2\) is the
auxiliary grid-norm exponent.

\begin{lemma}[Vector-valued Rademacher contraction]
\label{lem:rademacher-type-ellr-paper}
Let \(q\in(1,2)\), let \(r\ge2\), and let \(G\) and \(J\) be finite sets. There
exists a constant 
\(
C_q<\infty
\), 
depending only on \(q\), such that for every
finite deterministic family 
\(
(z_j)_{j\in J}\subset\mathbb C^G
\),
\begin{equation}\label{eq:rademacher-type-ellr-paper}
\E_\varepsilon
\left\lVert
\sum_{j\in J}\varepsilon_jz_j
\right\rVert_{\ell^r(G)}^q
\le
C_q r^{q/2}
\sum_{j\in J}\lVert z_j \rVert_{\ell^r(G)}^q,
\end{equation}
where 
\(
(\varepsilon_j)_{j\in J}
\) 
are independent Rademacher signs.
\end{lemma}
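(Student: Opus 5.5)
The estimate combines two standard facts. The first is the symmetrization inequality in $L^q$ for $q\in(1,2)$. The second is that $\ell^r(G)$ has Rademacher type $2$, with type constant of order $\sqrt r$, uniformly in $\#G$. Since the desired bound involves the $q$-th powers of the norms rather than a square function, we plan to pass from type $2$ to type $q$ with an unchanged constant. The steps are as follows.

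First, fix $G$ and $J$, and write $S=\sum_j\varepsilon_jz_j$. By Kahane's inequality with Rademacher coefficients in the Banach space $\ell^r(G)$, we have
\[
\bigl(\E_\varepsilon\|S\|_{\ell^r}^q\bigr)^{1/q}\le\bigl(\E_\varepsilon\|S\|_{\ell^r}^2\bigr)^{1/2}.
\]
This direction is simply Jensen's inequality, because $q<2$. It is therefore enough to control the second moment.

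Next we bound the second moment. By Jensen's inequality ($2\le r$),
\[
\E\|S\|_{\ell^r}^2\le\bigl(\E\|S\|_{\ell^r}^r\bigr)^{2/r}=\Bigl(\sum_{g\in G}\E\Bigl|\sum_j\varepsilon_jz_j(g)\Bigr|^r\Bigr)^{2/r}.
\]
For each fixed $g$, the scalar Khintchine inequality with the optimal growth $B_r\le C\sqrt r$ gives
\[
\E\Bigl|\sum_j\varepsilon_jz_j(g)\Bigr|^r\le (C\sqrt r)^r\Bigl(\sum_j|z_j(g)|^2\Bigr)^{r/2}.
\]
Summing over $g$ and applying Minkowski's inequality in $\ell^{r/2}(G)$ (valid since $r/2\ge1$) yields
\[
\E\|S\|^2_{\ell^r}\le C^2r\sum_j\|z_j\|_{\ell^r}^2.
\]
This is the type-$2$ estimate with constant $C\sqrt r$.

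Finally, we convert to the $q$-th power. Combining the two previous steps gives
\[
\E\|S\|^q\le C^q r^{q/2}\Bigl(\sum_j\|z_j\|^2\Bigr)^{q/2}.
\]
Because $q/2\le1$, the map $x\mapsto x^{q/2}$ is subadditive, so
\[
\Bigl(\sum_j a_j^2\Bigr)^{q/2}\le\sum_j a_j^q
\]
for $a_j=\|z_j\|_{\ell^r}\ge0$. This produces the stated bound with constant $C_q=C^q$, which depends only on $q$; in fact it is bounded by an absolute constant.

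The only step that is not routine is obtaining the sharp $\sqrt r$ growth in Khintchine's inequality, since the constant must not depend on $\#G$ or $\#J$. For this we would prove the sub-Gaussian bound
\[
\Pbb\Bigl(\Bigl|\sum_j\varepsilon_ja_j\Bigr|>t\Bigr)\le4e^{-t^2/(2\sum_j|a_j|^2)}
\]
by treating the real and imaginary parts separately and applying Hoeffding's inequality. Integrating the tail then gives $\E|\cdot|^r\le (C\sqrt r)^r(\sum_j|a_j|^2)^{r/2}$ through the Gamma function estimate $\Gamma(r/2+1)^{1/r}\lesssim\sqrt r$.
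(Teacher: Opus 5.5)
Your proof is correct and follows essentially the paper's route: scalar Khintchine with \(\sqrt r\) growth at exponent \(r\), summation over \(G\) with Minkowski's inequality in \(\ell^{r/2}(G)\), then passage to the \(q\)-th moment and subadditivity of \(x\mapsto x^{q/2}\). The only differences are cosmetic. You route the moment comparison through the second moment, whereas the paper goes directly from \(r\) to \(q\) by monotonicity of \(L^p\) norms. You also sketch the Hoeffding-based proof of the \(\sqrt r\) Khintchine constant, which the paper simply takes as known.
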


\begin{proof}
For \(p\ge2\), the scalar Khintchine inequality implies
\[
\E_\varepsilon
\left|
\sum_{j\in J}\varepsilon_j c_j
\right|^p
\le
(C\sqrt p)^p
\left(\sum_{j\in J}\lvert c_j\rvert^2\right)^{p/2}
\]
for every finite complex family \((c_j)\). Applying this with \(p=r\), summing
over \(g\in G\), and using Minkowski's inequality in \(\ell^{r/2}(G)\), we
obtain
\[
\E_\varepsilon
\left\|
\sum_{j\in J}\varepsilon_jz_j
\right\|_{\ell^r(G)}^r
\le
(C\sqrt r)^r
\left(\sum_{j\in J}\lVert z_j \rVert_{\ell^r(G)}^2\right)^{r/2}.
\]
Since \(q\le r\), monotonicity of \(L^p\)-norms and the assumption \(q<2\)
yield
\[
\E_\varepsilon
\left\|
\sum_{j\in J}\varepsilon_jz_j
\right\|_{\ell^r(G)}^q
\le
C_q r^{q/2}
\left(\sum_{j\in J}\lVert z_j \rVert_{\ell^r(G)}^2\right)^{q/2}
\le
C_q r^{q/2}
\sum_{j\in J}\lVert z_j \rVert_{\ell^r(G)}^q.
\]
This proves \eqref{eq:rademacher-type-ellr-paper}.
\end{proof}

\begin{proposition}\label{prop:vector-valued-contraction-paper}
Let \(q\in(1,2)\), let \(r\ge2\), let \(G\) be a finite set, and let
\(J\) be a finite index set. Let \((Z_j)_{j\in J}\) be independent mean-zero
random vectors in \(\mathbb C^G\) such that
\[
\E\lVert Z_j \rVert_{\ell^r(G)}^q<\infty
\qquad(j\in J).
\]
Let \(\mathcal A\) be a \(\sigma\)-field independent of \(\sigma(Z_j:j\in J)\),
and let \((a_j)_{j\in J}\) be complex-valued \(\mathcal A\)-measurable random
variables. Then, with both sides understood in the extended sense,
\begin{equation}\label{eq:vector-valued-contraction-paper}
\E\left[
\left\|
\sum_{j\in J}a_jZ_j
\right\|_{\ell^r(G)}^q
\middle|\mathcal A
\right]
\le
C_q r^{q/2}
\sum_{j\in J}\lvert a_j\rvert^q
\E\lVert Z_j \rVert_{\ell^r(G)}^q
\end{equation}
almost surely.
\end{proposition}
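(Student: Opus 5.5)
The plan is the standard symmetrization argument, done conditionally on \(\mathcal A\), followed by an application of Lemma~\ref{lem:rademacher-type-ellr-paper}.

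\textbf{Step 1: reduce to deterministic coefficients.} Since \(\mathcal A\) is independent of \(\sigma(Z_j:j\in J)\) and the \(a_j\) are \(\mathcal A\)-measurable, the conditional expectation given \(\mathcal A\) is obtained by freezing the coefficients. For fixed deterministic \(a=(a_j)\in\mathbb C^J\), set
\[
\Phi(a)=\E\Big\|\sum_j a_jZ_j\Big\|_{\ell^r(G)}^q .
\]
Then \(\E[\,\cdot\mid\mathcal A]=\Phi(a_1,\dots)\) almost surely. This is the substitution lemma for independent \(\sigma\)-fields, applied to a nonnegative jointly measurable function, so it holds in the extended sense. It therefore suffices to prove \eqref{eq:vector-valued-contraction-paper} with deterministic \(a_j\).

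\textbf{Step 2: symmetrization.} Fix deterministic \(a_j\). If \(\E\|Z_j\|^q=\infty\) for some \(j\) with \(a_j\ne0\), the right side is infinite and there is nothing to prove; indices with \(a_j=0\) can be dropped. Otherwise each \(a_jZ_j\) is in \(L^q\).

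Let \((Z_j')\) be an independent copy of \((Z_j)\). Since the \(Z_j'\) are mean zero, conditional Jensen applied to the convex function \(\|\cdot\|^q\) gives
\[
\E\Big\|\sum a_jZ_j\Big\|^q\le \E\Big\|\sum a_j(Z_j-Z_j')\Big\|^q .
\]
The differences \(Z_j-Z_j'\) are independent and symmetric, so inserting independent Rademacher signs \(\varepsilon_j\) does not change the joint law. Hence the right side equals
\[
\E\,\E_\varepsilon\Big\|\sum \varepsilon_j a_j(Z_j-Z_j')\Big\|^q .
\]

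\textbf{Step 3: apply the Rademacher lemma and undo the doubling.} For fixed \((Z,Z')\), apply Lemma~\ref{lem:rademacher-type-ellr-paper} with \(z_j=a_j(Z_j-Z_j')\). This bounds the inner expectation by
\[
C_q r^{q/2}\sum|a_j|^q\|Z_j-Z_j'\|^q .
\]
Taking expectations and using \(\|x-y\|^q\le2^{q-1}(\|x\|^q+\|y\|^q)\) gives the bound
\[
2^qC_q r^{q/2}\sum|a_j|^q\E\|Z_j\|^q .
\]
The factor \(2^q\) is absorbed into \(C_q\).

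\textbf{Main obstacle.} The only delicate point is Step 1, namely justifying measurability and the extended-value conditional statement when the \(a_j\) are random but independent of the \(Z_j\). I would handle it via regular conditional distributions: \(\sigma(a)\subset\mathcal A\) is independent of \(Z\), and Fubini applies to nonnegative integrands. Mean-zero integrability is needed only in Step 2, and it holds because \(q>1\) and \(\E\|Z_j\|^q<\infty\).
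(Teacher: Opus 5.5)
Your proposal is correct and follows the paper's proof: condition on \(\mathcal A\) to freeze the coefficients, symmetrize with an independent copy \((Z_j')\) and Rademacher signs, apply Lemma~\ref{lem:rademacher-type-ellr-paper} for each fixed realization, and undo the doubling with \(\|x-y\|^q\le 2^{q-1}(\|x\|^q+\|y\|^q)\), absorbing the factor \(2^q\) into \(C_q\). Your added justification of the conditioning step, via the substitution lemma for independent \(\sigma\)-fields in the extended sense, makes explicit what the paper states in one line.
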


\begin{proof}
Condition on \(\mathcal A\).  The coefficients \(a_j\) are then deterministic,
and the conditional law of the independent mean-zero vectors \(Z_j\) is their
original law.  The standard symmetrization argument yields
\[
\E\left\|\sum_j a_jZ_j\right\|_{\ell^r(G)}^q
\le
\E\E_\varepsilon
\left\|\sum_j\varepsilon_j a_j(Z_j-Z_j')\right\|_{\ell^r(G)}^q,
\]
where \((Z_j')\) is an independent copy of \((Z_j)\). Lemma~\ref{lem:rademacher-type-ellr-paper}
and
\[
\|a_j(Z_j-Z_j')\|_{\ell^r(G)}^q
\le
2^{q-1}|a_j|^q
\bigl(\|Z_j\|_{\ell^r(G)}^q+\|Z_j'\|_{\ell^r(G)}^q\bigr)
\]
then yield \eqref{eq:vector-valued-contraction-paper}.
\end{proof}

\begin{corollary}\label{cor:contraction-with-cascade-coefficients-paper}
Let 
\(
q\in(1,2)
\),
\(r\ge2\), and let \(G\) be a finite set. Fix \(m\ge0\).
Suppose that the family
\[
(Z_u)_{\lvert u \rvert=m}
\]
is independent of \(\mathcal F^X_m\), that the vectors \(Z_u\) are independent
and coordinatewise mean zero, and that all \(Z_u\) have the same distribution
as a random vector \(Z\) with
\[
\E\lVert Z \rVert_{\ell^r(G)}^q<\infty.
\]
Then
\begin{equation}\label{eq:contraction-with-cascade-coefficients-paper}
\E\left[
\left\|
\sum_{\lvert u \rvert=m}L_u Z_u
\right\|_{\ell^r(G)}^q
\middle|\mathcal F^X_m
\right]
\le
C_q r^{q/2}
\left(\sum_{\lvert u \rvert=m}L_u^q\right)
\E\lVert Z \rVert_{\ell^r(G)}^q.
\end{equation}
Consequently,
\begin{equation}\label{eq:contraction-with-rho-paper}
\E\left[
\left\|
\sum_{\lvert u \rvert=m}L_u Z_u
\right\|_{\ell^r(G)}^q
\right]
\le
C_q r^{q/2}\rho(q)^m
\E\lVert Z \rVert_{\ell^r(G)}^q.
\end{equation}
\end{corollary}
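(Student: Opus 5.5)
The plan is to apply Proposition~\ref{prop:vector-valued-contraction-paper} with the index set \(J=\{u:\lvert u\rvert=m\}\), the vectors \(Z_j=Z_u\), the \(\sigma\)-field \(\mathcal A=\mathcal F^X_m\), and the coefficients \(a_u=L_u\). This gives \eqref{eq:contraction-with-cascade-coefficients-paper} at once, and \eqref{eq:contraction-with-rho-paper} then follows by taking expectations.

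\emph{Checking the hypotheses.} The path weights \(L_u\) with \(\lvert u\rvert=m\) are products of coordinates of \(X(v)\) with \(\lvert v\rvert\le m-1\). Hence they are \(\mathcal F^X_m\)-measurable, real and nonnegative. By assumption the family \((Z_u)\) is independent of \(\mathcal F^X_m\), its members are mutually independent, and each is coordinatewise mean zero. Since \(Z_u\stackrel d=Z\), we have \(\E\lVert Z_u\rVert_{\ell^r(G)}^q=\E\lVert Z\rVert_{\ell^r(G)}^q<\infty\).

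\emph{Conditional bound.} Proposition~\ref{prop:vector-valued-contraction-paper} then yields, almost surely,
\[
\E\Bigl[\Bigl\lVert\sum_{\lvert u\rvert=m}L_uZ_u\Bigr\rVert_{\ell^r(G)}^q\Bigm|\mathcal F^X_m\Bigr]\le C_qr^{q/2}\sum_{\lvert u\rvert=m}L_u^q\,\E\lVert Z\rVert_{\ell^r(G)}^q,
\]
where \(\lvert a_u\rvert^q=L_u^q\). This is \eqref{eq:contraction-with-cascade-coefficients-paper}.

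\emph{Unconditional bound.} Take expectations of both sides. By the tower property the left side becomes the unconditional \(q\)-th moment. On the right, Lemma~\ref{lem:vector-q-mass-identity-paper} gives \(\E\sum_{\lvert u\rvert=m}L_u^q=\rho(q)^m\), which yields \eqref{eq:contraction-with-rho-paper}. Both inequalities are read in the extended sense, so the argument still applies if \(\rho(q)=\infty\), when the bound is trivial.

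The only point needing care is that Proposition~\ref{prop:vector-valued-contraction-paper} allows random coefficients. Its proof handles this by conditioning on \(\mathcal A\), after which the coefficients are deterministic and the \(Z_u\) keep their law because of independence. Since \(m\) is fixed, \(J\) is finite, so finiteness of the index set is not an issue.
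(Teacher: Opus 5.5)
Your proposal is correct and follows the paper's own proof: apply Proposition~\ref{prop:vector-valued-contraction-paper} with \(a_u=L_u\) and \(\mathcal A=\mathcal F^X_m\), then take expectations and use \(\E\sum_{\lvert u\rvert=m}L_u^q=\rho(q)^m\) from Lemma~\ref{lem:vector-q-mass-identity-paper}. Your explicit check that the \(L_u\) with \(\lvert u\rvert=m\) are \(\mathcal F^X_m\)-measurable is a useful detail that the paper leaves implicit.
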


\begin{proof}
Apply Proposition~\ref{prop:vector-valued-contraction-paper} with
\(a_u=L_u\) and 
\(
\mathcal A=\mathcal F^X_m
\) 
to obtain
\eqref{eq:contraction-with-cascade-coefficients-paper}. Taking expectations
and using Lemma~\ref{lem:vector-q-mass-identity-paper},
\[
\E\sum_{\lvert u \rvert=m}L_u^q=\rho(q)^m
\] 
gives 
\eqref{eq:contraction-with-rho-paper}.
\end{proof}

\begin{remark}\label{rem:role-of-contraction-estimate-paper}

In the mesoscopic decomposition, the centered profile on a dense grid is written
as a sum of independent mean-zero descendant contributions after conditioning
on \(\mathcal F^X_m\).  Corollary~\ref{cor:contraction-with-cascade-coefficients-paper}
then produces the random factor
\[
    \sum_{|u|=m}L_u^q,
\]
whose expectation is \(\rho(q)^m\).  This is the point at which the vector
moment profile enters the Fourier argument; no independence between siblings is
used.
\end{remark}

\subsection{Mesoscopic decomposition}
\label{subsec:mesoscopic-decomposition}

We now turn the star equation into an exact recursion for the centered
profiles.  After a mesoscopic generation is exposed, the descendant centered
profiles are independent and mean zero, while the remaining term is a
lower-frequency forcing term coming from the Lebesgue centering.

\begin{lemma}
\label{lem:centered-profile-mean-zero-paper}
Assume the vector law is energy-admissible.  Then, for every \(\xi\in\R\),
\[
    \E[\widehat{\mu}(\xi)]
    =
    \widehat{\mathcal L}(\xi).
\]
Consequently, for every \(n\ge0\) and every \(\eta\in[1,2]\),
\[
    \E[H_n(\eta)]=0.
\]
The same statements hold for every descendant copy \(\mu^{(u)}\).
\end{lemma}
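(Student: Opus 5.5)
The plan is to show that \(\E\mu(g)=\int_0^1 g\,dx\) for every bounded Borel (in particular continuous) function \(g\) on \([0,1]\), and then to specialize to \(g(x)=e^{-2\pi i\xi x}\). We first treat dyadic step functions. For a fixed word \(u\) with \(|u|=m\), the limiting cylinder mass is \(C_u=L_uM^{(u)}\). By Remark~\ref{rem:no-dyadic-endpoint-mass-paper} and the non-atomic case of Lemma~\ref{lem:endpoint-transfer-square-sums-paper}, this coincides almost surely with \(\mu(I_u)\). The descendant mass \(M^{(u)}\) is independent of \(\mathcal F^X_m\), while \(L_u\) is \(\mathcal F^X_m\)-measurable. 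Corollary~\ref{cor:energy-admissibility-nontriviality-paper} gives \(\E M^{(u)}=\E M=1\), and Lemma~\ref{lem:expected-path-weights-lebesgue-centering} gives \(\E L_u=2^{-m}\) under balance. Together these yield
\[
\E\mu(I_u)=\E L_u\,\E M^{(u)}=2^{-m}=\mathcal L(I_u).
\]
By linearity, \(\E\mu(g)=\mathcal L(g)\) for every dyadic step function \(g\).

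Next we pass to continuous functions. Let \(g\in C([0,1])\), possibly complex-valued, and choose dyadic step functions \(g_k\) with \(\|g-g_k\|_\infty\to0\). Then
\[
|\E\mu(g)-\E\mu(g_k)|\le \|g-g_k\|_\infty\,\E M=\|g-g_k\|_\infty,
\]
and likewise \(|\mathcal L(g)-\mathcal L(g_k)|\le\|g-g_k\|_\infty\). Letting \(k\to\infty\) gives \(\E\mu(g)=\mathcal L(g)\). Here the integrability \(\E M=1<\infty\) is the only input needed, and the interchange is justified because \(|\mu(g)|\le\|g\|_\infty M\) is integrable.

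Taking \(g(x)=e^{-2\pi i\xi x}\) yields \(\E\widehat\mu(\xi)=\widehat{\mathcal L}(\xi)\). Since \(\widehat{\mathcal L}(2^n\eta)\) is deterministic, it follows that \(\E H_n(\eta)=0\). The descendant statement holds because each \(\mu^{(u)}\) is built from an environment with the same law as the original, so the identical argument applies to it.

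The only delicate point is the identification of \(C_u\) with \(\mu(I_u)\), which requires the absence of endpoint mass. This can be sidestepped entirely by working with \(\mu_\partial\) and \(\pi\) via Lemma~\ref{lem:boundary-representation-paper}. For \(g\) continuous, \(g\circ\pi\) is continuous on \(\partial\mathcal T\) and is uniformly approximated by cylinder step functions, whose \(\mu_\partial\)-expectations equal those of \(\mathcal L\) by the computation above. Alternatively, one may approximate via \(\mu_n\), using \(\E\mu_n=\mathcal L\), since uniform integrability of \(M_n\) (from \(\E M=1\) together with Scheffé's lemma) allows passing to the limit.
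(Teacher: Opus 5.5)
Your proof is correct, but your main route differs from the paper's.

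\textbf{The paper's route.} The paper never uses the limiting cylinder masses. It starts from $\E\mu_n=\mathcal L$, which gives $\E\widehat{\mu_n}(\xi)=\widehat{\mathcal L}(\xi)$. It then uses $\widehat{\mu_n}(\xi)\to\widehat\mu(\xi)$ almost surely, which follows from weak convergence. Finally it passes the expectation to the limit by uniform integrability: $|\widehat{\mu_n}(\xi)|\le M_n$, and $(M_n)$ is uniformly integrable because $\E M=1$ (Theorem~\ref{thm:vector-KP-nondegeneracy-paper}). This is the alternative you sketch in your last sentence.

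\textbf{Your route.} You compute the expectation of the limit object directly: $\E C_u=\E L_u\,\E M^{(u)}=2^{-|u|}$. You then extend by linearity and uniform approximation, using the crude bound $\|g-g_k\|_\infty\,\E M$.

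\textbf{What each approach buys.}
\begin{itemize}
\item Your route needs no limit interchange or uniform integrability. It uses $\E M=1$ only at the cylinder level and $\E M<\infty$ for the approximation.
\item It also makes transparent that balance alone gives $\E\mu=(\E M)\,\mathcal L$. So non-degeneracy, $\E M=1$, is exactly what forces the Lebesgue centering.
\item The cost is that you must identify $C_u$ with the Euclidean mass $\mu(I_u)$, i.e.\ handle dyadic endpoints. The paper's route avoids this entirely, since it only integrates continuous functions against the absolutely continuous $\mu_n$.
\end{itemize}

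\textbf{A small correction to your citation.} You invoke the ``non-atomic case'' of Lemma~\ref{lem:endpoint-transfer-square-sums-paper}, but the paper never establishes that $\mu$ is non-atomic. All you actually need is that $\mu_\partial$ does not charge the countably many deterministic codings of dyadic endpoints, and that is exactly Remark~\ref{rem:no-dyadic-endpoint-mass-paper}. Your variant through $\mu_\partial$ and $\pi$ (Lemma~\ref{lem:boundary-representation-paper}) removes even that dependence. There $g\circ\pi$ is continuous on $\partial\mathcal T$, cylinder step functions have $\E\int h\,d\mu_\partial=\sum_u c_u2^{-|u|}$, and $\mathcal L$ is the push-forward under $\pi$ of the uniform Bernoulli measure.
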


\begin{proof}
For each \(n\), the balanced condition gives
\(
    \E\mu_n=\mathcal L,
\)
and hence
\(
    \E[\widehat{\mu_n}(\xi)]=\widehat{\mathcal L}(\xi).
\)
By Theorem~\ref{thm:vector-limiting-measure-construction-paper},
\(\mu_n\weak\mu\) almost surely, so
\(
    \widehat{\mu_n}(\xi)\to\widehat{\mu}(\xi)
\)
almost surely.  Energy-admissibility and
Theorem~\ref{thm:vector-KP-nondegeneracy-paper} imply uniform integrability of
the total-mass martingale and \(\E M=1\).  Since
\(
    |\widehat{\mu_n}(\xi)|\le M_n,
\)
the family \(\{\widehat{\mu_n}(\xi):n\ge0\}\) is uniformly integrable.  Therefore
the convergence holds in \(L^1\), and
\[
    \E[\widehat{\mu}(\xi)]
    =
    \lim_{n\to\infty}\E[\widehat{\mu_n}(\xi)]
    =
    \widehat{\mathcal L}(\xi).
\]
Thus \(\E[H_n(\eta)]=0\), by the definition of \(H_n\).  The descendant
statement follows because each descendant cascade has the same law as the
original one.
\end{proof}

Let \(0<m\le k\).  Applying
Lemma~\ref{lem:star-equation-fourier-paper} with \(n=k\), we obtain the exact
decomposition
\[
    H_k(\eta)=U_{k,m}(\eta)+R_{k,m}(\eta),
    \qquad \eta\in[1,2],
\]
where
\begin{equation}\label{eq:Uk-mesoscopic-paper}
    U_{k,m}(\eta)
    =
    \sum_{|u|=m}
    L_u e^{-2\pi i a_u2^k\eta}H^{(u)}_{k-m}(\eta),
\end{equation}
and
\begin{equation}\label{eq:Rk-mesoscopic-paper}
    R_{k,m}(\eta)
    =
    \widehat{\mathcal L}(2^{k-m}\eta)
    \sum_{|u|=m}
    (L_u-2^{-m})e^{-2\pi i a_u2^k\eta}.
\end{equation}
The term \(U_{k,m}\) is the centered descendant contribution.  The term
\(R_{k,m}\) is deterministic in phase but random in its coarse coefficients; it
is the forcing term created by centering with respect to Lebesgue measure.
For \(0<m\le k\), we call
\[
    H_k=U_{k,m}+R_{k,m}
\]
the mesoscopic decomposition of the centered profile at depth \(m\).

\begin{lemma}
\label{lem:mesoscopic-descendant-independence-paper}
Fix \(0<m\le k\), and let \(G\subset[1,2]\) be finite.  For \(|u|=m\), set
\[
    Z_u(\eta)
    =
    e^{-2\pi i a_u2^k\eta}H^{(u)}_{k-m}(\eta),
    \qquad \eta\in G.
\]
Conditional on \(\mathcal F^X_m\), the family
\[
    \{Z_u:|u|=m\}
\]
is independent.  Each \(Z_u\) is independent of \(\mathcal F^X_m\), has the law
of
\[
    \{H_{k-m}(\eta):\eta\in G\}
\]
up to deterministic unimodular phases, and satisfies
\[
    \E[Z_u\mid\mathcal F^X_m]=0
\]
coordinatewise.
\end{lemma}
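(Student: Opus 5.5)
The plan is to read off all three assertions from the construction of the descendant environments together with Lemma~\ref{lem:centered-profile-mean-zero-paper}.

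\textbf{Measurability.} For \(|u|=m\), the descendant limiting measure \(\mu^{(u)}\) is a measurable function of \(\mathcal E^{(u)}=\{X(uv):v\in\mathcal T\}\) alone. Concretely, it is the almost sure weak limit of the descendant level measures built from \(L^{(u)}_v\). Hence \(H^{(u)}_{k-m}(\eta)\) is \(\sigma(\mathcal E^{(u)})\)-measurable for each \(\eta\in G\). The phase \(e^{-2\pi i a_u2^k\eta}\) is deterministic, since \(a_u\) depends only on the word \(u\) and not on the environment. So \(Z_u\) is a deterministic, coordinatewise unimodular multiple of the vector \((H^{(u)}_{k-m}(\eta))_{\eta\in G}\), and it is \(\sigma(\mathcal E^{(u)})\)-measurable.

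\textbf{Independence and law.} The environments \(\mathcal E^{(u)}\), \(|u|=m\), are indexed by pairwise disjoint sets of vertices \(\{uv\}\). Each such set is disjoint from \(\{w:|w|\le m-1\}\), which generates \(\mathcal F^X_m\). Since the \(X(w)\) are independent across vertices, the \(\sigma\)-fields \(\sigma(\mathcal E^{(u)})\), \(|u|=m\), together with \(\mathcal F^X_m\), are mutually independent. Two consequences follow. First, the \(Z_u\) are jointly independent of \(\mathcal F^X_m\), so their conditional joint law given \(\mathcal F^X_m\) equals their unconditional law, and they are conditionally independent. Second, each \(\mathcal E^{(u)}\) has the law of the root environment, so \((H^{(u)}_{k-m}(\eta))_{\eta\in G}\) has the law of \((H_{k-m}(\eta))_{\eta\in G}\), as recorded after the definition of \(H^{(u)}_n\).

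\textbf{Mean zero.} Independence gives \(\E[Z_u(\eta)\mid\mathcal F^X_m]=e^{-2\pi i a_u2^k\eta}\,\E[H^{(u)}_{k-m}(\eta)]\). This vanishes by Lemma~\ref{lem:centered-profile-mean-zero-paper}, applied to the descendant copy. For this step we need \(H^{(u)}_{k-m}(\eta)\) to be integrable, and it is, because \(|H^{(u)}_{k-m}(\eta)|\le M^{(u)}+1\) and \(\E M^{(u)}=1\).

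\textbf{Main obstacle.} The only delicate point is that \(\mu^{(u)}\) is defined only on the full-probability event of simultaneous descendant convergence. We handle this by defining \(\mu^{(u)}\) as a measurable function of \(\mathcal E^{(u)}\), set to \(0\) off the corresponding \(\mathcal E^{(u)}\)-measurable convergence event. With that convention, modifications on null sets do not affect any of the conditional statements above.
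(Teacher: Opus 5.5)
Your proposal is correct and follows essentially the same route as the paper. Both arguments use four ingredients: \(H^{(u)}_{k-m}\) is measurable with respect to the descendant environment below \(u\); these environments are independent of each other and of \(\mathcal F^X_m\); the phases are deterministic and unimodular; and Lemma~\ref{lem:centered-profile-mean-zero-paper} gives the mean-zero property. Your integrability bound \(|H^{(u)}_{k-m}(\eta)|\le M^{(u)}+1\) and your null-set convention for \(\mu^{(u)}\) are details the paper leaves implicit, and both are sound.
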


\begin{proof}
For \(|u|=m\), the profile \(H^{(u)}_{k-m}\) is constructed entirely from the
descendant environment below \(u\).  These descendant environments are
independent of \(\mathcal F^X_m\), and are independent for distinct vertices
\(u\) of generation \(m\).  Multiplication by
\(e^{-2\pi i a_u2^k\eta}\) is deterministic and unimodular, and hence does not
affect independence or norms.  Finally,
Lemma~\ref{lem:centered-profile-mean-zero-paper} gives
\[
    \E[H^{(u)}_{k-m}(\eta)]=0
\]
for each \(\eta\in G\), which proves the coordinatewise conditional
mean-zero property.
\end{proof}

The forcing term is controlled by the mass martingale and the Lebesgue profile estimate.

\begin{lemma}
\label{lem:forcing-term-bound-paper}
Let \(q\in(1,2)\) and assume \(\rho(q)<1\).  Then there exists
\(C_q<\infty\) such that, for all \(0<m\le k\), all \(r\ge2\), and every finite
set \(G\subset[1,2]\),
\begin{equation}\label{eq:forcing-term-bound-paper}
    \E\|R_{k,m}\|_{\ell^r(G)}^q
    \le
    C_q\,2^{-q(k-m)}(\#G)^{q/r}.
\end{equation}
\end{lemma}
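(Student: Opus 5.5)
We prove the bound pointwise in \(\eta\) and then sum over the grid. By \eqref{eq:Rk-mesoscopic-paper} and Lemma~\ref{lem:lebesgue-profile-estimates-paper}, for every \(\eta\in G\subset[1,2]\),
\[
    |R_{k,m}(\eta)|
    \le
    C2^{-(k-m)}
    \Bigl|\sum_{|u|=m}(L_u-2^{-m})e^{-2\pi i a_u2^k\eta}\Bigr|.
\]
The key point is that \(\eta\mapsto R_{k,m}(\eta)\) depends only on \(\mathcal F^X_m\). We therefore bound the \(q\)-th moment of the scalar sum
\[
    T_m(\eta)=\sum_{|u|=m}(L_u-2^{-m})e^{-2\pi i a_u2^k\eta}
\]
uniformly in \(\eta\), \(k\) and \(m\). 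Since \(\|R_{k,m}\|_{\ell^r(G)}\le (\#G)^{1/r}\max_{\eta\in G}|R_{k,m}(\eta)|\), the factor \((\#G)^{q/r}\) then appears. To avoid a maximum inside the expectation, we use instead
\[
    \|R\|_{\ell^r}^q\le (\#G)^{q/r-1}\sum_{\eta\in G}|R(\eta)|^q.
\]
This follows from \(\|R\|_{\ell^r}\le\|R\|_{\ell^q}\) and Hölder between \(\ell^q\) and \(\ell^r\) counting norms, \(\|R\|_{\ell^q}\le (\#G)^{1/q-1/r}\|R\|_{\ell^r}\). One has to be careful about the direction here. The clean route is \(\|R\|_{\ell^r}\le (\#G)^{1/r}\|R\|_{\ell^\infty}\) together with \(\E\max_\eta|R(\eta)|^q\le \sum_\eta \E|R(\eta)|^q\). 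However, that route yields the weaker factor \(\#G\) in place of \((\#G)^{q/r}\).

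To obtain exactly \((\#G)^{q/r}\), we dominate \(|T_m(\eta)|\) by a single \(\eta\)-independent random variable. Write \(T_m(\eta)=\sum_{j=0}^{m-1}\Delta_j(\eta)\) as a telescoping martingale in the generation. Here
\[
    \Delta_j(\eta)=\sum_{|v|=j}L_v\sum_{i\in\{0,1\}}\bigl(X_i(v)-\tfrac12\bigr)2^{-(m-j-1)}\Phi_{v,i}(\eta),
\]
where \(\Phi_{v,i}(\eta)\) is the average of the unimodular phases \(e^{-2\pi i a_u2^k\eta}\) over the \(2^{m-j-1}\) descendants \(u\) of \(vi\) at level \(m\). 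Each \(\Phi_{v,i}\) has modulus at most \(1\) and is deterministic. This decomposition uses that \(L_u-2^{-m}\) telescopes through the products \(L_{u|j}2^{-(m-j)}\), which relies on balance. Bounding the phases crudely gives
\[
    |\Delta_j(\eta)|\le D_j:=\sum_{|v|=j}L_v\bigl(|X_0(v)-\tfrac12|+|X_1(v)-\tfrac12|\bigr).
\]
The quantity \(D_j\) is independent of \(\eta\), but it is no longer centered. Summing \(\E D_j^q\) over \(j\) does not produce a geometric series, since \(\E D_j\) is of order \(1\). This crude route therefore loses a factor \(m\).

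\textbf{Main obstacle.} The centering must be kept while the bound stays uniform in \(\eta\). My fallback is to keep \(\eta\) and apply the conditional von Bahr--Esseen inequality to each martingale difference \(\Delta_j(\eta)\), as in the proof of Lemma~\ref{lem:terminal-mass-moment-criterion}. Conditionally on \(\mathcal F^X_j\), the terms indexed by \(v\) are independent and centered. The inequality then gives
\[
    \E|\Delta_j(\eta)|^q\le 2\,\E\bigl(|X_0-\tfrac12|+|X_1-\tfrac12|\bigr)^q\rho(q)^j .
\]
The moment on the right is finite because \(\rho(q)<1\) forces \(\E X_i^q<\infty\). Minkowski's inequality and \(\rho(q)<1\) then give
\[
    \sup_{\eta,k,m}\E|T_m(\eta)|^q\le C_q<\infty .
\]
Finally, for the grid norm I would apply Corollary~\ref{cor:contraction-with-cascade-coefficients-paper}-style reasoning in \(\ell^r(G)\) rather than scalar reasoning. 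Proposition~\ref{prop:vector-valued-contraction-paper} is applied level by level to the vectors \(\bigl((X_i(v)-\tfrac12)\Phi_{v,i}(\eta)\bigr)_{\eta\in G}\). Each such vector has \(\ell^r(G)\)-norm at most \(|X_i(v)-\tfrac12|(\#G)^{1/r}\), so each level contributes
\[
    C_q r^{q/2}(\#G)^{q/r}\rho(q)^j .
\]
Summing the geometric series gives \(\E\|T_m\|_{\ell^r(G)}^q\le C_q r^{q/2}(\#G)^{q/r}\), and multiplying by \(2^{-q(k-m)}\) yields \eqref{eq:forcing-term-bound-paper}. The one remaining issue is the factor \(r^{q/2}\). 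If the constant must be independent of \(r\), I would instead use the scalar von Bahr--Esseen bound pointwise, which has no \(r\)-dependence, combined with \(\|\cdot\|_{\ell^r}\le\|\cdot\|_{\ell^q}\). I expect this bookkeeping to be where the argument needs the most care.
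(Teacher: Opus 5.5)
Your argument does not prove the lemma as stated. The statement asks for one constant \(C_q\) that works for all \(r\ge2\) and all finite \(G\), with the factor \((\#G)^{q/r}\).

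\textbf{Vector-valued route.} This gives \(\E\|R_{k,m}\|_{\ell^r(G)}^q\le C_q r^{q/2}2^{-q(k-m)}(\#G)^{q/r}\). The extra factor \(r^{q/2}\) violates the quantifiers. In this route you should also group the two children of each \(v\) into a single summand. The reason is that \(X_0(v)\) and \(X_1(v)\) may be dependent, while Proposition~\ref{prop:vector-valued-contraction-paper} requires independent summands.

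\textbf{Scalar fallback.} Pointwise von Bahr--Esseen combined with \(\|\cdot\|_{\ell^r}\le\|\cdot\|_{\ell^q}\) gives only \(\sum_{\eta\in G}\E|R_{k,m}(\eta)|^q\le C_q2^{-q(k-m)}\#G\). That is the weaker factor \(\#G\), which you had already identified as insufficient.

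\textbf{A false inequality.} The claimed bound \(\|R\|_{\ell^r}^q\le(\#G)^{q/r-1}\sum_\eta|R(\eta)|^q\) is false. H\"older's inequality gives the reverse direction, and a function supported at a single point is a counterexample.

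\textbf{The missing idea.} No centering is needed. The whole decay \(2^{-q(k-m)}\) already comes from the factor \(\widehat{\mathcal L}(2^{k-m}\eta)\). So \(T_m(\eta)\) only has to be bounded in \(L^q\) uniformly in \(m\), and you can dominate it uniformly in \(\eta\) before any telescoping.

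By the triangle inequality, for every \(\eta\),
\(|T_m(\eta)|\le\sum_{|u|=m}L_u+\sum_{|u|=m}2^{-m}=M_m+1\).
Hence \(\|R_{k,m}\|_{\ell^r(G)}\le C2^{-(k-m)}(M_m+1)(\#G)^{1/r}\). Since \(\rho(q)<1\), Lemma~\ref{lem:terminal-mass-moment-criterion} gives \(\sup_m\E[(M_m+1)^q]<\infty\). This is exactly the paper's proof, and it involves no \(r\)-dependence at all.

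Your crude bound \(D_j\) lost a factor \(m\) only because you applied the triangle inequality after telescoping. Applied directly to \(T_m\), the cancellation you were trying to preserve is already contained in the uniform \(L^q\) bound for the martingale \(M_m\). Your scalar von Bahr--Esseen computation essentially reproves that bound, pointwise in \(\eta\).

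The \(r^{q/2}\) loss would in fact be harmless in the application, because \(r_n\asymp n\) and the ladder iteration has polynomial slack \(n^{\tau_{\mathrm{lad}}}\). That does not, however, establish the lemma as stated.
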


\begin{proof}
By Lemma~\ref{lem:lebesgue-profile-estimates-paper},
\[
    |\widehat{\mathcal L}(2^{k-m}\eta)|
    \le C2^{-(k-m)}
    \qquad(\eta\in[1,2]).
\]
Also,
\[
\left|
\sum_{|u|=m}(L_u-2^{-m})e^{-2\pi i a_u2^k\eta}
\right|
\le
\sum_{|u|=m}L_u+\sum_{|u|=m}2^{-m}
=
M_m+1.
\]
Hence
\[
    \|R_{k,m}\|_{\ell^r(G)}
    \le
    C2^{-(k-m)}(M_m+1)(\#G)^{1/r}.
\]
Taking \(q\)-th moments gives
\[
    \E\|R_{k,m}\|_{\ell^r(G)}^q
    \le
    C2^{-q(k-m)}(\#G)^{q/r}\E[(M_m+1)^q].
\]
Since \(\rho(q)<1\), Lemma~\ref{lem:terminal-mass-moment-criterion} gives
\[
    \sup_{m\ge0}\E[M_m^q]<\infty.
\]
Thus \(\sup_{m\ge0}\E[(M_m+1)^q]<\infty\), and the estimate follows.
\end{proof}

\subsection{The grid-norm recursion}
\label{subsec:grid-norm-recursion-paper}

We now combine the mesoscopic decomposition with the vector-valued contraction
estimate.  The recursion is formulated with a grid envelope: after one
mesoscopic step, the descendant profile has Fourier scale \(k-m\), but it is
still evaluated on a grid chosen for the ambient annular scale \(n\).

Fix \(\alpha>1\), and set
\[
    r_n=\max\{2,n\}.
\]
Let \(\mathcal G_n(\alpha)\) be the collection of all finite sets
\(G\subset[1,2]\) satisfying
\begin{equation}\label{eq:grid-envelope-cardinality-paper}
    \#G\le C_\alpha 2^{\alpha n},
\end{equation}
where \(C_\alpha\) is chosen large enough so that
\(\Gamma_n(\alpha)\in\mathcal G_n(\alpha)\).

Fix \(q\in(1,2)\).  For \(0\le k\le n\), define
\begin{equation}\label{eq:grid-envelope-Akn-paper}
    A^{(q,\alpha)}_{k,n}
    =
    \sup_{G\in\mathcal G_n(\alpha)}
    \E\left[\|H_k\|_{\ell^{r_n}(G)}^q\right].
\end{equation}
When \(q\) and \(\alpha\) are fixed, we write \(A_{k,n}\) for
\(A^{(q,\alpha)}_{k,n}\).

\begin{lemma}\label{lem:crude-grid-norm-bound-paper}
Let \(q\in(1,2)\) and assume \(\rho(q)<1\).  For every \(\alpha>1\) there exists 
\(
C_{q,\alpha}<\infty
\) 
such that 
\(
A_{k,n}\le C_{q,\alpha}
\) 
for all 
\(
0\le k\le n
\).
\end{lemma}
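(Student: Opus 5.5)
The bound is crude: I will ignore all cancellation and use only a pointwise bound on \(H_k\), the size of the grid, and the uniform \(q\)-th moment bound for the total mass.

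\emph{Pointwise bound.} For every \(\eta\in[1,2]\) and \(k\ge0\),
\[
|H_k(\eta)|\le |\widehat\mu(2^k\eta)|+|\widehat{\mathcal L}(2^k\eta)|\le M+1,
\]
since \(|\widehat\mu|\le\mu([0,1])=M\) and \(|\widehat{\mathcal L}|\le1\).

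\emph{Grid norm.} For any \(G\in\mathcal G_n(\alpha)\), this gives
\[
\|H_k\|_{\ell^{r_n}(G)}\le (M+1)(\#G)^{1/r_n}\le (M+1)\bigl(C_\alpha2^{\alpha n}\bigr)^{1/r_n}.
\]
The choice \(r_n=\max\{2,n\}\) keeps the last factor bounded uniformly in \(n\). For \(n\ge2\) it equals \(C_\alpha^{1/n}2^{\alpha}\le \max\{1,C_\alpha\}2^{\alpha}\). For \(n\in\{0,1\}\) we have \(r_n=2\), and the factor is at most \((C_\alpha 2^{\alpha})^{1/2}\). So there is \(B_\alpha<\infty\), depending only on \(\alpha\), with \((\#G)^{1/r_n}\le B_\alpha\) for all \(n\) and all \(G\in\mathcal G_n(\alpha)\).

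\emph{Moment bound.} Raising to the power \(q\) and taking expectations gives
\[
\E\|H_k\|_{\ell^{r_n}(G)}^q\le B_\alpha^q\,\E[(M+1)^q]\le B_\alpha^q\,2^{q-1}\bigl(\E[M^q]+1\bigr).
\]
Since \(\rho(q)<1\), Lemma~\ref{lem:terminal-mass-moment-criterion} gives \(\E[M^q]<\infty\). The right-hand side is independent of \(k\), \(n\) and \(G\). Taking the supremum over \(G\in\mathcal G_n(\alpha)\) then yields \(A_{k,n}\le C_{q,\alpha}\), with \(C_{q,\alpha}=B_\alpha^q2^{q-1}(\E[M^q]+1)\).

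The only point needing care is that \((\#G)^{1/r_n}\) stays bounded even though \(\#G\) grows exponentially in \(n\). The growth of \(r_n\) with \(n\) absorbs it, and this is exactly why \(r_n\) is chosen as \(\max\{2,n\}\). Nothing else is delicate.
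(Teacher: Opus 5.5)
Your proposal is correct and follows essentially the same route as the paper: the pointwise bound \(|H_k|\le M+1\), the estimate \((\#G)^{q/r_n}\le C_{q,\alpha}\) coming from \(r_n=\max\{2,n\}\), and \(\E[M^q]<\infty\) from Lemma~\ref{lem:terminal-mass-moment-criterion}. Your explicit treatment of the small cases \(n\in\{0,1\}\) is a minor extra detail that the paper leaves implicit.
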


\begin{proof}
Since \(\rho(q)<1\), Lemma~\ref{lem:terminal-mass-moment-criterion}
gives
\[
    \E[M^q]<\infty.
\]
For every \(k\ge0\) and \(\eta\in[1,2]\),
\[
    |H_k(\eta)|
    \le |\widehat{\mu}(2^k\eta)|+
        |\widehat{\mathcal L}(2^k\eta)|
    \le M+1.
\]
Hence, for \(G\in\mathcal G_n(\alpha)\),
\[
    \E\|H_k\|_{\ell^{r_n}(G)}^q
    \le
    (\#G)^{q/r_n}\E[(M+1)^q].
\]
By \eqref{eq:grid-envelope-cardinality-paper},
\[
    (\#G)^{q/r_n}
    \le
    (C_\alpha 2^{\alpha n})^{q/\max\{2,n\}}
    \le C_{q,\alpha}.
\]
Thus \(A_{k,n}\le C_{q,\alpha}\) for all \(0\le k\le n\).
\end{proof}

 \begin{proposition}[Grid-norm recursion]
\label{prop:grid-norm-recursion-paper}
Let \(q\in(1,2)\) and assume \(\rho(q)<1\).  Fix \(\alpha>1\).  Then there
exists \(C_{q,\alpha}<\infty\) such that, for all integers
\(0<m\le k\le n\),
\begin{equation}\label{eq:grid-norm-recursion-paper}
    A_{k,n}
    \le
    C_{q,\alpha} r_n^{q/2}\rho(q)^m A_{k-m,n}
    +
    C_{q,\alpha}2^{-q(k-m)}.
\end{equation}
\end{proposition}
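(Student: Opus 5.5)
Fix \(0<m\le k\le n\) and an arbitrary \(G\in\mathcal G_n(\alpha)\). The plan is to use the mesoscopic decomposition \(H_k=U_{k,m}+R_{k,m}\) from \eqref{eq:Uk-mesoscopic-paper}--\eqref{eq:Rk-mesoscopic-paper}, estimate the two pieces separately in \(L^q(\ell^{r_n}(G))\), and then take the supremum over \(G\). Since \(q>1\), the elementary inequality \((a+b)^q\le 2^{q-1}(a^q+b^q)\) together with Minkowski in \(\ell^{r_n}(G)\) gives
\[
\E\|H_k\|_{\ell^{r_n}(G)}^q
\le
2^{q-1}\E\|U_{k,m}\|_{\ell^{r_n}(G)}^q
+
2^{q-1}\E\|R_{k,m}\|_{\ell^{r_n}(G)}^q .
\]
The constant \(2^{q-1}\) is absorbed into \(C_{q,\alpha}\).

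For the descendant term, I would apply Corollary~\ref{cor:contraction-with-cascade-coefficients-paper} with \(r=r_n\ge2\), the finite set \(G\), and \(Z_u(\eta)=e^{-2\pi i a_u2^k\eta}H^{(u)}_{k-m}(\eta)\). Lemma~\ref{lem:mesoscopic-descendant-independence-paper} supplies the hypotheses: the \(Z_u\) are independent of \(\mathcal F^X_m\), mutually independent, and coordinatewise mean zero. The unimodular phases depend on \(u\), so the \(Z_u\) are not literally identically distributed. However, \(\|Z_u\|_{\ell^{r_n}(G)}=\|H^{(u)}_{k-m}\|_{\ell^{r_n}(G)}\), which has the law of \(\|H_{k-m}\|_{\ell^{r_n}(G)}\). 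I would therefore invoke Proposition~\ref{prop:vector-valued-contraction-paper} directly with \(a_u=L_u\) and \(\mathcal A=\mathcal F^X_m\); it does not require identical laws, only the individual moments \(\E\|Z_u\|^q\). Finiteness of these moments follows from \(|H_{k-m}|\le M+1\) and \(\E M^q<\infty\) (Lemma~\ref{lem:terminal-mass-moment-criterion}). Taking expectations and using Lemma~\ref{lem:vector-q-mass-identity-paper} then gives
\[
\E\|U_{k,m}\|_{\ell^{r_n}(G)}^q
\le
C_q r_n^{q/2}\rho(q)^m\,\E\|H_{k-m}\|_{\ell^{r_n}(G)}^q
\le
C_q r_n^{q/2}\rho(q)^m A_{k-m,n}.
\]
The last step holds because the same \(G\in\mathcal G_n(\alpha)\) is admissible in the definition of \(A_{k-m,n}\). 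This is exactly why the grid envelope is tied to the ambient scale \(n\) rather than to \(k-m\).

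For the forcing term, Lemma~\ref{lem:forcing-term-bound-paper} gives \(\E\|R_{k,m}\|_{\ell^{r_n}(G)}^q\le C_q2^{-q(k-m)}(\#G)^{q/r_n}\). Since \(\#G\le C_\alpha2^{\alpha n}\) and \(r_n=\max\{2,n\}\), the factor \((\#G)^{q/r_n}\) is bounded by a constant depending only on \(q,\alpha\), as in the proof of Lemma~\ref{lem:crude-grid-norm-bound-paper}. Combining the two bounds and taking the supremum over \(G\in\mathcal G_n(\alpha)\) yields \eqref{eq:grid-norm-recursion-paper}.

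The main obstacle is the bookkeeping in the contraction step, which splits into two checks. First, conditioning on \(\mathcal F^X_m\) must leave the descendant profiles mean zero and independent; this relies on the balanced centering through Lemma~\ref{lem:centered-profile-mean-zero-paper}. Second, the \(u\)-dependent phases must not spoil the moment bound; this is handled by using Proposition~\ref{prop:vector-valued-contraction-paper} rather than the identically distributed corollary. Once these are in place, the remaining estimates are routine.
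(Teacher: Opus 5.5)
Your proposal is correct and follows the paper's proof essentially step by step. You use the mesoscopic decomposition, apply the vector-valued contraction to the descendant term on the same ambient grid \(G\) (which feeds into \(A_{k-m,n}\)), and bound the forcing term by Lemma~\ref{lem:forcing-term-bound-paper} together with \((\#G)^{q/r_n}\le C_{q,\alpha}\). Your choice to invoke Proposition~\ref{prop:vector-valued-contraction-paper} directly, since the \(u\)-dependent phases mean the \(Z_u\) are not literally identically distributed, is a small but legitimate tightening of the paper's appeal to Corollary~\ref{cor:contraction-with-cascade-coefficients-paper}.
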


\begin{proof}
Fix \(G\in\mathcal G_n(\alpha)\).  By the mesoscopic decomposition,
\[
    H_k=U_{k,m}+R_{k,m}
\]
on \(G\).  Therefore
\[
    \|H_k\|_{\ell^{r_n}(G)}^q
    \le
    2^{q-1}\|U_{k,m}\|_{\ell^{r_n}(G)}^q
    +
    2^{q-1}\|R_{k,m}\|_{\ell^{r_n}(G)}^q.
\]
We estimate the descendant term and the forcing term separately.

For the descendant term, set
\[
    Z_u(\eta)
    =
    e^{-2\pi i a_u2^k\eta}H^{(u)}_{k-m}(\eta),
    \qquad \eta\in G,\quad |u|=m.
\]
By Lemma~\ref{lem:mesoscopic-descendant-independence-paper}, conditional on
\(\mathcal F^X_m\), the vectors \(Z_u\) are independent, mean zero, and
independent of \(\mathcal F^X_m\).  Moreover,
\[
    \|Z_u\|_{\ell^{r_n}(G)}
    =
    \|H^{(u)}_{k-m}\|_{\ell^{r_n}(G)}.
\]
Applying  Corollary~\ref{cor:contraction-with-cascade-coefficients-paper}, gives
\[
    \E\left[
        \|U_{k,m}\|_{\ell^{r_n}(G)}^q
        \,\middle|\, \mathcal F^X_m
    \right]
    \le
    C_q r_n^{q/2}
    \left(\sum_{|u|=m}L_u^q\right)
    \E\|H_{k-m}\|_{\ell^{r_n}(G)}^q .
\]
Since \(G\in\mathcal G_n(\alpha)\), the last expectation is at most
\(A_{k-m,n}\).  Taking expectations and using
\[
    \E\sum_{|u|=m}L_u^q=\rho(q)^m
\]
yields
\[
    \E\|U_{k,m}\|_{\ell^{r_n}(G)}^q
    \le
    C_q r_n^{q/2}\rho(q)^m A_{k-m,n}.
\]

For the forcing term, Lemma~\ref{lem:forcing-term-bound-paper} gives
\[
    \E\|R_{k,m}\|_{\ell^{r_n}(G)}^q
    \le
    C_q2^{-q(k-m)}(\#G)^{q/r_n}.
\]
By \eqref{eq:grid-envelope-cardinality-paper},
\[
    (\#G)^{q/r_n}\le C_{q,\alpha}.
\]
Thus
\[
    \E\|R_{k,m}\|_{\ell^{r_n}(G)}^q
    \le
    C_{q,\alpha}2^{-q(k-m)}.
\]
Combining the two estimates and taking the supremum over
\(G\in\mathcal G_n(\alpha)\) proves
\eqref{eq:grid-norm-recursion-paper}.

\end{proof}

\begin{remark}\label{rem:ambient-index-grid-envelope-paper}
The grid \(G\) used to control the annulus at scale \(n\) has cardinality of
order \(2^{\alpha n}\).  After one mesoscopic step, the descendant profile has
Fourier scale \(k-m\), but it is still evaluated on the same ambient grid
\(G\).  This is why the envelope \(A_{k,n}\) has two indices: \(k\) records the
Fourier scale of the profile, while \(n\) records the size of the grid on which
the profile is measured.
\end{remark}

 \subsection{The ladder iteration estimate}
\label{subsec:ladder-estimate-paper}

We now iterate the grid-norm recursion from
Proposition~\ref{prop:grid-norm-recursion-paper}.  Fix
\(q\in(1,2)\) with \(\rho(q)<1\), and define
\begin{equation}\label{eq:beta-X-q-paper}
    \beta_X(q)
    =
    -\frac1q\log_2\rho(q)>0.
\end{equation}
Thus \(\rho(q)=2^{-q\beta_X(q)}\).

\begin{lemma}
\label{lem:beta-X-upper-bound-paper}
For every \(q\in(1,2)\) with \(\rho(q)<1\),
\[
    0<\beta_X(q)<1.
\]
In fact,
\(
    \beta_X(q)\le \frac{q-1}{q}<\frac12 .
\)
\end{lemma}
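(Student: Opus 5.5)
The plan is to bound \(\rho(q)\) from below by \(2^{1-q}\). Since \(\rho(q)=2^{-q\beta_X(q)}\), this is equivalent to \(\beta_X(q)\le (q-1)/q\). The upper bound \((q-1)/q<\tfrac12\) then follows from \(q<2\), because \((q-1)/q=1-1/q\) is increasing in \(q\) and equals \(\tfrac12\) at \(q=2\). Positivity \(\beta_X(q)>0\) is immediate from the hypothesis \(\rho(q)<1\) and the definition \eqref{eq:beta-X-q-paper}. So the whole content lies in the lower bound on \(\rho(q)\).

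For that lower bound I would use two convexity inequalities in succession. First, pointwise, the power-mean inequality (convexity of \(x\mapsto x^q\) for \(q>1\)) gives
\[
X_0^q+X_1^q\ge 2\Bigl(\tfrac{X_0+X_1}{2}\Bigr)^q=2^{1-q}(X_0+X_1)^q .
\]
Second, taking expectations and applying Jensen's inequality to the convex function \(x\mapsto x^q\), together with the balanced condition \eqref{eq:energy-admissible-balance} (which gives \(\E(X_0+X_1)=1\)), yields
\[
\rho(q)\ge 2^{1-q}\,\E[(X_0+X_1)^q]\ge 2^{1-q}\bigl(\E(X_0+X_1)\bigr)^q=2^{1-q}.
\]
Both sides may be read in the extended sense, although here \(\rho(q)<1\) is finite anyway.

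Taking \(-\frac1q\log_2\) of the inequality \(\rho(q)\ge 2^{1-q}\), and using that \(-\log_2\) is decreasing, gives
\[
\beta_X(q)\le \frac{q-1}{q}.
\]
This completes the argument. There is no real obstacle here; the only point requiring care is to use the balanced (mean-one) normalization in the Jensen step, so that the lower bound is exactly \(2^{1-q}\) with no extra constant.
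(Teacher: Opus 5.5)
Your proof is correct and is essentially the paper's argument: both derive \(\rho(q)\ge 2^{1-q}\) from convexity of \(x\mapsto x^q\) and then apply \(-\frac1q\log_2\). The only difference is that the paper applies Jensen to each coordinate using \(\E X_i=\tfrac12\), whereas your pointwise power-mean step followed by Jensen on \(X_0+X_1\) uses only the mean-one condition \(\E(X_0+X_1)=1\), a slightly weaker hypothesis.
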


\begin{proof}
The positivity follows from \(\rho(q)<1\).  For the upper bound, the balanced
condition and Jensen's inequality give
\[
    \E[X_i^q]\ge (\E X_i)^q=2^{-q},
    \qquad i=0,1.
\]
Hence
\[
    \rho(q)=\E[X_0^q+X_1^q]\ge 2^{1-q}.
\]
Therefore
\[
    -\log_2\rho(q)\le q-1,
\]
and so
\[
    \beta_X(q)
    =
    -\frac1q\log_2\rho(q)
    \le
    \frac{q-1}{q}
    <
    \frac12,
\]
because \(q<2\).
\end{proof}

The ladder proof uses a decay exponent strictly smaller than \(\beta_X(q)\). To
avoid conflict with the dense-grid parameter, we denote the Fourier decay
exponent in this subsection by \(\beta\), and the grid parameter by \(\alpha_{\mathrm{gr}}\).

\begin{proposition}[Ladder iteration]
\label{prop:ladder-estimate-paper}
Fix \(q\in(1,2)\) with \(\rho(q)<1\), let
\(0<\beta<\beta_X(q)\), and fix \(\alpha_{\mathrm{gr}}>1\).  Then there exist
constants
\[
    C_{\mathrm{lad}}<\infty,\qquad
    \tau_{\mathrm{lad}}<\infty,\qquad
    n_0\ge1,
\]
depending on \(q,\beta,\alpha_{\mathrm{gr}}\) and on the law of the cascade,
such that, for every \(n\ge n_0\) and every \(0\le k\le n\),
\begin{equation}\label{eq:ladder-iteration-bound-paper}
    A^{(q,\alpha_{\mathrm{gr}})}_{k,n}
    \le
    C_{\mathrm{lad}} n^{\tau_{\mathrm{lad}}}2^{-q\beta k}.
\end{equation}
In particular,
\begin{equation}\label{eq:ladder-terminal-bound-paper}
    A^{(q,\alpha_{\mathrm{gr}})}_{n,n}
    \le
    C_{\mathrm{lad}} n^{\tau_{\mathrm{lad}}}2^{-q\beta n}
    \qquad(n\ge n_0).
\end{equation}
\end{proposition}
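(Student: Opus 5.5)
The plan is to iterate the grid-norm recursion of Proposition~\ref{prop:grid-norm-recursion-paper} with a fixed mesoscopic step \(m\), using the crude bound of Lemma~\ref{lem:crude-grid-norm-bound-paper} at the bottom of the ladder. Write \(\rho=\rho(q)=2^{-q\beta_X(q)}\) and set \(\epsilon=\beta_X(q)-\beta>0\). The recursion reads
\[
    A_{k,n}\le C r_n^{q/2}\rho^m A_{k-m,n}+C2^{-q(k-m)},
\]
and the factor \(r_n^{q/2}\le n^{q/2}\) is the only obstruction to a clean geometric contraction. Since \(k\le n\), the number of ladder steps is at most \(k/m\). The first idea is to choose \(m\) depending on \(n\) so that \(r_n^{q/2}\) is absorbed into the gain \(2^{-q\epsilon m}\). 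Concretely, take \(m=m_n=\lceil \kappa\log_2 n\rceil\) with \(\kappa\) large enough that \(C r_n^{q/2}2^{-q\epsilon m_n}\le 1\) for \(n\ge n_0\). Then \(C r_n^{q/2}\rho^{m_n}\le 2^{-q\beta m_n}\).

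Next, unroll the recursion along \(k, k-m, k-2m,\dots\) down to some \(k_0=k-jm\in[0,m)\), where \(j=\lfloor k/m\rfloor\). This gives
\[
    A_{k,n}\le 2^{-q\beta jm}A_{k_0,n}+C\sum_{i=0}^{j-1}2^{-q\beta im}\,2^{-q(k-im-m)}.
\]
For the first term, Lemma~\ref{lem:crude-grid-norm-bound-paper} gives \(A_{k_0,n}\le C_{q,\alpha}\), and \(2^{-q\beta jm}\le 2^{q\beta m}2^{-q\beta k}\). Since \(2^{m_n}\lesssim n^{\kappa}\), this term is \(\lesssim n^{q\beta\kappa}2^{-q\beta k}\). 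For the forcing sum, the \(i\)-th term equals \(2^{qm}\,2^{-q\beta im}2^{-q(k-im)}\). Because \(\beta<1\) (Lemma~\ref{lem:beta-X-upper-bound-paper}), we have \(2^{-q(k-im)}\le 2^{-q\beta(k-im)}\), so each term is at most \(2^{qm}2^{-q\beta k}\). Summing at most \(k/m\le n\) terms bounds the forcing contribution by \(n\,2^{qm_n}2^{-q\beta k}\lesssim n^{1+q\kappa}2^{-q\beta k}\). Together these give \eqref{eq:ladder-iteration-bound-paper} with \(\tau_{\mathrm{lad}}=1+q\kappa\). The case \(k<m_n\) is covered directly by the crude bound, since there \(2^{-q\beta k}\ge n^{-q\beta\kappa}\) up to constants.

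The main obstacle, and the point to check carefully, is that the recursion constant \(C_{q,\alpha}\) in Proposition~\ref{prop:grid-norm-recursion-paper} must be uniform in \(m\). This holds as stated, since it was derived for all \(0<m\le k\le n\) with constants depending only on \(q,\alpha\). It is this uniformity that allows \(m\) to grow logarithmically with \(n\) while \(n\) appears only polynomially.

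A second point is that the forcing term is not summably small relative to \(2^{-q\beta k}\) without the crude comparison \(2^{-q(k-im)}\le 2^{-q\beta(k-im)}\). That comparison needs \(\beta\le 1\), which Lemma~\ref{lem:beta-X-upper-bound-paper} supplies. If a sharper polynomial factor were needed, one could instead sum the geometric series in \(i\) using \(1-\beta>0\); however, the crude count by \(n\) suffices for the stated polynomial loss \(n^{\tau_{\mathrm{lad}}}\). Finally, \eqref{eq:ladder-terminal-bound-paper} is the special case \(k=n\).
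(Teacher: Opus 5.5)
Your argument is correct, but it organizes the ladder differently from the paper.

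\textbf{The paper's route.} The paper uses geometric rungs. With \(\gamma\in(\beta,1)\) and \(k'=\lceil\gamma k\rceil\), it applies Proposition~\ref{prop:grid-norm-recursion-paper} once, with the large step \(m=k-k'\asymp k\), and closes a strong induction on \(k\) for fixed \(n\).
\begin{itemize}
\item Once \(k>K\log_2 n\), the contraction factor \(n^{q/2}2^{-q(\beta_X(q)-\beta)(k-k')}\) is at most \(n^{-2}\).
\item The single forcing term satisfies \(2^{-qk'}\le 2^{-q\gamma k}\le 2^{-q\beta k}\) because \(\gamma>\beta\).
\item The range \(k\le K\log_2 n\) is covered by Lemma~\ref{lem:crude-grid-norm-bound-paper}, and this base case is the only source of the factor \(n^{\tau_{\mathrm{lad}}}\).
\end{itemize}

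\textbf{Your route.} You use rungs of fixed logarithmic length \(m_n=\lceil\kappa\log_2 n\rceil\). You absorb \(r_n^{q/2}\) at every rung, unroll the recursion explicitly, and sum the forcing terms using \(2^{-q(k-im)}\le 2^{-q\beta(k-im)}\).

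\textbf{What the two routes share.} Both rest on the two facts you flagged. The first is that the recursion constant is uniform in \(m\); the paper needs this too, since its step \(k-k'\) varies with \(k\). The second is \(\beta<1\), which the paper uses implicitly through the choice \(\gamma\in(\beta,1)\).

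\textbf{What each buys.} Your version needs no induction and gives the explicit exponent \(\tau_{\mathrm{lad}}=1+q\kappa\) for any \(\kappa>1/(2(\beta_X(q)-\beta))\). The cost is \(O(k/\log n)\) rungs and an extra factor \(n\cdot 2^{qm_n}\) in the forcing sum. The factor \(n\) is removable by summing the geometric series in \(i\), as you note. The paper's geometric rungs need only one application of the recursion per induction stage. They also dispose of the forcing by a single comparison, with no summation over rungs.
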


\begin{proof}
Write \(A_{k,n}=A^{(q,\alpha_{\mathrm{gr}})}_{k,n}\).  Choose
\(\gamma\in(\beta,1)\), and set
\[
    k'=\lceil \gamma k\rceil .
\]
Since \(\rho(q)=2^{-q\beta_X(q)}\), Proposition~\ref{prop:grid-norm-recursion-paper}
gives a constant \(C_0<\infty\) such that, whenever \(k-k'\ge1\),
\begin{equation}\label{eq:ladder-one-step-paper}
    A_{k,n}
    \le
    C_0 r_n^{q/2}2^{-q\beta_X(q)(k-k')}A_{k',n}
    +
    C_0 2^{-qk'} .
\end{equation}
Since \(r_n\le Cn\), we may absorb the harmless constant and use
\(r_n^{q/2}\le Cn^{q/2}\).

We have
\[
    k-k'
    \ge (1-\gamma)k-1.
\]
Thus there exists \(c_\gamma>0\) such that
\[
    k-k'\ge c_\gamma k
\]
for all sufficiently large \(k\).  Choose \(K>0\) so large that
\begin{equation}\label{eq:ladder-K-choice-paper}
    q(\beta_X(q)-\beta)c_\gamma K>\frac q2+2 .
\end{equation}
Then, whenever \(k\ge K\log_2 n\) and \(n\) is sufficiently large,
\[
    n^{q/2}2^{-q(\beta_X(q)-\beta)(k-k')}
    \le n^{-2}.
\]

We prove \eqref{eq:ladder-iteration-bound-paper} by induction on \(k\), with
\(n\) fixed.  For \(0\le k\le K\log_2 n\), Lemma~\ref{lem:crude-grid-norm-bound-paper}
gives \(A_{k,n}\le C_1\).  Choose
\[
    \tau_{\mathrm{lad}}>q\beta K
\]
and then increase \(C_{\mathrm{lad}}\) and \(n_0\) so that
\[
    C_1
    \le
    C_{\mathrm{lad}}n^{\tau_{\mathrm{lad}}}2^{-q\beta k}
\]
for all \(0\le k\le K\log_2 n\) and \(n\ge n_0\).

Now assume \(k>K\log_2 n\), and suppose the bound has been proved at all
smaller scales.  For \(n\ge n_0\), we have \(k-k'\ge1\) and \(k'<k\).  Applying
\eqref{eq:ladder-one-step-paper} and the induction hypothesis at \(k'\), we get
\[
\begin{aligned}
A_{k,n}
&\le
C_0 C_{\mathrm{lad}} n^{\tau_{\mathrm{lad}}+q/2}
2^{-q\beta_X(q)(k-k')}2^{-q\beta k'}
+
C_0 2^{-qk'}  \\
&=
C_0 C_{\mathrm{lad}} n^{\tau_{\mathrm{lad}}}
2^{-q\beta k}
\left[
    n^{q/2}2^{-q(\beta_X(q)-\beta)(k-k')}
\right]
+
C_0 2^{-qk'} .
\end{aligned}
\]
By the choice of \(K\), the bracketed factor is at most \(n^{-2}\).  After
increasing \(n_0\), the first term is therefore at most
\[
    \frac12 C_{\mathrm{lad}}n^{\tau_{\mathrm{lad}}}2^{-q\beta k}.
\]
For the forcing term, since \(k'\ge \gamma k\) and \(\gamma>\beta\),
\[
    2^{-qk'}\le 2^{-q\beta k}.
\]
Increasing \(C_{\mathrm{lad}}\), if necessary, gives
\[
    C_0 2^{-qk'}
    \le
    \frac12 C_{\mathrm{lad}}n^{\tau_{\mathrm{lad}}}2^{-q\beta k}.
\]
Combining the two estimates closes the induction.  The terminal bound
\eqref{eq:ladder-terminal-bound-paper} is the case \(k=n\).
\end{proof}

\begin{corollary}\label{cor:dense-grid-decay-centered-profiles-paper}
 
Fix \(q\in(1,2)\) with \(\rho(q)<1\), and let
\(0<\beta<\beta_X(q)\).  Choose a grid parameter
\(\alpha_{\mathrm{gr}}>1\).  Let \(\Gamma_n(\alpha_{\mathrm{gr}})\) be the dense
grid from Definition~\ref{def:dense-grids-paper}.  Then, almost surely,
\begin{equation}\label{eq:grid-pointwise-decay-paper}
    \max_{\eta\in\Gamma_n(\alpha_{\mathrm{gr}})}
    |H_n(\eta)|
    \le
    2^{-\beta n}
\end{equation}
for all sufficiently large \(n\).
\end{corollary}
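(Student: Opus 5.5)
The plan is to pass from the averaged grid bound of Proposition~\ref{prop:ladder-estimate-paper} to an almost sure pointwise bound by Markov's inequality and Borel--Cantelli, using a small gap between decay exponents to absorb the polynomial loss \(n^{\tau_{\mathrm{lad}}}\). First fix an intermediate exponent \(\beta'\) with \(\beta<\beta'<\beta_X(q)\), and apply Proposition~\ref{prop:ladder-estimate-paper} with \(\beta'\) in place of \(\beta\) and with the same grid parameter \(\alpha_{\mathrm{gr}}\). The terminal bound \eqref{eq:ladder-terminal-bound-paper} at \(k=n\) then gives, for \(n\ge n_0\),
\[
    \E\|H_n\|_{\ell^{r_n}(\Gamma_n(\alpha_{\mathrm{gr}}))}^q
    \le A^{(q,\alpha_{\mathrm{gr}})}_{n,n}
    \le C_{\mathrm{lad}}n^{\tau_{\mathrm{lad}}}2^{-q\beta' n},
\]
since \(\Gamma_n(\alpha_{\mathrm{gr}})\in\mathcal G_n(\alpha_{\mathrm{gr}})\) by the choice of \(C_{\alpha}\).

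Next, compare the maximum with the grid norm. Because \(\ell^{r}\) norms dominate the \(\ell^\infty\) norm for every \(r\), we have \(\max_{\eta\in\Gamma_n(\alpha_{\mathrm{gr}})}|H_n(\eta)|\le\|H_n\|_{\ell^{r_n}(\Gamma_n(\alpha_{\mathrm{gr}}))}\). There is no cardinality loss in this direction; the choice \(r_n=\max\{2,n\}\) was already paid for inside the ladder estimate through the factor \(r_n^{q/2}\).

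Markov's inequality then gives
\[
    \Pbb\Bigl(\max_{\eta\in\Gamma_n(\alpha_{\mathrm{gr}})}|H_n(\eta)|>2^{-\beta n}\Bigr)
    \le 2^{q\beta n}\,C_{\mathrm{lad}}n^{\tau_{\mathrm{lad}}}2^{-q\beta' n}
    = C_{\mathrm{lad}}n^{\tau_{\mathrm{lad}}}2^{-q(\beta'-\beta)n}.
\]
Since \(\beta'>\beta\), this is summable in \(n\), and the polynomial factor is harmless. Borel--Cantelli therefore yields \eqref{eq:grid-pointwise-decay-paper} for all sufficiently large \(n\), almost surely.

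There is essentially no obstacle here. The only point needing care is the inclusion \(\Gamma_n(\alpha_{\mathrm{gr}})\in\mathcal G_n(\alpha_{\mathrm{gr}})\), which holds because \(\#\Gamma_n(\alpha_{\mathrm{gr}})\le C2^{\alpha_{\mathrm{gr}}n}\). The other point is to use the strict gap \(\beta<\beta_X(q)\) to insert \(\beta'\), so that the ladder bound at \(\beta'\) is available.
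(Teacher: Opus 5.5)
Your proposal is correct and matches the paper's proof essentially step for step. You insert \(\beta'\in(\beta,\beta_X(q))\), apply the ladder terminal bound at \(k=n\) using \(\Gamma_n(\alpha_{\mathrm{gr}})\in\mathcal G_n(\alpha_{\mathrm{gr}})\), dominate the grid maximum by the \(\ell^{r_n}\) norm, and conclude with Markov's inequality and Borel--Cantelli using the summable factor \(n^{\tau_{\mathrm{lad}}}2^{-q(\beta'-\beta)n}\).
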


\begin{proof}
Choose \(\beta'\) with
\[
    \beta<\beta'<\beta_X(q).
\]
By Proposition~\ref{prop:ladder-estimate-paper}, applied with \(\beta'\), and
since \(\Gamma_n(\alpha_{\mathrm{gr}})\in\mathcal G_n(\alpha_{\mathrm{gr}})\),
\[
    \E\|H_n\|_{\ell^{r_n}(\Gamma_n(\alpha_{\mathrm{gr}}))}^q
    \le
    C n^{\tau}2^{-q\beta'n}
\]
for all sufficiently large \(n\).  If
\[
    \max_{\eta\in\Gamma_n(\alpha_{\mathrm{gr}})}|H_n(\eta)|>2^{-\beta n},
\]
then
\[
    \|H_n\|_{\ell^{r_n}(\Gamma_n(\alpha_{\mathrm{gr}}))}^q
    >
    2^{-q\beta n}.
\]
Markov's inequality gives
\[
    \mathbb P\left(
    \max_{\eta\in\Gamma_n(\alpha_{\mathrm{gr}})}
    |H_n(\eta)|>2^{-\beta n}
    \right)
    \le
    C n^\tau 2^{-q(\beta'-\beta)n}.
\]
The right-hand side is summable in \(n\), so Borel--Cantelli proves
\eqref{eq:grid-pointwise-decay-paper}.
\end{proof}

\subsection{Annular Fourier decay and optimization over q}
\label{subsec:annular-decay-optimization-paper}

We now pass from dense-grid bounds for the centered profiles to annular Fourier
decay for \(\mu\), and then optimize over the admissible moment exponent \(q\).

\begin{theorem}
\label{thm:q-level-annular-decay-paper}
Fix \(q\in(1,2)\) with \(\rho(q)<1\), and let
\[
    0<\beta<\beta_X(q)
    =
    -\frac1q\log_2\rho(q).
\]
Then, almost surely, there exists a finite random constant
\(C_\beta(\omega)<\infty\) such that
\begin{equation}\label{eq:q-level-annular-decay-paper}
    \sup_{2^n\le |\xi|\le 2^{n+1}}
    |\widehat{\mu}(\xi)|
    \le
    C_\beta(\omega)2^{-\beta n}
\end{equation}
for all sufficiently large \(n\).
\end{theorem}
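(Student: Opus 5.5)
The plan is to combine Corollary~\ref{cor:dense-grid-decay-centered-profiles-paper} with the dense-grid passage Lemma~\ref{lem:dense-grid-passage-paper}; all the analytic work has already been done. Since \(0<\beta<\beta_X(q)<1\) by Lemma~\ref{lem:beta-X-upper-bound-paper}, Lemma~\ref{lem:dense-grid-passage-paper} applies with this \(\beta\). We fix some \(\varepsilon>0\), for instance \(\varepsilon=1\), and set \(\alpha_{\mathrm{gr}}=1+\beta+\varepsilon>1\).

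We then apply Corollary~\ref{cor:dense-grid-decay-centered-profiles-paper} with this grid parameter \(\alpha_{\mathrm{gr}}\) and the same \(\beta\). It gives, almost surely, \(\max_{\eta\in\Gamma_n(1+\beta+\varepsilon)}|H_n(\eta)|\le 2^{-\beta n}\) for all sufficiently large \(n\). This is exactly hypothesis \eqref{eq:grid-centered-bound-assumption-paper}, with the deterministic constant \(A_\beta=1\).

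Lemma~\ref{lem:dense-grid-passage-paper} then upgrades the grid bound to the annular supremum. Its proof uses three ingredients. The Lipschitz bound of Lemma~\ref{lem:centered-profile-lipschitz-paper} carries a factor \(2^n\), which the grid mesh \(2^{-(1+\beta+\varepsilon)n}\) absorbs. The centering term \(\widehat{\mathcal L}(2^n\eta)=O(2^{-n})\) is dominated by \(2^{-\beta n}\) because \(\beta<1\). Negative frequencies follow by conjugate symmetry. The result is \eqref{eq:q-level-annular-decay-paper} with a finite random constant \(C_\beta(\omega)\) depending on \(M\).

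There is no real obstacle at this stage. The one point to check is that the grid parameter fed into the ladder/Borel--Cantelli corollary is the same \(1+\beta+\varepsilon\) that the passage lemma requires; the corollary allows an arbitrary \(\alpha_{\mathrm{gr}}>1\), so this is consistent. We also need all almost-sure events involved, namely \(M<\infty\), the Lipschitz bound, and the eventual grid bound, to hold simultaneously. They are countably many full-probability events, so their intersection still has probability one.
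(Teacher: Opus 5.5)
Your proposal is correct and matches the paper's proof essentially step for step. You use Lemma~\ref{lem:beta-X-upper-bound-paper} to get \(\beta<1\), set \(\alpha_{\mathrm{gr}}=1+\beta+\varepsilon\) in Corollary~\ref{cor:dense-grid-decay-centered-profiles-paper}, upgrade the grid bound to the full annulus via Lemma~\ref{lem:dense-grid-passage-paper}, and handle negative frequencies by conjugate symmetry.
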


\begin{proof}
By Lemma~\ref{lem:beta-X-upper-bound-paper}, \(\beta<1\).  Choose
\(\varepsilon>0\) and set
\[
    \alpha_{\mathrm{gr}}=1+\beta+\varepsilon.
\]
By
Corollary~\ref{cor:dense-grid-decay-centered-profiles-paper},  almost surely,
\[
    \max_{\eta\in\Gamma_n(\alpha_{\mathrm{gr}})}
    |H_n(\eta)|
    \le
    2^{-\beta n}
\]
for all sufficiently large \(n\). Lemma~\ref{lem:dense-grid-passage-paper} 
upgrades this dense-grid estimate to the annular bound
\eqref{eq:q-level-annular-decay-paper} for positive frequencies.  Negative
frequencies follow from
\(
    |\widehat{\mu}(-\xi)|=|\widehat{\mu}(\xi)|.
\)
\end{proof}

\begin{corollary}\label{cor:q-level-fourier-dimension-lower-bound-paper}

If \(q\in(1,2)\) and \(\rho(q)<1\), then
\[
    \dim_{\mathrm F}(\mu)
    \ge
    2\beta_X(q)
    =
    -\frac2q\log_2\rho(q)
\]
almost surely on \(\{M>0\}\).
\end{corollary}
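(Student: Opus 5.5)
The plan is to read the corollary off Theorem~\ref{thm:q-level-annular-decay-paper} through the definition of Fourier dimension, letting \(\beta\uparrow\beta_X(q)\) along a countable sequence. The normalization is \(\dimF(\nu)=\sup\{s:\lvert\wh\nu(\xi)\rvert=O(\lvert\xi\rvert^{-s/2})\}\), so a decay exponent \(\beta\) corresponds to \(s=2\beta\). This explains the factor \(2\) in \(2\beta_X(q)=-\frac2q\log_2\rho(q)\).

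Fix \(q\in(1,2)\) with \(\rho(q)<1\), and choose a sequence \(\beta_j\uparrow\beta_X(q)\) with \(0<\beta_j<\beta_X(q)\). Apply Theorem~\ref{thm:q-level-annular-decay-paper} for each \(j\) and intersect the countably many full-probability events. On this intersection, for every \(j\) there are a finite random constant \(C_{\beta_j}\) and an index \(n_j\) such that \(\sup_{2^n\le\lvert\xi\rvert\le2^{n+1}}\lvert\wh\mu(\xi)\rvert\le C_{\beta_j}2^{-\beta_j n}\) for all \(n\ge n_j\). For \(\lvert\xi\rvert\ge 2^{n_j}\), let \(n=\lfloor\log_2\lvert\xi\rvert\rfloor\). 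Then \(2^{-\beta_j n}\le 2^{\beta_j}\lvert\xi\rvert^{-\beta_j}\), which gives \(\lvert\wh\mu(\xi)\rvert\le 2C_{\beta_j}\lvert\xi\rvert^{-\beta_j}\) for \(\lvert\xi\rvert\ge 2^{n_j}\). Hence \(s=2\beta_j\) is admissible in the definition of \(\dimF(\mu)\).

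One check is that admissibility requires \(0\le s\le d=1\). This holds because Lemma~\ref{lem:beta-X-upper-bound-paper} gives \(\beta_j<\beta_X(q)<\tfrac12\), so \(2\beta_j<1\). Another check is that \(\dimF\) is only defined for nonzero measures. This is handled by restricting to \(\{M>0\}\): there \(\mu([0,1])=M>0\) by Theorem~\ref{thm:vector-limiting-measure-construction-paper}, so \(\mu\ne0\). Consequently, almost surely on \(\{M>0\}\), \(\dimF(\mu)\ge 2\beta_j\) for all \(j\), and letting \(j\to\infty\) gives \(\dimF(\mu)\ge2\beta_X(q)\).

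There is no real obstacle here; the substantive work lies in the ladder estimate and the dense-grid passage that precede the statement. The only points needing care are the countable intersection of almost-sure events over \(\beta_j\), the fact that the annular bound covers both signs of \(\xi\) (already included in Theorem~\ref{thm:q-level-annular-decay-paper}), and the factor-of-two conversion between the decay exponent and the Fourier-dimension normalization.
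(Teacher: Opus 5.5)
Your proposal is correct and follows the paper's proof: apply Theorem~\ref{thm:q-level-annular-decay-paper} for each $\beta<\beta_X(q)$, observe that $2\beta$ is then an admissible Fourier-dimension exponent, and let $\beta\uparrow\beta_X(q)$. Your additional bookkeeping makes explicit what the paper leaves implicit. This covers the countable intersection over $\beta_j$, the passage from annular bounds to $|\widehat\mu(\xi)|\le 2C_{\beta_j}|\xi|^{-\beta_j}$, the check $2\beta_j<1$ via Lemma~\ref{lem:beta-X-upper-bound-paper}, and $\mu\neq0$ on $\{M>0\}$.
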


\begin{proof}
For every \(0<\beta<\beta_X(q)\), Theorem~\ref{thm:q-level-annular-decay-paper}
gives
\[
    |\widehat{\mu}(\xi)|
    =
    O(|\xi|^{-\beta})
    \qquad (|\xi|\to\infty).
\]
Thus \(2\beta\) is an admissible Fourier-dimension exponent.  Letting
\(\beta\uparrow\beta_X(q)\) gives the claim.
\end{proof}

\begin{theorem}[Vector Fourier lower bound]
\label{thm:vector-fourier-lower-bound}
Assume that the dyadic vector-weight law is energy-admissible.  Then
\[
    \dim_{\mathrm F}(\mu)\ge D_{\mathrm E}(X)
\]
almost surely on \(\{M>0\}\).  Equivalently, on the same event, for every
\(0<\sigma<D_{\mathrm E}(X)\), there exists a finite random constant
\(C_\sigma(\omega)<\infty\) such that
\begin{equation}\label{eq:vector-fourier-decay-sigma-paper}
    |\widehat{\mu}(\xi)|
    \le
    C_\sigma(\omega)|\xi|^{-\sigma/2}
\end{equation}
for all sufficiently large \(|\xi|\).
\end{theorem}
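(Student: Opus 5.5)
The plan is to assemble the vector Fourier lower bound directly from Corollary~\ref{cor:q-level-fourier-dimension-lower-bound-paper} and the variational formula of Proposition~\ref{prop:vector-DE-variational-formula-paper}. If \(D_{\mathrm E}(X)=0\), there is nothing to prove: \(\dim_{\mathrm F}(\mu)\ge0\) trivially, and the decay statement is vacuous. So assume \(D_{\mathrm E}(X)>0\).

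\textbf{Choosing the witness exponent.} By \eqref{eq:vector-DE-variational-formula-paper},
\[
D_{\mathrm E}(X)=\sup\Bigl\{-\tfrac2q\log_2\rho(q):\ q\in(1,2),\ \rho(q)<1\Bigr\}.
\]
Fix \(0<\sigma<D_{\mathrm E}(X)\). Choose \(q\in(1,2)\) with \(\rho(q)<1\) and \(-\frac2q\log_2\rho(q)>\sigma\). Equivalently, \(\beta:=\sigma/2<\beta_X(q)\).

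\textbf{Applying the annular estimate.} Theorem~\ref{thm:q-level-annular-decay-paper} then gives, almost surely, a finite random \(C_\beta\) with
\[
\sup_{2^n\le|\xi|\le2^{n+1}}|\widehat\mu(\xi)|\le C_\beta 2^{-\beta n}
\]
for all large \(n\). For \(2^n\le|\xi|\le2^{n+1}\) we have \(2^{-\beta n}\le 2^{\beta}|\xi|^{-\beta}\). This yields \eqref{eq:vector-fourier-decay-sigma-paper} with \(C_\sigma=2^{\beta}C_\beta\), and in particular it holds on \(\{M>0\}\).

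\textbf{Passing to the dimension bound.} Take a countable sequence \(\sigma_j\uparrow D_{\mathrm E}(X)\) and intersect the corresponding full-probability events. On \(\{M>0\}\) the measure \(\mu\) is nonzero, so the definition of \(\dim_{\mathrm F}\) applies. Every \(\sigma_j\) is then admissible, which gives \(\dim_{\mathrm F}(\mu)\ge D_{\mathrm E}(X)\).

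\textbf{Main obstacle.} The only point needing care is bookkeeping rather than analysis. The admissible \(q\) depends on \(\sigma\), so the almost-sure events must be taken along a countable family before the supremum is formed. One should also check that \(\sigma\le1\), so that the admissible range \(0\le s\le d=1\) in the definition is respected; this holds since \(D_{\mathrm E}(X)<1\) by construction. All of the real work was already done in the ladder iteration, Proposition~\ref{prop:ladder-estimate-paper}, and the dense-grid passage, Lemma~\ref{lem:dense-grid-passage-paper}.
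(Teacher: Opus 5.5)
Your proof is correct and is essentially the paper's argument: choose \(q\in(1,2)\) from the variational formula, apply Theorem~\ref{thm:q-level-annular-decay-paper}, and intersect over countably many \(\sigma\). Taking \(\beta=\sigma/2\) directly works just as well as the paper's choice \(\sigma/2<\beta<\beta_X(q)\). One small slip: in general \(D_{\mathrm E}(X)\le 1\), not \(<1\) (equality holds for \(X_0=X_1=1/2\)), but \(\sigma<D_{\mathrm E}(X)\le 1\) is all you actually need.
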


\begin{proof}
If \(D_{\mathrm E}(X)=0\), there is nothing to prove.  Fix
\(0<\sigma<D_{\mathrm E}(X)\).  By
Proposition~\ref{prop:vector-DE-variational-formula-paper},  there exists
\(q\in(1,2)\) such that
\[
    \rho(q)<1,
    \qquad
    -\frac2q\log_2\rho(q)>\sigma.
\]
Equivalently, \(\beta_X(q)>\sigma/2\).  Choose
\[
    \frac{\sigma}{2}<\beta<\beta_X(q).
\]
Theorem~\ref{thm:q-level-annular-decay-paper} gives, almost surely, a finite
random constant \(C_\beta(\omega)\) such that
\[
    |\widehat{\mu}(\xi)|
    \le
    C_\beta(\omega)|\xi|^{-\beta}
    \le
    C_\beta(\omega)|\xi|^{-\sigma/2}
\]
for all sufficiently large \(|\xi|\).  This proves
 \eqref{eq:vector-fourier-decay-sigma-paper}.

Intersecting the corresponding probability-one events over rational
\(\sigma\in(0,D_{\mathrm E}(X))\) gives
\[
    \dim_{\mathrm F}(\mu)\ge D_{\mathrm E}(X)
\]
almost surely on \(\{M>0\}\).
\end{proof}

\begin{remark}\label{rem:comparison-LQT-scalar}
In the scalar canonical case, related Fourier-dimension lower bounds are also
available from the general theory of Lin--Qiu--Tan
\cite[Theorem~1.6]{LinQiuTan2025}.  We do not use that input here.  The proof
above is formulated directly for the dyadic vector cascade: sibling weights may
be dependent, and the argument uses the vector moment profile
\[
    \rho(q)=\E(X_0^q+X_1^q)
\]
together with independence of descendant environments.
\end{remark}

\section{The exact interval theorem}
\label{sec:exact-interval-theorem}

We now combine the tree-cylinder energy theorem with the dense-grid Fourier
lower bound.

\begin{theorem}[Exact interval formula]
\label{thm:exact-interval-formula-paper}
Assume that the dyadic vector-weight law is energy-admissible.  Let \(\mu\) be
the limiting dyadic vector cascade measure on \([0,1]\), and set
\(M=\mu([0,1])\).  Then, almost surely on \(\{M>0\}\),
\begin{equation}\label{eq:exact-interval-vector-theorem-paper}
    \dim_{\mathrm F}(\mu)
    =
    \dim_{\mathrm E}(\mu)
    =
    \dim_2(\mu)
    =
    D_{\mathrm E}(X).
\end{equation}
In particular, on the same event, if \(D_{\mathrm E}(X)>0\), then for every
\(0<\sigma<D_{\mathrm E}(X)\) there is a finite random constant
\(C_\sigma(\omega)<\infty\) such that
\begin{equation}\label{eq:exact-interval-vector-decay-paper}
    |\widehat{\mu}(\xi)|
    \le
    C_\sigma(\omega)|\xi|^{-\sigma/2}
\end{equation}
for all sufficiently large \(|\xi|\).  If \(D_{\mathrm E}(X)=0\), the decay
assertion is vacuous, and all three dimensions in
\eqref{eq:exact-interval-vector-theorem-paper} are equal to \(0\).
\end{theorem}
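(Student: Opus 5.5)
The plan is to assemble \eqref{eq:exact-interval-vector-theorem-paper} from three results already proved, working on a single full-probability event intersected with \(\{M>0\}\). First, Theorem~\ref{thm:vector-energy-theorem-paper} gives, almost surely on \(\{M>0\}\),
\[
\dimE(\mu)=\dimtwo(\mu)=D_{\mathrm E}(X).
\]
Second, Theorem~\ref{thm:vector-fourier-lower-bound} gives \(\dimF(\mu)\ge D_{\mathrm E}(X)\) almost surely on \(\{M>0\}\). Third, on \(\{M>0\}\) the measure \(\mu\) is a nonzero finite Borel measure on \([0,1]\) by Theorem~\ref{thm:vector-limiting-measure-construction-paper}. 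Hence Proposition~\ref{prop:fourier-energy-upper-bound-paper} applies pathwise and yields \(\dimF(\mu)\le\dimE(\mu)\). Intersecting the two almost-sure events, we obtain the chain
\[
D_{\mathrm E}(X)\le\dimF(\mu)\le\dimE(\mu)=\dimtwo(\mu)=D_{\mathrm E}(X),
\]
so all inequalities are equalities.

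For the decay statement \eqref{eq:exact-interval-vector-decay-paper}, I would not argue through the definition of \(\dimF\), because that only gives decay for exponents strictly below the supremum and would need a countable intersection. Instead I would invoke directly the second half of Theorem~\ref{thm:vector-fourier-lower-bound}. That result already provides, for each fixed \(0<\sigma<D_{\mathrm E}(X)\), a finite random constant \(C_\sigma(\omega)\) on a probability-one event. To make the statement hold simultaneously for all \(\sigma\), I would intersect over rational \(\sigma\); monotonicity in \(\sigma\) (since \(|\xi|^{-\sigma/2}\) decreases in \(\sigma\) for \(|\xi|\ge1\)) then extends the bound to every real \(\sigma\) in the range. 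When \(D_{\mathrm E}(X)=0\), the decay assertion is vacuous, and the chain above forces all three dimensions to equal \(0\).

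There is no real obstacle here; the only point needing care is bookkeeping of null sets. The energy theorem, the Fourier lower bound, and the remark on vanishing dyadic-endpoint mass (Remark~\ref{rem:no-dyadic-endpoint-mass-paper}) each hold on their own full-probability events. Since there are countably many such events, their intersection still has full probability, and the deterministic inequality \(\dimF\le\dimE\) is applied pointwise on that intersection. Theorem~\ref{thm:main-vector-exact-fourier-dimension} is then exactly this statement, with \(\sigma\)-decay phrased as \eqref{eq:main-vector-decay}.
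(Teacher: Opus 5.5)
Your proposal is correct and is essentially the paper's own proof. The paper likewise combines the energy theorem ($\dim_{\mathrm E}=\dim_2=D_{\mathrm E}(X)$), the Fourier lower bound, and the deterministic inequality $\dim_{\mathrm F}\le\dim_{\mathrm E}$ on $\{M>0\}$, and it reads the decay estimate off directly from the decay formulation of the Fourier lower-bound theorem. Your extra bookkeeping (a countable intersection over rational $\sigma$, with monotonicity in $\sigma$) is harmless.
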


\begin{proof}
On \(\{M>0\}\), Theorem~\ref{thm:vector-energy-theorem-paper} gives
\[
    \dim_{\mathrm E}(\mu)=\dim_2(\mu)=D_{\mathrm E}(X),
\]
while Theorem~\ref{thm:vector-fourier-lower-bound} gives
\[
    \dim_{\mathrm F}(\mu)\ge D_{\mathrm E}(X).
\]
Since \(\mu\) is then a nonzero finite Borel measure on \([0,1]\),
Proposition~\ref{prop:fourier-energy-upper-bound-paper}  gives
\[
    \dim_{\mathrm F}(\mu)\le \dim_{\mathrm E}(\mu).
\]
Combining the three inequalities proves
\eqref{eq:exact-interval-vector-theorem-paper}.  The decay estimate
\eqref{eq:exact-interval-vector-decay-paper} is exactly the decay formulation in
Theorem~\ref{thm:vector-fourier-lower-bound}.
\end{proof}

\begin{corollary}[Canonical scalar formula]
\label{cor:scalar-dyadic-interval-formula-paper}
Let \(W\ge0\) satisfy
\[
    \E W=1,\qquad
    \E[W\log_2^+W]<\infty,\qquad
    \E[W\log_2 W]<1.
\]
Let \(W_0,W_1\) be independent copies of \(W\), define
\[
    X_i=\frac{W_i}{2},
    \qquad i=0,1,
\]
and let \(\mu\) be the corresponding scalar dyadic Mandelbrot cascade measure
on \([0,1]\).  Set \(M=\mu([0,1])\).  Then, almost surely on \(\{M>0\}\),
\begin{equation}\label{eq:scalar-dyadic-interval-formula-paper}
    \dim_{\mathrm F}(\mu)
    =
    \dim_{\mathrm E}(\mu)
    =
    \dim_2(\mu)
    =
    D^+(W),
\end{equation}
where
\begin{equation}\label{eq:scalar-Dplus-paper}
D^+(W)
=
\sup_{1<q<2}
\max\left\{
0,\,
2-\frac2q\bigl(1+\log_2\E[W^q]\bigr)
\right\},
\end{equation}
with the convention that the corresponding term is interpreted as \(0\)
whenever \(\E[W^q]=\infty\).  In particular, on the same event, if
\(D^+(W)>0\), then for every \(0<\sigma<D^+(W)\) there are finite random
constants \(C_\sigma(\omega),R_\sigma(\omega)<\infty\) such that
\begin{equation}\label{eq:scalar-dyadic-interval-decay-paper}
    |\widehat{\mu}(\xi)|
    \le
    C_\sigma(\omega)|\xi|^{-\sigma/2}
    \qquad (|\xi|\ge R_\sigma(\omega)).
\end{equation}
If \(D^+(W)=0\), the decay assertion is vacuous.
\end{corollary}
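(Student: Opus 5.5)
The plan is to deduce the corollary directly from Theorem~\ref{thm:exact-interval-formula-paper}, applied to the vector law \(X=(W_0/2,W_1/2)\). Two things need checking: that this vector law is energy-admissible in the sense of Definition~\ref{def:energy-admissible-vector-law}, and that \(D_{\mathrm E}(X)=D^+(W)\). The latter is exactly Corollary~\ref{cor:scalar-reduction-vector-profile-paper}, once admissibility is known. The decay statement \eqref{eq:scalar-dyadic-interval-decay-paper} then follows from \eqref{eq:exact-interval-vector-decay-paper}, taking \(R_\sigma(\omega)\) to be the random threshold beyond which that bound holds.

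First, balance. Since \(\E W=1\), we get \(\E X_i=\tfrac12\), so \eqref{eq:energy-admissible-balance} holds.

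Next, positive entropy. Since \(X_i\le W_i\), we have \(\log_2^+X_i\le\log_2^+W_i\), and therefore
\[
\E[X_i\log_2^+X_i]\le\tfrac12\E[W\log_2^+W]<\infty .
\]
This gives \eqref{eq:energy-admissible-positive-entropy}.

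Then the negative drift. Using \(\log_2(W/2)=\log_2W-1\) and \(0\log_2 0=0\), we compute
\[
\E\bigl[X_0\log_2X_0+X_1\log_2X_1\bigr]=2\cdot\tfrac12\,\E\bigl[W(\log_2W-1)\bigr]=\E[W\log_2W]-1<0 .
\]
This is \eqref{eq:energy-admissible-negative-drift}. The expectation is finite because the negative part of \(x\log_2x\) is bounded and the positive part is integrable.

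With admissibility in hand, Theorem~\ref{thm:exact-interval-formula-paper} gives, almost surely on \(\{M>0\}\),
\[
\dim_{\mathrm F}(\mu)=\dim_{\mathrm E}(\mu)=\dim_2(\mu)=D_{\mathrm E}(X).
\]
Corollary~\ref{cor:scalar-reduction-vector-profile-paper} identifies \(D_{\mathrm E}(X)\) with \(D^+(W)\). The identification goes through \(\rho(q)=2^{1-q}\E[W^q]\) and the variational formula \eqref{eq:vector-DE-variational-formula-paper}: \(-\tfrac2q\log_2\rho(q)\) equals \(2-\tfrac2q(1+\log_2\E[W^q])\). The condition \(\rho(q)<1\) is exactly the positivity of this term. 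When \(\E[W^q]=\infty\) the term is omitted, which matches the stated convention.

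The only point needing care is the identification of the measures. The scalar Mandelbrot cascade on \([0,1]\) must coincide with the vector cascade built from \(X(u)=(W_{u0}/2,W_{u1}/2)\). Its level-\(n\) density is \(\prod_j W_{u|j}\), which equals \(2^nL_u\), so the measures \(\mu_n\) agree. Hence the limits \(\mu\) and the totals \(M\) agree as well. Independence across vertices holds because distinct vertices use disjoint pairs of the i.i.d. family \(\{W_v\}\). I expect no real obstacle here.
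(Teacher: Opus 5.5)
Your proposal is correct and follows the paper's own proof: verify energy-admissibility of \(X=(W_0/2,W_1/2)\) (balance, the entropy bound, and the drift identity \(\E[X_0\log_2X_0+X_1\log_2X_1]=\E[W\log_2W]-1<0\)), identify \(D_{\mathrm E}(X)=D^+(W)\) through \(\rho(q)=2^{1-q}\E[W^q]\) and Corollary~\ref{cor:scalar-reduction-vector-profile-paper}, and then apply Theorem~\ref{thm:exact-interval-formula-paper}. Your extra check that the scalar cascade coincides with the vector cascade, via \(2^nL_u=\prod_j W_{u|j}\), makes explicit a point the paper leaves implicit.
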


\begin{proof}
The vector law \(X=(W_0/2,W_1/2)\) is balanced.  Moreover,
\[
    \E[X_0\log_2^+X_0+X_1\log_2^+X_1]<\infty
\]
is equivalent to
\[
    \E[W\log_2^+W]<\infty,
\]
and
\[
    \E[X_0\log_2X_0+X_1\log_2X_1]
    =
    \E[W\log_2W]-1.
\]
Thus the scalar assumptions imply energy-admissibility of the associated vector
law.  Also,
\[
    \rho(q)=2^{1-q}\E[W^q],
\]
and Corollary~\ref{cor:scalar-reduction-vector-profile-paper} gives
\(
    D_{\mathrm E}(X)=D^+(W).
\)
Theorem~\ref{thm:exact-interval-formula-paper} therefore gives
\[
    \dim_{\mathrm F}(\mu)=\dim_{\mathrm E}(\mu)=\dim_2(\mu)
    =
    D_{\mathrm E}(X)=D^+(W)
\]
almost surely on \(\{M>0\}\), and its decay formulation gives
\eqref{eq:scalar-dyadic-interval-decay-paper}.
\end{proof}

\begin{corollary}[Pure heavy-tail case]
\label{cor:interval-heavy-tail-zero-branch-paper}
In the scalar setting of
Corollary~\ref{cor:scalar-dyadic-interval-formula-paper}, assume further
that
\[
    \E[W^q]=\infty
    \qquad\text{for every }q>1.
\]
Then \(D^+(W)=0\), and hence
\[
    \dim_{\mathrm F}(\mu)
    =
    \dim_{\mathrm E}(\mu)
    =
    \dim_2(\mu)
    =
    0
\]
almost surely on \(\{M>0\}\).
\end{corollary}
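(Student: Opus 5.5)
The plan is to reduce everything to Corollary~\ref{cor:scalar-dyadic-interval-formula-paper}; it then suffices to show \(D^+(W)=0\). Under the additional hypothesis \(\E[W^q]=\infty\) for every \(q>1\), every term in the supremum defining \(D^+(W)\) in \eqref{eq:scalar-Dplus-paper} corresponds to some \(q\in(1,2)\) with \(\E[W^q]=\infty\). By the stated convention, each such term is interpreted as \(0\). Hence the supremum over \(q\in(1,2)\) is a supremum of zeros, so \(D^+(W)=0\).

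As a cross-check, the same conclusion follows through the vector profile. With \(X_i=W_i/2\), Corollary~\ref{cor:scalar-reduction-vector-profile-paper} gives \(\rho(q)=2^{1-q}\E[W^q]=\infty\) for every \(q>1\). The last equivalence in Proposition~\ref{prop:vector-DE-variational-formula-paper} states that \(D_{\mathrm E}(X)>0\) if and only if \(\rho(q)<\infty\) for some \(q>1\). It therefore yields \(D_{\mathrm E}(X)=0\), consistent with \(D_{\mathrm E}(X)=D^+(W)\).

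Since the scalar assumptions of Corollary~\ref{cor:scalar-dyadic-interval-formula-paper} are in force, that corollary gives, almost surely on \(\{M>0\}\),
\[
\dimF(\mu)=\dimE(\mu)=\dimtwo(\mu)=D^+(W)=0 .
\]

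There is no genuine obstacle. The only point needing care is that the convention for infinite moments is applied consistently, so that the supremum is not read as \(-\infty\) or as undefined. The \(\max\{0,\cdot\}\) in \eqref{eq:scalar-Dplus-paper} and the empty-supremum convention for \(D_{\mathrm E}\) both fix the value at \(0\).
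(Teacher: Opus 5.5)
Your proposal is correct and matches the paper's proof: each term in the supremum defining \(D^+(W)\) is set to \(0\) by the infinite-moment convention, so \(D^+(W)=0\), and Corollary~\ref{cor:scalar-dyadic-interval-formula-paper} then yields the three vanishing dimensions on \(\{M>0\}\). Your cross-check via the last equivalence in Proposition~\ref{prop:vector-DE-variational-formula-paper} is valid, since the scalar assumptions give energy-admissibility, but it is not needed.
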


\begin{proof}
Every term in the definition of \(D^+(W)\) is interpreted as \(0\), and hence
\(D^+(W)=0\).  The conclusion follows from
Corollary~\ref{cor:scalar-dyadic-interval-formula-paper}.
\end{proof}

\begin{corollary}[Positive Fourier dimension criterion]
\label{cor:interval-positive-dimensional-branch-paper}
Assume that the dyadic vector-weight law is energy-admissible.  Then
\(D_{\mathrm E}(X)>0\) if and only if
\[
    \rho(q)<1
    \qquad\text{for some }q\in(1,2).
\]
Equivalently, \(D_{\mathrm E}(X)>0\) if and only if
\[
    \rho(q)<\infty
    \qquad\text{for some }q>1.
\]
When these equivalent conditions hold,
\[
    \dim_{\mathrm F}(\mu)>0
\]
almost surely on \(\{M>0\}\).
\end{corollary}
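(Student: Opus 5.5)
This corollary is a direct assembly of results already in hand, so the plan is to chain them rather than prove anything new. The first equivalence, \(D_{\mathrm E}(X)>0\) if and only if \(\rho(q)<1\) for some \(q\in(1,2)\), I would quote verbatim from Proposition~\ref{prop:vector-DE-variational-formula-paper}. The variational formula \eqref{eq:vector-DE-variational-formula-paper} expresses \(D_{\mathrm E}(X)\) as a supremum over \(q\in(1,2)\) with \(\rho(q)<1\) of the strictly positive quantities \(-\frac2q\log_2\rho(q)\). The empty supremum is \(0\), so positivity holds exactly when the index set is nonempty.

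For the second equivalence, with \(\rho(q)<\infty\) for some \(q>1\), I would again use the last part of Proposition~\ref{prop:vector-DE-variational-formula-paper}.
\begin{itemize}
\item One direction is trivial: \(\rho(q)<1\) implies \(\rho(q)<\infty\).
\item For the converse, suppose \(\rho(q_0)<\infty\) with \(q_0>1\). Pick \(r\in(1,\min\{q_0,2\})\). Since \(x^r\le 1+x^{q_0}\), we have \(\rho(r)<\infty\), so \(\log_2\rho\) is finite and convex on \([1,r]\) by Lemma~\ref{lem:rho-convexity-subcritical-interval-paper}.
\item Its right derivative at \(1\) equals \(\gamma(X)<0\). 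This needs a dominated-convergence argument like the left-derivative computation in that lemma, now using \(\rho(r)<\infty\) to dominate \(x^q\log x\) for \(q\in[1,r]\).
\item Since \(\rho(1)=1\), it follows that \(\rho(q)<1\) for \(q\) slightly above \(1\).
\end{itemize}
The right-derivative step is the only point requiring any care, and it is already handled in the proof of that proposition.

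Finally, under these conditions \(D_{\mathrm E}(X)>0\). Theorem~\ref{thm:exact-interval-formula-paper} gives \(\dim_{\mathrm F}(\mu)=D_{\mathrm E}(X)\) almost surely on \(\{M>0\}\). Alternatively, Corollary~\ref{cor:q-level-fourier-dimension-lower-bound-paper} alone gives \(\dim_{\mathrm F}(\mu)\ge -\frac2q\log_2\rho(q)>0\) on that event. Hence \(\dim_{\mathrm F}(\mu)>0\) almost surely on \(\{M>0\}\). No step presents a genuine obstacle.
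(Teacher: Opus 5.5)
Your proposal is correct and follows the paper's proof: the two equivalences are read off from Proposition~\ref{prop:vector-DE-variational-formula-paper}, and positivity of \(\dim_{\mathrm F}(\mu)\) on \(\{M>0\}\) follows from Theorem~\ref{thm:exact-interval-formula-paper}. Your extra detail on the right-derivative step is a sound expansion of what that proposition's proof asserts. The alternative route through Corollary~\ref{cor:q-level-fourier-dimension-lower-bound-paper} is also valid.
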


\begin{proof}
The equivalences are  Proposition~\ref{prop:vector-DE-variational-formula-paper}.  If
\(D_{\mathrm E}(X)>0\), then
 Theorem~\ref{thm:exact-interval-formula-paper}  gives
\[
    \dim_{\mathrm F}(\mu)=D_{\mathrm E}(X)>0
\]
almost surely on \(\{M>0\}\).
\end{proof}

\begin{remark}
\label{rem:no-interval-endpoint-decay-claim-paper}
Theorem~\ref{thm:exact-interval-formula-paper} gives Fourier decay for every
strict exponent \(0<\sigma<D_{\mathrm E}(X)\), but it does not assert a uniform
decay estimate at the endpoint \(\sigma=D_{\mathrm E}(X)\).  This is the usual
distinction between a dimension identity and an endpoint regularity statement.
\end{remark}

This completes the interval part of the paper.  The remaining sections treat
the circle cascade, where the relevant obstruction is the endpoint local
exponent \(A_{\mathrm{loc}}(W)\), rather than the interval energy dimension.

\section{Circle cascades and the curved-support endpoint obstruction}
\label{sec:circle-endpoint-obstruction}

We prove the upper-bound direction of Theorem~\ref{thm:main-circle-endpoint-formula},
\[
\dimF(\mu_\circ)\le \Aloc(W)
\qquad
\text{almost surely on }\{\mu_\circ(\mathbb S^1)>0\}.
\]
The argument has two components:
\[
\alphamin(\mu_\circ)=\Aloc(W)
\qquad
\text{almost surely on }\{\mu_\circ(\mathbb S^1)>0\},
\]
together with the deterministic curved-support obstruction 
\(
\dimF(\nu)\le\alphamin(\nu)
\) 
for every nonzero finite Borel measure \(\nu\) supported on \(\mathbb S^1\). On the circle, this obstruction arises from quadratic stationary phase in the normal direction: a ball of radius \(r\) carrying comparatively large mass forces a large one-dimensional Fourier average at frequency scale \(r^{-2}\).

\subsection{The circle cascade}
\label{subsec:circle-cascade-construction-paper}

Let 
\(
\mathbb S^1=\{x\in\R^2:\lvert x \rvert=1\},
\) 
and let \(\sigma\) denote normalized arclength measure on \(\mathbb S^1\). We use the parametrization
\begin{equation*}
f:[0,1)\to\mathbb S^1,
\qquad
f(t)=(\cos 2\pi t,\sin 2\pi t),
\end{equation*}
only to impose the dyadic filtration.

Let \(W\ge0\) satisfy \(\E W=1\). Attach independent copies \(W_v\) of \(W\) to all nonempty binary words \(v\). For \(v\in\{0,1\}^n\), set
\begin{equation*}
Q_v=\prod_{j=1}^n W_{v|j},
\qquad
Q_\varnothing=1.
\end{equation*}
Let 
\(
J_v=[a_v,a_v+2^{-\lvert v \rvert})\subset[0,1),
\) 
where 
\(
a_v=\sum_{j=1}^{\lvert v \rvert}v_j2^{-j},
\)
and set \(\mathcal I_v=f(J_v)\). The level-\(n\) circle cascade measure is
\begin{equation}\label{eq:circle-level-measure-paper}
d\mu_{\circ,n}(x)
=
\sum_{\lvert v \rvert=n}Q_v\one_{\mathcal I_v}(x)\,d\sigma(x).
\end{equation}
Equivalently, 
\(
\mu_{\circ,n}(\mathcal I_v)=2^{-n}Q_v
\) 
for \(\lvert v \rvert=n\). Its total mass is
\begin{equation*}
Y_n=\mu_{\circ,n}(\mathbb S^1)
=
2^{-n}\sum_{\lvert v \rvert=n}Q_v.
\end{equation*}
Let 
\(
\mathcal F_0^W=\{\varnothing,\Omega\}
\) 
and 
\(
\mathcal F_n^W=\sigma(W_v:1\le \lvert v \rvert\le n)
\) 
for \(n\ge1\).

\begin{proposition}
\label{prop:circle-cascade-construction-paper}
Assume that \(W\ge0\) and \(\E W=1\). Then \((Y_n)_{n\ge0}\) is a nonnegative martingale. Moreover, there exists a finite random Borel measure \(\mu_\circ\) on \(\mathbb S^1\) such that
\begin{equation*}
\mu_{\circ,n}\weak\mu_\circ
\qquad
\text{almost surely}.
\end{equation*}
Its total mass satisfies
\begin{equation*}
\mu_\circ(\mathbb S^1)=Y:=\lim_{n\to\infty}Y_n
\end{equation*}
almost surely. In particular, 
\(
\{\mu_\circ\neq0\}=\{Y>0\}
\) 
up to null events.
\end{proposition}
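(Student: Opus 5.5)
The plan is to transcribe the proofs of Lemma~\ref{lem:vector-total-mass-martingale-paper} and Theorem~\ref{thm:vector-limiting-measure-construction-paper} into circle notation, with the circle arcs \(\mathcal I_v\) in place of the dyadic intervals \(I_u\).

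\textbf{Martingale property.} For \(\lvert v\rvert=n\) we have \(Q_{vi}=Q_vW_{vi}\), so
\[
Y_{n+1}=2^{-(n+1)}\sum_{\lvert v\rvert=n}Q_v\,(W_{v0}+W_{v1}).
\]
Here \(Q_v\) is \(\mathcal F_n^W\)-measurable, while \(W_{v0},W_{v1}\) are independent of \(\mathcal F_n^W\) with mean one. Hence \(\E[Y_{n+1}\mid\mathcal F_n^W]=Y_n\). Nonnegativity is clear, and the martingale convergence theorem gives \(Y_n\to Y<\infty\) almost surely.

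\textbf{Descendant martingales.} For each word \(v\), define the descendant total-mass process
\[
Y_m^{(v)}=2^{-m}\sum_{\lvert w\rvert=m}\prod_{j=1}^{m}W_{v(w|j)}.
\]
It is a copy of \((Y_m)_{m\ge0}\) built from the subtree below \(v\), so it converges almost surely to some \(Y^{(v)}\). The tree is countable, so there is one event of full probability on which all these convergences hold simultaneously. On that event, for \(n\ge\lvert v\rvert\),
\[
\mu_{\circ,n}(\mathcal I_v)=2^{-\lvert v\rvert}Q_v\,Y^{(v)}_{n-\lvert v\rvert}\longrightarrow 2^{-\lvert v\rvert}Q_vY^{(v)}.
\]

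\textbf{Passage to a limit measure.} The step functions on \(\mathbb S^1\) that are constant on level-\(k\) arcs are dense in \(C(\mathbb S^1)\) in the uniform norm. This holds because the arcs \(\mathcal I_v\) have diameter \(\lesssim 2^{-k}\) and continuous functions on \(\mathbb S^1\) are uniformly continuous. Since \(\sup_n\mu_{\circ,n}(\mathbb S^1)=\sup_nY_n<\infty\), the limits \(\lim_n\int g\,d\mu_{\circ,n}\) exist for every \(g\in C(\mathbb S^1)\), by a \(3\varepsilon\) argument. They define a positive linear functional bounded by \(\|g\|_\infty\sup_nY_n\). The Riesz representation theorem then gives a finite Borel measure \(\mu_\circ\) with \(\mu_{\circ,n}\weak\mu_\circ\). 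Testing against \(g\equiv1\) gives \(\mu_\circ(\mathbb S^1)=Y\), and hence \(\{\mu_\circ\neq0\}=\{Y>0\}\) up to null events.

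There is no serious obstacle here. The only points needing care are the discontinuity of step functions, handled by the uniform approximation above rather than by evaluating the limit on arcs directly, and measurability of \(\mu_\circ\) as a random measure. The latter follows because \(\int g\,d\mu_\circ\) is an almost sure limit of random variables for each \(g\) in a countable dense subset of \(C(\mathbb S^1)\).
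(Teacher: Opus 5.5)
Your argument is correct and is essentially the paper's. The paper disposes of this proposition by observing that \(\mu_{\circ,n}=f_\#\mu_n\), where \(\mu_n\) is the scalar interval cascade with \(X_i=W_i/2\) and \(Y_n=M_n\); Lemma~\ref{lem:vector-total-mass-martingale-paper}, Theorem~\ref{thm:vector-limiting-measure-construction-paper} and continuity of \(f\) (extended by \(f(1)=f(0)\)) then give everything at once. You instead rerun the proof of that theorem directly on \(\mathbb S^1\): simultaneous descendant convergence, uniform approximation by step functions on level-\(k\) arcs, Riesz representation, and testing with \(g\equiv1\). That is equally valid, only longer.
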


The circle cascade is the pushforward, under \(f\), of the scalar dyadic cascade on the parameter interval; hence no separate compactness argument is required.

The nontriviality criterion is the classical Kahane--Peyri\`ere condition. We adopt the convention \(0\log_2 0=0\), and all expectations involving \(W\log_2 W\) are understood in the extended sense.

\begin{theorem}[Kahane--Peyri\`ere nondegeneracy on the circle]
\label{thm:circle-KP-nondegeneracy-paper}
Assume that \(W\ge0\) and \(\E W=1\). If
\begin{equation*}
\E[W\log_2^+W]<\infty
\qquad\text{and}\qquad
\E[W\log_2 W]<1,
\end{equation*}
then \((Y_n)\) is uniformly integrable, \(\E Y=1\), and \(\Pbb(Y>0)>0\). If \(\E[W\log_2 W]\ge1\), then \(Y=0\) almost surely.
\end{theorem}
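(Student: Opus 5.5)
The plan is to reduce Theorem~\ref{thm:circle-KP-nondegeneracy-paper} to the vector nondegeneracy criterion, Theorem~\ref{thm:vector-KP-nondegeneracy-paper}, applied to the scalar specialization $X_i=W_i/2$, $i=0,1$, with $W_0,W_1$ independent copies of $W$. Since $\mu_{\circ,n}(\mathcal I_v)=2^{-n}Q_v$ and $Q_v$ is a product of independent copies of $W$ along the path, we have $2^{-n}Q_v\stackrel{d}{=}L_v$ jointly over $|v|=n$: relabel $W_{vi}$ as $2X_i(v)$. Hence $(Y_n)_{n\ge0}$ has the same law, as a process, as the total-mass martingale $(M_n)_{n\ge0}$ of this balanced vector cascade. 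Uniform integrability, $\E Y$, $\Pbb(Y>0)$ and almost sure extinction are all statements about the law of the process, so they transfer directly.

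The next step is to translate the hypotheses. As in the proof of Corollary~\ref{cor:scalar-dyadic-interval-formula-paper}, $X_i\log_2^+X_i\le \tfrac12 W_i\log_2^+W_i$, and conversely $W\log_2^+W\le 2X\log_2^+X+W$ up to a constant. Thus $H^+(X)<\infty$ is equivalent to $\E[W\log_2^+W]<\infty$. Also $\gamma(X)=\E[W\log_2W]-1$, in the extended sense. Under the first hypothesis, $\gamma(X)<0$ is exactly $\E[W\log_2W]<1$.

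Then split on $\kappa_X$. If $\kappa_X<1$, part (1) of Theorem~\ref{thm:vector-KP-nondegeneracy-paper} gives $\E M=1$ and $\Pbb(M>0)>0$. Since $M_n\ge0$ converges almost surely and $\E M=\lim\E M_n=1$, Scheffé's lemma gives $L^1$ convergence, hence uniform integrability. If $\kappa_X=1$, then $W\in\{0,2\}$ almost surely with $\Pbb(W=2)=1/2$, so $\E[W\log_2W]=1$ and the hypothesis fails; this case is vacuous for the first assertion. This is also why the strict inequality is needed.

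For the converse assertion, suppose $\E[W\log_2W]\ge1$. If $\E[W\log_2^+W]=\infty$, or if the entropy is finite with $\gamma(X)\ge0$ and $\kappa_X<1$, then the equivalence in part (1) gives $\E M<1$. The main obstacle is upgrading this to $M=0$ almost surely. The standard route is the fixed-point argument. The extinction-type probability $p=\Pbb(M=0)$ satisfies $p=\Pbb(X_0M^{(0)}=0)\,\Pbb(X_1M^{(1)}=0)$ in the scalar case, where the siblings are independent. Moreover $\E M\in\{0,1\}$, because $\E M$ solves the same linear fixed-point structure and $M/\E M$, if $\E M>0$, would be a nondegenerate mean-one fixed point, which forces the condition in (c). So $\E M<1$ gives $\E M=0$, hence $M=0$ almost surely. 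Finally, if $\kappa_X=1$, then $X_0+X_1=1$ almost surely fails, because $W_0+W_1\in\{0,2,4\}$ is not almost surely $2$; part (2) then gives $M=0$ almost surely. I would cite the classical Kahane--Peyrière theorem for the step $\E M\in\{0,1\}$ rather than reprove it.
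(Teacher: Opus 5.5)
Your proposal is correct, but it takes a different route from the paper. The paper does not reprove this statement. Remark~\ref{rem:circle-KP-classical-source} simply invokes the classical scalar Kahane--Peyri\`ere theorem, noting that the binary tree, the normalization \(\E W=1\) and the total-mass martingale are the same for arcs and for intervals.

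You instead make the identification explicit: with \(X_i(v)=W_{vi}/2\) one has \(L_v=2^{-n}Q_v\), so \(Y_n=M_n\) pathwise, not just in law. You then derive both directions from the paper's own vector criterion, Theorem~\ref{thm:vector-KP-nondegeneracy-paper}. The individual steps are all right:
\begin{enumerate}
\item The hypothesis translation holds: \(H^+(X)<\infty\iff\E[W\log_2^+W]<\infty\), and \(\gamma(X)=\E[W\log_2W]-1\).
\item You correctly observe that \(\kappa_X=1\) forces \(W\in\{0,2\}\) with \(\E[W\log_2W]=1\). Part (2) then gives \(M=0\), since \(\Pbb(X_0+X_1=1)=1/2<1\).
\item Scheff\'e's lemma correctly upgrades \(\E M=1\) to uniform integrability.
\end{enumerate}
What your route buys is internal consistency: one external input (Alsmeyer--Kuhlbusch) serves both the interval and the circle. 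The price is the \(\kappa_X\) case split, which the scalar theorem does not need.

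One simplification: the ``main obstacle'' in your last paragraph does not exist. Condition (a) of part (1) is \(\Pbb(M>0)>0\). Suppose \(\kappa_X<1\) and (c) fails, either because \(\E[W\log_2^+W]=\infty\) (which already forces \(\kappa_X<1\), since then \(W\) is unbounded) or because \(\gamma(X)\ge0\). Then the equivalence (a)\(\iff\)(c) gives \(\Pbb(M>0)=0\) directly, i.e.\ \(M=0\) almost surely.

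So you should delete the fixed-point discussion. Its sketched justification, that a nondegenerate mean-one fixed point forces (c), is itself a nontrivial theorem. Falling back on the classical Kahane--Peyri\`ere theorem for \(\E M\in\{0,1\}\) would import essentially the statement you are proving, which defeats the purpose of the reduction.
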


\begin{remark}\label{rem:circle-KP-classical-source}
This is the classical dyadic Kahane--Peyri\`ere theorem for Mandelbrot martingales \cite{KahanePeyriere1976}; see also \cite{Heurteaux2016}. The proof depends only on the binary tree, the normalization \(\E W=1\), and the total-mass martingale, all of which are identical for the dyadic arc construction on \(\mathbb S^1\) and the dyadic interval construction on \([0,1)\).
\end{remark}

Write \(\mathcal S_\circ=\{Y>0\}\). By Proposition~\ref{prop:circle-cascade-construction-paper}, this agrees with 
\(
\{\mu_\circ(\mathbb S^1)>0\}
\) 
up to null events.

For \(v\in\mathcal T\), let \(\mu_\circ^{(v)}\) be the descendant circle cascade generated by \(\{W_{vu}:u\in\mathcal T,\ u\neq\varnothing\}\). For \(m\ge0\), define
\begin{equation*}
Y_m^{(v)}
=
2^{-m}
\sum_{\lvert u \rvert=m}
\prod_{j=1}^m W_{v(u|j)}.
\end{equation*}
Then \((Y_m^{(v)})_{m\ge0}\) is a nonnegative martingale with the same law as \((Y_m)\); denote its limit by \(Y^{(v)}\). By Proposition~\ref{prop:circle-cascade-construction-paper}, after intersecting countably many probability-one events, we may assume that
\begin{equation*}
\mu_\circ^{(v)}(\mathbb S^1)=Y^{(v)}
\qquad(v\in\mathcal T).
\end{equation*}
For each \(m\ge0\), the family \((\mu_\circ^{(v)})_{\lvert v \rvert=m}\) is independent, is independent of \(\mathcal F_m^W\), and consists of measures with the same law as \(\mu_\circ\); the same assertions hold for \((Y^{(v)})_{\lvert v \rvert=m}\). Since \(\mathcal T\) is countable, we may also assume that all descendant martingales converge. For every \(n\ge \lvert v \rvert\),
\begin{equation*}
\mu_{\circ,n}(\mathcal I_v)
=
2^{-\lvert v \rvert}Q_vY_{n-\lvert v \rvert}^{(v)},
\end{equation*}
and hence
\begin{equation*}
\lim_{n\to\infty}\mu_{\circ,n}(\mathcal I_v)
=
2^{-\lvert v \rvert}Q_vY^{(v)}
\qquad(v\in\mathcal T).
\end{equation*}
By the standard dyadic endpoint estimate, we further restrict to the full-probability event on which \(\mu_\circ\) charges no dyadic endpoint. Then every half-open arc \(\mathcal I_v\) is a \(\mu_\circ\)-continuity set, and
\begin{equation}\label{eq:circle-cylinder-mass-decomposition-paper}
\mu_\circ(\mathcal I_v)
=
2^{-\lvert v \rvert}Q_vY^{(v)}
\qquad(v\in\mathcal T)
\end{equation}
simultaneously for all \(v\in\mathcal T\).

\subsection{The endpoint local exponent}
\label{subsec:endpoint-local-exponent-paper}

Recall
\begin{equation*}
\Aloc(W)
=
\sup_{q>1}
\max\left\{
0,\,
\frac{q-1-\log_2\E[W^q]}{q}
\right\},
\end{equation*}
where the term is interpreted as \(0\) when \(\E[W^q]=\infty\). Write \(\tau(q)=q-1-\log_2\E[W^q]\) whenever \(\E[W^q]<\infty\). Then
\begin{equation*}
\Aloc(W)
=
\sup_{\substack{q>1\\ \E[W^q]<\infty}}
\max\left\{0,\frac{\tau(q)}{q}\right\}.
\end{equation*}
By Jensen's inequality, \(\E[W^q]\ge(\E W)^q=1\) for \(q>1\), and therefore \(0\le\Aloc(W)\le1\).

For a nonzero finite Borel measure \(\nu\) on \(\mathbb S^1\), define
\begin{equation*}
\alphamin(\nu)
=
\inf_{x\in\spt\nu}
\liminf_{r\downarrow0}
\frac{\log_2\nu(B(x,r))}{\log_2 r},
\end{equation*}
where \(B(x,r)\) denotes the Euclidean ball in \(\R^2\). For the zero measure, set \(\alphamin(0)=0\).

\begin{lemma}
\label{lem:circle-uniform-cylinder-upper-bound-paper}
Assume the minimal Kahane--Peyri\`ere regime. If \(0<\beta<\Aloc(W)\), then, almost surely, there exists a finite random constant \(C_\beta\) such that
\begin{equation}\label{eq:circle-uniform-cylinder-upper-bound-paper}
\mu_\circ(\mathcal I_v)\le C_\beta 2^{-\beta \lvert v \rvert}
\end{equation}
for every finite binary word \(v\).
\end{lemma}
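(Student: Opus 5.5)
The plan is a moment-plus-union-bound argument over all cylinders, using the decomposition \eqref{eq:circle-cylinder-mass-decomposition-paper}, $\mu_\circ(\mathcal I_v)=2^{-|v|}Q_vY^{(v)}$. Given $0<\beta<\Aloc(W)$, first choose $q>1$ with $\E[W^q]<\infty$ and $\tau(q)/q>\beta$, that is,
\[
2^{-q}\,2\,\E[W^q]\,2^{q\beta}=2^{q\beta-\tau(q)}<1 .
\]
Write $\lambda:=2^{q\beta}2^{1-q}\E[W^q]<1$. We need a finite $q$-th moment for $Y$. The scalar identification $X_i=W_i/2$ gives $\rho(q)=2^{1-q}\E[W^q]\le\lambda<1$, since $2^{q\beta}\ge1$. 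Lemma~\ref{lem:terminal-mass-moment-criterion} (applied to the pushforward scalar cascade, whose total mass is $Y$) then gives $\E[Y^q]<\infty$. Since $Y^{(v)}\stackrel d=Y$ and $Y^{(v)}$ is independent of $\mathcal F^W_{|v|}\ni Q_v$,
\[
\E\bigl[\mu_\circ(\mathcal I_v)^q\bigr]=2^{-q|v|}\bigl(\E[W^q]\bigr)^{|v|}\E[Y^q].
\]

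Next, apply Markov's inequality at level $2^{-\beta n}$ and sum over the $2^n$ words of length $n$:
\[
\Pbb\Bigl(\max_{|v|=n}\mu_\circ(\mathcal I_v)>2^{-\beta n}\Bigr)\le 2^n2^{q\beta n}2^{-qn}\E[W^q]^n\E[Y^q]=\E[Y^q]\,\lambda^n .
\]
This is summable in $n$, so Borel--Cantelli gives, almost surely, $\mu_\circ(\mathcal I_v)\le 2^{-\beta|v|}$ for all $|v|\ge n_0(\omega)$. For the finitely many words with $|v|<n_0$, use $\mu_\circ(\mathcal I_v)\le Y<\infty$. Setting $C_\beta=\max\{1,\,Y2^{\beta n_0}\}$ then yields \eqref{eq:circle-uniform-cylinder-upper-bound-paper} for every finite word. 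The null events are countable in number, since one is removed for each $\beta$ or rational sequence.

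The only delicate step is the moment input $\E[Y^q]<\infty$ for possibly large $q>2$. The hypothesis that must be checked is exactly $\rho(q)<1$, and this follows from $\tau(q)>q\beta>0$. The induction in Lemma~\ref{lem:terminal-mass-moment-criterion} for $q>2$ uses $\rho(p)<1$ for $1<p\le q$, which that lemma already establishes via Hölder's inequality, so no new ingredient is needed. The independence of $Q_v$ from $Y^{(v)}$ is immediate from the tree structure recorded before \eqref{eq:circle-cylinder-mass-decomposition-paper}.
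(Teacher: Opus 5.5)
Your proposal is correct and follows the paper's proof essentially step for step. You choose $q>1$ with $\E[W^q]<\infty$ and $\tau(q)/q>\beta$, obtain $\E[Y^q]<\infty$ from the terminal-mass moment criterion, and compute $\E\sum_{|v|=n}\mu_\circ(\mathcal I_v)^q=\E[Y^q]\,2^{-n\tau(q)}$ from the cylinder decomposition and independence; you then finish with Markov's inequality and Borel--Cantelli, and enlarge the constant to cover the finitely many short words. Your explicit identification of $Y$ with the total mass of the vector cascade $X_i=W_i/2$, so that $\rho(q)=2^{1-q}\E[W^q]<1$, simply spells out what the paper calls the standard moment criterion.
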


\begin{proof}
Choose \(q>1\) such that \(\E[W^q]<\infty\) and \(\tau(q)/q>\beta\). Then \(\tau(q)>0\), equivalently \(2^{1-q}\E[W^q]<1\), and the standard terminal-mass moment criterion implies \(\E[Y^q]<\infty\). By \eqref{eq:circle-cylinder-mass-decomposition-paper}, the independence of \(Q_v\) and \(Y^{(v)}\), and the identity \(Y^{(v)}\stackrel{d}=Y\),
\begin{equation*}
\E[\mu_\circ(\mathcal I_v)^q]
=
2^{-q\lvert v \rvert}\E[Q_v^q]\E[Y^q]
=
2^{-q\lvert v \rvert}\bigl(\E[W^q]\bigr)^{\lvert v \rvert}\E[Y^q].
\end{equation*}
Thus
\begin{equation*}
\E\sum_{\lvert v \rvert=n}\mu_\circ(\mathcal I_v)^q
=
\E[Y^q]\,2^{-n\tau(q)}.
\end{equation*}
Therefore
\begin{equation*}
\Pbb\left(\max_{\lvert v \rvert=n}\mu_\circ(\mathcal I_v)>2^{-\beta n}\right)
\le
2^{\beta q n}
\E\sum_{\lvert v \rvert=n}\mu_\circ(\mathcal I_v)^q
\le
C2^{-n(\tau(q)-\beta q)}.
\end{equation*}
Since \(\tau(q)-\beta q>0\), the right-hand side is summable. The Borel--Cantelli lemma implies \(\max_{\lvert v \rvert=n}\mu_\circ(\mathcal I_v)\le2^{-\beta n}\) for all sufficiently large \(n\), almost surely. Enlarging the random constant to cover the finitely many initial generations proves \eqref{eq:circle-uniform-cylinder-upper-bound-paper}.
\end{proof}

\begin{lemma}
\label{lem:circle-cylinder-to-local-dimension-paper}
Let \(\nu\) be a finite Borel measure on \(\mathbb S^1\). Suppose that, for some \(\beta\ge0\), there is a constant \(C<\infty\) such that
\begin{equation*}
\nu(\mathcal I_v)\le C2^{-\beta \lvert v \rvert}
\qquad(v\in\mathcal T).
\end{equation*}
Then
\begin{equation*}
\liminf_{r\downarrow0}
\frac{\log_2\nu(B(x,r))}{\log_2 r}\ge\beta
\qquad
(x\in\spt\nu).
\end{equation*}
\end{lemma}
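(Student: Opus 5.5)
The approach is deterministic and purely geometric: cover a small Euclidean ball on the circle by boundedly many dyadic arcs of comparable length, then apply the assumed cylinder bound to each arc. Fix \(x\in\mathbb S^1\) (membership in \(\spt\nu\) is not needed) and \(0<r<1/4\), say. The trace \(B(x,r)\cap\mathbb S^1\) is a single arc of the circle. Its chord half-length is \(r\), so its normalized arclength \(\sigma\bigl(B(x,r)\cap\mathbb S^1\bigr)\) is at most \(c_0 r\) for an absolute constant \(c_0\); indeed, the angle subtended satisfies \(2\arcsin(r)\le \pi r\) for \(r\le1\).

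Choose the integer \(n=n(r)\ge0\) with \(2^{-n-1}<r\le 2^{-n}\). Each level-\(n\) dyadic arc \(\mathcal I_v\), \(\lvert v\rvert=n\), has normalized length \(2^{-n}\ge r\). Since the dyadic arcs of level \(n\) partition \(\mathbb S^1\) into consecutive arcs of length \(2^{-n}\), a connected arc of normalized length at most \(c_0 r\le c_0 2^{-n}\) meets at most \(\lceil c_0\rceil+1\) of them. We may take this count to be \(K=5\), since \(\pi\le 4\). Hence
\[
\nu(B(x,r))\le \sum_{\substack{\lvert v\rvert=n\\ \mathcal I_v\cap B(x,r)\neq\emptyset}}\nu(\mathcal I_v)\le K C 2^{-\beta n}\le KC\,2^{\beta}\,r^{\beta},
\]
using \(2^{-n}<2r\) in the last step.

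Taking \(\log_2\) and dividing by \(\log_2 r<0\), which reverses the inequality, gives
\[
\frac{\log_2\nu(B(x,r))}{\log_2 r}\ge \beta+\frac{\log_2(KC2^\beta)}{\log_2 r}\longrightarrow\beta
\qquad(r\downarrow0).
\]
If \(\nu(B(x,r))=0\), the ratio is \(+\infty\) under the convention \(\log 0=-\infty\), so the inequality holds trivially. The \(\liminf\) is therefore at least \(\beta\) for every \(x\), and in particular for every \(x\in\spt\nu\).

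The only point requiring care is the covering count, namely that a Euclidean ball of radius \(r\) meets only boundedly many level-\(n\) arcs. This reduces to the comparison between chord length and arclength on the circle. The case \(\beta=0\) is included trivially, since then \(\nu(B(x,r))\le\nu(\mathbb S^1)\le C\), which gives a nonnegative ratio in the limit.
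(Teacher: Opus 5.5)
Your proof is correct and is essentially the paper's argument: with $2^{-n-1}<r\le 2^{-n}$, cover $B(x,r)\cap\mathbb S^1$ by a bounded number of level-$n$ dyadic arcs to get $\nu(B(x,r))\le C'' r^{\beta}$, then divide by $\log_2 r<0$. The arc-length bookkeeping is slightly loose but harmless: the half-chord of the trace is at most $r$ rather than equal to it, and your constant $K=5$ is generous. Neither point affects the upper bound you use.
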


\begin{proof}
If \(2^{-(n+1)}<r\le2^{-n}\), then \(B(x,r)\cap\mathbb S^1\) meets at most a bounded number of level-\(n\) dyadic arcs. Hence
\begin{equation*}
\nu(B(x,r))\le C'2^{-\beta n}\le C''r^\beta.
\end{equation*}
For \(x\in\spt\nu\), divide \(\log_2\nu(B(x,r))\le \log_2 C''+\beta\log_2 r\) by \(\log_2 r<0\) and let \(r\downarrow0\) to obtain the claim.
\end{proof}

\begin{lemma}
\label{lem:alive-tree-supercritical-support-witness}
Assume the minimal Kahane--Peyri\`ere regime. Then \(\Pbb(W>0)>1/2\). Hence
\begin{equation*}
\mathcal T^{+}:=\{v\in\mathcal T:Q_v>0\}
\end{equation*}
is a supercritical Galton--Watson tree with offspring law \(\operatorname{Binomial}(2,\Pbb(W>0))\). Moreover, with
\begin{equation*}
A_n:=\#\{v\in\{0,1\}^{n}:Q_v>0\},
\end{equation*}
one has \(A_n\to\infty\) almost surely on \(\{\mu_\circ(\mathbb S^1)>0\}\).
\end{lemma}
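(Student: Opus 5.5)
We first show \(\Pbb(W>0)>1/2\). Set \(p=\Pbb(W>0)\). On \(\{W>0\}\) Jensen's inequality applies to the conditional law of \(W\) given \(W>0\), whose mean is \(\E W/p=1/p\). Hence
\[
\E[W\log_2 W]=p\,\E[W\log_2W\mid W>0]\ge p\cdot\frac1p\log_2\frac1p=\log_2\frac1p ,
\]
where we used convexity of \(x\log_2x\) and the convention \(0\log_2 0=0\). The hypothesis \(\E[W\log_2W]<1\) then gives \(\log_2(1/p)<1\), that is, \(p>1/2\). We must check that the conditional expectation makes sense. This follows from \(\E[W\log_2^+W]<\infty\) and the fact that \(x\log_2 x\) is bounded below, so the extended-sense expectation is finite.

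Next we identify the Galton--Watson structure. A word \(v\) satisfies \(Q_v>0\) if and only if \(Q_{v|j}>0\) for every prefix, i.e. \(W_{v|j}>0\) for all \(j\). Given that \(v\) is alive, its children \(v0,v1\) are alive exactly when \(W_{v0}>0\), respectively \(W_{v1}>0\). These two events are independent of each other and of \(\mathcal F^W_{|v|}\), and they are independent across distinct alive vertices of the same generation. Therefore \(\mathcal T^+\) is a Galton--Watson tree with offspring law \(\operatorname{Binomial}(2,p)\). Its mean is \(2p>1\), so the tree is supercritical.

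Finally, we show \(A_n\to\infty\) on \(\{\mu_\circ(\mathbb S^1)>0\}\). If \(\mathcal T^+\) dies out, then \(Q_v=0\) for all \(|v|=n\) once \(n\) is large. In that case \(Y_n=0\) eventually, so \(Y=0\), and hence \(\mu_\circ(\mathbb S^1)=0\) by Proposition~\ref{prop:circle-cascade-construction-paper}. Thus \(\{\mu_\circ(\mathbb S^1)>0\}\) is contained in the survival event up to null sets. By the standard dichotomy for supercritical Galton--Watson processes with \(\Pbb(\text{offspring}=1)<1\), the population \(A_n\) tends to infinity almost surely on survival. This is the same argument as Lemma~\ref{lem:alive-tree-growth-paper}; alternatively, Kesten--Stigum via Lemma~\ref{lem:bounded-gw-growth-paper} applies because the offspring law is bounded. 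The only delicate point is the Jensen step in the first paragraph. The remaining steps are routine transfers of the interval argument.
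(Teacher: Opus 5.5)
Your proof is correct. The Galton--Watson identification of \(\mathcal T^+\) and the final step (extinction of \(\mathcal T^+\) forces \(Y_n=0\) eventually, hence \(\mu_\circ(\mathbb S^1)=0\), and a supercritical population tends to infinity on survival) are exactly what the paper does through Lemma~\ref{lem:alive-tree-growth-paper}.

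Where you genuinely differ is the inequality \(\Pbb(W>0)>1/2\). The paper obtains it by specializing Lemma~\ref{lem:alive-offspring-supercritical-paper} to \(X_i=W_i/2\), and there \(\E N_X>1\) is proved indirectly, in three steps:
\begin{enumerate}
\item It invokes the nondegeneracy theorem (Alsmeyer--Kuhlbusch/Kahane--Peyri\`ere) to get \(\Pbb(M>0)>0\).
\item It uses extinction of subcritical and nondegenerate critical Galton--Watson trees to rule out \(\E N_X<1\) and to force \(N_X\equiv1\) in the critical case.
\item It contradicts \(N_X\equiv1\) by Jensen applied to \(S=X_0+X_1\).
\end{enumerate}
Your argument replaces all of this by a single Jensen step for the law of \(W\) given \(W>0\). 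It needs only \(\E W=1\) and the entropy condition, with no appeal to nondegeneracy. It also yields the quantitative bound \(\Pbb(W>0)\ge 2^{-\E[W\log_2 W]}\).

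The same idea works in the vector setting. Apply Jensen to \(\log_2\) under the size-biased probability measure \(x\,\E(\delta_{X_0}+\delta_{X_1})(dx)\) on \((0,\infty)\). This gives \(\gamma(X)\ge-\log_2\E N_X\), so your approach could replace the paper's proof of Lemma~\ref{lem:alive-offspring-supercritical-paper} as well.

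The paper's route makes the circle lemma a literal specialization of interval results already proved. The cost is that it relies on the heavier nondegeneracy theorem for what is really an elementary consequence of convexity.
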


\begin{remark}
Lemma~\ref{lem:alive-tree-supercritical-support-witness} is the specialization of Lemmas~\ref{lem:alive-offspring-supercritical-paper} and~\ref{lem:alive-tree-growth-paper} to the normalized binary weights \(X_i=W_i/2\), \(i=0,1\). For \(v\in\{0,1\}^n\), the corresponding product is \(L_v=2^{-n}Q_v\), so the alive tree there is precisely \(\mathcal T^+=\{v\in\mathcal T:Q_v>0\}\).

Thus
\begin{equation*}
\E N_X=2\Pbb(W>0)>1,
\end{equation*}
hence \(\Pbb(W>0)>1/2\), and Lemma~\ref{lem:alive-tree-growth-paper} implies that
\begin{equation*}
A_n=\#\{v\in\{0,1\}^n:Q_v>0\}\to\infty
\end{equation*}
almost surely on \(\{\mu_\circ(\mathbb S^1)>0\}\). Replacing dyadic intervals by dyadic arcs does not affect this tree-indexed positivity argument.
\end{remark}

\begin{lemma}
\label{lem:biggins-chernoff-block-count-exponent}
Assume the minimal Kahane--Peyri\`ere regime. For \(a\in\mathbb R\) and \(n\ge1\), set
\begin{equation*}
N_n(a)=\#\{u\in\{0,1\}^{n}:Q_u\ge 2^{an}\}.
\end{equation*}
Then:
\begin{enumerate}
\item For every \(a\in\mathbb R\),
\begin{equation*}
\Lambda(a):=\lim_{n\to\infty}\frac1n\log_2\mathbb E N_n(a)
=
\inf_{q\ge0}
\left\{
1+\log_2\mathbb E(W^q)-qa
\right\},
\end{equation*}
where \(W^0:=\mathbf 1_{\{W>0\}}\), and for \(q>0\) the term is \(+\infty\) if \(\mathbb E(W^q)=+\infty\).

\item If \(a<1-\Aloc(W)\), then \(\Lambda(a)>0\).

\item The function \(\Lambda\) is finite and continuous on \((-\infty,1-\Aloc(W))\).
\end{enumerate}
\end{lemma}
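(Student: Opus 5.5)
The plan is to reduce everything to a Cramér-type large-deviation statement for a single random walk, using the many-to-one identity of the kind already used in the proof of Lemma~\ref{lem:finite-block-weighted-growth-paper}. Define the finite measure \(\nu(B)=2\,\Pbb(W>0,\ \log_2 W\in B)\) on \(\R\). Independence of the \(W_v\) gives
\[
\E N_n(a)=\nu^{*n}\bigl([an,\infty)\bigr)=2^n\,\Pbb\bigl(S_n\ge an,\ W_1\cdots W_n>0\bigr),
\]
where \(S_n\) is a sum of \(n\) i.i.d.\ copies of \(\log_2 W\), conditioned to be finite. With \(p_+=\Pbb(W>0)\), this equals \(2^n p_+^n\,\Pbb(T_n\ge an)\), where \(T_n\) is a random walk whose increments have the law of \(\log_2 W\) given \(W>0\). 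The log-moment generating function of an increment is
\[
\phi(q)=\log_2\bigl(\E W^q/p_+\bigr),\qquad q\ge 0.
\]

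For part (1), we get the upper bound from Chernoff's bound \(\Pbb(T_n\ge an)\le 2^{-n(qa-\phi(q))}\) for each \(q\ge0\). This gives
\[
\tfrac1n\log_2\E N_n(a)\le 1+\log_2\E W^q-qa
\]
for every \(q\ge 0\); the case \(q=0\) is the trivial bound \(\log_2(2p_+)\). For the lower bound we use Cramér's theorem in its general form. The increments take values in \(\R\), may be heavy-tailed to the right, and are only guaranteed to have \(\phi\) finite on \([0,1]\). This form gives
\[
\lim_n\tfrac1n\log_2\Pbb(T_n\ge an)=-\inf_{x\ge a}I(x)
\]
(with the open/closed issue handled by the lower semicontinuity or monotonicity of the rate). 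Here \(I\) is the Legendre transform of \(\phi\) restricted to \(q\ge 0\), because the event is a right tail. To be safe about the limit (rather than the liminf), we use superadditivity. Since \(N_{m+n}(a)\ge\) the number of pairs of ancestor and descendant blocks each exceeding its threshold, we get \(\E N_{m+n}(a)\ge\E N_m(a)\,\E N_n(a)\). Fekete's lemma then shows that the limit exists in \((-\infty,1]\). The Chernoff upper bound is matched by the standard tilting lower bound, obtained by exactly the change-of-measure computation in Lemma~\ref{lem:finite-block-weighted-growth-paper} after truncating \(W\) to \(\{2^{-m}\le W\le 2^m\}\) and letting \(m\to\infty\). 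Truncation only decreases \(N_n\), and the truncated infima increase to the untruncated one by monotone convergence in \(m\) together with a minimax/compactness argument in \(q\). This last step, interchanging the limit in \(m\) with the infimum over \(q\), including the endpoint \(q=0\) where \(W^0=\one_{\{W>0\}}\), is the main obstacle. To handle it, we restrict \(q\) to a compact interval \([0,Q]\) where the truncated functions are continuous and increase pointwise. We then use Dini-type uniformity, and handle \(q>Q\) separately by convexity when \(a\) lies below the slope region.

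For part (2), assume \(a<1-\Aloc(W)\). For \(q\ge0\), set \(g(q)=1+\log_2\E W^q-qa\). If \(\E W^q<\infty\) and \(q>1\), then \(\tau(q)\le q\Aloc(W)\). Hence
\[
g(q)=q-\tau(q)-qa\ge q\bigl(1-\Aloc(W)-a\bigr)>0 .
\]
Moreover \(g=+\infty\) where the moment is infinite. At \(q=1\) we have \(g(1)=1-a>0\), since \(a<1-\Aloc\le1\). On \([0,1]\), \(\log_2\E W^q\) is convex with value \(\log_2 p_+\) at \(0\) and \(0\) at \(1\), so \(g\) is bounded below by the chord. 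That chord is positive at both endpoints, because \(1+\log_2 p_+>0\) by Lemma~\ref{lem:alive-tree-supercritical-support-witness}. To upgrade this pointwise positivity to a positive infimum, we argue as follows. As \(q\to\infty\) within the finite-moment range, \(g(q)\ge q(1-\Aloc-a)\to\infty\). Convexity and lower semicontinuity of \(g\) on the closed range \(\{q:\E W^q<\infty\}\) (by Fatou) then give that the infimum is attained or approached on a compact set where \(g>0\). Hence \(\Lambda(a)>0\).

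For part (3), \(\Lambda\) is an infimum of affine functions of \(a\), so it is concave. It is finite on \((-\infty,1-\Aloc)\): it is \(\le 1\) (from \(q=0\)) and, by part (2), \(>0\) there. A finite concave function on an open interval is continuous, which finishes the proof.
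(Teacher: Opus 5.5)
There is a genuine gap in part (2), on the range $0<q<1$. You argue that $\varphi(q)=\log_2\E[W^q]$ is convex on $[0,1]$ with $\varphi(0)=\log_2\Pbb(W>0)$ and $\varphi(1)=0$, and conclude that $g(q)=1+\varphi(q)-qa$ is bounded below by its chord. Convexity gives the reverse inequality: a convex function lies \emph{below} its chords, so the chord is only an upper bound for $g$. Positivity of $g(0)=1+\log_2\Pbb(W>0)$ and $g(1)=1-a$ therefore says nothing about the sign of $g$ in between.

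This is not a harmless slip. On $[0,1]$ your argument uses only $\E W=1$ and $\Pbb(W>0)>1/2$, and these do not suffice. Take $W\in\{0,\tfrac1{16},16\}$ with probabilities $0.3,\,0.64,\,0.06$. Then $\E W=1$, $\Pbb(W>0)=0.7$, and $\Aloc(W)=0$. Yet with $a=0.9$ one computes $g(0.9)\approx-0.17<0$, so $\Lambda(0.9)<0$. The only hypothesis this law violates is the drift condition, since $\E[W\log_2 W]=3.68$, and your part (2) never uses that condition.

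The fix is to use a supporting line at $q=1$ instead of a chord. Under $\E[W\log_2^+W]<\infty$, the left derivative of $\varphi$ at $1$ is $h=\E[W\log_2 W]$. Convexity then gives $\varphi(q)\ge(q-1)h$ on $[0,1]$, hence
\[ g(q)\ge(1-h)+q(h-a)\ge\min\{1-h,\,1-a\}>0\qquad(0\le q\le 1). \]
This is exactly where the Kahane--Peyri\`ere condition $h<1$ enters. It is the paper's argument, phrased there as follows:
\begin{itemize}
\item $\Psi(q)=(1+\varphi(q))/q>1$ on $(0,1)$, so $\inf_{q>0}\Psi(q)=1-\Aloc(W)$;
\item hence $g(q)=q(\Psi(q)-a)\ge q(1-\Aloc(W)-a)$;
\item continuity at $q=0$ handles small $q$.
\end{itemize}
Your bound $g(q)\ge q(1-\Aloc(W)-a)$ for $q>1$ is correct. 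Combined with the display above, it gives a uniform positive lower bound directly, so the lower-semicontinuity/compactness step is unnecessary. Part (3) then goes through as you wrote it.

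For part (1), the paper simply cites Biggins' Chernoff theorem for the branching random walk with displacement $-\ln W$. Your many-to-one reduction to the classical Chernoff theorem for $\sum_i\log_2 W_i$ is a legitimate self-contained substitute, with two caveats. First, $\E N_n(a)$ vanishes identically when $a>\log_2\operatorname{ess\,sup}W$, so the limit lies in $[-\infty,1]$, not $(-\infty,1]$. Second, the endpoint $a=\log_2\operatorname{ess\,sup}W$ is not covered by the open-set Cram\'er lower bound and needs a direct count of the atom.
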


\begin{proof}
Apply Biggins' Chernoff theorem for branching random walks \cite[Theorem~1]{Biggins1977} to the displacement
\(-\ln W\), with edges for which \(W=0\) treated as killed. The
one-generation transform is
\[
m(0)=2\Pbb(W>0),\qquad
m(\theta)=2\mathbb E(W^\theta)\quad(\theta>0),
\]
with the value \(+\infty\) allowed.  By
Lemma~\ref{lem:alive-tree-supercritical-support-witness}, \(m(0)>1\), and
\(m(1)=2\mathbb EW=2<\infty\).  Since \(N_n(a)\) counts the particles with total displacement at most
\(-an\ln2\), Biggins' theorem yields
\[
\lim_{n\to\infty}
\bigl(\mathbb E N_n(a)\bigr)^{1/n}
=
\inf_{\theta\ge0} e^{-a\theta\ln2}m(\theta).
\]
Taking \(\log_2\) yields
\[
\Lambda(a)=
\inf_{q\ge0}
\left\{
1+\log_2\mathbb E(W^q)-qa
\right\},
\]
with \(W^0=\mathbf 1_{\{W>0\}}\) and with the stated extended-value
convention.

For \(q>0\), define
\begin{equation*}
\Psi(q)=\frac{1+\log_2\mathbb E(W^q)}{q},
\end{equation*}
with value \(+\infty\) if \(\mathbb E(W^q)=+\infty\). We first show that
\begin{equation*}
\inf_{q>0}\Psi(q)=1-\Aloc(W).
\end{equation*}
For \(0<q\le1\), put \(\varphi(q)=\log_2\mathbb E(W^q)\). Then \(\varphi\) is convex, \(\varphi(1)=0\), and \(\varphi'_-(1)=\mathbb E(W\log_2 W)\). Hence, for \(0<q<1\),
\begin{equation*}
\varphi(q)\ge(q-1)\mathbb E(W\log_2 W).
\end{equation*}
Since the minimal Kahane--Peyri\`ere regime ensures \(\mathbb E(W\log_2 W)<1\), it follows that \(\Psi(q)>1\) for \(0<q<1\), while \(\Psi(1)=1\). Thus
\begin{equation*}
\inf_{0<q\le1}\Psi(q)=1.
\end{equation*}
Together with the definition of \(\Aloc(W)\), this implies
\begin{equation*}
1-\Aloc(W)
=
\min\left\{1,\inf_{q>1}\Psi(q)\right\}
=
\inf_{q>0}\Psi(q).
\end{equation*}

Fix \(a<1-\Aloc(W)\), and set \(\eta=1-\Aloc(W)-a>0\). For \(q>0\),
\begin{equation*}
1+\log_2\mathbb E(W^q)-qa
=
q(\Psi(q)-a)
\ge q\eta
\end{equation*}
in the extended sense. Also, \(\mathbb E(W^q)\to\Pbb(W>0)\) as \(q\downarrow0\), and Lemma~\ref{lem:alive-tree-supercritical-support-witness} ensures that \(1+\log_2\Pbb(W>0)>0\). Hence there exist \(q_0,c_0>0\) such that
\begin{equation*}
1+\log_2\mathbb E(W^q)-qa\ge c_0
\qquad(0\le q\le q_0),
\end{equation*}
whereas for \(q\ge q_0\) the previous bound yields the lower bound \(q_0\eta\). Therefore
\begin{equation*}
\Lambda(a)\ge\min\{c_0,q_0\eta\}>0.
\end{equation*}

Finally, for each \(q\ge0\), the map \(a\mapsto 1+\log_2\mathbb E(W^q)-qa\) is affine; hence \(\Lambda\) is concave. On \((-\infty,1-\Aloc(W))\), the positivity just proved and the \(q=0\) term imply
\begin{equation*}
0<\Lambda(a)\le1+\log_2\Pbb(W>0)<\infty.
\end{equation*}
Thus \(\Lambda\) is finite and concave on this open interval, and is therefore continuous there.
\end{proof}

\begin{theorem}
\label{thm:support-mass-witness}
Assume the minimal Kahane--Peyri\`ere regime. Let \(\beta>\Aloc(W)\), let \(\Lambda\) be the block-count exponent from Lemma~\ref{lem:biggins-chernoff-block-count-exponent}, and let \(C\ge0\) be finite almost surely on \(\mathcal S_\circ\). Define
\begin{equation*}
E_{\beta,C}(\mu_\circ)
:=
\left\{
x\in\operatorname{spt}\mu_\circ:
\mu_\circ(\mathcal I_{n_k}(x))
\ge C(\omega)2^{-\beta n_k}
\text{ for some }n_k\uparrow\infty
\right\}.
\end{equation*}
Then, almost surely on \(\mathcal S_\circ\),
\begin{equation*}
\dimH E_{\beta,C}(\mu_\circ)
\ge
\Lambda(1-\beta)
>
0.
\end{equation*}
\end{theorem}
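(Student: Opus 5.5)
The plan is a Cantor-subtree (Galton--Watson) construction inside the cascade tree, followed by a mass-distribution argument. Fix \(\beta>\Aloc(W)\), write \(a=1-\beta<1-\Aloc(W)\), and fix a small \(\varepsilon>0\) with \(a+\varepsilon<1-\Aloc(W)\). By the continuity in Lemma~\ref{lem:biggins-chernoff-block-count-exponent}(3), we have \(\Lambda(a+\varepsilon)\ge\Lambda(a)-o(1)\) as \(\varepsilon\downarrow0\). By part (1), for a large block length \(N\),
\[
m_N:=\E N_N(a+\varepsilon)\ge 2^{N(\Lambda(a)-2\varepsilon)}>1 .
\]
To keep offspring bounded and to control the descendant mass, I would also fix \(b>0\) with \(p_b:=\Pbb(Y\ge b)>0\).

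The next step is to build an embedded \(N\)-block Galton--Watson process, as in the proof of Proposition~\ref{prop:positive-probability-supercritical-divergence-paper}. A word \(u\) is good if its parent block \(u'\) is good and \(Q_u/Q_{u'}\ge 2^{(a+\varepsilon)N}\). The offspring number is bounded by \(2^N\) and has mean \(m_N\). Good vertices in generation \(k\) then satisfy \(Q_u\ge 2^{(a+\varepsilon)kN}\), so at \(n_k=kN\),
\[
\mu_\circ(\mathcal I_u)=2^{-kN}Q_uY^{(u)}\ge 2^{-\beta kN}\,2^{\varepsilon kN}\,Y^{(u)} .
\]
This is \(\ge C(\omega)2^{-\beta kN}\) as soon as \(Y^{(u)}\ge b\) and \(k\) is large, whatever the finite value of \(C(\omega)\). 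To enforce the descendant-mass condition along a whole path, I would thin the good tree. Using the Chernoff/Borel--Cantelli argument of Proposition~\ref{prop:positive-probability-supercritical-divergence-paper}, on survival a proportion at least \(p_b/2\) of generation-\(k\) good vertices eventually have \(Y^{(u)}\ge b\).

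A cleaner route, which I would actually use, is to modify the offspring rule. Put \(uv\) in the tree iff the block ratio is large and also \(Y^{(uv)}\ge b\). The second condition is not measurable with respect to the block. I would circumvent this by requiring instead the local event \(Y^{(uv)}_{N'}\ge b\) at a fixed finite depth. Alternatively, I would simply prove that the limit set of the good tree has positive \(\mu_\circ\)-measure near each good vertex, which suffices because \(x\in\spt\mu_\circ\) is needed. Every point of the boundary of the good tree then belongs to \(E_{\beta,C}\), provided it lies in \(\spt\mu_\circ\) and infinitely many generations carry the mass bound.

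For the dimension, I would use the standard result that the boundary of a Galton--Watson tree with bounded offspring and mean \(m_N\) has, on survival, Hausdorff dimension \(\log_2 m_N/N\) in the \(2^{-N}\)-adic metric (Hawkes/Lyons, or Kesten--Stigum plus the mass distribution principle with the branching measure). I would transfer this to the circle through the bi-Lipschitz map \(f\), giving \(\dimH\ge\Lambda(a)-2\varepsilon\). Letting \(\varepsilon\downarrow0\) along a countable sequence gives \(\Lambda(1-\beta)\), and positivity follows from part (2).

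Finally, survival of the good tree has only positive probability, so it remains to upgrade to almost surely on \(\mathcal S_\circ\). I would use alive-tree amplification (Lemma~\ref{lem:alive-tree-supercritical-support-witness} and the argument of Proposition~\ref{prop:subtree-amplification-supercritical-divergence-paper}). Starting independent good trees at each of the \(A_m\to\infty\) alive vertices of generation \(m\), the monotonicity of \(E_{\beta,C}\) and of \(\dimH\) under inclusion (rescaling by the factor \(2^{-m}Q_v\) only changes constants) shows that failure has conditional probability at most \(q^{A_m}\to0\). I expect the main obstacle to be the support/mass issue: ensuring that the boundary points lie in \(\spt\mu_\circ\) and carry the lower mass bound uniformly in the random constant \(C(\omega)\). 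The \(2^{\varepsilon kN}\) surplus handles \(C\), but controlling \(Y^{(u)}\) along entire paths needs the thinning by \(Y^{(u)}\ge b\), which costs a factor \(p_b\) per block. That cost must be absorbed by taking \(N\) large, since \(p_b\) is fixed and \(m_N\) grows exponentially.
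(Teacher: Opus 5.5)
Your architecture matches the paper's. It uses a block Galton--Watson process selected by the relative-product threshold, the surplus \(2^{\varepsilon kN}\) to absorb the random constant \(C(\omega)\), the Galton--Watson boundary-dimension theorem, alive-tree amplification over the \(A_m\to\infty\) alive vertices, and continuity of \(\Lambda\). The gap is exactly at the step you flag as the main obstacle. You need \(Y^{(u)}\) bounded below at infinitely many ancestors of every point of the constructed set, and none of your mechanisms achieves this while keeping a Galton--Watson structure.

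\begin{itemize}
\item The Chernoff/Borel--Cantelli statement only counts marked vertices at each generation. It does not produce a set of large Hausdorff dimension whose individual paths carry infinitely many marked ancestors.
\item The finite-depth event \(Y^{(uv)}_{N'}\ge b\) fails for two reasons. First, a finite-level value of the descendant martingale gives no lower bound on the terminal mass \(Y^{(uv)}\), which can still be arbitrarily small; yet \(Y^{(uv)}\) is what enters \(\mu_\circ(\mathcal I_{uv})=2^{-\lvert uv\rvert}Q_{uv}Y^{(uv)}\). Second, the event depends on the very weights below \(uv\) that generate the next block, so selection and offspring become dependent.
\item Showing that the limit set lies in \(\operatorname{spt}\mu_\circ\) addresses only the support condition, not the mass lower bound.
\item Thinning directly by \(Y^{(uv)}\ge b\) destroys independence, as you note. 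The reason is that \(Y^{(uv)}\) is a function of the whole subtree that also produces the descendants of \(uv\).
\end{itemize}

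The missing idea is the paper's sibling side witness. Take the block-children of \(v\) only inside the half-subtree \(v0\), and put the terminal-mass requirement on the sibling subtree \(v1\):
\(\mathcal C(v)=\{v0u:\ u\in\{0,1\}^{h-1},\ Q^{(v)}_{0u}\ge 2^{ah},\ W_{v1}Y^{(v1)}\ge 2c\}\).
Here \(a=1-\beta+\delta\) plays the role of your \(a+\varepsilon\), and \(p_c=\Pbb(W_uY^{(u)}\ge 2c)>0\).

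The one-step decomposition \(Y^{(v)}=\tfrac12W_{v0}Y^{(v0)}+\tfrac12W_{v1}Y^{(v1)}\) then gives \(Y^{(v)}\ge c\) at every vertex of every chain. The event \(\{z\in\mathcal C(v)\}\) uses only the path from \(v\) to \(z\) and the subtree at \(v1\), and these are disjoint from everything used later. So the process is a genuine Galton--Watson process, with mean \(\tfrac{p_c}{2}\E N_h(a)\) by symmetry and independence. The factor \(p_c/2\) is exactly the per-block cost you anticipated, and it is absorbed by taking \(h\) large.

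Your Chernoff route could alternatively be rescued, but only with substantially more than you sketched. You would weight generation-\(k\) vertices by their branching-measure masses, use reverse Fatou, and invoke the exact dimensionality of the branching measure.
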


\begin{proof}
Fix \(0<\delta<\beta-\Aloc(W)\), and set \(a=1-\beta+\delta<1-\Aloc(W)\). By the descendant construction in Subsection~\ref{subsec:circle-cascade-construction-paper}, \(Y^{(v)}=\mu_\circ^{(v)}(\mathbb S^1)\stackrel{d}{=}\mu_\circ(\mathbb S^1)\), and
\begin{equation*}
Y^{(v)}
=
\frac12 W_{v0}Y^{(v0)}
+
\frac12 W_{v1}Y^{(v1)},
\qquad
\mu_\circ(\mathcal I_v)=2^{-\lvert v \rvert}Q_vY^{(v)}
\end{equation*}
by the first-step descendant decomposition and \eqref{eq:circle-cylinder-mass-decomposition-paper}.

By Theorem~\ref{thm:circle-KP-nondegeneracy-paper}, \(\mathbb P(Y^{(u)}>0)>0\), and since \(W_u\) is independent of \(Y^{(u)}\) with \(\mathbb P(W_u>0)>0\), choose \(c>0\) such that
\begin{equation*}
p_c:=\mathbb P(W_uY^{(u)}\ge 2c)>0,
\end{equation*}
independently of \(u\).

Let \(0<\varepsilon<\Lambda(a)/2\). Choose \(h\ge1\) such that
\begin{equation*}
\frac1h\log_2\mathbb E N_h(a)>\Lambda(a)-\varepsilon,
\qquad
\frac1h\log_2\left(\frac{p_c}{2}\right)>-\varepsilon.
\end{equation*}
Then
\begin{equation*}
m_h:=\frac{p_c}{2}\mathbb E N_h(a)
>
2^{h(\Lambda(a)-2\varepsilon)}
>
1.
\end{equation*}

For \(v,u\in\mathcal T\), set
\begin{equation*}
Q_u^{(v)}=\prod_{j=1}^{\lvert u \rvert}W_{v(u|j)},
\qquad Q_{\varnothing}^{(v)}=1.
\end{equation*}
If \(Q_v>0\), then \(Q_u^{(v)}=Q_{vu}/Q_v\). Define
\begin{equation*}
\mathcal C(v)
=
\left\{
v0u:
u\in\{0,1\}^{h-1},\
W_{v1}Y^{(v1)}\ge 2c,\
Q_{0u}^{(v)}\ge 2^{ah}
\right\},
\qquad
\xi_v=\#\mathcal C(v).
\end{equation*}
The side-witness event \(\{W_uY^{(u)}\ge 2c\}\) is independent of the \(v0\)-subtree and has probability \(p_c\). By symmetry,
\begin{equation*}
\mathbb E\xi_v
=
p_c\,
\mathbb E\#\{u\in\{0,1\}^{h-1}:Q_{0u}^{(v)}\ge 2^{ah}\}
=
\frac{p_c}{2}\mathbb E N_h(a)
=
m_h>1.
\end{equation*}

Starting from \(v\), set
\begin{equation*}
Z_0^{(v)}=\{v\},
\qquad
Z_{k+1}^{(v)}=\bigcup_{w\in Z_k^{(v)}}\mathcal C(w).
\end{equation*}
Then \((\#Z_k^{(v)})_{k\ge0}\) is a Galton--Watson process with offspring law \(\xi_v\): different current particles use disjoint subtrees; if \(z\in\mathcal C(w)\), the event \(z\in\mathcal C(w)\) uses only the finite path from \(w\) to \(z\) and the side subtree \(w1\), and no weights strictly below \(z\); the laws are identical by the tree-indexed i.i.d. construction. Its mean is \(m_h>1\). Therefore
\begin{equation*}
\rho_h
:=
\mathbb P\left(
Z_k^{(\varnothing)}\neq\varnothing
\text{ for every }k\ge0
\right)
>0.
\end{equation*}

We next show that, on non-extinction, some alive vertex starts a surviving witnessed block process. Let
\begin{equation*}
\mathcal F_n=\sigma(W_w:1\le \lvert w \rvert\le n),
\qquad
H_v=\bigcap_{k=0}^{\infty}\{Z_k^{(v)}\neq\varnothing\},
\quad v\in\{0,1\}^{n}.
\end{equation*}
Conditional on \(\mathcal F_n\), the events \(H_v\) are independent and \(\mathbb P(H_v\mid\mathcal F_n)=\rho_h\). Put
\begin{equation*}
B_n
=
\bigcap_{v\in\{0,1\}^{n}}
\left(
\{Q_v=0\}\cup H_v^c
\right),
\qquad
G=\bigcap_{n=0}^{\infty}B_n.
\end{equation*}
Then
\begin{equation*}
\mathbb P(B_n\mid\mathcal F_n)=(1-\rho_h)^{A_n},
\qquad
\mathbb E(\mathbf 1_G\mid\mathcal F_n)
\le (1-\rho_h)^{A_n}.
\end{equation*}
By Lemma~\ref{lem:alive-tree-supercritical-support-witness}, \(A_n\to\infty\) almost surely on \(\{\mu_\circ(\mathbb S^1)>0\}\), while \(\mathbb E(\mathbf 1_G\mid\mathcal F_n)\to\mathbf 1_G\) almost surely by martingale convergence. Hence \(G\) has probability zero on the non-extinction event. Thus, almost surely on the non-extinction event, there is an alive vertex \(v_0\) such that \(Q_{v_0}>0\) and \(Z_k^{(v_0)}\neq\varnothing\) for every \(k\ge0\).

Define
\begin{equation*}
\partial\mathcal T_{v_0}^{\mathrm{wit}}
:=
\left\{
(w_k)_{k=0}^{\infty}:
w_0=v_0,\ 
w_{k+1}\in\mathcal C(w_k)\ \text{for all }k\ge0
\right\},
\end{equation*}
with \(h\)-block ultrametric
\begin{equation*}
d_{\mathrm{tree}}(\gamma,\gamma')
=
2^{-h\lvert\gamma\wedge\gamma'\rvert_{\mathrm{GW}}},
\end{equation*}
where \(\lvert\gamma\wedge\gamma'\rvert_{\mathrm{GW}}\) denotes the common initial block length. Let \(\Pi_{v_0}\) be the dyadic coding map
\begin{equation*}
\Pi_{v_0}(\gamma)
\in
\bigcap_{k=0}^{\infty}\overline{\mathcal I_{w_k}}.
\end{equation*}
If \(D\) is the countable set of dyadic endpoints on \(\mathbb S^1\), set
\begin{equation*}
K_{v_0}
:=
\Pi_{v_0}\bigl(\partial\mathcal T_{v_0}^{\mathrm{wit}}\bigr)\setminus D.
\end{equation*}
By the Galton--Watson boundary dimension theorem and the metric-change remark \cite[Theorem~0(ii) and Remark~6]{Liu1996}, on survival,
\begin{equation*}
\dimH K_{v_0}
=
\frac{\log_2 m_h}{h}
=
\frac1h\log_2\mathbb E N_h(a)
+
\frac1h\log_2\left(\frac{p_c}{2}\right)
>
\Lambda(a)-2\varepsilon.
\end{equation*}
Here deleting \(D\) does not change Hausdorff dimension, and the symbolic \(h\)-block metric and the Euclidean metric are dimensionally equivalent because each \(h\)-block cylinder projects to a dyadic arc of comparable length, while each arc at that scale intersects only \(O_h(1)\) such arcs.

It remains to show that \(K_{v_0}\subseteq E_{\beta,C}(\mu_\circ)\). Let \(x\in K_{v_0}\). Then \(x\notin D\) corresponds to a chain
\begin{equation*}
v_0\prec v_1\prec v_2\prec\cdots,
\qquad
v_{k+1}\in\mathcal C(v_k),
\qquad
\lvert v_k \rvert=\lvert v_0 \rvert+kh,
\end{equation*}
with \(\mathcal I_{\lvert v_k \rvert}(x)=\mathcal I_{v_k}\). The relative-product condition yields
\begin{equation*}
\frac{Q_{v_{k+1}}}{Q_{v_k}}\ge 2^{ah},
\qquad
Q_{v_k}\ge Q_{v_0}2^{akh},
\end{equation*}
and the side witness ensures
\begin{equation*}
Y^{(v_k)}
=
\frac12W_{v_k0}Y^{(v_k0)}
+
\frac12W_{v_k1}Y^{(v_k1)}
\ge c.
\end{equation*}
Therefore
\begin{equation*}
\mu_\circ(\mathcal I_{v_k})
=
2^{-\lvert v_k \rvert}Q_{v_k}Y^{(v_k)}
\ge
c\,2^{-\lvert v_k \rvert}Q_{v_0}2^{akh}.
\end{equation*}
Since \(\lvert v_k \rvert=\lvert v_0 \rvert+kh\) and \(a=1-\beta+\delta\),
\begin{equation*}
\frac{\mu_\circ(\mathcal I_{v_k})}{2^{-\beta\lvert v_k \rvert}}
\ge
cQ_{v_0}2^{(\beta-1)\lvert v_0 \rvert}2^{\delta kh}
\to+\infty.
\end{equation*}
As \(C(\omega)<\infty\) on non-extinction, for all large \(k\),
\begin{equation*}
\mu_\circ(\mathcal I_{\lvert v_k \rvert}(x))
=
\mu_\circ(\mathcal I_{v_k})
\ge
C(\omega)2^{-\beta\lvert v_k \rvert}.
\end{equation*}
The cylinders \(\mathcal I_{v_k}\) shrink to \(x\) and have positive \(\mu_\circ\)-mass for all large \(k\), so \(x\in\operatorname{spt}\mu_\circ\). Thus \(K_{v_0}\subseteq E_{\beta,C}(\mu_\circ)\).

Consequently,
\begin{equation*}
\dimH E_{\beta,C}(\mu_\circ)
\ge
\dimH K_{v_0}
>
\Lambda(1-\beta+\delta)-2\varepsilon.
\end{equation*}
Letting \(\varepsilon\downarrow0\), then \(\delta\downarrow0\), and using the continuity from Lemma~\ref{lem:biggins-chernoff-block-count-exponent}, we obtain
\begin{equation*}
\dimH E_{\beta,C}(\mu_\circ)
\ge
\Lambda(1-\beta)>0.
\end{equation*}
\end{proof}

\begin{theorem}
\label{thm:exact-dimension-support-mass-witnesses}
Assume the minimal Kahane--Peyri\`ere regime. Let \(\beta>\Aloc(W)\), let \(\Lambda\) be the block-count exponent in Lemma~\ref{lem:biggins-chernoff-block-count-exponent}, and let \(C\) be a nonnegative random variable with \(0<C<\infty\) almost surely on \(\mathcal S_\circ\). Then, almost surely on \(\mathcal S_\circ\),
\begin{equation*}
\dimH E_{\beta,C}(\mu_\circ)=\Lambda(1-\beta).
\end{equation*}
Equivalently,
\begin{equation*}
\dimH E_{\beta,C}(\mu_\circ)
=
\inf_{q\ge0}
\left\{
1+\log_2\mathbb E(W^q)-q(1-\beta)
\right\},
\end{equation*}
where \(W^0=\mathbf 1_{\{W>0\}}\), and the value inside the infimum is interpreted as \(+\infty\) whenever \(\mathbb E(W^q)=+\infty\).
\end{theorem}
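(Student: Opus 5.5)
The lower bound \(\dimH E_{\beta,C}(\mu_\circ)\ge\Lambda(1-\beta)\) is Theorem~\ref{thm:support-mass-witness}. So only the upper bound \(\dimH E_{\beta,C}(\mu_\circ)\le\Lambda(1-\beta)\) remains, and I would prove it by a first-moment covering argument. The equivalent infimum formula is then just part (1) of Lemma~\ref{lem:biggins-chernoff-block-count-exponent}.

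\emph{Step 1: reduction to cylinder counts.} Fix \(\varepsilon>0\) and work on \(\mathcal S_\circ\), where \(0<C<\infty\). Since \(C\) is random, I would intersect over rational levels: for rational \(c_0>0\),
\[
E_{\beta,C}\subseteq E_{\beta,c_0}\quad\text{on }\{C\ge c_0\},
\]
so it suffices to bound \(\dimH E_{\beta,c_0}\) for each deterministic \(c_0\). Any \(x\in E_{\beta,c_0}\) lies in infinitely many level-\(n\) arcs \(\mathcal I_v\) with \(\mu_\circ(\mathcal I_v)=2^{-n}Q_vY^{(v)}\ge c_0 2^{-\beta n}\). Hence, for every \(N\),
\[
E_{\beta,c_0}\subseteq\bigcup_{n\ge N}\ \bigcup_{v\in\mathcal V_n}\mathcal I_v,
\qquad
\mathcal V_n:=\{\lvert v\rvert=n:\ Q_vY^{(v)}\ge c_0 2^{(1-\beta)n}\}.
\]

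\emph{Step 2: counting.} I would split according to whether \(Y^{(v)}\ge 2^{\varepsilon n}\). If \(Y^{(v)}<2^{\varepsilon n}\), then \(Q_v\ge c_0 2^{(1-\beta-\varepsilon)n}\), and for large \(n\) this gives \(Q_v\ge 2^{(1-\beta-2\varepsilon)n}\). The expected number of such \(v\) is therefore at most
\[
\E N_n(1-\beta-2\varepsilon)=2^{n(\Lambda(1-\beta-2\varepsilon)+o(1))}.
\]
For the remaining \(v\), independence of \(Y^{(v)}\) from \(Q_v\) and \(\E Y=1\) give the expected count
\[
\sum_{\lvert v\rvert=n}\Pbb\bigl(Q_v\ge c_02^{(1-\beta)n}Y^{-1}\bigr)\,\Pbb(Y^{(v)}\ge 2^{\varepsilon n})
\]
(with the natural joint interpretation). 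This is the delicate term. I would handle it with a Chernoff bound: for each \(q\in(0,1]\),
\[
\E\sum_{v}\one\{Q_vY^{(v)}\ge c_02^{(1-\beta)n}\}\le c_0^{-q}2^{-q(1-\beta)n}\,2^n\E[W^q]^n\,\E[Y^q].
\]
Here \(\E[Y^q]\le1\). For \(q>1\) with \(\E[W^q]<\infty\) and \(2^{1-q}\E[W^q]<1\), the same bound holds with \(\E[Y^q]<\infty\) by the terminal-mass moment criterion. The case \(q=0\) is the trivial alive count. Taking the infimum over admissible \(q\) yields \(\E\#\mathcal V_n\le 2^{n(\Lambda(1-\beta)+\varepsilon)}\), provided the minimizing \(q\) in the formula for \(\Lambda(1-\beta)\) lies in \([0,1]\) or in the range where \(\E[Y^q]<\infty\).

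\emph{Main obstacle.} The obstacle is exactly this proviso. Since \(\beta>\Aloc(W)\), the point \(a=1-\beta<1-\Aloc(W)\) lies where the infimum may be attained at some \(q>1\) for which \(Y\) has no \(q\)-th moment. In that case I would fall back on the truncation split above, which uses only \(\E N_n\) and the tail \(\Pbb(Y\ge 2^{\varepsilon n})\), and then let \(\varepsilon\downarrow0\) using the continuity of \(\Lambda\) from Lemma~\ref{lem:biggins-chernoff-block-count-exponent}. For the high-\(Y\) part, I would bound the count by first-moment counting of pairs (ancestor at level \(n-j\) with large \(Q\), descendant mass large), absorbing the cost via the \(L\log L\) tail of \(Y\).

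\emph{Step 3: conclusion.} Markov's inequality gives \(\Pbb(\#\mathcal V_n>2^{n(\Lambda+2\varepsilon)})\le 2^{-\varepsilon n}\), and Borel--Cantelli then gives \(\#\mathcal V_n\le 2^{n(\Lambda(1-\beta)+2\varepsilon)}\) eventually. For \(s>\Lambda(1-\beta)+2\varepsilon\),
\[
\mathcal H^s_\infty(E_{\beta,c_0})\lesssim\sum_{n\ge N}2^{n(\Lambda+2\varepsilon)}2^{-sn}\to0 \quad (N\to\infty).
\]
Letting \(\varepsilon\downarrow0\) and taking a countable union over rational \(c_0\) gives the upper bound, which combined with Theorem~\ref{thm:support-mass-witness} proves the equality.
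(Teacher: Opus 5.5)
Your covering argument is the paper's argument: a first-moment bound on the limsup cover by arcs of large mass, using
\[
\E\sum_{|v|=n}\mu_\circ(\mathcal I_v)^q=\E[Y^q]\,2^{n(1-q)}\bigl(\E[W^q]\bigr)^n
\]
for exponents $q$ with $\E[Y^q]<\infty$. As written, however, it has a gap exactly at the proviso you flag, and your fallback does not close it.

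The missing observation is that the obstacle never occurs. Write $\Phi_\beta(q)=1+\log_2\E[W^q]-q(1-\beta)$. Suppose $q>1$ lies outside the range of Lemma~\ref{lem:terminal-mass-moment-criterion}, i.e.\ $\E[W^q]<\infty$ but $2^{1-q}\E[W^q]\ge1$. Then
\[
\Phi_\beta(q)=q\beta+\bigl(1-q+\log_2\E[W^q]\bigr)\ge q\beta>\beta=\Phi_\beta(1),
\]
because $\beta>\Aloc(W)\ge0$. So these $q$ never lower the infimum. Hence $\Lambda(1-\beta)=\inf_{q\ge0}\Phi_\beta(q)$ equals the infimum over $[0,1]\cup\{q>1:2^{1-q}\E[W^q]<1\}$. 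On this set $\E[Y^q]<\infty$: trivially for $q\le1$, and by Lemma~\ref{lem:terminal-mass-moment-criterion} for $q>1$.

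This is exactly how the paper removes the restriction $q\in\mathcal A$. With it, your Chernoff bound directly gives $\E\#\mathcal V_n\le2^{n(\Lambda(1-\beta)+\varepsilon)}$ for large $n$. Your Step~3 then completes the upper bound, together with the rational levels $c_0$ for the random $C$. That device is a valid alternative to the paper's $C2^{-\beta n}\ge2^{-(\beta+\eta)n}$ combined with continuity of $\Lambda$.

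The fallback as sketched cannot work in the case it was designed for. The low-$Y$ part is fine. For the high-$Y$ part, the tail information you invoke ($\E Y<\infty$, or an $L\log L$-type bound) gives only $\Pbb(Y\ge t)\le t^{-1+o(1)}$. Split according to $Y^{(v)}\approx 2^{tn}$ with $t\ge\varepsilon$, and use independence of $Q_v$ and $Y^{(v)}$. The high-$Y$ count then has exponent
\[
\sup_{t\ge\varepsilon}\bigl(\Lambda(1-\beta-t)-t\bigr)=\sup_{t\ge\varepsilon}\,\inf_{q\ge0}\bigl(\Phi_\beta(q)+(q-1)t\bigr).
\]
By Lagrange duality, for small $\varepsilon$ this equals $\inf_{0\le q\le1}\Phi_\beta(q)$. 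If the minimiser of $\Phi_\beta$ lay beyond $q=1$, that infimum would be $\Phi_\beta(1)=\beta$, not $\Lambda(1-\beta)$. So tail truncation only reproduces the exponents $q\le1$; it succeeds exactly when no fallback is needed. What closes the argument is the inequality above, which shows the unreachable exponents $q>1$ are never competitive.
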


\begin{proof}
The lower bound follows from Theorem~\ref{thm:support-mass-witness}. We prove the reverse inequality.

Fix \(\gamma>\Aloc(W)\), and set
\begin{equation*}
E_{\gamma}(\mu_\circ)
:=
\left\{
x\in\operatorname{spt}\mu_\circ:
\mu_\circ(\mathcal I_{n_k}(x))
\ge 2^{-\gamma n_k}
\text{ for some }n_k\uparrow\infty
\right\}.
\end{equation*}
For \(n\ge1\), let
\begin{equation*}
H_n(\gamma)
=
\left\{
v\in\{0,1\}^{n}:
\mu_\circ(\mathcal I_v)\ge 2^{-\gamma n}
\right\}.
\end{equation*}
Then
\begin{equation*}
E_\gamma(\mu_\circ)
\subseteq
\limsup_{n\to\infty}
\bigcup_{v\in H_n(\gamma)}\mathcal I_v.
\end{equation*}

Let
\begin{equation*}
\mathcal A
=
(0,1]
\cup
\left\{
q>1:
\mathbb E(W^q)<\infty,\
2^{1-q}\mathbb E(W^q)<1
\right\}.
\end{equation*}
For \(q\in\mathcal A\), the \(L^q\)-boundedness theorem for Mandelbrot cascade martingales implies that \(\mathbb E(Y^q)<\infty\), where \(Y=\mu_\circ(\mathbb S^1)\). Put
\begin{equation*}
\Phi_\gamma(q)=1+\log_2\mathbb E(W^q)-q(1-\gamma).
\end{equation*}
Since
\begin{equation*}
\mu_\circ(\mathcal I_v)=2^{-n}Q_vY^{(v)},
\end{equation*}
with \(Y^{(v)}\) an independent copy of \(Y\), independent of \(Q_v\),
\begin{equation*}
\mathbb E\sum_{\lvert v \rvert=n}\mu_\circ(\mathcal I_v)^q
=
\mathbb E(Y^q)\,
2^{n(1-q)}
\bigl(\mathbb E(W^q)\bigr)^n.
\end{equation*}
For \(s>0\),
\begin{equation*}
\mathbf 1_{\{\mu_\circ(\mathcal I_v)\ge 2^{-\gamma n}\}}
\le
2^{q\gamma n}\mu_\circ(\mathcal I_v)^q,
\qquad
\operatorname{diam}(\mathcal I_v)\le C_1 2^{-n},
\end{equation*}
and therefore
\begin{equation*}
\mathbb E
\sum_{v\in H_n(\gamma)}
\operatorname{diam}(\mathcal I_v)^s
\le
C_1^s\mathbb E(Y^q)\,
2^{-n(s-\Phi_\gamma(q))}.
\end{equation*}
If \(s>\Phi_\gamma(q)\), summing over \(n\) and applying Fubini yields
\begin{equation*}
\sum_{n=1}^{\infty}
\sum_{v\in H_n(\gamma)}
\operatorname{diam}(\mathcal I_v)^s
<\infty
\end{equation*}
almost surely. The limsup cover then implies
\begin{equation*}
\dimH E_\gamma(\mu_\circ)\le \Phi_\gamma(q),
\qquad q\in\mathcal A.
\end{equation*}

It remains to remove the restriction \(q\in\mathcal A\). If \(q>1\), \(\mathbb E(W^q)<\infty\), and \(2^{1-q}\mathbb E(W^q)\ge1\), then
\begin{equation*}
\Phi_\gamma(q)
=
q\gamma+1-q+\log_2\mathbb E(W^q)
\ge q\gamma>\gamma=\Phi_\gamma(1),
\end{equation*}
so such \(q\) cannot decrease the infimum; if \(\mathbb E(W^q)=+\infty\), the value is \(+\infty\). Since \(q=1\in\mathcal A\) and \(\mathbb E(W^q)\to\mathbb P(W>0)\) as \(q\downarrow0\), the endpoint \(q=0\) may also be included. Hence
\begin{equation*}
\inf_{q\in\mathcal A}\Phi_\gamma(q)
=
\inf_{q\ge0}
\left\{
1+\log_2\mathbb E(W^q)-q(1-\gamma)
\right\}.
\end{equation*}
By Lemma~\ref{lem:biggins-chernoff-block-count-exponent}, the last infimum is \(\Lambda(1-\gamma)\). Thus
\begin{equation*}
\dimH E_\gamma(\mu_\circ)\le \Lambda(1-\gamma)
\end{equation*}
almost surely.

Now let \(C\) be as in the statement. For each rational \(\eta>0\), on \(\{0<C(\omega)<\infty\}\),
\begin{equation*}
C(\omega)2^{-\beta n}\ge 2^{-(\beta+\eta)n}
\end{equation*}
eventually, and hence
\begin{equation*}
E_{\beta,C}(\mu_\circ)\subseteq E_{\beta+\eta}(\mu_\circ).
\end{equation*}
The deterministic upper bound with \(\gamma=\beta+\eta\) yields
\begin{equation*}
\dimH E_{\beta,C}(\mu_\circ)
\le
\Lambda(1-\beta-\eta)
\end{equation*}
almost surely. Taking a countable intersection over rational \(\eta>0\), letting \(\eta\downarrow0\), and using the continuity of \(\Lambda\) from Lemma~\ref{lem:biggins-chernoff-block-count-exponent}, we obtain
\begin{equation*}
\dimH E_{\beta,C}(\mu_\circ)\le\Lambda(1-\beta).
\end{equation*}
Together with the lower bound, this proves the equality. The variational formula follows from Lemma~\ref{lem:biggins-chernoff-block-count-exponent}.
\end{proof}

\begin{remark}
The closest result among the references cited here is \cite[Theorem~7]{Heurteaux2016}. It proves a stronger multifractal level-set statement, but under stronger regularity assumptions: in addition to nondegeneracy, it assumes \(\mathbb P(W=0)=0\)
and the existence of all real moments \(\mathbb E(W^q)<\infty
\) for every \(q\in\mathbb R\).
\end{remark}

\begin{theorem}
\label{thm:minimum-local-dimension}
Assume the minimal Kahane--Peyri\`ere regime. Then, almost surely on \(\mathcal S_\circ\),
\begin{equation}\label{eq:minimum-local-dimension-paper}
\alphamin(\mu_\circ)=\Aloc(W).
\end{equation}
\end{theorem}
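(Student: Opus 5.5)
The plan is to prove the two inequalities $\alphamin(\mu_\circ)\ge\Aloc(W)$ and $\alphamin(\mu_\circ)\le\Aloc(W)$ separately, almost surely on $\mathcal S_\circ$. Throughout we work on the full-probability event where \eqref{eq:circle-cylinder-mass-decomposition-paper} holds simultaneously for all $v$.

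For the lower bound, if $\Aloc(W)=0$ there is nothing to prove, since local dimensions are nonnegative (as $\mu_\circ(B(x,r))\le Y$ and $\log_2 r<0$). Otherwise fix a rational $0<\beta<\Aloc(W)$. Lemma~\ref{lem:circle-uniform-cylinder-upper-bound-paper} gives a finite random $C_\beta$ with $\mu_\circ(\mathcal I_v)\le C_\beta 2^{-\beta|v|}$ for all $v$. Lemma~\ref{lem:circle-cylinder-to-local-dimension-paper} then yields a lower local dimension at least $\beta$ at every $x\in\spt\mu_\circ$, so $\alphamin(\mu_\circ)\ge\beta$. Intersecting over countably many rational $\beta\uparrow\Aloc(W)$ gives $\alphamin(\mu_\circ)\ge\Aloc(W)$ almost surely.

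For the upper bound, assume $\Aloc(W)<1$. If $\Aloc(W)=1$, the bound $\alphamin\le 1$ follows on the support from the trivial comparison with arcs, or equivalently is immediate from what follows for every $\beta>1$. Fix a rational $\beta>\Aloc(W)$ and apply Theorem~\ref{thm:support-mass-witness} with $C\equiv1$. Almost surely on $\mathcal S_\circ$, the set $E_{\beta,1}(\mu_\circ)$ has positive Hausdorff dimension, and in particular is nonempty. Pick $x$ in it. Then $x\in\spt\mu_\circ$, and $\mu_\circ(\mathcal I_{n_k}(x))\ge 2^{-\beta n_k}$ along some $n_k\uparrow\infty$.

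The dyadic arc $\mathcal I_{n_k}(x)$ has Euclidean diameter at most $2\pi 2^{-n_k}$. Hence with $r_k=2\pi 2^{-n_k}$ we get $B(x,r_k)\supseteq\mathcal I_{n_k}(x)$, and so $\mu_\circ(B(x,r_k))\ge 2^{-\beta n_k}=(r_k/2\pi)^{\beta}$. Dividing $\log_2$ of this by $\log_2 r_k<0$ gives
\[
\frac{\log_2\mu_\circ(B(x,r_k))}{\log_2 r_k}\le \beta+o(1).
\]
Since $r_k\downarrow0$, the liminf at $x$ is at most $\beta$, and therefore $\alphamin(\mu_\circ)\le\beta$. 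Intersecting over rational $\beta\downarrow\Aloc(W)$ gives the upper bound.

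The main obstacle is only the nonemptiness of the witness set in the support. That is exactly what Theorem~\ref{thm:support-mass-witness} supplies via $\Lambda(1-\beta)>0$, so the remaining work is bookkeeping of null sets and of the arc-versus-ball comparison.
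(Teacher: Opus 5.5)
Your proposal is correct and follows the paper's proof essentially step for step. The lower bound comes from Lemma~\ref{lem:circle-uniform-cylinder-upper-bound-paper} and Lemma~\ref{lem:circle-cylinder-to-local-dimension-paper} along countably many $\beta\uparrow\Aloc(W)$. The upper bound comes from nonemptiness of $E_{\beta,1}(\mu_\circ)$ together with the arc-in-ball comparison; the paper quotes Theorem~\ref{thm:exact-dimension-support-mass-witnesses} for the nonemptiness, but its lower half is exactly the Theorem~\ref{thm:support-mass-witness} you use. Your separate treatment of $\Aloc(W)=1$ is unnecessary, since Theorem~\ref{thm:support-mass-witness} applies to every $\beta>\Aloc(W)$, but it does no harm.
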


\begin{proof}
We first prove the lower bound. If \(\Aloc(W)=0\), the claim is immediate, since, for every finite positive measure \(\nu\),
\begin{equation*}
\inf_{x\in\operatorname{spt}\nu}
\liminf_{r\downarrow0}
\frac{\log_2\nu(B(x,r))}{\log_2 r}
\ge0.
\end{equation*}
Assume \(\Aloc(W)>0\), and choose \(0<\beta_j<\Aloc(W)\) with \(\beta_j\uparrow\Aloc(W)\). For each \(j\), Lemma~\ref{lem:circle-uniform-cylinder-upper-bound-paper} and Lemma~\ref{lem:circle-cylinder-to-local-dimension-paper} imply, on a full-probability event, that
\begin{equation*}
\liminf_{r\downarrow0}
\frac{\log_2\mu_\circ(B(x,r))}{\log_2 r}
\ge \beta_j
\qquad
\text{for every }x\in\operatorname{spt}\mu_\circ.
\end{equation*}
Intersecting these countably many full-probability events yields \(\alphamin(\mu_\circ)\ge\beta_j\) for all \(j\), hence \(\alphamin(\mu_\circ)\ge \Aloc(W)\)
almost surely on \(\mathcal S_\circ\).

For the upper bound, choose \(\beta_j>\Aloc(W)\) with \(\beta_j\downarrow\Aloc(W)\). By Theorem~\ref{thm:exact-dimension-support-mass-witnesses},
\begin{equation*}
\dimH E_{\beta_j,1}(\mu_\circ)=\Lambda(1-\beta_j)>0
\end{equation*}
almost surely on \(\mathcal S_\circ\). Hence, after a countable intersection, for every \(j\) there exist \(x_j\in\spt\mu_\circ\) and \(n_{j,k}\uparrow\infty\) such that
\begin{equation*}
\mu_\circ(\mathcal I_{n_{j,k}}(x_j))
\ge
2^{-\beta_j n_{j,k}}.
\end{equation*}
For fixed \(j\), \(\mathcal I_{n_{j,k}}(x_j)\) is contained in a Euclidean ball \(B(x_j,C2^{-n_{j,k}})\) with deterministic \(C<\infty\). Set \(r_{j,k}=C2^{-n_{j,k}}\). Then
\begin{equation*}
\mu_\circ(B(x_j,r_{j,k}))
\ge
2^{-\beta_j n_{j,k}}
=
C^{-\beta_j}r_{j,k}^{\beta_j},
\end{equation*}
and hence
\begin{equation*}
\log_2\mu_\circ(B(x_j,r_{j,k}))
\ge
\beta_j\log_2 r_{j,k}-\beta_j\log_2 C.
\end{equation*}
Since \(\log_2 r_{j,k}<0\),
\begin{equation*}
\frac{\log_2\mu_\circ(B(x_j,r_{j,k}))}{\log_2 r_{j,k}}
\le
\beta_j-\frac{\beta_j\log_2 C}{\log_2 r_{j,k}}.
\end{equation*}
Letting \(k\to\infty\) yields
\begin{equation*}
\liminf_{r\downarrow0}
\frac{\log_2\mu_\circ(B(x_j,r))}{\log_2 r}
\le
\beta_j.
\end{equation*}
Since \(x_j\in\operatorname{spt}\mu_\circ\), \(\alphamin(\mu_\circ)\le\beta_j\). Letting \(j\to\infty\) yields \(\alphamin(\mu_\circ)\le \Aloc(W)\) almost surely on \(\mathcal S_\circ\). Combining the two bounds proves \eqref{eq:minimum-local-dimension-paper}.
\end{proof}

\subsection{The curved-support upper bound}
\label{subsec:curved-support-upper-bound-paper}

We prove the deterministic upper bound \(\dimF(\nu)\le\alphamin(\nu)\) for finite measures supported on the circle.

\begin{lemma}
\label{lem:radial-L2-local-mass-paper}
Let \(\nu\) be a finite Borel measure on \(\mathbb S^1\). There exist constants \(c,C>0\) such that for every \(x_0\in\mathbb S^1\) and all sufficiently small \(r>0\), if \(\Lambda=r^{-2}\), then
\begin{equation}\label{eq:radial-L2-local-mass-paper}
\int_{\R}\lvert\wh\nu(Rx_0)\rvert^2e^{-\pi(R/\Lambda)^2}\,dR
\ge
c\Lambda\,\nu(B(x_0,cr))^2.
\end{equation}
\end{lemma}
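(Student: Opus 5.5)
The plan is to compute the Gaussian-weighted radial integral exactly, as a double integral against \(\nu\otimes\nu\). Then we show that pairs of points near \(x_0\) contribute positively and dominate, while the rest is a nonnegative remainder.

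\textbf{Expanding the square.} We have
\[
\lvert\wh\nu(Rx_0)\rvert^2=\iint e^{-2\pi i R\,x_0\cdot(x-y)}\,d\nu(x)\,d\nu(y).
\]
By Fubini (\(\nu\) is finite and the Gaussian is integrable),
\[
\int_{\R}\lvert\wh\nu(Rx_0)\rvert^2e^{-\pi(R/\Lambda)^2}\,dR
=\Lambda\iint e^{-\pi\Lambda^2(x_0\cdot(x-y))^2}\,d\nu(x)\,d\nu(y),
\]
using \(\int_\R e^{-2\pi iRt}e^{-\pi R^2/\Lambda^2}dR=\Lambda e^{-\pi\Lambda^2t^2}\).

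\textbf{Positivity and restriction.} The integrand \(e^{-\pi\Lambda^2(x_0\cdot(x-y))^2}\) is positive. We may therefore discard all pairs except \(x,y\in B(x_0,cr)\cap\mathbb S^1\), which gives
\[
\int_{\R}\lvert\wh\nu(Rx_0)\rvert^2e^{-\pi(R/\Lambda)^2}\,dR\;\ge\;\Lambda\,\nu(B(x_0,cr))^2\inf_{x,y\in B(x_0,cr)\cap\mathbb S^1}e^{-\pi\Lambda^2(x_0\cdot(x-y))^2}.
\]

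\textbf{Geometric estimate (the key point).} This is where curvature enters. For \(x\in\mathbb S^1\) with \(\lvert x-x_0\rvert\le cr\),
\[
x_0\cdot x=1-\tfrac12\lvert x-x_0\rvert^2\in[1-c^2r^2/2,\,1].
\]
Hence \(\lvert x_0\cdot(x-y)\rvert\le c^2r^2/2\) for such \(x,y\). With \(\Lambda=r^{-2}\), this gives
\[
\Lambda^2(x_0\cdot(x-y))^2\le c^4/4,
\]
so the infimum above is at least \(e^{-\pi c^4/4}\). Taking, say, \(c=1\), we obtain the claim with constant \(e^{-\pi/4}\ge c\) after shrinking \(c\) (which only decreases the right side). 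In fact the constants are absolute and the estimate holds for all \(r>0\), not only for small \(r\).

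There is no real obstacle. The only point needing care is that the quadratic identity \(1-x_0\cdot x=\frac12\lvert x-x_0\rvert^2\) on the unit circle is what makes the normal displacement of order \(r^2\), which matches the frequency scale \(\Lambda=r^{-2}\).
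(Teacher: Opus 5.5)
Your proof is correct and follows the paper's argument. You expand \(\lvert\wh\nu(Rx_0)\rvert^2\), use Fubini with the Gaussian Fourier transform to get \(\Lambda\iint e^{-\pi\Lambda^2(x_0\cdot(x-y))^2}\,d\nu\,d\nu\), and then use positivity to restrict to pairs near \(x_0\), where curvature makes the normal displacement \(O(r^2)\). The only difference is that you use the exact identity \(1-x_0\cdot x=\tfrac12\lvert x-x_0\rvert^2\) directly on Euclidean balls. The paper instead pulls back to the parameter interval and uses a Taylor bound plus chord--arclength comparability. Your version gives explicit absolute constants and removes the smallness restriction on \(r\).
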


\begin{proof}
Let \(x_0=f(t_0)\), and pull \(\nu\) back to \([0,1)\); we denote the resulting measure again by \(\nu\). Set
\begin{equation*}
a(t)=f(t)\cdot x_0=\cos 2\pi(t-t_0).
\end{equation*}
For \(t,u\) sufficiently close to \(t_0\),
\begin{equation*}
\lvert a(t)-a(t_0)\rvert\le C\lvert t-t_0\rvert^2,
\qquad
\lvert a(u)-a(t_0)\rvert\le C\lvert u-t_0\rvert^2.
\end{equation*}
Let \(\psi(R)=e^{-\pi R^2}\). Its Fourier transform is again \(e^{-\pi\rho^2}\), which is positive and bounded below in a neighborhood of the origin. Expanding the square and applying Fubini,
\begin{equation*}
\begin{aligned}
\int_{\R}\lvert\wh\nu(Rx_0)\rvert^2\psi(R/\Lambda)\,dR
&=
\int_{\R}\iint e^{-2\pi iR(a(t)-a(u))}\,d\nu(t)d\nu(u)\,\psi(R/\Lambda)\,dR\\
&=
\Lambda\iint e^{-\pi\Lambda^2(a(t)-a(u))^2}\,d\nu(t)d\nu(u).
\end{aligned}
\end{equation*}
If \(t,u\) both lie in a sufficiently small arc of length \(cr\) around \(t_0\), then
\begin{equation*}
\Lambda \lvert a(t)-a(u)\rvert\le C r^{-2}r^2\le1
\end{equation*}
after choosing \(c\) sufficiently small. On this set the Gaussian factor is bounded below by a positive absolute constant. Restricting the double integral to the square of this arc, and using the comparability of chord distance and arclength at small scales, proves \eqref{eq:radial-L2-local-mass-paper}.
\end{proof}

\begin{proposition}
\label{prop:curved-support-upper-bound}
Let \(\nu\) be a nonzero finite Borel measure supported on \(\mathbb S^1\). Then
\begin{equation}\label{eq:curved-support-upper-bound-paper}
\dimF(\nu)\le\alphamin(\nu).
\end{equation}
\end{proposition}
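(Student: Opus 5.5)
We argue by contradiction, using Lemma~\ref{lem:radial-L2-local-mass-paper} as the only analytic input. Suppose \(\dimF(\nu)>\alphamin(\nu)\). Then we can choose \(s\) with \(\alphamin(\nu)<s<\dimF(\nu)\). By the definition of Fourier dimension there are constants \(C_0,R_0\) with \(\lvert\wh\nu(\xi)\rvert\le C_0\lvert\xi\rvert^{-s/2}\) for \(\lvert\xi\rvert\ge R_0\). By the definition of \(\alphamin\), pick a point \(x_0\in\spt\nu\) and an exponent \(\alpha\) with \(\alphamin(\nu)\le\liminf_{r\downarrow0}\log_2\nu(B(x_0,r))/\log_2 r<\alpha<s\). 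This gives a sequence \(r_k\downarrow0\) with \(\nu(B(x_0,r_k))\ge r_k^{\alpha}\). We apply the lemma at the radii \(r=r_k/c\), so that the ball appearing on its right-hand side is \(B(x_0,r_k)\). Then \(\Lambda_k=c^2r_k^{-2}\), and the lemma gives the lower bound
\[
\int_{\R}\lvert\wh\nu(Rx_0)\rvert^2e^{-\pi(R/\Lambda_k)^2}\,dR\ \ge\ c'\,r_k^{-2}\,r_k^{2\alpha}.
\]

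Next we bound the same integral from above using the decay hypothesis. We split the integral into the regions \(\lvert R\rvert\le R_0\) and \(\lvert R\rvert>R_0\).
\begin{itemize}
\item On \(\lvert R\rvert\le R_0\) we use \(\lvert\wh\nu\rvert\le\nu(\mathbb S^1)\), so this part contributes \(O(1)\).
\item On \(\lvert R\rvert>R_0\) we have \(\lvert\wh\nu(Rx_0)\rvert^2\le C_0^2\lvert R\rvert^{-s}\). The Gaussian weight effectively cuts the integral off at \(\lvert R\rvert\lesssim\Lambda_k\).
\end{itemize}
When \(s<1\), the second region contributes \(O(\Lambda_k^{1-s})=O(r_k^{-2+2s})\). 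When \(s\ge1\), it contributes at most \(O(\log\Lambda_k)\), and using that bound only makes the contradiction stronger.

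Comparing the two bounds in the case \(s<1\) gives \(r_k^{-2+2\alpha}\lesssim r_k^{-2+2s}+1\). Since \(\alpha<s\), we have \(r_k^{-2+2\alpha}/r_k^{-2+2s}=r_k^{2(\alpha-s)}\to\infty\). For the constant term, note \(\alphamin(\nu)\le1\) always: the local dimension cannot exceed \(1\) at \(\nu\)-almost every point, since on the circle \(\nu(B(x,r))\) cannot decay faster than \(r\) for \(\nu\)-typical \(x\). So we may take \(\alpha<1\), and then \(r_k^{-2+2\alpha}\to\infty\) also beats the \(O(1)\) term. This is a contradiction. If \(s\ge1\), we simply replace \(s\) by some \(s'\in(\alpha,1)\), which still satisfies the decay hypothesis, and run the same argument. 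Hence \(\dimF(\nu)\le\alphamin(\nu)\).

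The main subtlety is matching the scales correctly: the lemma is stated for \(B(x_0,cr)\) with \(\Lambda=r^{-2}\), and we have to check that the constant \(c\) and the condition "all sufficiently small \(r\)" are uniform at the fixed point \(x_0\). They are, because \(x_0\) is fixed and \(r_k\to0\). The rest is the elementary Gaussian-weighted integral of \(\lvert R\rvert^{-s}\).
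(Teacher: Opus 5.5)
Your argument is the paper's argument in contrapositive form: the same Gaussian-weighted radial $L^2$ quantity, the same lower bound from Lemma~\ref{lem:radial-L2-local-mass-paper}, and the same $\Lambda^{1-s}$ upper bound. It is correct whenever $\alphamin(\nu)<1$. There is, however, a gap in the boundary case $\alphamin(\nu)=1$.

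\textbf{The failing step.} The step ``$\alphamin(\nu)\le 1$, so we may take $\alpha<1$'' does not follow. To get $\alpha<1$ you need a point $x_0\in\spt\nu$ whose lower local dimension is strictly below $1$, that is, $\alphamin(\nu)<1$. For normalized arclength $\sigma$, every point has local dimension exactly $1$, so $\alphamin(\sigma)=1$ and no such $x_0$ exists.

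In that case, suppose $\dimF(\nu)>1$. Then you are forced to take $s>1$ and $\alpha\ge 1$. The Gaussian-weighted integral of $\lvert R\rvert^{-s}$ is then $O(1)$. The lower bound $c\Lambda_k\,\nu(B(x_0,r_k))^2\gtrsim r_k^{2\alpha-2}$ is bounded as well, so no contradiction arises. Your fallback of replacing $s$ by $s'\in(\alpha,1)$ is impossible once $\alpha\ge1$.

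This is not just a bookkeeping slip. The radial lemma can at best give $\nu(B(x_0,r))\lesssim r$, which arclength satisfies. So this tool alone cannot exclude $\dimF(\nu)>1$.

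\textbf{The missing ingredient.} You need the ambient bound $\dimF(\nu)\le\dimH(\spt\nu)\le 1$, which the paper invokes separately at the end of its proof. One way to see it is through the energy identity $I_t(\nu)=c_t\int_{\R^2}\lvert\wh\nu(\xi)\rvert^2\lvert\xi\rvert^{t-2}\,d\xi$, valid for $0<t<2$. If $\lvert\wh\nu(\xi)\rvert\lesssim\lvert\xi\rvert^{-t'/2}$ with $t'>1$, then $I_t(\nu)<\infty$ for some $t\in(1,t')$. This is impossible for a nonzero measure carried by $\mathbb S^1$, which has Hausdorff dimension $1$.

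With this fact added, the supposition $\dimF(\nu)>\alphamin(\nu)$ forces $\alphamin(\nu)<1$, and every $s$ you choose satisfies $s<\dimF(\nu)\le 1$. The case $s\ge1$ then never arises, and your contradiction argument goes through. Your separate appeal to $\alphamin(\nu)\le1$ also becomes unnecessary; it is true, via the mass distribution principle, but it is not what the argument needs.
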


\begin{proof}
Let \(t\) be an admissible Fourier decay exponent for \(\nu\), and assume \(0<t<1\). Thus
\begin{equation*}
\lvert\wh\nu(\xi)\rvert\le C_t(1+\lvert\xi\rvert)^{-t/2}
\qquad(\xi\in\R^2).
\end{equation*}
We prove that \(\alphamin(\nu)\ge t\). Fix \(0<s<t\) and \(x_0\in\operatorname{spt}\nu\). For small \(r>0\), set \(\Lambda=r^{-2}\). Since \(x_0\in\mathbb S^1\),
\begin{equation*}
\begin{aligned}
\int_{\R}\lvert\wh\nu(Rx_0)\rvert^2e^{-\pi(R/\Lambda)^2}\,dR
&\le
C_t^2\int_{\R}(1+\lvert R\rvert)^{-t}e^{-\pi(R/\Lambda)^2}\,dR\\
&\le
C_s\Lambda^{1-s}.
\end{aligned}
\end{equation*}
On the other hand, Lemma~\ref{lem:radial-L2-local-mass-paper} implies
\begin{equation*}
\int_{\R}\lvert\wh\nu(Rx_0)\rvert^2e^{-\pi(R/\Lambda)^2}\,dR
\ge
c\Lambda\,\nu(B(x_0,cr))^2.
\end{equation*}
Combining the two estimates,
\begin{equation*}
\nu(B(x_0,cr))^2\le C_s\Lambda^{-s}=C_sr^{2s},
\end{equation*}
and hence \(\nu(B(x_0,cr))\le C_sr^s\). After changing constants, \(\nu(B(x_0,\rho))\le C_s\rho^s\) for all sufficiently small \(\rho>0\). Since \(\log_2\rho<0\),
\begin{equation*}
\liminf_{\rho\downarrow0}
\frac{\log_2\nu(B(x_0,\rho))}{\log_2\rho}
\ge s.
\end{equation*}
As \(x_0\in\operatorname{spt}\nu\) was arbitrary, \(\alphamin(\nu)\ge s\). Letting \(s\uparrow t\) yields \(\alphamin(\nu)\ge t\). Thus every admissible Fourier decay exponent \(t<1\) is bounded above by \(\alphamin(\nu)\).

It remains to exclude admissible exponents larger than \(1\). Since \(\nu\) is supported on \(\mathbb S^1\), one has \(\dimF(\nu)\le1\). Taking the supremum over admissible Fourier decay exponents and using the preceding conclusion for all \(t<1\), we obtain the desired claim.
\end{proof}

\begin{corollary}
\label{cor:endpoint-upper-bound-paper}
Assume the minimal Kahane--Peyri\`ere regime. Then, almost surely on \(\mathcal S_\circ\),
\begin{equation}\label{eq:endpoint-upper-bound-paper}
\dimF(\mu_\circ)\le \Aloc(W).
\end{equation}
\end{corollary}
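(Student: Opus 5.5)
The plan is to combine the two results just proved. Work on the non-extinction event \(\mathcal S_\circ\). By Proposition~\ref{prop:circle-cascade-construction-paper}, \(\mathcal S_\circ\) agrees with \(\{\mu_\circ(\mathbb S^1)>0\}\) up to a null event. On this event \(\mu_\circ\) is a nonzero finite Borel measure. It is supported on \(\mathbb S^1\), because each approximant \(\mu_{\circ,n}\) is supported on the closed set \(\mathbb S^1\) and weak limits preserve this.

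Hence the deterministic curved-support bound, Proposition~\ref{prop:curved-support-upper-bound}, applies pathwise and gives
\[
\dimF(\mu_\circ)\le\alphamin(\mu_\circ)
\qquad\text{on }\mathcal S_\circ .
\]
Theorem~\ref{thm:minimum-local-dimension} supplies a full-probability subset of \(\mathcal S_\circ\) on which \(\alphamin(\mu_\circ)=\Aloc(W)\). Intersecting these events gives \(\dimF(\mu_\circ)\le\Aloc(W)\) almost surely on \(\mathcal S_\circ\).

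The only point needing care is measure-theoretic bookkeeping. The Fourier bound is deterministic and holds for every \(\omega\) with \(\mu_\circ\neq0\). The local-dimension identity holds only off a null set. So the final exceptional set is the union of that null set and the null symmetric difference between \(\mathcal S_\circ\) and \(\{\mu_\circ\neq0\}\).

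No step is a genuine obstacle: the analytic input (Lemma~\ref{lem:radial-L2-local-mass-paper}) and the probabilistic input (the witness-tree construction behind Theorem~\ref{thm:minimum-local-dimension}) were established earlier. The one thing to confirm is that \(\alphamin\) is taken over \(\spt\mu_\circ\) in both results, so that they chain directly. This holds, since both use the same definition of \(\alphamin\).
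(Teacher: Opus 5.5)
Your proposal is correct and follows the paper's proof exactly: on \(\mathcal S_\circ\) the measure \(\mu_\circ\) is a nonzero finite Borel measure on \(\mathbb S^1\), so Proposition~\ref{prop:curved-support-upper-bound} gives \(\dimF(\mu_\circ)\le\alphamin(\mu_\circ)\), and Theorem~\ref{thm:minimum-local-dimension} identifies \(\alphamin(\mu_\circ)=\Aloc(W)\) almost surely on \(\mathcal S_\circ\). Your extra bookkeeping about the support and the null exceptional sets is sound. It simply makes explicit what the paper leaves implicit.
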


\begin{proof}
On \(\mathcal S_\circ\), the measure \(\mu_\circ\) is a nonzero finite Borel measure on \(\mathbb S^1\). Proposition~\ref{prop:curved-support-upper-bound} and Theorem~\ref{thm:minimum-local-dimension} imply the desired claim.
\end{proof}

\section{A finite-\texorpdfstring{\(r\)}{r} annular Fourier decay theorem}
\label{sec:finite-r-annular-theorem}

This section proves the annular Fourier estimate that is the main analytic input for the circle endpoint theorem. The estimate is formulated for an auxiliary scalar weight \(U\), which need not coincide with the endpoint weight \(W\), and requires only the existence of a moment \(r>1\) giving a strict gap beyond the target exponent \(s\).

Let \(0<s<1\), \(r>1\), \(\delta>0\), and let 
\(U\ge0\) 
satisfy
\[
\E U=1,\qquad
\E[U^r]<\infty,\qquad
2^{1-r}\E[U^r]\le 2^{-r(s+\delta)}.
\]
Attach independent copies \(U_v\) of \(U\) to all nonempty binary words \(v\). For \(v\in\{0,1\}^n\), set
\begin{equation*}
Q_v=\prod_{j=1}^{n}U_{v|j},
\qquad
Q_{\varnothing}=1,
\end{equation*}
where \(v|j\) denotes the truncation of \(v\) to its first \(j\) symbols. Let \(\widetilde\nu_\ell\) be the level-\(\ell\) dyadic cascade measure on \([0,1)\), defined by
\[
d\widetilde\nu_\ell(t)
=
\sum_{\lvert v \rvert=\ell}Q_v\one_{J_v}(t)\,dt,
\qquad
\widetilde\nu_\ell(J_v)=2^{-\ell}Q_v\quad(\lvert v \rvert=\ell).
\]
Let \(\widetilde\nu\) be the almost sure weak limit of \(
(\widetilde\nu_\ell)_{\ell\ge0}\),
and define
\[
\nu_\circ=f_\#\widetilde\nu,
\qquad
f(t)=(\cos 2\pi t,\sin 2\pi t).
\]
Then, for \(\xi\in\R^2\),
\[
\wh{\nu_\circ}(\xi)
=
\int_0^1 e^{-2\pi i \xi\cdot f(t)}\,d\widetilde\nu(t).
\]

\subsection{Statement and parameter choices}
\label{subsec:finite-r-statement-parameters-paper}

\begin{theorem}
\label{thm:finite-r-annular}
Let \(0<s<1\), \(r>1\), and \(\delta>0\). Let \(U\ge0\) be a random variable with \(\E U=1\) and
\[
2^{1-r}\E[U^r]\le2^{-r(s+\delta)}.
\]
Let \(\nu_\circ=f_\#\widetilde\nu\) be the circle cascade generated by \(U\).  Then there exist constants \(C<\infty\), \(c>0\), and \(\eta>0\), depending only on \(s,r,\delta\) and on the law of \(U\), such that, for every \(n\ge1\),
\begin{equation}\label{eq:finite-r-annular-theorem-paper}
\Pbb\left(
\sup_{2^n\le\lvert\xi\rvert\le2^{n+1}}
\lvert\wh{\nu_\circ}(\xi)\rvert
>
C2^{-sn/2}
\right)
\le
C\exp(-c2^{\eta n})+C2^{-cn}.
\end{equation}
In particular,
\[
\lvert\wh{\nu_\circ}(\xi)\rvert=O(\lvert\xi\rvert^{-s/2})
\qquad (\lvert\xi\rvert\to\infty)
\]
almost surely.
\end{theorem}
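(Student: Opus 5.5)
The proof fixes a frequency $\xi=\lambda e(\theta)$ with $2^n\le\lambda\le2^{n+1}$ and studies the phase $\phi_\xi(t)=\lambda\cos 2\pi(t-\theta)$ on $[0,1)$.

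\textbf{Step 1: local-mass good event.} We first set up this event. Since $2^{1-r}\E[U^r]\le 2^{-r(s+\delta)}$, the argument of Lemma~\ref{lem:terminal-mass-moment-criterion} applied to $X_i=U_i/2$ gives $\E[Y^r]<\infty$. The computation of Lemma~\ref{lem:circle-uniform-cylinder-upper-bound-paper} gives
\[
\Pbb\Bigl(\max_{|v|=\ell}\widetilde\nu(J_v)>2^{-(s+\delta/2)\ell}\Bigr)\le C2^{-c\ell}.
\]
We intersect these events over $\ell\ge \epsilon n$. This costs probability $C2^{-cn}$, which is the polynomial term in \eqref{eq:finite-r-annular-theorem-paper}.

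\textbf{Step 2: stationary tube and derivative bands.} Next we split $[0,1)$ into two pieces.
\begin{itemize}
\item The stationary tubes $|t-\theta|,|t-\theta-\tfrac12|\lesssim\lambda^{-1/2}$ have $\widetilde\nu$-mass at most $\lambda^{-(s+\delta/2)/2}\le 2^{-sn/2}2^{-\delta n/4}$ on the good event. They are harmless, and this is where the curved scaling $\lambda^{-1/2}$ produces the factor $s/2$.
\item The rest is cut into bands $B_j$ on which $|\phi'_\xi|\sim 2^j$, with $\lambda^{1/2}\lesssim 2^j\lesssim\lambda$. Band $B_j$ has length $\sim 2^j/\lambda$. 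Bands with $2^j\le\lambda^{1/2+\epsilon'}$ are handled again by local mass alone, at the cost of a small loss absorbed by $\delta$.
\end{itemize}

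\textbf{Step 3: dyadic martingale decomposition on a band.} On a remaining band we use the natural phase-bin scale $\ell_j\approx j$, at which the phase varies by $O(1)$ across a bin. We write
\[
\int_{B_j}e^{-2\pi i\phi_\xi}\,d\widetilde\nu=\E\Bigl[\int_{B_j}e^{-2\pi i\phi_\xi}\,d\widetilde\nu\Bigm|\mathcal F_{m_0}\Bigr]+\sum_{\ell\ge m_0}D_\ell ,
\]
where $D_\ell$ are the martingale differences obtained by exposing generation $\ell+1$.

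Each difference has the form
\[
D_\ell=\sum_{|v|=\ell}2^{-\ell}Q_v\sum_{i}(U_{vi}Y^{(vi)}-\E)\,c_{vi},
\]
with deterministic coefficients $c_{vi}$ given by the Fourier integral of $e^{-2\pi i\phi_\xi}$ against the descendant shape. Two regimes appear:
\begin{itemize}
\item \emph{Pre-bin levels} $\ell<\ell_j$. Here $c_{vi}$ gains from oscillation, $|c_{vi}|\lesssim (2^j2^{-\ell})^{-1}$. Integration by parts, using the Lebesgue centering $\E\widetilde\nu=\mathcal L$ as in Lemma~\ref{lem:star-equation-fourier-paper}, makes the coarse term small.
\item \emph{Post-bin levels} $\ell\ge\ell_j$. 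Here we only use $|c|\le1$, and the conditional variance is bounded by $\sum_v\widetilde\nu(J_v)^2\le\max_v\widetilde\nu(J_v)\cdot\widetilde\nu(B_j)$, which is controlled by Step 1.
\end{itemize}

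\textbf{Step 4: concentration with predictable capping.} Since $U$ is unbounded, we replace each increment $\xi_{vi}=2^{-\ell}Q_v(U_{vi}Y^{(vi)}-1)$ by its truncation at a predictable level $2^{-(s+\delta/4)\ell}$ and recentre it. We then apply Freedman's inequality to the capped martingale, with the predictable quadratic variation bounded on the good event. This gives a tail of size $\exp(-c2^{\eta n})$ at threshold $2^{-sn/2}2^{-\delta' n}$.

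The discarded tails are the $r$-tail compensator. Its expectation is bounded using $\E[(UY)^r]<\infty$ and $\sum\E[\text{mass}^r]\le C2^{-\ell r(s+\delta)}$ via Markov, which gives the $C2^{-cn}$ term uniformly in $\xi$. Because the compensator is an $\ell^1$ sum independent of the phase, one bound serves the whole annulus.

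\textbf{Step 5: union over the annulus.} We take a grid $\Gamma\subset\{2^n\le|\xi|\le2^{n+1}\}$ of mesh $2^{-2n}$, with $\#\Gamma\le 2^{6n}$. The stretched-exponential bound survives the union bound. The trivial Lipschitz estimate $|\nabla\widehat{\nu_\circ}|\le 2\pi Y$ passes from $\Gamma$ to the whole annulus, with $Y$ controlled by Markov. Summing the bands costs only a factor $O(n)$, which is absorbed by $2^{-\delta' n}$.

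\textbf{Almost sure decay.} The right-hand side of \eqref{eq:finite-r-annular-theorem-paper} is summable in $n$, so Borel--Cantelli gives $|\widehat{\nu_\circ}(\xi)|=O(|\xi|^{-s/2})$ almost surely.

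\textbf{Main obstacle.} I expect the hardest part to be Steps 3–4 together: bounding the phase-bin coefficients $c_{vi}$ uniformly in $\xi$, and making the capping level compatible both with the Freedman variance bound and with the compensator summability. The plan is to fix the cap exponent between $s$ and $s+\delta$ and let $\delta$ absorb every loss.
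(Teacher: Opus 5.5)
Your analytic skeleton matches the paper's: a stationary tube, derivative bands with a mass-only range, a phase-bin scale with pre-/post-bin coefficient bounds, predictable capping plus Freedman, a $\xi$-independent $\ell^1$ compensator envelope, and a grid plus Lipschitz passage. The gap is in the probabilistic localization that is supposed to connect Step 1 to Steps 3--4.

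First, the increment you write, $2^{-\ell}Q_v(U_{vi}Y^{(vi)}-1)c_{vi}$, is not what you get by exposing generation $\ell+1$. The factor $Y^{(vi)}$ depends on the whole subtree below $vi$, so these terms are not $\mathcal F_{\ell+1}$-measurable and do not form a martingale difference sequence to which Freedman applies. The actual Doob martingale is $\E[\int G\,d\widetilde\nu\mid\mathcal F_\ell]=\int G\,d\widetilde\nu_\ell$, where $G=e^{i\phi_\xi}\chi_{\xi,d}$. Its increment is $\sum_{J\subset I}c_J\,\widetilde\nu_\ell(I)\,(U_J-1)$ with $c_J=2^\ell\int_J G\,dt$. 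This involves only the fresh weight $U_J$ and the \emph{prelimit} mass $\widetilde\nu_\ell(I)$.

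Second, once the increment is corrected, every quantity in Step 4 is governed by prelimit masses $\widetilde\nu_\ell(J)$ on earlier intervals $J$, at all later levels $\ell\ge|J|$. This includes whether the cap is inactive, the jump bound, and the predictable quadratic variation $\sum|c_J|^2\,\widetilde\nu_\ell(I)\cdot(\text{cap})$.

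Your Step 1 controls only the limiting masses $\widetilde\nu(J_v)=\widetilde\nu_\ell(J_v)\cdot(\text{descendant terminal mass})$. That says nothing about $\widetilde\nu_\ell(J_v)$ when the descendant mass is small or zero, which happens with positive probability. The event is also not adapted, so it cannot supply the almost-sure jump and variance bounds Freedman needs. Controlling the quadratic variation only in expectation gives a polynomially small probability per frequency, and that does not survive the union over the $2^{6n}$ grid points.

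The missing ingredient is the paper's event $\mathcal G_n^{\mathrm{pre}}$: $\widetilde\nu_\ell(J)\le 2^{\varepsilon n}2^{-a|J|}$ for all $J$ and all $\ell\ge|J|$, with $s<a<s+\delta$. Its complement has probability $O(2^{-r\varepsilon n})$. This follows from Doob's $L^r$ maximal inequality for the nonnegative martingale $(\widetilde\nu_\ell(J))_{\ell\ge|J|}$, together with the uniform budget $\E\sum_{|J|=k}\widetilde\nu_\ell(J)^r\le C2^{-r(s+\delta)k}$.

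One then does two things:
\begin{itemize}
\item Stop the capped arrays at the first violation time $\tau_n$, so that Freedman's hypotheses hold almost surely for the stopped martingale.
\item Cap the child mass $\tfrac12\widetilde\nu_\ell(I)U_J$ at $2^{\kappa n}\cdot 2^{\varepsilon n}2^{-a(\ell+1)}$. This cap is inactive on $\{\tau_n=\infty\}=\mathcal G_n^{\mathrm{pre}}$, which is what makes raw increment $=$ centered capped increment $+$ predictable compensator there.
\end{itemize}
The $2^{\varepsilon n}$ slack also covers the levels $|J|<\epsilon n$, which your Step 1 leaves uncontrolled.
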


The remainder of this section is devoted to the proof of Theorem~\ref{thm:finite-r-annular}. Choose \(
0<\delta_1<\min\{\delta,1-s\}\), 
and set \(a=s+\delta_1.\)
Then \(s<a<1\). The exponent \(a\) will serve as the local-mass exponent. The surplus \(a-s\) absorbs the losses arising from stationary tubes, phase bins, martingale estimates, compensator bounds, and the passage from a grid to the full annulus.

For \(k\ge0\) and \(n\ge1\), define the local-mass threshold
\begin{equation}\label{eq:local-mass-threshold-paper}
T_{k,n}=2^{\varepsilon n}2^{-ak},
\end{equation}
where \(\varepsilon>0\) will be chosen below. We also set
\begin{equation}\label{eq:cap-growth-factor-paper}
L_n=2^{\kappa n},
\end{equation}
where \(\kappa>0\) will be chosen below. For the \(r\)-tail compensator estimates, introduce 
\[\beta_{\mathrm{comp}}
=
r(s+\delta)-(r-1)(s+\delta_1)
=
s+r\delta-(r-1)\delta_1\quad \text{and}\quad \gamma_{\mathrm{comp}}=\min\{\beta_{\mathrm{comp}},1\}.\]

Whenever \(r>1\), \(0<\delta_1<\delta\), and \(s<1\), one has
\[
\beta_{\mathrm{comp}}>s,
\qquad
\gamma_{\mathrm{comp}}>s.
\]

\begin{lemma}
\label{lem:compatible-parameter-choice-paper}
The parameters \(
\delta_1,\varepsilon,\kappa>0\) can be chosen so that
\[
0<\delta_1<\min\{\delta,1-s\},
\qquad
20\varepsilon+\kappa<\frac{\delta_1}{2},
\]
and, with \(a=s+\delta_1\) and
\[
\vartheta=1-\frac{s}{2a}-\frac{8\varepsilon}{a},
\]
one has \(\vartheta>0\) and 
\[\beta_{\mathrm{comp}}\vartheta>\frac{s}{2},
\qquad
\gamma_{\mathrm{comp}}\vartheta>\frac{s}{2}.\]

\end{lemma}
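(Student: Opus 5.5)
The plan is to fix \(\delta_1\) first, then choose \(\varepsilon\) and \(\kappa\) small enough, using monotone lower bounds for \(\vartheta\), \(\beta_{\mathrm{comp}}\) and \(\gamma_{\mathrm{comp}}\) that do not depend on \(\delta_1\). This avoids any limiting argument.

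First, fix any \(\delta_1\) with \(0<\delta_1<\min\{\delta,1-s\}\), so that \(a=s+\delta_1\in(s,1)\). Rewriting the compensator exponent gives
\[
\beta_{\mathrm{comp}}=s+r\delta-(r-1)\delta_1=s+\delta+(r-1)(\delta-\delta_1)\ge s+\delta,
\]
since \(r>1\) and \(\delta_1<\delta\). Hence
\[
\gamma_{\mathrm{comp}}\ge g:=\min\{s+\delta,1\}>s,
\]
using \(s<1\), and the same lower bound \(g\) holds for \(\beta_{\mathrm{comp}}\). Next, since \(a>s\) we have \(s/(2a)<1/2\). Combined with \(a>s\) in the \(\varepsilon\)-term, this gives
\[
\vartheta\ge \frac12-\frac{8\varepsilon}{a}\ge \frac12-\frac{8\varepsilon}{s}.
\]

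Now choose \(\varepsilon>0\) so that \(20\varepsilon<\delta_1/4\) and
\[
\varepsilon<\frac{s(g-s)}{16g}.
\]
The second condition gives \(8\varepsilon/s<(g-s)/(2g)\). Therefore
\[
\vartheta>\frac12-\frac{g-s}{2g}=\frac{s}{2g}>0,
\]
which proves \(\vartheta>0\). It also gives
\[
\gamma_{\mathrm{comp}}\vartheta\ge g\vartheta>\frac s2,
\qquad
\beta_{\mathrm{comp}}\vartheta\ge g\vartheta>\frac s2,
\]
where the second chain uses \(\beta_{\mathrm{comp}}\ge g\) together with \(\vartheta>0\).

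Finally, choose \(\kappa\) with \(0<\kappa<\delta_1/4\). Then \(20\varepsilon+\kappa<\delta_1/2\), and all the required constraints hold.

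There is no real obstacle here. The only point needing care is the order of choices: \(\delta_1\) is fixed first, and the constraints on \(\varepsilon\) use the uniform lower bound \(g\). The bound \(g\) depends only on \(s\) and \(\delta\), so it does not degrade as \(\delta_1\) varies. For the same reason the dependence on \(\delta_1\) causes no circularity.
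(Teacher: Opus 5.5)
Your proof is correct and uses the same order of choices as the paper: fix \(\delta_1\), then take \(\varepsilon\) small, then take \(\kappa<\delta_1/2-20\varepsilon\). The only difference is that the paper argues by continuity at \(\varepsilon=0\), using \(\vartheta_0=1-s/(2a)>1/2\) and \(\beta_{\mathrm{comp}},\gamma_{\mathrm{comp}}>s\), whereas you give the explicit threshold \(\varepsilon<s(g-s)/(16g)\) with \(g=\min\{s+\delta,1\}\), which makes the choice quantitative.
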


\begin{proof}
Choose \(
0<\delta_1<\min\{\delta,1-s\}\), 
and set \(a=s+\delta_1\). Then \(s<a<1\), and since \(\delta_1<\delta\),
\[
\beta_{\mathrm{comp}}
=
s+r\delta-(r-1)\delta_1>s,
\qquad
\gamma_{\mathrm{comp}}=\min\{\beta_{\mathrm{comp}},1\}>s.
\]
For \(\varepsilon=0\), \(\vartheta_0=1-s/(2a)>1/2\), so
\[
\beta_{\mathrm{comp}}\vartheta_0>\frac{s}{2},
\qquad
\gamma_{\mathrm{comp}}\vartheta_0>\frac{s}{2}.
\]
By continuity, choose \(\varepsilon>0\) sufficiently small so that the same strict inequalities hold with
\[
\vartheta=1-\frac{s}{2a}-\frac{8\varepsilon}{a},
\]
while also ensuring that \(\vartheta>0\) and \(
20\varepsilon<\delta_1/2\).
Finally, choose
\[
0<\kappa<\frac{\delta_1}{2}-20\varepsilon.
\]
Then \(
20\varepsilon+\kappa<\delta_1/2\), as required.
\end{proof}

\begin{remark}
\label{rem:use-of-finite-r-parameters-paper}
The parameter \(\delta_1\) creates the local-mass surplus \(a-s\), \(\varepsilon\) is the annular slack in the local-mass thresholds and grid passage, and \(\kappa\) controls cap growth.  The inequalities
\[
\beta_{\mathrm{comp}}\vartheta>\frac{s}{2},
\qquad
\gamma_{\mathrm{comp}}\vartheta>\frac{s}{2}
\]
are used only in the compensator estimates, where they convert the generation gain from the \(r\)-tail bound into decay at the annular frequency scale.
\end{remark}

\subsection{Good events for local mass bounds}
\label{subsec:local-mass-good-events-paper}

Let \(\mathcal D_k^\circ=\{J_v:\lvert v \rvert=k\}\) denote the collection of level-\(k\) half-open dyadic intervals in \([0,1)\).

\begin{definition}
\label{def:local-mass-good-events-paper}
For \(n\ge1\), let \(
\mathcal G^{\mathrm{pre}}_n\) be the event that, for all \(k\ge0\), \(J\in\mathcal D_k^\circ\), and \(\ell\ge k\),
\[
\widetilde\nu_\ell(J)\le T_{k,n}.
\]
Let \(\mathcal G^{\mathrm{lim}}_n\) be the event that, for all \(k\ge0\) and \(J\in\mathcal D_k^\circ\),
\[
\widetilde\nu(J)\le T_{k,n}.
\]
Finally, set
\[
\mathcal G_n=\mathcal G^{\mathrm{pre}}_n\cap\mathcal G^{\mathrm{lim}}_n.
\]
\end{definition}

The prelimit event is stronger than the level-matched condition \(\widetilde\nu_k(J)\le T_{k,n}\), since the later martingale estimates require control of later prelimit masses on earlier dyadic intervals.

Let \(\widetilde Y_\ell=\widetilde\nu_\ell([0,1))\) and \(
\widetilde Y=\widetilde\nu([0,1))\).

\begin{lemma}
\label{lem:uniform-r-moment-total-mass-circle-paper}
Under the finite-\(r\) witnessed hypothesis,
\[
\sup_{\ell\ge0}\E[\widetilde Y_\ell^r]<\infty
\qquad\text{and}\qquad
\E[\widetilde Y^r]<\infty.
\]
\end{lemma}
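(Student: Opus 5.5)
The plan is to reduce to the vector terminal-mass moment criterion, Lemma~\ref{lem:terminal-mass-moment-criterion}, applied to the balanced vector weight \(X_i=U_i/2\), \(i=0,1\), with \(U_0,U_1\) independent copies of \(U\). For \(v\in\{0,1\}^\ell\) the corresponding path weight is \(L_v=2^{-\ell}Q_v\). Hence \(\widetilde Y_\ell=2^{-\ell}\sum_{|v|=\ell}Q_v\) is precisely the total mass \(M_\ell\) of that vector cascade. The one point to watch is that the circle cascade attaches weights to edges \(U_{v|j}\) rather than to parent vertices. However, the pair \((U_{u0},U_{u1})\) attached to the two children of \(u\) is an independent copy of \((U_0,U_1)\), independent over distinct \(u\). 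So relabeling \(X_i(u)=U_{ui}/2\) produces exactly the vector cascade of Section~\ref{sec:vector-cascades-energy-profile}, and the law of \((\widetilde Y_\ell)_\ell\) coincides with that of \((M_\ell)_\ell\).

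Next I verify the hypotheses of the criterion. The mean-one condition holds because \(\E(X_0+X_1)=\E U=1\). The moment profile at exponent \(r\) is \(\rho(r)=2\cdot2^{-r}\E[U^r]=2^{1-r}\E[U^r]\), which by \eqref{eq:finite-r-witnessed-gap} satisfies
\[
\rho(r)\le 2^{-r(s+\delta)}<1 .
\]
Since \(r>1\), Lemma~\ref{lem:terminal-mass-moment-criterion} then gives \(\sup_{\ell}\E[M_\ell^r]<\infty\) and \(\E[M^r]<\infty\), and both statements are in terms of laws.

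It remains to identify \(\widetilde Y\) with \(\lim_\ell\widetilde Y_\ell\). By the scalar specialization of Theorem~\ref{thm:vector-limiting-measure-construction-paper}, the weak limit \(\widetilde\nu\) has total mass equal to the almost sure limit of \(\widetilde Y_\ell\). Evaluating on the constant function \(1\) on \([0,1)\) confirms this, since the measures \(\widetilde\nu_\ell\) are defined on \([0,1)\) and can be viewed on \([0,1]\) without loss. Fatou's lemma, or directly the second conclusion of the criterion, then gives \(\E[\widetilde Y^r]<\infty\).

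No real obstacle is expected. The only care needed is the case \(r>2\): there the criterion is applied through its inductive branch, which needs \(\rho(p)<1\) for \(1<p\le r\). That branch is handled inside Lemma~\ref{lem:terminal-mass-moment-criterion} itself via the H\"older interpolation, so it is already covered.
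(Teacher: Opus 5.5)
Your proposal is correct and follows the paper's proof: you view the circle cascade as the vector cascade with \(X_i=U_i/2\), note \(\rho(r)=2^{1-r}\E[U^r]\le 2^{-r(s+\delta)}<1\), invoke Lemma~\ref{lem:terminal-mass-moment-criterion}, and pass to \(\widetilde Y\) by Fatou. Your extra care about the edge-versus-vertex relabeling and about identifying \(\widetilde Y\) with \(\lim_\ell\widetilde Y_\ell\) is harmless; it spells out steps the paper leaves implicit.
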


\begin{proof}
The scalar cascade can be viewed as a dyadic vector cascade with normalized child coefficients \(U_0/2\) and \(U_1/2\), where \(U_0\) and \(U_1\) are independent copies of \(U\). By the finite-\(r\) witnessed hypothesis,
\[
\E\left[\left(\frac{U_0}{2}\right)^r+\left(\frac{U_1}{2}\right)^r\right]
=
2^{1-r}\E[U^r]
\le
2^{-r(s+\delta)}
<1.
\]
In addition,
\[
\E\left[\left(\frac{U_0+U_1}{2}\right)^r\right]
\le
\frac{\E U_0^r+\E U_1^r}{2}
<\infty.
\]
Therefore Lemma~\ref{lem:terminal-mass-moment-criterion} implies
\[
\sup_{\ell\ge0}\E[\widetilde Y_\ell^r]<\infty.
\]
Since \(\widetilde Y_\ell\to\widetilde Y\) almost surely, Fatou's lemma yields
\[
\E[\widetilde Y^r]\le\liminf_{\ell\to\infty}\E[\widetilde Y_\ell^r]<\infty.
\]
\end{proof}

\begin{lemma}
\label{lem:prelimit-limiting-r-mass-budgets-paper}
There exists a constant \(C_r<\infty\) such that, for all \(0\le k\le \ell\),
\begin{equation}\label{eq:prelimit-r-mass-budget-paper}
\E\left[
\sum_{J\in\mathcal D_k^\circ}
\widetilde\nu_\ell(J)^r
\right]
\le
C_r2^{-r(s+\delta)k}.
\end{equation}
Moreover, for all \(k\ge0\),
\begin{equation}\label{eq:limiting-r-mass-budget-paper}
\E\left[
\sum_{J\in\mathcal D_k^\circ}
\widetilde\nu(J)^r
\right]
\le
C_r2^{-r(s+\delta)k}.
\end{equation}
\end{lemma}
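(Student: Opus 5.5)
The plan is to use the exact branching decomposition of the dyadic masses and then the \(r\)-moment identity for path products, exactly as in the proof of Lemma~\ref{lem:circle-uniform-cylinder-upper-bound-paper}.

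For \(J=J_v\) with \(\lvert v\rvert=k\le\ell\), the descendant construction gives
\[
\widetilde\nu_\ell(J_v)=2^{-k}Q_v\,\widetilde Y^{(v)}_{\ell-k},
\qquad
\widetilde\nu(J_v)=2^{-k}Q_v\,\widetilde Y^{(v)}.
\]
Here \(\widetilde Y^{(v)}_{m}\) is the level-\(m\) total mass of the descendant cascade rooted at \(v\), and \(\widetilde Y^{(v)}\) is its limit. For the limiting identity we work on the full-probability event where \(\widetilde\nu\) charges no dyadic endpoint, so that the half-open intervals carry exactly the tree-cylinder masses, as in \eqref{eq:circle-cylinder-mass-decomposition-paper}. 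The variable \(\widetilde Y^{(v)}_{\ell-k}\) depends only on weights strictly below \(v\), so it is independent of \(Q_v\). It also has the law of \(\widetilde Y_{\ell-k}\), and likewise \(\widetilde Y^{(v)}\stackrel d=\widetilde Y\).

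Taking \(r\)-th moments gives
\[
\E\,\widetilde\nu_\ell(J_v)^r=2^{-rk}\bigl(\E[U^r]\bigr)^k\,\E[\widetilde Y_{\ell-k}^r].
\]
Summing over the \(2^k\) words of length \(k\), we obtain
\[
\E\sum_{J\in\mathcal D_k^\circ}\widetilde\nu_\ell(J)^r
=\bigl(2^{1-r}\E[U^r]\bigr)^k\,\E[\widetilde Y_{\ell-k}^r]
\le 2^{-r(s+\delta)k}\sup_{m\ge0}\E[\widetilde Y_m^r].
\]
The inequality uses the witnessed hypothesis \eqref{eq:finite-r-witnessed-gap}. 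Lemma~\ref{lem:uniform-r-moment-total-mass-circle-paper} shows that the supremum is finite, so we may take \(C_r=\max\{\sup_m\E[\widetilde Y_m^r],\,\E[\widetilde Y^r]\}\). This proves \eqref{eq:prelimit-r-mass-budget-paper}. The same computation with \(\widetilde Y\) in place of \(\widetilde Y_{\ell-k}\) gives \eqref{eq:limiting-r-mass-budget-paper}. Alternatively, \eqref{eq:limiting-r-mass-budget-paper} follows from \eqref{eq:prelimit-r-mass-budget-paper} by Fatou's lemma as \(\ell\to\infty\).

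The only delicate point is the identification of the limiting half-open interval masses with the tree-cylinder products in the second estimate. The Fatou route avoids this: weak convergence gives \(\widetilde\nu_\ell(J)\to\widetilde\nu(J)\) for continuity sets, and the endpoint null-mass event makes every dyadic interval a continuity set. Everything else is the routine independence of a path product from its descendant subtree.
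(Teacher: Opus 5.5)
Your proposal is correct and is essentially the paper's proof: the same factorization \(\widetilde\nu_\ell(J_v)=2^{-k}Q_v\widetilde Y^{(v)}_{\ell-k}\), independence of \(Q_v\) from the descendant mass, summation over the \(2^k\) words using \(2^{1-r}\E[U^r]\le 2^{-r(s+\delta)}\), and Lemma~\ref{lem:uniform-r-moment-total-mass-circle-paper} for the uniform \(r\)-moment bound. The limiting estimate is handled by the same product identity with \(\widetilde Y^{(v)}\), and your Fatou alternative and your attention to the endpoint identification are harmless refinements of that step.
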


\begin{proof}
Fix \(J_v\in\mathcal D_k^\circ\) with \(\lvert v \rvert=k\). For \(\ell\ge k\),
\[
\widetilde\nu_\ell(J_v)
=
2^{-k}Q_v\widetilde Y_{\ell-k}^{(v)},
\]
where \(
\widetilde Y_{\ell-k}^{(v)}\) is independent of \(Q_v\) and has the same law as \(\widetilde Y_{\ell-k}\). Hence
\[
\E[\widetilde\nu_\ell(J_v)^r]
=
2^{-kr}\E[Q_v^r]\E[(\widetilde Y_{\ell-k})^r]
\le
C_r2^{-kr}(\E[U^r])^k.
\]
Summing over the \(2^k\) intervals \(J_v\), we obtain
\[
\E\left[
\sum_{J\in\mathcal D_k^\circ}
\widetilde\nu_\ell(J)^r
\right]
\le
C_r(2^{1-r}\E[U^r])^k
\le
C_r2^{-r(s+\delta)k}.
\]
This proves \eqref{eq:prelimit-r-mass-budget-paper}. For the limiting measure,
\[
\widetilde\nu(J_v)=2^{-k}Q_v\widetilde Y^{(v)},
\]
where \(\widetilde Y^{(v)}\) is an independent copy of \(\widetilde Y\). The same computation, now using \(\E[\widetilde Y^r]<\infty\), implies \eqref{eq:limiting-r-mass-budget-paper}.
\end{proof}

\begin{proposition}
\label{prop:local-mass-good-events-paper}
There exist constants \(C<\infty\) and \(c>0\) such that, for all \(n\ge1\),
\begin{equation}\label{eq:local-mass-good-event-probability-paper}
\Pbb(\mathcal G_n^c)\le C2^{-cn}.
\end{equation}
Consequently, \(
\sum_{n=1}^{\infty}\Pbb(\mathcal G_n^c)<\infty\). 
\end{proposition}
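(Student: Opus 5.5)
The plan is to treat the two halves of \(\mathcal G_n\) separately, using a union bound over all dyadic intervals \(J\in\mathcal D_k^\circ\) and all \(k\ge0\) together with Markov's inequality at the \(r\)-th moment. The key budget is Lemma~\ref{lem:prelimit-limiting-r-mass-budgets-paper}, which gives a level-\(k\) \(r\)-mass of order \(2^{-r(s+\delta)k}\). This beats the threshold \(T_{k,n}^r=2^{r\varepsilon n}2^{-rak}\) by the factor \(2^{-r(\delta-\delta_1)k}\), since \(a=s+\delta_1<s+\delta\). The extra factor \(2^{-r\varepsilon n}\) in \(T_{k,n}^{-r}\) supplies the decay in \(n\).

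For the limiting event \(\mathcal G^{\mathrm{lim}}_n\), I would bound the probability of its complement as follows, using \eqref{eq:limiting-r-mass-budget-paper}:
\[
\Pbb\bigl((\mathcal G^{\mathrm{lim}}_n)^c\bigr)
\le\sum_{k\ge0}T_{k,n}^{-r}\,\E\sum_{J\in\mathcal D_k^\circ}\widetilde\nu(J)^r
\le C_r2^{-r\varepsilon n}\sum_{k\ge0}2^{-r(\delta-\delta_1)k}.
\]
The series is a convergent geometric series because \(\delta_1<\delta\), so the whole bound is \(C2^{-r\varepsilon n}\).

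For the prelimit event \(\mathcal G^{\mathrm{pre}}_n\), the only additional issue is the supremum over \(\ell\ge k\). This is the one step that needs more than the stated budget lemma. I would handle it with Doob's maximal inequality. For \(|v|=k\) and \(\ell\ge k\),
\[
\widetilde\nu_\ell(J_v)=2^{-k}Q_v\,\widetilde Y^{(v)}_{\ell-k},
\]
where \((\widetilde Y^{(v)}_m)_{m\ge0}\) is a nonnegative martingale independent of \(Q_v\) and equal in law to \((\widetilde Y_m)\). By Lemma~\ref{lem:uniform-r-moment-total-mass-circle-paper}, this martingale is bounded in \(L^r\) with \(r>1\). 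Doob's \(L^r\) inequality then gives
\[
\E\Bigl[\sup_{m\ge0}\bigl(\widetilde Y^{(v)}_m\bigr)^r\Bigr]\le \Bigl(\tfrac{r}{r-1}\Bigr)^r\sup_m\E[\widetilde Y_m^r]=:C'_r<\infty.
\]
Using independence, I would then obtain
\[
\E\sum_{|v|=k}\sup_{\ell\ge k}\widetilde\nu_\ell(J_v)^r
\le C'_r\bigl(2^{1-r}\E[U^r]\bigr)^k\le C'_r2^{-r(s+\delta)k}.
\]
This is the maximal version of \eqref{eq:prelimit-r-mass-budget-paper}. The same Markov and union bound over \(J\) and \(k\) as in the limiting case then yields \(\Pbb\bigl((\mathcal G^{\mathrm{pre}}_n)^c\bigr)\le C2^{-r\varepsilon n}\).

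Combining the two estimates gives \(\Pbb(\mathcal G_n^c)\le C2^{-cn}\) with \(c=r\varepsilon>0\). Summability of \(\sum_n\Pbb(\mathcal G_n^c)\) follows at once. The constants depend only on \(r,s,\delta\), the chosen \(\delta_1,\varepsilon\), and \(\E[U^r]\).
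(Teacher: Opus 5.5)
Your proposal is correct and follows essentially the paper's proof. Both arguments use Markov's inequality at the \(r\)-th moment with the budgets of Lemma~\ref{lem:prelimit-limiting-r-mass-budgets-paper}, Doob's inequality to handle the supremum over \(\ell\ge k\), and a geometric sum in \(k\) (via \(\delta_1<\delta\)), which yields \(C2^{-r\varepsilon n}\).

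The only cosmetic difference is where Doob is applied. You apply Doob's \(L^r\) inequality to the descendant total-mass martingale to obtain a maximal \(r\)-mass budget. The paper instead applies Doob's maximal inequality directly to \((\widetilde\nu_\ell(J))_{\ell\ge k}\), using the prelimit budget at a terminal level \(L\) and then letting \(L\to\infty\).
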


\begin{proof}
Fix \(k\ge0\) and \(J\in\mathcal D_k^\circ\). Since \(
(\widetilde\nu_\ell(J))_{\ell\ge k}\) is a nonnegative martingale, Doob's \(L^r\) maximal inequality implies, for every \(L\ge k\),
\[
\Pbb\left(
\max_{k\le \ell\le L}\widetilde\nu_\ell(J)>T_{k,n}
\right)
\le
\left(\frac{r}{r-1}\right)^r
T_{k,n}^{-r}\E[\widetilde\nu_L(J)^r].
\]
After summing over \(J\in\mathcal D_k^\circ\), applying \eqref{eq:prelimit-r-mass-budget-paper}, and letting \(L\to\infty\), we get
\[
\Pbb\left(
\exists J\in\mathcal D_k^\circ,\ \exists \ell\ge k:
\widetilde\nu_\ell(J)>T_{k,n}
\right)
\le
C T_{k,n}^{-r}2^{-r(s+\delta)k}.
\]
Using \(T_{k,n}=2^{\varepsilon n}2^{-ak}\) and \(a=s+\delta_1\), the right-hand side becomes \(C2^{-r\varepsilon n}2^{-r(\delta-\delta_1)k}\).

Summing over \(k\ge0\) yields
\[
\Pbb((\mathcal G^{\mathrm{pre}}_n)^c)\le C2^{-r\varepsilon n}.
\]

For the limiting event, Markov's inequality together with \eqref{eq:limiting-r-mass-budget-paper} implies
\[
\Pbb\left(
\exists J\in\mathcal D_k^\circ:
\widetilde\nu(J)>T_{k,n}
\right)
\le
C2^{-r\varepsilon n}2^{-r(\delta-\delta_1)k}.
\]
Summing over \(k\ge0\), we obtain \(\Pbb((\mathcal G^{\mathrm{lim}}_n)^c)\le C2^{-r\varepsilon n}\). 
Since
\[
\mathcal G_n^c
\subset
(\mathcal G^{\mathrm{pre}}_n)^c
\cup
(\mathcal G^{\mathrm{lim}}_n)^c,
\]
we obtain \(\Pbb(\mathcal G_n^c)\le C2^{-r\varepsilon n}\), which implies \eqref{eq:local-mass-good-event-probability-paper} after decreasing \(c>0\), if necessary. Summability is immediate.
\end{proof}

\begin{lemma}
\label{lem:frostman-consequences-good-event-paper}
Suppose that \(\mathcal G_n\) holds. Then the following estimates hold with a universal constant \(C<\infty\).

\begin{enumerate}[label=\textup{(\roman*)}]
\item For every interval \(B\subset[0,1)\),
\begin{equation}\label{eq:limiting-frostman-good-event-paper}
\widetilde\nu(B)\le C2^{\varepsilon n}\lvert B\rvert^a.
\end{equation}

\item For every \(\ell\ge0\) and every interval \(B\subset[0,1)\),
\begin{equation}\label{eq:prelimit-frostman-good-event-paper}
\widetilde\nu_\ell(B)
\le
C2^{\varepsilon n}(\lvert B\rvert+2^{-\ell})^a.
\end{equation}

\item For every \(\ell\ge0\) and every interval \(B\subset[0,1)\),
\begin{equation}\label{eq:prelimit-square-budget-good-event-paper}
\sum_{\substack{I\in\mathcal D_\ell^\circ\\ I\cap B\neq\varnothing}}
\widetilde\nu_\ell(I)^2
\le
C2^{2\varepsilon n}2^{-a\ell}(\lvert B\rvert+2^{-\ell})^a.
\end{equation}
\end{enumerate}
\end{lemma}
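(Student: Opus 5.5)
The three estimates are deterministic consequences of the dyadic bounds built into \(\mathcal G_n\). The plan is to cover an arbitrary interval by boundedly many dyadic intervals of comparable length and then invoke \(\widetilde\nu(J)\le T_{k,n}\) and \(\widetilde\nu_\ell(J)\le T_{k,n}\).

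For (i), let \(B\subset[0,1)\) be an interval. If \(\lvert B\rvert\ge 1/2\), the bound follows from the level-\(0\) case \(\widetilde\nu([0,1))\le T_{0,n}=2^{\varepsilon n}\), since \(\lvert B\rvert^a\ge 2^{-a}\). Otherwise choose \(k\ge1\) with \(2^{-k-1}<\lvert B\rvert\le 2^{-k}\). Then \(B\) meets at most two intervals of \(\mathcal D_k^\circ\), and \(\mathcal G^{\mathrm{lim}}_n\) gives
\[
\widetilde\nu(B)\le 2T_{k,n}=2\cdot2^{\varepsilon n}2^{-ak}\le 2^{1+a}2^{\varepsilon n}\lvert B\rvert^a.
\]
Degenerate intervals (\(\lvert B\rvert=0\)) have zero mass whenever \(\mathcal G_n^{\mathrm{lim}}\) holds. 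Indeed, a point lies in \(J_k(x)\) for all \(k\), so its mass is at most \(T_{k,n}\to0\). This already matches the endpoint convention fixed in Section~\ref{sec:circle-endpoint-obstruction}.

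For (ii), let \(\ell\ge0\) and put \(\lambda=\lvert B\rvert+2^{-\ell}\). Choose \(k\) with \(2^{-k-1}<\lambda\le 2^{-k}\); since \(\lambda\ge 2^{-\ell}\), this gives \(k\le \ell\), or \(\lambda>1/2\) and we take \(k=0\). The interval \(B\) meets at most two intervals of \(\mathcal D_k^\circ\). Because \(\ell\ge k\), the prelimit event \(\mathcal G^{\mathrm{pre}}_n\) bounds each of them by \(T_{k,n}\), so \(\widetilde\nu_\ell(B)\le 2T_{k,n}\le C2^{\varepsilon n}\lambda^a\). The point of imposing the bound for all \(\ell\ge k\) in Definition~\ref{def:local-mass-good-events-paper} is exactly so that this works at every scale.

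For (iii), fix \(\ell\) and \(B\), and let \(\mathcal J\) be the set of \(I\in\mathcal D_\ell^\circ\) with \(I\cap B\ne\varnothing\). Each such \(I\) satisfies \(\widetilde\nu_\ell(I)\le T_{\ell,n}=2^{\varepsilon n}2^{-a\ell}\), by \(\mathcal G^{\mathrm{pre}}_n\) with \(k=\ell\). This gives
\[
\sum_{I\in\mathcal J}\widetilde\nu_\ell(I)^2\le 2^{\varepsilon n}2^{-a\ell}\,\widetilde\nu_\ell\Bigl(\bigcup_{I\in\mathcal J} I\Bigr).
\]
The union is an interval of length at most \(\lvert B\rvert+2\cdot2^{-\ell}\). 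Applying (ii) to it bounds its mass by \(C2^{\varepsilon n}(\lvert B\rvert+2^{-\ell})^a\), and this yields \eqref{eq:prelimit-square-budget-good-event-paper}.

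The only delicate step is bookkeeping: when \(\lambda\) is comparable to \(1\) the dyadic level \(k\) must stay nonnegative, and in (ii) we need \(k\le\ell\) so that the prelimit bound applies. Both are handled by the choice of \(k\) above. No probabilistic input is needed.
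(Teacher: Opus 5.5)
Your proposal is correct and follows the paper's proof essentially step by step. You cover \(B\) by at most two level-\(k\) intervals with \(2^{-k}\) comparable to \(\lvert B\rvert\) in (i), or to \(\lvert B\rvert+2^{-\ell}\) in (ii), where the \(2^{-\ell}\) term forces \(k\le\ell\) so that \(\mathcal G_n^{\mathrm{pre}}\) applies; for (iii) you use \(\widetilde\nu_\ell(I)\le T_{\ell,n}\) and then apply (ii) to the enlarged interval. Your explicit treatment of degenerate intervals and of the case \(\lvert B\rvert+2^{-\ell}>1\) fills in details the paper leaves implicit.
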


\begin{proof}
For (i), choose \(k\ge0\) such that \(2^{-(k+1)}<\lvert B\rvert\le2^{-k}\). 
Up to the wrap-around convention, the interval \(B\) can be covered by at most two intervals in \(\mathcal D_k^\circ\). On \(\mathcal G^{\mathrm{lim}}_n\), each such interval has \(\widetilde\nu\)-mass at most \(T_{k,n}=2^{\varepsilon n}2^{-ak}\). 
Hence
\[
\widetilde\nu(B)\le C2^{\varepsilon n}2^{-ak}\le C2^{\varepsilon n}\lvert B\rvert^a.
\]

For (ii), set \(\rho=\lvert B\rvert+2^{-\ell}\).
If \(\rho>1\), the estimate is immediate from the case \(k=0\) and the choice of the constant \(C\). Otherwise, choose \(0\le k\le\ell\) such that \(2^{-(k+1)}<\rho\le2^{-k}\).
The union of the level-\(\ell\) intervals meeting \(B\) is contained in an interval of length \(O(\rho)\), and hence in a bounded number of level-\(k\) intervals. On \(\mathcal G^{\mathrm{pre}}_n\),
\[
\widetilde\nu_\ell(B)\le C2^{\varepsilon n}2^{-ak}\le C2^{\varepsilon n}\rho^a.
\]
This proves \eqref{eq:prelimit-frostman-good-event-paper}.

For (iii), on \(\mathcal G^{\mathrm{pre}}_n\), every level-\(\ell\) interval \(I\) satisfies \(
\widetilde\nu_\ell(I)\le T_{\ell,n}=2^{\varepsilon n}2^{-a\ell}\).
Hence
\[
\sum_{\substack{I\in\mathcal D_\ell^\circ\\ I\cap B\neq\varnothing}}
\widetilde\nu_\ell(I)^2
\le
2^{\varepsilon n}2^{-a\ell}
\sum_{\substack{I\in\mathcal D_\ell^\circ\\ I\cap B\neq\varnothing}}
\widetilde\nu_\ell(I).
\]
The last sum is bounded by the prelimit mass of an enlarged interval of length \(O(\lvert B\rvert+2^{-\ell})\). Applying (ii) to this enlarged interval implies \eqref{eq:prelimit-square-budget-good-event-paper}.
\end{proof}

\begin{lemma}
\label{lem:good-event-total-mass-lipschitz-paper}
Suppose that \(\mathcal G^{\mathrm{lim}}_n\) holds. Then
\[
\nu_\circ(\mathbb S^1)=\widetilde\nu([0,1))\le 2^{\varepsilon n}.
\]
Consequently, for all \(\xi,\eta\in\R^2\),
\begin{equation}\label{eq:good-event-fourier-lipschitz-paper}
\lvert\wh{\nu_\circ}(\xi)-\wh{\nu_\circ}(\eta)\rvert
\le
2\pi 2^{\varepsilon n}\lvert\xi-\eta\rvert.
\end{equation}
\end{lemma}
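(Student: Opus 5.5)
The plan is to prove the two assertions one after the other: first the total-mass bound, then the Lipschitz estimate, which will follow from it by a direct integral estimate.

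\textbf{Total-mass bound.} Apply the defining condition of \(\mathcal G^{\mathrm{lim}}_n\) at level \(k=0\). The only interval in \(\mathcal D_0^\circ\) is \(J_\varnothing=[0,1)\), and \(T_{0,n}=2^{\varepsilon n}\). Hence \(\mathcal G^{\mathrm{lim}}_n\) gives
\[
\widetilde\nu([0,1))\le 2^{\varepsilon n}.
\]
Since \(\nu_\circ=f_\#\widetilde\nu\) and \(f\) maps \([0,1)\) bijectively onto \(\mathbb S^1\), the pushforward preserves total mass:
\[
\nu_\circ(\mathbb S^1)=\widetilde\nu(f^{-1}(\mathbb S^1))=\widetilde\nu([0,1)).
\]

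\textbf{Lipschitz estimate.} Write both Fourier transforms through the parametrization,
\[
\wh{\nu_\circ}(\xi)-\wh{\nu_\circ}(\eta)=\int_0^1\bigl(e^{-2\pi i\xi\cdot f(t)}-e^{-2\pi i\eta\cdot f(t)}\bigr)\,d\widetilde\nu(t).
\]
Two elementary facts bound the integrand pointwise. For real \(\theta,\theta'\) one has \(\lvert e^{i\theta}-e^{i\theta'}\rvert\le\lvert\theta-\theta'\rvert\). By Cauchy--Schwarz and \(\lvert f(t)\rvert=1\),
\[
\lvert(\xi-\eta)\cdot f(t)\rvert\le\lvert\xi-\eta\rvert.
\]
Together these give
\[
\bigl\lvert e^{-2\pi i\xi\cdot f(t)}-e^{-2\pi i\eta\cdot f(t)}\bigr\rvert\le 2\pi\lvert\xi-\eta\rvert .
\]
Integrating this bound against \(\widetilde\nu\) gives
\[
\lvert\wh{\nu_\circ}(\xi)-\wh{\nu_\circ}(\eta)\rvert\le 2\pi\lvert\xi-\eta\rvert\,\widetilde\nu([0,1)),
\]
and the total-mass bound then yields \eqref{eq:good-event-fourier-lipschitz-paper}.

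There is no real obstacle here. The only point to check is that the \(k=0\) case is actually included in the definition of \(\mathcal G^{\mathrm{lim}}_n\). It is, because that definition quantifies over all \(k\ge0\).
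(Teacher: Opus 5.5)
Your proof is correct and follows essentially the same route as the paper: the $k=0$ case of $\mathcal G^{\mathrm{lim}}_n$ gives $\widetilde\nu([0,1))\le T_{0,n}=2^{\varepsilon n}$, and the Lipschitz bound follows from $\lvert e^{-iu}-e^{-iv}\rvert\le\lvert u-v\rvert$ together with $\lvert f(t)\rvert=1$. The only cosmetic difference is that you integrate against $\widetilde\nu$ on the parameter interval, while the paper integrates against $\nu_\circ$ on $\mathbb S^1$; the two are equivalent because $\nu_\circ=f_\#\widetilde\nu$.
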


\begin{proof} 
Taking \(k=0\) and \(J=[0,1)\) in \(
\mathcal G^{\mathrm{lim}}_n\), we obtain \(\widetilde\nu([0,1))\le T_{0,n}=2^{\varepsilon n}\). 
Since \(\nu_\circ=f_\#\widetilde\nu\), it follows that \(\nu_\circ(\mathbb S^1)\le2^{\varepsilon n}\).
Also, 
\[ \begin{aligned} \lvert\wh{\nu_\circ}(\xi)-\wh{\nu_\circ}(\eta)\rvert &\le \int_{\mathbb S^1} \left\lvert e^{-2\pi i x\cdot \xi} - e^{-2\pi i x\cdot \eta} \right\rvert\,d\nu_\circ(x)\\ &\le 2\pi\lvert\xi-\eta\rvert\,\nu_\circ(\mathbb S^1) \le 2\pi 2^{\varepsilon n}\lvert\xi-\eta\rvert, \end{aligned} \] 
using \(\lvert e^{-iu}-e^{-iv}\rvert\le \lvert u-v\rvert\) and \(\lvert x \rvert=1\) on \(\mathbb S^1\). 
\end{proof}

Proposition~\ref{prop:local-mass-good-events-paper} and the Borel--Cantelli lemma immediately imply the following.

\begin{corollary}
\label{cor:eventually-good-local-mass-events-paper}
Almost surely, there exists a finite random integer \(N_{\mathcal G}\) such that \(\mathcal G_n\) holds for all \(n\ge N_{\mathcal G}\).
\end{corollary}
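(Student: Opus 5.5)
Corollary~\ref{cor:eventually-good-local-mass-events-paper} follows from Proposition~\ref{prop:local-mass-good-events-paper} by a direct Borel--Cantelli argument, and I will carry it out in that order.

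First, Proposition~\ref{prop:local-mass-good-events-paper} gives \(\Pbb(\mathcal G_n^c)\le C2^{-cn}\) with \(c>0\). Hence
\[
\sum_{n\ge1}\Pbb(\mathcal G_n^c)\le C\sum_{n\ge1}2^{-cn}<\infty .
\]

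Second, I check that each \(\mathcal G_n\) is an event, so that Borel--Cantelli applies.
\begin{enumerate}[label=\textup{(\roman*)}]
\item The event \(\mathcal G^{\mathrm{pre}}_n\) is a countable intersection, over \(k\ge0\), \(J\in\mathcal D_k^\circ\) and \(\ell\ge k\), of the events \(\{\widetilde\nu_\ell(J)\le T_{k,n}\}\). Each of these is measurable, since \(\widetilde\nu_\ell(J_v)\) is a finite sum of products of the weights \(U_w\).
\item The event \(\mathcal G^{\mathrm{lim}}_n\) is likewise a countable intersection of the events \(\{\widetilde\nu(J)\le T_{k,n}\}\). Each \(\widetilde\nu(J)\) is measurable because \(\widetilde\nu(J)=2^{-k}Q_v\widetilde Y^{(v)}\), and \(\widetilde Y^{(v)}\) is an almost sure limit of measurable martingales. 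This identity holds on the full-probability event where no dyadic endpoints are charged.
\end{enumerate}

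Third, the first Borel--Cantelli lemma gives \(\Pbb(\mathcal G_n^c \text{ i.o.})=0\). Off this null set there are only finitely many \(n\) with \(\mathcal G_n^c\). I then set \(N_{\mathcal G}\) to be one more than the largest such \(n\), and \(N_{\mathcal G}=1\) if there is none. This \(N_{\mathcal G}\) is a finite random integer, and \(\mathcal G_n\) holds for every \(n\ge N_{\mathcal G}\).

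There is no real obstacle here. The only point needing care is the measurability in the second step, and it is routine.
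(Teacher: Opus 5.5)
Your proposal is correct and matches the paper's argument: the paper also derives the corollary directly from the summable bound \(\Pbb(\mathcal G_n^c)\le C2^{-cn}\) of Proposition~\ref{prop:local-mass-good-events-paper} together with the first Borel--Cantelli lemma. Your measurability check for \(\mathcal G_n^{\mathrm{pre}}\) and \(\mathcal G_n^{\mathrm{lim}}\) is a harmless addition that the paper leaves implicit.
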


\subsection{Stationary tubes and phase bins}
\label{subsec:stationary-tubes-phase-bins-paper}

Fix \(\xi\in\R^2\setminus\{0\}\), and write \(\Lambda=\lvert\xi\rvert\).  Set
\[
\phi_\xi(t)=-2\pi\,\xi\cdot f(t),
\qquad
f(t)=(\cos2\pi t,\sin2\pi t).
\]
Then
\[
\wh{\nu_\circ}(\xi)
=
\int_0^1 e^{i\phi_\xi(t)}\,d\widetilde\nu(t).
\]
If \(\xi=\Lambda(\cos\theta,\sin\theta)\), then
\[
\phi_\xi(t)=-2\pi\Lambda\cos(2\pi t-\theta),
\]
and
\[
\phi_\xi'(t)=4\pi^2\Lambda\sin(2\pi t-\theta),
\qquad
\phi_\xi''(t)=8\pi^3\Lambda\cos(2\pi t-\theta).
\]
The stationary set \(Z_\xi=\{t\in\R/\Z:\phi_\xi'(t)=0\}\) consists of two points.

We shall also use
\begin{equation}\label{eq:theta-bigger-than-s-over-two-paper}
\vartheta
=
1-\frac{s}{2a}-\frac{8\varepsilon}{a}
>
\frac{s}{2}.
\end{equation}
Indeed, at \(\varepsilon=0\), one has \(1-s/(2a)>1/2>s/2\), since \(a>s\) and \(s<1\); then choose \(\varepsilon>0\) sufficiently small.

\begin{lemma}
\label{lem:stationary-derivative-band-partition-paper}
There exists an absolute constant \(C\ge1\) such that, for every \(\xi\neq0\), there exist nonnegative \(C^1\) functions
\[
\chi_{\xi,\mathrm{stat}},
\qquad
\chi_{\xi,d}\quad(d\in\mathfrak D_\xi),
\]
on \(\R/\Z\), where \(\mathfrak D_\xi\) is a finite set of dyadic numbers, satisfying the following properties.

\begin{enumerate}[label=\textup{(\roman*)}]
\item Partition of unity:
\[
\chi_{\xi,\mathrm{stat}}(t)+\sum_{d\in\mathfrak D_\xi}\chi_{\xi,d}(t)=1
\qquad(t\in\R/\Z).
\]

\item Number of bands:
\[
\#\mathfrak D_\xi\le C(1+\log_2(2+\Lambda)).
\]

\item Stationary tube:
\[
\spt\chi_{\xi,\mathrm{stat}}
\subset
\{t:\operatorname{dist}(t,Z_\xi)\le C\Lambda^{-1/2}\}.
\]

\item Localization of derivative bands:
for every \(d\in\mathfrak D_\xi\),
\[
C^{-1}\Lambda^{-1/2}\le d\le C,
\]
and
\[
\spt\chi_{\xi,d}
\subset
\{t:C^{-1}d\le\operatorname{dist}(t,Z_\xi)\le Cd\}.
\]

\item Derivative size:
on \(\spt\chi_{\xi,d}\),
\[
C^{-1}\Lambda d\le \lvert\phi_\xi'(t)\rvert\le C\Lambda d.
\]

\item Cutoff derivative:
\[
\lvert\chi_{\xi,d}'(t)\rvert\le Cd^{-1}.
\]

\item Bounded overlap:
\[
\sum_{d\in\mathfrak D_\xi}\one_{\spt\chi_{\xi,d}}(t)\le C
\qquad(t\in\R/\Z).
\]
\end{enumerate}
\end{lemma}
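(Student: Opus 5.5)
We construct the partition explicitly from a smooth dyadic partition of unity in the variable \(\operatorname{dist}(t,Z_\xi)\). First we reduce to geometry: writing \(\xi=\Lambda(\cos\theta,\sin\theta)\), the stationary set is \(Z_\xi=\{t: 2\pi t-\theta\in\pi\Z\}\), two antipodal points at distance \(1/2\). Set \(\delta(t)=\operatorname{dist}(t,Z_\xi)\in[0,1/4]\). Since \(|\sin(2\pi t-\theta)|=|\sin 2\pi\delta(t)|\) and \(\frac{2}{\pi}x\le\sin x\le x\) on \([0,\pi/2]\), we get
\[
4\pi\delta(t)\le \frac{|\phi_\xi'(t)|}{4\pi^2\Lambda}\cdot 2\pi\le 2\pi^2\cdot 2\delta(t),
\]
that is, \(|\phi_\xi'(t)|\asymp \Lambda\,\delta(t)\) with absolute constants. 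This single comparability gives (v) once the bands are localized in \(\delta\). The function \(\delta\) is only Lipschitz at its maxima \(\delta=1/4\) and at \(Z_\xi\), so we will compose the cutoffs only with \(\delta\) on regions where \(\delta\) is smooth, or equivalently use the smooth function \(g(t)=|\sin(2\pi t-\theta)|\) near \(Z_\xi\) and a constant cutoff near \(\delta=1/4\).

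Next we build the cutoffs. Fix a smooth \(\psi:\R\to[0,1]\) with \(\psi=1\) on \((-\infty,1]\), \(\psi=0\) on \([2,\infty)\), and put \(\beta_j(x)=\psi(2^{-j}x)-\psi(2^{-j+1}x)\), supported in \(2^{j-1}\le x\le 2^{j+1}\). Let \(j_0\) be the least integer with \(2^{j_0}\ge\Lambda^{-1/2}\), and set \(\chi_{\xi,\mathrm{stat}}(t)=\psi(2^{-j_0}\delta(t))\). For \(j_0<j\le 0\) set \(\chi_{\xi,2^{j}}(t)=\beta_j(\delta(t))\), but replace the top band by \(1-\psi(2^{-j}\delta(t))\) when \(j=0\) so that the sum telescopes exactly to \(1\). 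Because \(\delta\le1/4\), the top term \(1-\psi(\delta)\) vanishes identically and the sum is a finite telescoping sum equal to \(1\), which gives (i). Here \(\mathfrak D_\xi=\{2^j:j_0<j\le0\}\) has at most \(\tfrac12\log_2\Lambda+O(1)\) elements, giving (ii); when \(\Lambda\) is small we simply take \(\chi_{\xi,\mathrm{stat}}\equiv1\) and \(C\) large. The supports give (iii) with \(\delta\le 2^{j_0+1}\le 4\Lambda^{-1/2}\), and (iv) with \(2^{j-1}\le\delta\le 2^{j+1}\) and \(d=2^j\ge\Lambda^{-1/2}\). Each \(t\) lies in at most three bands, which is (vii).

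The remaining points are regularity and the derivative bound. Near \(Z_\xi\) (say \(\delta<1/8\)), \(\delta\) is smooth with \(|\delta'|=1\). The functions \(\psi(2^{-j}\delta)\) with \(2^{j+1}\le 1/8\) are therefore \(C^\infty\), and the chain rule gives \(|\chi'_{\xi,d}|\le \|\psi'\|_\infty 2^{-j}\cdot 2\lesssim d^{-1}\), which is (vi). The only issue is for the few top bands \(j\in\{-2,-1,0\}\) whose support may reach \(\delta=1/4\), where \(\delta\) has a corner. We handle this by noting that \(\psi(2^{-j}\delta)\equiv1\) where \(\delta\le 2^{j}\); for \(j\ge -2\) this already covers \(\delta\le1/4\) when \(j\ge -2\) with \(2^j\ge1/4\). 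For \(j=-3\), we instead compose with the smooth proxy \(\tilde\delta(t)=\frac1{2\pi}\arcsin g(t)\)'s smooth modification, or simply replace \(\delta\) by a fixed smooth function equal to \(\delta\) on \(\{\delta\le 3/16\}\) and taking values in \([3/16,1/4]\) elsewhere. This modification preserves the support and comparability statements up to constants.

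I expect this corner bookkeeping, ensuring global \(C^1\) regularity on \(\R/\Z\) uniformly in \(\xi\), to be the only genuine obstacle. Everything else is the standard dyadic partition together with the elementary estimate \(|\phi_\xi'|\asymp\Lambda\delta\). All constants are independent of \(\xi\) by rotation invariance: the construction depends on \(\xi\) only through a translation by \(\theta/2\pi\) and the scale \(\Lambda\).
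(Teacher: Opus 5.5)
Your construction is correct, but it uses a different radial variable from the paper.

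\textbf{Comparison with the paper.} The paper works with the globally smooth function $h_\xi(t)=\sin^2(2\pi t-\theta)$. It sets $\chi_{\xi,\mathrm{stat}}=\psi(\Lambda h_\xi)$ and $\chi_{\xi,d}=(1-\chi_{\xi,\mathrm{stat}})\,\zeta(h_\xi/d^2)$, so $C^\infty$ regularity on $\R/\Z$ is automatic. The price is the product form: (vi) needs a separate check, via $|h_\xi'|\le Ch_\xi^{1/2}$ and the observation that $\chi_{\xi,\mathrm{stat}}'$ only meets bands with $d\asymp\Lambda^{-1/2}$.

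You instead telescope dyadic cutoffs in $\delta=\operatorname{dist}(t,Z_\xi)$, with the stationary cutoff as the bottom term. Then (i) is an exact telescoping identity, (v) follows from $|\sin(2\pi t-\theta)|=\sin 2\pi\delta(t)$, and (vi) is just the chain rule with $|\delta'|=1$. The cost is that you work with a function that is only Lipschitz. The paper's choice avoids all regularity bookkeeping and does not depend on special features of the circle. Yours gives (i) and (vi) with essentially no computation.

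\textbf{Your final paragraph should be discarded, not patched.} The corner bookkeeping you flag as the main obstacle is not an obstacle.
\begin{itemize}
\item At $Z_\xi$ itself, $\delta$ does have a corner, so it is not smooth on $\{\delta<1/8\}$ as you claim. This is harmless, because every cutoff is identically $1$ or $0$ near $\delta=0$.
\item At the midpoints, $\delta$ attains the value $1/4=2^{-2}$. Each $\psi(2^{-j}\,\cdot)$ is non-constant only on the open interval $(2^j,2^{j+1})$, which never contains $2^{-2}$. Moreover, $\psi$ has all derivatives equal to $0$ at $1$ and at $2$. So, for example, near a midpoint $t^*$ one has $\psi(8\delta(t))=\psi(2-8|t-t^*|)$, which is $C^\infty$.
\end{itemize}
Hence your unmodified cutoffs are already $C^\infty$ on $\R/\Z$.

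\textbf{The repairs you propose are also flawed as written.}
\begin{itemize}
\item The proxy $\frac1{2\pi}\arcsin|\sin(2\pi t-\theta)|$ equals $\delta(t)$ exactly, so it changes nothing.
\item Replacing $\delta$ only in the $j=-3$ band would break (i). The $j=-2$ band is $\psi(4\delta)-\psi(8\delta)=1-\psi(8\delta)$, which contains the same term $\psi(8\delta)$, so the telescoping would no longer cancel.
\end{itemize}
If you want a modification at all, substitute a smooth $\tilde\delta\asymp\delta$ with $|\tilde\delta'|\le C$ and $\tilde\delta\le 1$ into every cutoff simultaneously. The identity $\chi_{\xi,\mathrm{stat}}+\sum_j\beta_j(\tilde\delta)=\psi(\tilde\delta)=1$ then survives.
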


\begin{proof}
We first treat the low-frequency case.  If \(\Lambda\le
\Lambda_0\), where \(\Lambda_0\) is a sufficiently large absolute constant,
we simply take
\[
\chi_{\xi,\mathrm{stat}}\equiv 1,
\qquad
\mathfrak D_\xi=\varnothing.
\]
Then all assertions are immediate after enlarging the absolute constant
\(C\).

Assume from now on that \(\Lambda>\Lambda_0\).  Let \(h_\xi(t)=\sin^2(2\pi t-\theta)\). Since \(Z_\xi\) is the zero set of \(\sin(2\pi t-\theta)\), we have the
uniform comparison
\[
h_\xi(t)^{1/2}\asymp \operatorname{dist}(t,Z_\xi)
\qquad(t\in\mathbb R/\mathbb Z).
\]
Choose a nonnegative \(C^\infty\) cutoff \(\psi\) on \([0,\infty)\) such that
\[
\psi(u)=1\quad(0\le u\le1),
\qquad
\psi(u)=0\quad(u\ge4),
\]
and set
\[
\chi_{\xi,\mathrm{stat}}(t)
=
\psi\bigl(\Lambda h_\xi(t)\bigr).
\]
Then
\[
\spt\chi_{\xi,\mathrm{stat}}
\subset
\{t:h_\xi(t)\le 4\Lambda^{-1}\}
\subset
\{t:\operatorname{dist}(t,Z_\xi)\le C\Lambda^{-1/2}\}.
\]

Next choose a smooth dyadic partition of unity on \((0,\infty)\). Let \(\rho:[0,\infty)\to[0,1]\) be nonincreasing and \(C^\infty\), with
\[
\rho(u)=1\quad(0\le u\le1),
\qquad
\rho(u)=0\quad(u\ge4),
\]
and set \(
\zeta(u)=\rho(u)-\rho(4u)\). 
Then \(\zeta\ge0\), \(\spt\zeta\subset[1/4,4]\), and, writing \(d=2^j\),
\[
\sum_{d\in2^{\mathbb Z}}\zeta\left(\frac{u}{d^2}\right)
=
\sum_{j\in\mathbb Z}
\left[
\rho\left(\frac{u}{4^j}\right)
-
\rho\left(\frac{u}{4^{j-1}}\right)
\right] 
=1,
\qquad u>0,
\]
by telescoping. This provides the required \(C^\infty\) dyadic partition of
unity.

Let \(\mathfrak D_\xi\) be the finite set of dyadic numbers \(d\) for which
\[
\zeta\left(\frac{h_\xi(t)}{d^2}\right)
\]
can be nonzero for some \(t\) with \(1-\chi_{\xi,\mathrm{stat}}(t)\neq0\).
For \(d\in\mathfrak D_\xi\), define
\[
\chi_{\xi,d}(t)
=
(1-\chi_{\xi,\mathrm{stat}}(t))
\zeta\left(\frac{h_\xi(t)}{d^2}\right).
\]
Then, for every \(t\),
\[
\chi_{\xi,\mathrm{stat}}(t)
+
\sum_{d\in\mathfrak D_\xi}\chi_{\xi,d}(t)
=
1.
\]

The support condition for the derivative bands follows from the support of
\(\zeta\).  If \(t\in\spt\chi_{\xi,d}\), then \(
h_\xi(t)\asymp d^2\), and hence \(
\operatorname{dist}(t,Z_\xi)\asymp d\). 
Thus
\[
\spt\chi_{\xi,d}
\subset
\{t:C^{-1}d\le \operatorname{dist}(t,Z_\xi)\le Cd\}.
\]
Moreover, since \(1-\chi_{\xi,\mathrm{stat}}\) vanishes inside a stationary
tube of radius \(\asymp\Lambda^{-1/2}\), any such \(d\) satisfies
\[
C^{-1}\Lambda^{-1/2}\le d\le C.
\]
It follows that the number of possible dyadic scales is bounded by
\[
\#\mathfrak D_\xi
\le
C(1+\log_2(2+\Lambda)).
\]

We next verify the derivative size.  Since
\[
\lvert\phi_\xi'(t)\rvert
=
4\pi^2\Lambda\lvert\sin(2\pi t-\theta)\rvert
=
4\pi^2\Lambda h_\xi(t)^{1/2},
\]
and since \(h_\xi(t)^{1/2}\asymp d\) on \(\spt\chi_{\xi,d}\), we get
\[
C^{-1}\Lambda d
\le
\lvert\phi_\xi'(t)\rvert
\le
C\Lambda d
\qquad(t\in\spt\chi_{\xi,d}).
\]
It remains to prove the cutoff derivative bounds. We have
\[
\lvert h_\xi'(t)\rvert\le C h_\xi(t)^{1/2}.
\]
On the support of \(\zeta(h_\xi/d^2)\), we obtain
\[
\left\lvert
\frac{d}{dt}\zeta\left(\frac{h_\xi(t)}{d^2}\right)
\right\rvert
\le
C\frac{\lvert h_\xi'(t)\rvert}{d^2}
\le
Cd^{-1}.
\]
Also,
\[
\lvert\chi_{\xi,\mathrm{stat}}'(t)\rvert
\le
C\Lambda \lvert h_\xi'(t)\rvert.
\]
This derivative is supported where \(h_\xi(t)\asymp\Lambda^{-1}\), and hence
where \(h_\xi(t)^{1/2}\asymp\Lambda^{-1/2}\).
If this region intersects the
support of \(\zeta(h_\xi/d^2)\), then \(d\asymp\Lambda^{-1/2}\), so
\[
\lvert\chi_{\xi,\mathrm{stat}}'(t)\rvert
\le
C\Lambda h_\xi(t)^{1/2}
\le
C\Lambda^{1/2}
\le
Cd^{-1}.
\]
Combining the two derivative estimates, we obtain \(
\lvert\chi_{\xi,d}'(t)\rvert\le Cd^{-1}\).

Finally, the bounded overlap follows from the dyadic construction: for each
fixed \(t\), the condition
\[
\zeta\left(\frac{h_\xi(t)}{d^2}\right)\neq0
\]
can hold for only \(O(1)\) dyadic values of \(d\).  Hence
\[
\sum_{d\in\mathfrak D_\xi}\one_{\spt\chi_{\xi,d}}(t)\le C.
\]
This proves all the stated properties.
\end{proof}

\begin{lemma}
\label{lem:stationary-tube-estimate-paper}
Assume \(
\mathcal G_n^{\mathrm{lim}}\) holds.  If \(
2^n\le\lvert\xi\rvert\le2^{n+1}\), then
\begin{equation}\label{eq:stationary-tube-estimate-paper}
\left|
\int e^{i\phi_\xi(t)}
\chi_{\xi,\mathrm{stat}}(t)\,d\widetilde\nu(t)
\right|
\le
C2^{-sn/2}2^{-c_{\mathrm{stat}}n}
\end{equation}
for some \(c_{\mathrm{stat}}>0\).
\end{lemma}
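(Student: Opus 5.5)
The plan is a trivial bound: the absolute value of the integral is at most \(\widetilde\nu(\spt\chi_{\xi,\mathrm{stat}})\), since \(0\le\chi_{\xi,\mathrm{stat}}\le1\) and \(\lvert e^{i\phi_\xi}\rvert=1\). We then control this mass with the limiting Frostman bound available on \(\mathcal G_n^{\mathrm{lim}}\). No oscillation is used in the stationary tube; the gain comes entirely from the local-mass exponent \(a=s+\delta_1\) exceeding \(s\).

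First, \(\chi_{\xi,\mathrm{stat}}\) is nonnegative and bounded by \(1\). In the construction of Lemma~\ref{lem:stationary-derivative-band-partition-paper} it equals \(\psi(\Lambda h_\xi)\) with \(0\le\psi\le1\), or is identically \(1\) at low frequency. We choose \(\psi\) with values in \([0,1]\) explicitly, which is harmless. By part (iii) of that lemma, \(\spt\chi_{\xi,\mathrm{stat}}\) lies in the \(C\Lambda^{-1/2}\)-neighbourhood of the two-point set \(Z_\xi\). This neighbourhood is a union of at most two arcs in \(\R/\Z\), each of length at most \(2C\Lambda^{-1/2}\). Each arc, after possibly splitting at the wrap-around point \(0\), is a union of at most two intervals \(B\subset[0,1)\) with \(\lvert B\rvert\le 2C\Lambda^{-1/2}\).

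Next we apply Lemma~\ref{lem:frostman-consequences-good-event-paper}(i). Its proof uses only \(\mathcal G_n^{\mathrm{lim}}\), so it is available under the present hypothesis. It gives \(\widetilde\nu(B)\le C2^{\varepsilon n}\lvert B\rvert^a\). With \(\Lambda\ge 2^n\) we obtain
\[
\left|\int e^{i\phi_\xi}\chi_{\xi,\mathrm{stat}}\,d\widetilde\nu\right|
\le
4C2^{\varepsilon n}(2C)^a\Lambda^{-a/2}
\le
C'2^{\varepsilon n}2^{-an/2}
=
C'2^{-sn/2}2^{-(\delta_1/2-\varepsilon)n}.
\]
Lemma~\ref{lem:compatible-parameter-choice-paper} ensures \(20\varepsilon<\delta_1/2\), so \(c_{\mathrm{stat}}:=\delta_1/2-\varepsilon>0\) works.

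The only point needing care is the low-frequency case \(\Lambda\le\Lambda_0\), where \(\chi_{\xi,\mathrm{stat}}\equiv1\). There the integral is bounded by the total mass \(\le 2^{\varepsilon n}\). Since then \(2^n\le\Lambda_0\), only finitely many \(n\) occur, and these are absorbed into the constant \(C\). There is no real obstacle in this estimate.
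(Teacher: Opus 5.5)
Your proposal is correct and is essentially the paper's proof: you bound the integral by the \(\widetilde\nu\)-mass of the stationary tube, apply the limiting Frostman bound \eqref{eq:limiting-frostman-good-event-paper} (which indeed uses only \(\mathcal G_n^{\mathrm{lim}}\)) to at most a bounded number of intervals of length \(O(\Lambda^{-1/2})\), and take \(c_{\mathrm{stat}}=\delta_1/2-\varepsilon\). Your remarks on \(0\le\chi_{\xi,\mathrm{stat}}\le1\), the wrap-around splitting, and the low-frequency case make explicit details that the paper leaves implicit.
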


\begin{proof}
The absolute value is at most \(\widetilde\nu(\spt\chi_{\xi,\mathrm{stat}})\).
By Lemma~\ref{lem:stationary-derivative-band-partition-paper}, this support is contained in the union of two intervals, each of length at most \(C\Lambda^{-1/2}\). By \eqref{eq:limiting-frostman-good-event-paper},
\[
\widetilde\nu(\spt\chi_{\xi,\mathrm{stat}})
\le
C2^{\varepsilon n}\Lambda^{-a/2}.
\]
Since \(\Lambda\ge2^n\),
\[
2^{\varepsilon n}\Lambda^{-a/2}
\le
2^{-sn/2}2^{-(\delta_1/2-\varepsilon)n}.
\]
The parameter choice ensures \(\delta_1/2-\varepsilon>0\), so \eqref{eq:stationary-tube-estimate-paper} holds with \(c_{\mathrm{stat}}=\delta_1/2-\varepsilon\).
\end{proof}

We next separate derivative bands into those that are already small by local mass and those where oscillation is needed.

\begin{definition}
\label{def:mass-only-oscillatory-bands-paper}
Let \(2^n\le\lvert\xi\rvert\le2^{n+1}\).  
A derivative band \(d\in\mathfrak D_\xi\) is called mass-only at annular scale \(n\) if 
\[d^a\le2^{-sn/2}2^{-8\varepsilon n}.\]

It is called oscillatory, or non-mass, if \(d^a>2^{-sn/2}2^{-8\varepsilon n}\).

We denote the corresponding families by \(\mathfrak D_{\xi,n}^{\mathrm{mass}}\) and \(\mathfrak D_{\xi,n}^{\mathrm{osc}}\), respectively.
\end{definition}

\begin{lemma}
\label{lem:mass-only-band-estimate-paper}
Assume \(\mathcal G_n^{\mathrm{lim}}\) holds. If \(2^n\le\lvert\xi\rvert\le2^{n+1}\), then
\begin{equation}\label{eq:mass-only-band-sum-estimate-paper}
\sum_{d\in\mathfrak D_{\xi,n}^{\mathrm{mass}}}
\left|
\int e^{i\phi_\xi(t)}\chi_{\xi,d}(t)\,d\widetilde\nu(t)
\right|
\le
C2^{-sn/2}2^{-6\varepsilon n}.
\end{equation}
\end{lemma}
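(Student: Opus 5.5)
Each mass-only band is bounded by its total mass, using the Frostman bound \eqref{eq:limiting-frostman-good-event-paper} available on \(\mathcal G_n^{\mathrm{lim}}\); the resulting geometric series over the mass-only scales is then summed. Since \(0\le\chi_{\xi,d}\le 1\), we have
\[
\left|\int e^{i\phi_\xi(t)}\chi_{\xi,d}(t)\,d\widetilde\nu(t)\right|
\le \widetilde\nu(\spt\chi_{\xi,d}).
\]
By Lemma~\ref{lem:stationary-derivative-band-partition-paper}(iv), \(\spt\chi_{\xi,d}\) lies in \(\{t:\operatorname{dist}(t,Z_\xi)\le Cd\}\). Because \(Z_\xi\) consists of two points, this set is a union of at most two intervals (up to wrap-around on \(\R/\Z\)) of length at most \(2Cd\). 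Applying \eqref{eq:limiting-frostman-good-event-paper} to each interval gives
\[
\widetilde\nu(\spt\chi_{\xi,d})\le C2^{\varepsilon n}d^a .
\]

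Next we sum over \(d\in\mathfrak D_{\xi,n}^{\mathrm{mass}}\). These are dyadic numbers \(d=2^{j}\) satisfying \(d^a\le 2^{-sn/2}2^{-8\varepsilon n}\). Since \(a>0\) is fixed, the values \(d^a\) over these dyadic \(d\) form a geometric sequence with ratio \(2^{-a}\), so their sum is dominated by the largest term:
\[
\sum_{d\in\mathfrak D_{\xi,n}^{\mathrm{mass}}} d^a\le \frac{1}{1-2^{-a}}\,2^{-sn/2}2^{-8\varepsilon n}.
\]
Multiplying by \(C2^{\varepsilon n}\) gives the bound \(C_a 2^{-sn/2}2^{-7\varepsilon n}\), which is at most \(C2^{-sn/2}2^{-6\varepsilon n}\). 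Here \(C_a\) depends only on \(a=s+\delta_1\), which is a fixed parameter, so the constant is admissible.

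Alternatively, one can bound the number of bands by \(\#\mathfrak D_\xi\le C(1+\log_2(2+\Lambda))\le Cn\) (Lemma~\ref{lem:stationary-derivative-band-partition-paper}(ii)) and absorb the factor \(n\) into \(2^{\varepsilon n}\). This also explains the stated margin of \(6\varepsilon\) rather than \(7\varepsilon\). There is no real obstacle in either route. The only points needing care are that \(\spt\chi_{\xi,d}\) is covered by \(O(1)\) intervals, which follows from the two stationary points and the annular localization, and that the constant in \eqref{eq:limiting-frostman-good-event-paper} is uniform over intervals. Both facts are already recorded in Lemma~\ref{lem:frostman-consequences-good-event-paper} and Lemma~\ref{lem:stationary-derivative-band-partition-paper}.
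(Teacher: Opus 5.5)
Your proof is correct and follows the paper's argument: on \(\mathcal G_n^{\mathrm{lim}}\), each mass-only band is bounded by \(\widetilde\nu(\spt\chi_{\xi,d})\le C2^{\varepsilon n}d^a\le C2^{-sn/2}2^{-7\varepsilon n}\) via the Frostman bound, and the bands are then summed. The paper sums using the \(O(n)\) band count (your alternative route, which is exactly where the \(6\varepsilon\) comes from), while your geometric-series summation over the distinct dyadic scales \(d\) is a valid slight sharpening that avoids the logarithmic loss.
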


\begin{proof}
For a fixed \(d\), the support of \(\chi_{\xi,d}\) is contained in a bounded number of intervals of length at most \(Cd\). By \eqref{eq:limiting-frostman-good-event-paper},
\[
\widetilde\nu(\spt\chi_{\xi,d})
\le
C2^{\varepsilon n}d^a.
\]
If \(d\) is mass-only, then \(d^a\le2^{-sn/2}2^{-8\varepsilon n}\), hence
\[
\left|
\int e^{i\phi_\xi(t)}\chi_{\xi,d}(t)\,d\widetilde\nu(t)
\right|
\le
C2^{-sn/2}2^{-7\varepsilon n}.
\]
There are \(O(n)\) derivative bands in the annulus, and \(n2^{-7\varepsilon n}\le C2^{-6\varepsilon n}\).
 
This proves \eqref{eq:mass-only-band-sum-estimate-paper}.
\end{proof}

\begin{definition}
\label{def:phase-bin-scale-paper}
Let \(d\in\mathfrak D_\xi\).  The phase-bin scale \(m_{\xi,d}\) is the unique integer satisfying 
\[2^{m_{\xi,d}-1}<\Lambda d\leq 2^{m_{\xi,d}}.\]

Thus \(2^{-m_{\xi,d}}\asymp(\Lambda d)^{-1}\).
\end{definition}

\begin{lemma}
\label{lem:nonmass-phase-bin-lower-bound-paper}
Let \(2^n\le\lvert\xi\rvert\le2^{n+1}\). 
If \(d\in\mathfrak D_{\xi,n}^{\mathrm{osc}}\) and \(m=m_{\xi,d}\), then there is an absolute constant \(C_0<\infty\) such that 
\[m\ge \vartheta n-C_0, \qquad\text{where}\quad\vartheta=1-\frac{s}{2a}-\frac{8\varepsilon}{a}.\]
Moreover, \(m\le n+C_0\).
\end{lemma}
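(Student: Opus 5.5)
The plan is to read both bounds off the definition of \(m=m_{\xi,d}\), namely \(2^{m-1}<\Lambda d\le 2^m\). We combine this with \(2^n\le\Lambda\le 2^{n+1}\) and with the two-sided constraints on \(d\): the localization bound \(C^{-1}\Lambda^{-1/2}\le d\le C\) from Lemma~\ref{lem:stationary-derivative-band-partition-paper}(iv), and the non-mass condition \(d^a>2^{-sn/2}2^{-8\varepsilon n}\) from Definition~\ref{def:mass-only-oscillatory-bands-paper}. Taking \(\log_2\) of everything gives the bounds on \(m\) up to additive absolute constants. The constant \(C_0\) will absorb \(\log_2 C\) and the factor \(2\) in \(\Lambda\le 2^{n+1}\).

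For the upper bound, \(d\le C\) gives \(2^{m-1}<\Lambda d\le C2^{n+1}\). Hence \(m< n+2+\log_2 C\), so \(m\le n+C_0\).

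For the lower bound, the oscillatory condition gives \(\log_2 d> -\frac{s}{2a}n-\frac{8\varepsilon}{a}n\). Then
\[
m\ge\log_2(\Lambda d)> n-\frac{s}{2a}n-\frac{8\varepsilon}{a}n=\vartheta n,
\]
using \(\Lambda\ge 2^n\). This already yields \(m\ge\vartheta n-C_0\) for any \(C_0\ge0\).

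The only point needing care is that the constants be absolute and independent of \(\xi\), \(d\), and \(n\). This holds because the constant in (iv) is absolute, and the non-mass threshold involves no random quantities. There is essentially no obstacle: the statement is a bookkeeping consequence of the definitions. One might also note \(d\ge C^{-1}\Lambda^{-1/2}\), which gives \(m\ge n/2-C\). That bound is weaker than \(\vartheta n\) since \(\vartheta\) may exceed \(1/2\), so it is not needed.
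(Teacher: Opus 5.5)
Your proposal is correct and follows the paper's proof essentially verbatim. The non-mass condition together with \(\Lambda\ge2^n\) gives \(\Lambda d\ge2^{\vartheta n}\), hence \(m\ge\vartheta n-C_0\), while \(d\le C\) and \(\Lambda\le2^{n+1}\) give \(\Lambda d\le C2^n\), hence \(m\le n+C_0\).
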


\begin{proof}
Since \(d\) is non-mass,
\[
d^a>2^{-sn/2}2^{-8\varepsilon n},
\qquad
d>2^{-sn/(2a)}2^{-8\varepsilon n/a}.
\]
Using \(\Lambda\ge2^n\),
we get \(\Lambda d\ge2^{\vartheta n}\).
By Definition~\ref{def:phase-bin-scale-paper}, \(m_{\xi,d}\ge\vartheta n-C_0\).
For the upper bound, derivative bands satisfy \(d\le C\), and \(\Lambda\le2^{n+1}\), hence \(\Lambda d\le C2^n\), so \(m_{\xi,d}\le n+C_0\) after increasing \(C_0\).
\end{proof}

Let \(d\in\mathfrak D_\xi\).  For \(I\in\mathcal D_\ell^\circ\) and a child \(J\in\mathcal D_{\ell+1}^\circ\) of \(I\), define 
\[c_J(\xi,d,\ell)
=
2^\ell\int_J e^{i\phi_\xi(t)}\chi_{\xi,d}(t)\,dt.\]

\begin{lemma}
\label{lem:prefix-postbin-coefficient-estimates-paper}
There is an absolute constant \(C<\infty\) such that, for every \(\xi\neq0\), \(d\in\mathfrak D_\xi\), \(\ell\ge0\), and child interval \(J\in\mathcal D_{\ell+1}^\circ\), 
\[
\lvert c_J(\xi,d,\ell)\rvert
\le C2^{\ell-m_{\xi,d}}
\qquad
(0\le\ell<m_{\xi,d}),
\]
and
\[
\lvert c_J(\xi,d,\ell)\rvert\le C
\qquad
(\ell\ge m_{\xi,d}).
\]
\end{lemma}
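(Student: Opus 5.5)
The post-bin bound is trivial. Since \(\chi_{\xi,d}\) takes values in \([0,1]\) (it is a piece of a partition of unity with nonnegative pieces) and \(\lvert J\rvert=2^{-\ell-1}\), we get \(\lvert c_J\rvert\le 2^\ell\lvert J\rvert=1/2\) for every \(\ell\). In particular this holds for \(\ell\ge m_{\xi,d}\).

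For the pre-bin range \(0\le\ell<m_{\xi,d}\), the plan is a single integration by parts on \(J\) using the nonstationary phase. Write
\[
e^{i\phi_\xi}\chi_{\xi,d}=\frac{\chi_{\xi,d}}{i\phi_\xi'}\,\frac{d}{dt}e^{i\phi_\xi}.
\]
This is legitimate on \(\spt\chi_{\xi,d}\), where \(\lvert\phi_\xi'\rvert\ge C^{-1}\Lambda d\) by Lemma~\ref{lem:stationary-derivative-band-partition-paper}(v). If \(J\cap\spt\chi_{\xi,d}=\varnothing\) the coefficient vanishes. Otherwise we integrate by parts on \(J\), which gives two boundary terms plus the integral of \(\bigl(\chi_{\xi,d}/\phi_\xi'\bigr)'\).

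Each boundary term is at most \(\sup\lvert\chi_{\xi,d}/\phi_\xi'\rvert\le C(\Lambda d)^{-1}\). One point needs care: the function \(\chi_{\xi,d}/\phi_\xi'\), extended by zero, is \(C^1\) on all of \(\R/\Z\). This holds because \(\chi_{\xi,d}\) vanishes near \(Z_\xi\), so there is no singularity at the stationary points.

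For the interior term we use
\[
\Bigl(\frac{\chi}{\phi'}\Bigr)'=\frac{\chi'}{\phi'}-\frac{\chi\,\phi''}{(\phi')^2}.
\]
By Lemma~\ref{lem:stationary-derivative-band-partition-paper}(vi), \(\lvert\chi'\rvert\le Cd^{-1}\). Also \(\lvert\phi''\rvert\le C\Lambda\). Hence on the support
\[
\Bigl\lvert\Bigl(\frac{\chi}{\phi'}\Bigr)'\Bigr\rvert\le \frac{C}{\Lambda d^2}+\frac{C\Lambda}{\Lambda^2d^2}=\frac{C}{\Lambda d^2}.
\]
Integrating over \(J\), of length \(2^{-\ell-1}\), gives \(C2^{-\ell}/(\Lambda d^2)\). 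By Lemma~\ref{lem:stationary-derivative-band-partition-paper}(iv), \(d\ge C^{-1}\Lambda^{-1/2}\), so \(\Lambda d^2\ge C^{-1}\) and the interior term is \(\le C2^{-\ell}\). That is too weak on its own. The fix is to compare \(2^{-\ell}\) with \(d\).

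On the support, \(d\) is the scale over which \(\chi\) and \(\phi'\) vary relative to their size: \(\lvert\chi'\rvert\lesssim d^{-1}\), and \(\lvert\phi''\rvert/\lvert\phi'\rvert\lesssim \Lambda/(\Lambda d)=d^{-1}\). So the interior term is really bounded by \(C\lvert J\rvert\, d^{-1}(\Lambda d)^{-1}\). When \(\lvert J\rvert\lesssim d\) this is \(\le C(\Lambda d)^{-1}\).

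Collecting terms in that case,
\[
\lvert c_J\rvert\le 2^\ell\cdot C(\Lambda d)^{-1}\le C2^{\ell-m_{\xi,d}},
\]
using \(2^{m_{\xi,d}}\asymp\Lambda d\) from Definition~\ref{def:phase-bin-scale-paper}.

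The main obstacle is the remaining case \(\lvert J\rvert\gg d\), i.e. \(2^{-\ell}\gg d\). There \(J\) is long compared with the band width. However, \(\spt\chi_{\xi,d}\) consists of \(O(1)\) intervals of length \(\le Cd\), by Lemma~\ref{lem:stationary-derivative-band-partition-paper}(iv). So I would not integrate by parts over all of \(J\), but only over \(J\cap\spt\chi_{\xi,d}\), a bounded number of intervals each of length \(\lesssim d\). On each of these the estimate above gives \(C(\Lambda d)^{-1}\), and multiplying by \(2^\ell\) yields \(C2^{\ell-m_{\xi,d}}\) uniformly.

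Finally, the constant is absolute. All the inputs (support geometry, \(\lvert\chi'\rvert\lesssim d^{-1}\), the bounds on \(\phi'\) and \(\phi''\)) come from Lemma~\ref{lem:stationary-derivative-band-partition-paper} with absolute constants, and the low-frequency case \(\mathfrak D_\xi=\varnothing\) is vacuous.
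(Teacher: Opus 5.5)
Your proposal is correct and follows the paper's proof essentially verbatim. The post-bin range uses the trivial bound \(\lvert c_J\rvert\le 2^\ell\lvert J\rvert=1/2\). For the prefix range you integrate by parts once on the \(O(1)\) pieces of \(J\cap\spt\chi_{\xi,d}\), getting boundary terms of size \(O((\Lambda d)^{-1})\) and interior terms of size \(O((\Lambda d)^{-1})\) because the support has length \(O(d)\). Your preliminary case split \(\lvert J\rvert\lesssim d\) versus \(\lvert J\rvert\gg d\) is unnecessary, since the localized estimate already covers every \(\ell\).
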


\begin{proof}
The second estimate is immediate from \(
\lvert c_J\rvert\le2^\ell\lvert J\rvert=1/2\).
 
Assume \(0\le\ell<m_{\xi,d}\). It suffices to prove
\[
\left\lvert
\int_J e^{i\phi_\xi(t)}\chi_{\xi,d}(t)\,dt
\right\rvert
\le
C(\Lambda d)^{-1},
\]
since multiplying by \(2^\ell\) yields \(\lvert c_J\rvert\le C2^\ell(\Lambda d)^{-1}\le C2^{\ell-m_{\xi,d}}\).

Let \(G(t)=\chi_{\xi,d}(t)\).  On \(\spt G\),
\[
\lvert\phi_\xi'(t)\rvert\ge C^{-1}\Lambda d,
\qquad
\lvert G'(t)\rvert\le Cd^{-1},
\qquad
\lvert\phi_\xi''(t)\rvert\le C\Lambda.
\]
The set \(J\cap\spt G\) has a bounded number of connected components. On each component, integration by parts yields
\[
\int e^{i\phi_\xi(t)}G(t)\,dt
=
\left[
\frac{e^{i\phi_\xi(t)}G(t)}{i\phi_\xi'(t)}
\right]_{\mathrm{endpoints}}
-
\int e^{i\phi_\xi(t)}
\left(
\frac{G'(t)}{i\phi_\xi'(t)}
-
\frac{G(t)\phi_\xi''(t)}{i(\phi_\xi'(t))^2}
\right)\,dt.
\]
The boundary term is \(O((\Lambda d)^{-1})\).  Also,
\[
\int_{\spt G}\frac{\lvert G'(t)\rvert}{\lvert\phi_\xi'(t)\rvert}\,dt
\le
C\int_{\spt G}\frac{d^{-1}}{\Lambda d}\,dt
\le
C(\Lambda d)^{-1},
\]
because \(\spt G\) has length \(O(d)\), and
\[
\int_{\spt G}\frac{\lvert G(t)\rvert\,\lvert\phi_\xi''(t)\rvert}{\lvert\phi_\xi'(t)\rvert^2}\,dt
\le
C\int_{\spt G}\frac{\Lambda}{(\Lambda d)^2}\,dt
\le
C(\Lambda d)^{-1}.
\]
Combining these bounds proves the prefix estimate.
\end{proof}

For \(d\in\mathfrak D_\xi\), define
\[
F^{(0)}_{\xi,d}
=
\int_0^1 e^{i\phi_\xi(t)}\chi_{\xi,d}(t)\,dt.
\]

\begin{lemma}
\label{lem:arclength-forcing-nonmass-paper}
If \(2^n\le\lvert\xi\rvert\le2^{n+1}\) and \(d\in\mathfrak D_{\xi,n}^{\mathrm{osc}}\), then
\[
\lvert F^{(0)}_{\xi,d}\rvert
\le
C2^{-sn/2}2^{-c_{\mathrm{arc}}n}
\]
for some \(c_{\mathrm{arc}}>0\).  Consequently,
\[
\sum_{d\in\mathfrak D_{\xi,n}^{\mathrm{osc}}}
\lvert F^{(0)}_{\xi,d}\rvert
\le
C2^{-sn/2}2^{-c_{\mathrm{arc}}n}
\]
after decreasing \(c_{\mathrm{arc}}\) if necessary.
\end{lemma}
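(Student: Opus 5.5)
The estimate is a deterministic non-stationary phase bound, applied to the whole parameter interval \([0,1)\) with arclength (Lebesgue) measure. Fix \(d\in\mathfrak D_{\xi,n}^{\mathrm{osc}}\) and set \(G=\chi_{\xi,d}\). By Lemma~\ref{lem:stationary-derivative-band-partition-paper}, \(\spt G\) consists of a bounded number of intervals of length \(O(d)\). On them we have \(\lvert\phi_\xi'\rvert\ge C^{-1}\Lambda d\), \(\lvert G'\rvert\le Cd^{-1}\) and \(\lvert\phi_\xi''\rvert\le C\Lambda\). Since \(G\) is \(C^1\) and periodic, I integrate by parts once over \(\R/\Z\) with no boundary terms, exactly as in the prefix case of Lemma~\ref{lem:prefix-postbin-coefficient-estimates-paper}. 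The two error integrals are bounded by \(O(d)\cdot d^{-1}(\Lambda d)^{-1}\) and \(O(d)\cdot\Lambda(\Lambda d)^{-2}\). This gives
\[
\lvert F^{(0)}_{\xi,d}\rvert\le C(\Lambda d)^{-1}\le C2^{-m_{\xi,d}}.
\]
(Equivalently, one can apply Lemma~\ref{lem:prefix-postbin-coefficient-estimates-paper} at \(\ell=0\) to the two children of \([0,1)\), which gives \(\lvert F^{(0)}\rvert\le C2^{-m}\) whenever \(m\ge1\).)

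Next, use the oscillatory condition through Lemma~\ref{lem:nonmass-phase-bin-lower-bound-paper}: \(m_{\xi,d}\ge\vartheta n-C_0\). Hence \(\lvert F^{(0)}_{\xi,d}\rvert\le C2^{-\vartheta n}\). By \eqref{eq:theta-bigger-than-s-over-two-paper}, \(\vartheta>s/2\), so the first claim holds with \(c_{\mathrm{arc}}=\vartheta-s/2>0\). An alternative route avoids relying on that display: since \(\Lambda d\ge2^{\vartheta n}\) and \(\vartheta\ge 1-s/(2a)-8\varepsilon/a\) with \(1-s/(2a)>1/2>s/2\), the same margin comes directly from the parameter choices of Lemma~\ref{lem:compatible-parameter-choice-paper}, provided \(\varepsilon\) is small.

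For the sum, Lemma~\ref{lem:stationary-derivative-band-partition-paper}(ii) gives \(\#\mathfrak D_\xi\le C(1+\log_2(2+\Lambda))\le Cn\) in the annulus. Therefore
\[
\sum_{d\in\mathfrak D_{\xi,n}^{\mathrm{osc}}}\lvert F^{(0)}_{\xi,d}\rvert\le Cn\,2^{-sn/2}2^{-c_{\mathrm{arc}}n}.
\]
Absorb the factor \(n\) by halving \(c_{\mathrm{arc}}\) and enlarging \(C\). A sharper geometric-sum bound is also available, since \((\Lambda d)^{-1}\) summed over dyadic \(d\ge d_{\min}\) is comparable to its largest term, but it is not needed.

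The only delicate point is the integration by parts. It must be done on the full circle (or the children of \([0,1)\)) using the cutoff-derivative bound (vi), so that the \(G'\) term costs only \(d^{-1}\) over a support of length \(d\). The comparison \(\vartheta>s/2\) must also hold uniformly. Neither step is a real obstacle: the argument is deterministic and uniform in \(\xi\) in the annulus.
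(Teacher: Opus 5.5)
Your proof is correct and follows the paper's argument. It bounds \(\lvert F^{(0)}_{\xi,d}\rvert\le C(\Lambda d)^{-1}\) by integration by parts, uses the non-mass condition to get \((\Lambda d)^{-1}\lesssim 2^{-\vartheta n}\) with \(\vartheta>s/2\), and absorbs the \(O(n)\) bands into the exponential gain. The differences are cosmetic: you integrate by parts over the full circle without boundary terms, and you pass through \(m_{\xi,d}\ge\vartheta n-C_0\) rather than computing \((\Lambda d)^{-1}\le 2^{-\vartheta n}\) directly from the non-mass inequality.
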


\begin{proof}
The integration-by-parts estimate from Lemma~\ref{lem:prefix-postbin-coefficient-estimates-paper} yields
\[
\lvert F^{(0)}_{\xi,d}\rvert\le C(\Lambda d)^{-1}.
\]
Since \(d\) is non-mass, \(d^{-1}<2^{sn/(2a)}2^{8\varepsilon n/a}\).
Using \(\Lambda\ge2^n\),
\[
(\Lambda d)^{-1}
\le
2^{-n}2^{sn/(2a)}2^{8\varepsilon n/a}
=
2^{-\vartheta n}=
2^{-sn/2}2^{-(\vartheta-s/2)n}.
\]
By \eqref{eq:theta-bigger-than-s-over-two-paper}, \(\vartheta>s/2\). 
This proves the first estimate with \(
c_{\mathrm{arc}}=\vartheta-s/2>0\).
The \(O(n)\) derivative bands are absorbed into the exponential decay after decreasing \(c_{\mathrm{arc}}>0\), if necessary.
\end{proof}

We now define the exact martingale arrays. Let \(I\in\mathcal D_\ell^\circ\), let \(J\subset I\) be a dyadic child, and write \(M_I=\widetilde\nu_\ell(I)\).  If \(J\) corresponds to an extension by \(i\in\{0,1\}\), let \(U_J\) be the fresh scalar weight on that edge.  Then
\[
\widetilde\nu_{\ell+1}(J)=\frac12M_IU_J.
\]

\begin{definition}
\label{def:raw-prefix-postbin-arrays-paper}
Let \(d\in\mathfrak D_\xi\), and set \(m=m_{\xi,d}\).  Define 
\[F^{\mathrm{pre}}_{\xi,d}
=
\sum_{\ell=0}^{m-1}
\sum_{I\in\mathcal D_\ell^\circ}
\sum_{\substack{J\in\mathcal D_{\ell+1}^\circ\\ J\subset I}}
c_J(\xi,d,\ell)M_I(U_J-1).\]
For \(L>m\), define 
\[F^{\mathrm{post}}_{\xi,d,L}
=
\sum_{\ell=m}^{L-1}
\sum_{I\in\mathcal D_\ell^\circ}
\sum_{\substack{J\in\mathcal D_{\ell+1}^\circ\\ J\subset I}}
c_J(\xi,d,\ell)M_I(U_J-1).\]
Whenever the limit exists, set \(F^{\mathrm{post}}_{\xi,d}=\lim_{L\to\infty}F^{\mathrm{post}}_{\xi,d,L}\).
\end{definition}

\begin{lemma}
\label{lem:exact-martingale-decomposition-band-paper}
Let \(d\in\mathfrak D_\xi\).  On the weak-convergence event \(\widetilde\nu_L\weak\widetilde\nu\), the post-bin limit \(F^{\mathrm{post}}_{\xi,d}\) exists, and
\begin{equation}\label{eq:exact-band-decomposition-paper}
\int e^{i\phi_\xi(t)}\chi_{\xi,d}(t)\,d\widetilde\nu(t)
=
F^{(0)}_{\xi,d}
+
F^{\mathrm{pre}}_{\xi,d}
+
F^{\mathrm{post}}_{\xi,d}.
\end{equation}
\end{lemma}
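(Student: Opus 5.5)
The plan is to telescope the band integral through the level measures \(\widetilde\nu_\ell\) and then pass to the limit. Set \(G=\chi_{\xi,d}\) and \(g=e^{i\phi_\xi}G\), and write
\[
\Phi_\ell=\int g\,d\widetilde\nu_\ell .
\]
Since \(\widetilde\nu_0\) is Lebesgue measure on \([0,1)\), we have \(\Phi_0=F^{(0)}_{\xi,d}\). Next we compute the increments \(\Phi_{\ell+1}-\Phi_\ell\). On a level-\(\ell\) interval \(I\) the density of \(\widetilde\nu_\ell\) is the constant \(2^\ell M_I\). On each child \(J\subset I\) the density of \(\widetilde\nu_{\ell+1}\) is \(2^{\ell+1}\widetilde\nu_{\ell+1}(J)=2^{\ell}M_IU_J\), using \(\widetilde\nu_{\ell+1}(J)=\tfrac12M_IU_J\). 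This gives
\[
\Phi_{\ell+1}-\Phi_\ell=\sum_{I\in\mathcal D_\ell^\circ}\sum_{J\subset I}2^\ell\int_J g\,dt\;M_I(U_J-1)=\sum_{I}\sum_{J\subset I}c_J(\xi,d,\ell)M_I(U_J-1).
\]
Summing from \(\ell=0\) to \(L-1\) with \(L>m=m_{\xi,d}\) yields the exact finite identity
\[
\Phi_L=F^{(0)}_{\xi,d}+F^{\mathrm{pre}}_{\xi,d}+F^{\mathrm{post}}_{\xi,d,L}.
\]
This holds pointwise for every \(\omega\), with no probabilistic input.

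Next we pass to the limit \(L\to\infty\). The function \(g\) is continuous on \(\R/\Z\), because \(\chi_{\xi,d}\) is a \(C^1\) function on the circle. Hence \(g\circ\) (the identification \([0,1)\to\R/\Z\)) extends to a continuous function on \([0,1]\) with equal endpoint values. Weak convergence \(\widetilde\nu_L\weak\widetilde\nu\) on \([0,1]\) then gives \(\Phi_L\to\int g\,d\widetilde\nu\). Because \(F^{(0)}_{\xi,d}\) and \(F^{\mathrm{pre}}_{\xi,d}\) are fixed finite sums, \(F^{\mathrm{post}}_{\xi,d,L}=\Phi_L-F^{(0)}_{\xi,d}-F^{\mathrm{pre}}_{\xi,d}\) converges. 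Its limit \(F^{\mathrm{post}}_{\xi,d}\) therefore exists and satisfies \eqref{eq:exact-band-decomposition-paper}.

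The only point needing care is the meaning of weak convergence. For the limit measure \(\widetilde\nu\) on \([0,1)\), \(\int g\,d\widetilde\nu\) should be read through this periodic continuous extension. The mass that \(\widetilde\nu\) could place at the endpoint \(1\) versus \(0\) is immaterial, since \(g(0)=g(1)\). This is consistent with \(\nu_\circ=f_\#\widetilde\nu\), because \(f\) also identifies \(0\) with \(1\). The main obstacle is therefore just this endpoint and continuity bookkeeping. The algebraic telescoping identity is the heart of the argument and is routine.
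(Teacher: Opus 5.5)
Your proposal is correct and follows essentially the same route as the paper. Both telescope \(\int e^{i\phi_\xi}\chi_{\xi,d}\,d\widetilde\nu_L\) through the level measures and use the parent/child densities \(2^\ell M_I\) and \(2^\ell M_IU_J\) to identify each increment as \(c_J(\xi,d,\ell)M_I(U_J-1)\); they then invoke continuity of the integrand and weak convergence to get existence of the post-bin limit. Your remark on the periodic endpoint identification \(g(0)=g(1)\) is a harmless extra precision that the paper leaves implicit.
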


\begin{proof}
Let \(G_{\xi,d}(t)=e^{i\phi_\xi(t)}\chi_{\xi,d}(t)\).
For \(L\ge1\),
\[
\int G_{\xi,d}\,d\widetilde\nu_L
=
\int G_{\xi,d}(t)\,dt
+
\sum_{\ell=0}^{L-1}
\int G_{\xi,d}\,d(\widetilde\nu_{\ell+1}-\widetilde\nu_\ell).
\]
The first term is \(F^{(0)}_{\xi,d}\).  For a parent \(I\in\mathcal D_\ell^\circ\) and child \(J\subset I\), the densities of \(\widetilde\nu_\ell\) on \(I\) and \(\widetilde\nu_{\ell+1}\) on \(J\) are \(2^\ell M_I\) and \(2^\ell M_IU_J\), respectively.  Hence
\[ \int_JG_{\xi,d}\,d(\widetilde\nu_{\ell+1}-\widetilde\nu_\ell) = M_I(U_J-1)\,2^\ell\int_JG_{\xi,d}(t)\,dt=
M_I(U_J-1)\,c_J(\xi,d,\ell). \]
Summing over children and generations, we obtain, for \(L>m_{\xi,d}\),
\[
\int G_{\xi,d}\,d\widetilde\nu_L
=
F^{(0)}_{\xi,d}
+
F^{\mathrm{pre}}_{\xi,d}
+
F^{\mathrm{post}}_{\xi,d,L}.
\]
Since \(G_{\xi,d}\) is continuous and \(
\widetilde\nu_L\weak\widetilde\nu\), the left side converges to \(
\int G_{\xi,d}\,d\widetilde\nu\).  
Thus the post-bin partial sums converge and satisfy \eqref{eq:exact-band-decomposition-paper}.
\end{proof}

We now assemble the deterministic reduction.

\begin{proposition}[Stationary-tube and phase-bin reduction]
\label{prop:stationary-tube-phase-bin-reduction-paper}
Assume \(\mathcal G_n\) holds and \(
2^n\le\lvert\xi\rvert\le2^{n+1}\). 
On the weak-convergence event for \(\widetilde\nu\),
\begin{equation}\label{eq:stationary-phase-bin-reduction-paper}
\wh{\nu_\circ}(\xi)
=
E^{\mathrm{safe}}_{\xi,n}
+
\sum_{d\in\mathfrak D_{\xi,n}^{\mathrm{osc}}}
\left(
F^{\mathrm{pre}}_{\xi,d}
+
F^{\mathrm{post}}_{\xi,d}
\right),
\end{equation}
where 
\[\lvert E^{\mathrm{safe}}_{\xi,n}\rvert
\le
C2^{-sn/2}2^{-c_{\mathrm{safe}}n} \qquad \text{for some}\quad c_{\mathrm{safe}}>0.\]
Moreover, for every \(d\in\mathfrak D_{\xi,n}^{\mathrm{osc}}\), the coefficients satisfy
\[
\lvert c_J(\xi,d,\ell)\rvert\le C2^{\ell-m_{\xi,d}}
\quad(0\le\ell<m_{\xi,d}),
\qquad
\lvert c_J(\xi,d,\ell)\rvert\le C
\quad(\ell\ge m_{\xi,d}).
\]
\end{proposition}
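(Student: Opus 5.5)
The plan is to assemble the decomposition directly from the partition of unity in Lemma~\ref{lem:stationary-derivative-band-partition-paper}, the band-by-band martingale identity of Lemma~\ref{lem:exact-martingale-decomposition-band-paper}, and the three deterministic smallness lemmas already proved (stationary tube, mass-only bands, arclength forcing). Since \(\chi_{\xi,\mathrm{stat}}+\sum_{d\in\mathfrak D_\xi}\chi_{\xi,d}\equiv1\) on \(\R/\Z\) and the sum is finite, linearity of integration against \(\widetilde\nu\) gives
\[
\wh{\nu_\circ}(\xi)=\int e^{i\phi_\xi}\chi_{\xi,\mathrm{stat}}\,d\widetilde\nu+\sum_{d\in\mathfrak D^{\mathrm{mass}}_{\xi,n}}\int e^{i\phi_\xi}\chi_{\xi,d}\,d\widetilde\nu+\sum_{d\in\mathfrak D^{\mathrm{osc}}_{\xi,n}}\int e^{i\phi_\xi}\chi_{\xi,d}\,d\widetilde\nu,
\]
using that \(\mathfrak D_\xi\) is the disjoint union of the mass-only and oscillatory families of Definition~\ref{def:mass-only-oscillatory-bands-paper}.

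For each oscillatory band, I will apply Lemma~\ref{lem:exact-martingale-decomposition-band-paper} on the weak-convergence event to write the band integral as \(F^{(0)}_{\xi,d}+F^{\mathrm{pre}}_{\xi,d}+F^{\mathrm{post}}_{\xi,d}\), with the post-bin limit existing. The safe error is then defined as
\[
E^{\mathrm{safe}}_{\xi,n}=\int e^{i\phi_\xi}\chi_{\xi,\mathrm{stat}}\,d\widetilde\nu+\sum_{d\in\mathfrak D^{\mathrm{mass}}_{\xi,n}}\int e^{i\phi_\xi}\chi_{\xi,d}\,d\widetilde\nu+\sum_{d\in\mathfrak D^{\mathrm{osc}}_{\xi,n}}F^{(0)}_{\xi,d},
\]
which yields \eqref{eq:stationary-phase-bin-reduction-paper} identically.

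The bound on \(E^{\mathrm{safe}}_{\xi,n}\) comes from three results, valid since \(\mathcal G_n\subset\mathcal G^{\mathrm{lim}}_n\). Lemma~\ref{lem:stationary-tube-estimate-paper} bounds the first term by \(C2^{-sn/2}2^{-c_{\mathrm{stat}}n}\). Lemma~\ref{lem:mass-only-band-estimate-paper} bounds the second by \(C2^{-sn/2}2^{-6\varepsilon n}\). Lemma~\ref{lem:arclength-forcing-nonmass-paper} bounds the third by \(C2^{-sn/2}2^{-c_{\mathrm{arc}}n}\). Taking \(c_{\mathrm{safe}}=\min\{c_{\mathrm{stat}},6\varepsilon,c_{\mathrm{arc}}\}>0\) finishes this part. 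The coefficient bounds are exactly Lemma~\ref{lem:prefix-postbin-coefficient-estimates-paper}, which holds for every \(d\in\mathfrak D_\xi\) and in particular for oscillatory ones.

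The only point needing care is that the low-frequency convention (\(\Lambda\le\Lambda_0\), empty band set) is irrelevant here, since \(\Lambda\ge2^n\) and \(n\) is large. For small \(n\) the estimate holds trivially after enlarging \(C\), because \(|\wh{\nu_\circ}|\le 2^{\varepsilon n}\) on \(\mathcal G_n\) and we may place everything into \(E^{\mathrm{safe}}\). There is no real obstacle here: the proposition is pure bookkeeping on top of the previously established lemmas.
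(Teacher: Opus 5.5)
Your proposal is correct and follows the paper's proof essentially verbatim. You split off the stationary tube via the partition of unity, apply Lemma~\ref{lem:exact-martingale-decomposition-band-paper} on the oscillatory bands, and absorb the stationary-tube, mass-only and arclength-forcing terms into \(E^{\mathrm{safe}}_{\xi,n}\) with \(c_{\mathrm{safe}}=\min\{c_{\mathrm{stat}},6\varepsilon,c_{\mathrm{arc}}\}\), quoting the coefficient bounds from Lemma~\ref{lem:prefix-postbin-coefficient-estimates-paper}; your closing remark about small \(n\) is unnecessary, since those three estimates already hold for every \(n\ge1\) with uniform constants (the low-frequency case is built into Lemma~\ref{lem:stationary-derivative-band-partition-paper}).
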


\begin{proof}
By Lemma~\ref{lem:stationary-derivative-band-partition-paper},
\[
1=\chi_{\xi,\mathrm{stat}}+\sum_{d\in\mathfrak D_\xi}\chi_{\xi,d}.
\]
Therefore
\[
\wh{\nu_\circ}(\xi)
=
\int e^{i\phi_\xi(t)}\chi_{\xi,\mathrm{stat}}(t)\,d\widetilde\nu(t)
+
\sum_{d\in\mathfrak D_\xi}
\int e^{i\phi_\xi(t)}\chi_{\xi,d}(t)\,d\widetilde\nu(t).
\]
The stationary term is bounded by Lemma~\ref{lem:stationary-tube-estimate-paper}.  The sum over mass-only bands is bounded by Lemma~\ref{lem:mass-only-band-estimate-paper}. We include these terms in \(E^{\mathrm{safe}}_{\xi,n}\). 

For each non-mass band \(
d\in\mathfrak D_{\xi,n}^{\mathrm{osc}}\), Lemma~\ref{lem:exact-martingale-decomposition-band-paper} provides the decomposition
\[
\int e^{i\phi_\xi(t)}\chi_{\xi,d}(t)\,d\widetilde\nu(t)
=
F^{(0)}_{\xi,d}
+
F^{\mathrm{pre}}_{\xi,d}
+
F^{\mathrm{post}}_{\xi,d}.
\]
The arclength term \(F^{(0)}_{\xi,d}\) is bounded by Lemma~\ref{lem:arclength-forcing-nonmass-paper}; summing over all non-mass bands still contributes a term of size
\(C2^{-sn/2}2^{-c n}\)
for some \(c>0\).  These arclength terms are also included in \(E^{\mathrm{safe}}_{\xi,n}\).

Combining the stationary term, the mass-only band terms, and the arclength forcing terms, we obtain \eqref{eq:stationary-phase-bin-reduction-paper} with
\[
\lvert E^{\mathrm{safe}}_{\xi,n}\rvert
\le
C2^{-sn/2}
\left(
2^{-c_{\mathrm{stat}}n}
+
2^{-6\varepsilon n}
+
2^{-c_{\mathrm{arc}}n}
\right).
\]
After decreasing the exponent and increasing \(C\), this is
\[
\lvert E^{\mathrm{safe}}_{\xi,n}\rvert
\le
C2^{-sn/2}2^{-c_{\mathrm{safe}}n}.
\]
The coefficient estimates are exactly Lemma~\ref{lem:prefix-postbin-coefficient-estimates-paper}.
\end{proof}

\subsection{Predictable capping and martingale concentration}
\label{subsec:predictable-capping-martingale-paper}

We now estimate the martingale arrays appearing in Proposition~\ref{prop:stationary-tube-phase-bin-reduction-paper}.  Since \(U\) is not assumed to be bounded, we truncate each child weight at a predictable cap, center the truncated increment, and then apply martingale concentration.  The resulting compensator terms are estimated in Subsection~\ref{subsec:r-tail-compensator-paper}.

Let \(
\mathcal F_\ell=\sigma\{U_v:1\le \lvert v \rvert\le \ell\}\).
For \(I\in\mathcal D_\ell^\circ\), set \(M_I=\widetilde\nu_\ell(I)\).  
For a child \(J\subset I\), let \(U_J\) denote the fresh weight assigned to the edge \(I\to J\).  Then \(\widetilde\nu_{\ell+1}(J)=M_IU_J/2\).

\begin{definition}[Local-mass stopping time]
\label{def:local-mass-stopping-time-paper}
For \(n\ge1\), define
\[
\tau_n
=
\inf\left\{
\ell\ge0:
\exists\,0\le k\le \ell,\ \exists J\in\mathcal D_k^\circ
\text{ such that }
\widetilde\nu_\ell(J)>T_{k,n}
\right\},
\]
with the convention \(\inf\varnothing=+\infty\).
\end{definition}

\begin{lemma}
\label{lem:stopping-time-localization-paper}
The random variable \(\tau_n\) is a stopping time with respect to the filtration \((\mathcal F_\ell)_{\ell\ge0}\). 
Moreover,
\[
\mathcal G_n^{\mathrm{pre}}=\{\tau_n=\infty\}.
\]
\end{lemma}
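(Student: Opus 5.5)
The plan is to verify the two assertions directly from the definitions, since the lemma is essentially a measurability statement. The first step is to write
\[
\{\tau_n\le \ell\}
=
\bigcup_{j=0}^{\ell}\ \bigcup_{k=0}^{j}\ \bigcup_{J\in\mathcal D_k^\circ}
\bigl\{\widetilde\nu_j(J)>T_{k,n}\bigr\}.
\]
Because \(\tau_n\) is an infimum over a set of integers, \(\tau_n\le\ell\) holds exactly when some \(j\le\ell\) belongs to that set. For \(k\le j\), the quantity \(\widetilde\nu_j(J)\) is the finite sum \(\sum_{v}2^{-j}Q_v\) over the \(2^{j-k}\) words \(v\) of length \(j\) extending the code of \(J\). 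Each \(Q_v\) is a product of \(U_{v|i}\) with \(i\le j\), so \(\widetilde\nu_j(J)\) is \(\mathcal F_j\)-measurable and hence \(\mathcal F_\ell\)-measurable. The displayed set is then a finite union of \(\mathcal F_\ell\)-events, since \(\mathcal D_k^\circ\) is finite and \(j,k\) range over finite sets. This shows that \(\tau_n\) is a stopping time; the value \(+\infty\) is allowed by convention and causes no difficulty.

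For the identity of events, I unwind both definitions.
\begin{itemize}
\item \(\{\tau_n=\infty\}\) holds if and only if, for every \(\ell\ge0\), every \(0\le k\le\ell\) and every \(J\in\mathcal D_k^\circ\), one has \(\widetilde\nu_\ell(J)\le T_{k,n}\).
\item \(\mathcal G_n^{\mathrm{pre}}\) (Definition~\ref{def:local-mass-good-events-paper}) requires, for every \(k\ge0\), every \(J\in\mathcal D_k^\circ\) and every \(\ell\ge k\), that \(\widetilde\nu_\ell(J)\le T_{k,n}\).
\end{itemize}
Both conditions quantify over the same index set \(\{(k,\ell,J):0\le k\le\ell,\ J\in\mathcal D_k^\circ\}\), merely listed in a different order. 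Reindexing therefore gives equality of the two events pointwise on \(\Omega\), not just up to null sets.

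There is no genuine obstacle here. The only point needing care is that the masses \(\widetilde\nu_j\) are the prelimit masses \(\widetilde\nu_j(J)=\sum 2^{-j}Q_v\), which are adapted, and not the limiting masses \(\widetilde\nu(J)\), which are not. This adaptedness is why the stopping time is built from the prelimit event \(\mathcal G_n^{\mathrm{pre}}\).
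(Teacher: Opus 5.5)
Your proposal is correct and follows the same route as the paper. You write \(\{\tau_n\le\ell\}\) as a finite union of events determined by the \(\mathcal F_\ell\)-measurable prelimit masses \(\widetilde\nu_j(J)\), \(0\le k\le j\le\ell\), and you get \(\mathcal G_n^{\mathrm{pre}}=\{\tau_n=\infty\}\) because both events quantify over the same index set \(\{(k,\ell,J):0\le k\le\ell,\ J\in\mathcal D_k^\circ\}\); your version spells out the measurability and reindexing details that the paper leaves to the definitions.
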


\begin{proof}
For fixed \(\ell\), the event \(\{\tau_n\le \ell\}\) is determined by the collection of masses \(\widetilde\nu_j(J)\), \(0\le j\le\ell\), \(J\in\mathcal D_k^\circ\), \(0\le k\le j\), all of which are \(\mathcal F_\ell\)-measurable. Hence \(\tau_n\) is a stopping time. The identity \(\mathcal G_n^{\mathrm{pre}}=\{\tau_n=\infty\}\) follows directly from the definitions.
\end{proof}

\begin{definition}[Predictable cap]
\label{def:predictable-cap-paper}
Let \(I\in\mathcal D_\ell^\circ\).  Define
\[
C_{I,n}
=
\begin{cases}
\displaystyle
L_n\,\frac{2T_{\ell+1,n}}{M_I}, & M_I>0,\\[8pt]
+\infty, & M_I=0.
\end{cases}
\]
For a child \(J\subset I\), set \(
U_{J,n}^{\mathrm{cap}}=U_J\wedge C_{I,n}\), and define
\[
\overline U_{I,n}
=
\E[U_{J,n}^{\mathrm{cap}}\mid\mathcal F_\ell].
\]
This quantity is independent of the choice of the child \(J\subset I\), since the fresh child weights are independent copies of \(U\).
\end{definition}

The cap \(C_{I,n}\) is \(\mathcal F_\ell\)-measurable, whereas \(U_J\) is independent of \(\mathcal F_\ell\).

\begin{definition}[Centered capped increment]
\label{def:centered-capped-increment-paper}
Let \(d\in\mathfrak D_{\xi,n}^{\mathrm{osc}},\) 
and write \(c_J=c_J(\xi,d,\ell)\). For \(I\in\mathcal D_\ell^\circ\) and a child \(J\subset I\), define 
\[X_{J,n}^{\mathrm{cap}}(\xi,d,\ell)
=
c_JM_I
\left(
U_{J,n}^{\mathrm{cap}}-\overline U_{I,n}
\right).\]
\end{definition}

\begin{lemma}
\label{lem:centered-capped-basic-bounds-paper}
For every child \(J\subset I\), \(I\in\mathcal D_\ell^\circ\),
\[
\E\left[
X_{J,n}^{\mathrm{cap}}(\xi,d,\ell)
\mid
\mathcal F_\ell
\right]=0.
\]
Moreover,
\begin{equation}\label{eq:capped-jump-bound-paper}
\lvert X_{J,n}^{\mathrm{cap}}(\xi,d,\ell)\rvert
\le
C\lvert c_J\rvert L_nT_{\ell+1,n}
\end{equation}
almost surely, and
\begin{equation}\label{eq:capped-variance-bound-paper}
\E\left[
\lvert X_{J,n}^{\mathrm{cap}}(\xi,d,\ell)\rvert^2
\mid
\mathcal F_\ell
\right]
\le
CL_n\lvert c_J\rvert^2M_IT_{\ell+1,n}.
\end{equation}
\end{lemma}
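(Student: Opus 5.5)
The plan is to prove the three assertions of Lemma~\ref{lem:centered-capped-basic-bounds-paper} in order, conditioning throughout on \(\mathcal F_\ell\). Under this conditioning \(c_J\), \(M_I\), \(C_{I,n}\) and \(\overline U_{I,n}\) are all fixed. The fresh weight \(U_J\) is independent of \(\mathcal F_\ell\) and has the law of \(U\).

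\textbf{Mean zero.} On \(\{M_I=0\}\) the increment vanishes identically. On \(\{M_I>0\}\) the cap is finite, so \(0\le U^{\mathrm{cap}}_{J,n}\le C_{I,n}\), and this variable is integrable since \(\E U=1\). Since \(\overline U_{I,n}\) is by definition the conditional mean of \(U^{\mathrm{cap}}_{J,n}\), we get
\[
\E\bigl[X^{\mathrm{cap}}_{J,n}\mid\mathcal F_\ell\bigr]
=c_JM_I\bigl(\E[U^{\mathrm{cap}}_{J,n}\mid\mathcal F_\ell]-\overline U_{I,n}\bigr)=0 .
\]

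\textbf{Jump bound.} Both \(U^{\mathrm{cap}}_{J,n}\) and \(\overline U_{I,n}\) lie in \([0,C_{I,n}]\), so their difference is at most \(C_{I,n}\) in absolute value. Hence, on \(\{M_I>0\}\),
\[
\lvert X^{\mathrm{cap}}_{J,n}\rvert
\le\lvert c_J\rvert M_IC_{I,n}
=2\lvert c_J\rvert L_nT_{\ell+1,n}.
\]
The factor \(M_I\) cancels against \(1/M_I\) in the cap, and this cancellation is exactly why the cap is chosen predictably in the form of Definition~\ref{def:predictable-cap-paper}. This gives \eqref{eq:capped-jump-bound-paper}.

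\textbf{Variance bound.} The conditional variance is at most the conditional second moment, so
\[
\E\bigl[\lvert X^{\mathrm{cap}}_{J,n}\rvert^2\mid\mathcal F_\ell\bigr]
\le\lvert c_J\rvert^2M_I^2\,\E\bigl[(U\wedge C_{I,n})^2\bigr].
\]
Next use \((U\wedge C)^2\le C\,U\) and \(\E U=1\), which give \(\E[(U\wedge C_{I,n})^2]\le C_{I,n}\). Therefore
\[
\lvert c_J\rvert^2M_I^2C_{I,n}=2L_n\lvert c_J\rvert^2M_IT_{\ell+1,n},
\]
which is \eqref{eq:capped-variance-bound-paper}.

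The only delicate point is the variance estimate. There we must avoid any moment of \(U\) above the first, since only \(\E U^r\) with \(r\) possibly below \(2\) is available. The trick \((U\wedge C)^2\le CU\) handles this, and it is where one factor \(M_I\), rather than \(M_I^2\), survives. All constants are absolute, for instance \(C=2\).
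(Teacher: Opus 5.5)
Your proof is correct and follows the paper's argument essentially line by line. The centering comes from the definition of \(\overline U_{I,n}\). The jump bound comes from \(U^{\mathrm{cap}}_{J,n},\overline U_{I,n}\in[0,C_{I,n}]\) together with the cancellation \(M_IC_{I,n}=2L_nT_{\ell+1,n}\). The variance bound uses \((U\wedge C_{I,n})^2\le C_{I,n}U\) and \(\E U=1\), so no moment of \(U\) beyond the first is needed.
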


\begin{proof}
The centeredness follows immediately from the definition of \(\overline U_{I,n}\).  If \(M_I=0\), then \(X_{J,n}^{\mathrm{cap}}=0\).  We may therefore assume that \(M_I>0\).  Since \(0\le U_{J,n}^{\mathrm{cap}}\le C_{I,n}\) and \(C_{I,n}\) is \(\mathcal F_\ell\)-measurable,
\[
0\le\overline U_{I,n}\le C_{I,n},
\qquad
\lvert U_{J,n}^{\mathrm{cap}}-\overline U_{I,n}\rvert\le C_{I,n}.
\]
Therefore
\[
\lvert X_{J,n}^{\mathrm{cap}}\rvert
\le
\lvert c_J\rvert M_IC_{I,n}
=
2\lvert c_J\rvert L_nT_{\ell+1,n},
\]
which proves \eqref{eq:capped-jump-bound-paper}.  For the variance,
\[
\operatorname{Var}(U_{J,n}^{\mathrm{cap}}\mid\mathcal F_\ell)
\le
\E[(U_{J,n}^{\mathrm{cap}})^2\mid\mathcal F_\ell].
\]
Since \((U_{J,n}^{\mathrm{cap}})^2\le C_{I,n}U_{J,n}^{\mathrm{cap}}\le C_{I,n}U_J\), 
\[
\E[(U_{J,n}^{\mathrm{cap}})^2\mid\mathcal F_\ell]\le C_{I,n}.
\]
Thus
\[
\E[\lvert X_{J,n}^{\mathrm{cap}}\rvert^2\mid\mathcal F_\ell]
\le
\lvert c_J\rvert^2M_I^2C_{I,n}
=
2L_n\lvert c_J\rvert^2M_IT_{\ell+1,n},
\]
which establishes \eqref{eq:capped-variance-bound-paper}.
\end{proof}

\begin{lemma}
\label{lem:raw-capped-compensator-identity-paper}
On \(\mathcal G_n^{\mathrm{pre}}\), for every child \(J\subset I\) with \(M_I>0\), we have
\(U_J\le C_{I,n}\).
Consequently, on \(\mathcal G_n^{\mathrm{pre}}\), 
\begin{equation}\label{eq:raw-capped-compensator-identity-paper}
c_JM_I(U_J-1)
=
X_{J,n}^{\mathrm{cap}}+D_{J,n},
\end{equation}
where
\begin{equation}\label{eq:compensator-one-step-paper}
D_{J,n}
=
c_JM_I(\overline U_{I,n}-1)
=
-c_JM_I
\E[(U_J-C_{I,n})_+\mid\mathcal F_\ell].
\end{equation}
\end{lemma}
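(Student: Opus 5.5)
The plan is to deduce the cap inequality directly from the prelimit good event at level \(\ell+1\), and then obtain the decomposition by adding and subtracting \(\overline U_{I,n}\).

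\emph{Step 1 (the cap is never active).} Fix \(I\in\mathcal D_\ell^\circ\) with \(M_I>0\) and a child \(J\subset I\), so that \(J\in\mathcal D_{\ell+1}^\circ\). On \(\mathcal G_n^{\mathrm{pre}}\), apply Definition~\ref{def:local-mass-good-events-paper} with \(k=\ell+1\), the interval \(J\), and the time \(\ell+1\ge k\). This gives
\[
\widetilde\nu_{\ell+1}(J)\le T_{\ell+1,n}.
\]
Since \(\widetilde\nu_{\ell+1}(J)=\frac12 M_IU_J\), it follows that
\[
U_J\le 2T_{\ell+1,n}/M_I\le L_n\,2T_{\ell+1,n}/M_I=C_{I,n},
\]
using \(L_n=2^{\kappa n}\ge1\). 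Hence \(U_{J,n}^{\mathrm{cap}}=U_J\) on this event.

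\emph{Step 2 (the decomposition).} On \(\mathcal G_n^{\mathrm{pre}}\), write
\[
c_JM_I(U_J-1)=c_JM_I(U_{J,n}^{\mathrm{cap}}-\overline U_{I,n})+c_JM_I(\overline U_{I,n}-1).
\]
The first summand is \(X_{J,n}^{\mathrm{cap}}\) by Definition~\ref{def:centered-capped-increment-paper}, and the second summand is \(D_{J,n}\).

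If \(M_I=0\), both sides of \eqref{eq:raw-capped-compensator-identity-paper} vanish, because \(c_JM_I(\cdot)=0\). The identity is therefore trivial in that case, and the convention \(C_{I,n}=+\infty\) raises no issue.

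\emph{Step 3 (the compensator formula).} The cap \(C_{I,n}\) is \(\mathcal F_\ell\)-measurable, and \(U_J\) is independent of \(\mathcal F_\ell\) with \(\E U_J=1\). Use the pointwise identity
\[
U_J\wedge C_{I,n}=U_J-(U_J-C_{I,n})_+ .
\]
Taking conditional expectations gives
\[
\overline U_{I,n}=1-\E[(U_J-C_{I,n})_+\mid\mathcal F_\ell].
\]
Substituting this into \(D_{J,n}\) yields \eqref{eq:compensator-one-step-paper}.

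\emph{Main subtlety.} The point to watch is that \(D_{J,n}\) is in general nonzero even on \(\mathcal G_n^{\mathrm{pre}}\). Only the realized \(U_J\) is below the cap; its conditional law is not truncated. The argument must therefore avoid claiming \(\overline U_{I,n}=1\). The identity holds pathwise on \(\mathcal G_n^{\mathrm{pre}}\) solely because \(U_{J,n}^{\mathrm{cap}}=U_J\) there.
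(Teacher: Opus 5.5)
Your proposal is correct and follows the paper's proof essentially line by line. In both, the prelimit good event at level \(\ell+1\) forces \(U_J\le 2T_{\ell+1,n}/M_I\le C_{I,n}\), the identity comes from adding and subtracting \(\overline U_{I,n}\), and the compensator formula comes from \(U_J\wedge C_{I,n}=U_J-(U_J-C_{I,n})_+\) together with \(\E U_J=1\). Your explicit treatment of the case \(M_I=0\), and your remark that \(\overline U_{I,n}\neq 1\) in general, are accurate additions that the paper leaves implicit.
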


\begin{proof}
On \(\mathcal G_n^{\mathrm{pre}}\), \(
\widetilde\nu_{\ell+1}(J)\le T_{\ell+1,n}\).
Since \(\widetilde\nu_{\ell+1}(J)=M_IU_J/2\),

if \(M_I>0\), then
\[
U_J\le \frac{2T_{\ell+1,n}}{M_I}\le L_n\frac{2T_{\ell+1,n}}{M_I}=C_{I,n}.
\]
Thus, on \(\mathcal G_n^{\mathrm{pre}}\), we have \(U_{J,n}^{\mathrm{cap}}=U_J\), and
\[
c_JM_I(U_J-1)
=
c_JM_I(U_{J,n}^{\mathrm{cap}}-\overline U_{I,n})
+
c_JM_I(\overline U_{I,n}-1).
\]
This proves \eqref{eq:raw-capped-compensator-identity-paper}.  Finally, since \(\E U_J=1\),
\[
\overline U_{I,n}
=
\E[U_J-(U_J-C_{I,n})_+\mid\mathcal F_\ell]
=
1-\E[(U_J-C_{I,n})_+\mid\mathcal F_\ell].
\]
\end{proof}

We next estimate the centered capped arrays.  For a non-mass band \(d\in\mathfrak D_{\xi,n}^{\mathrm{osc}}\), write
\[
m=m_{\xi,d},
\qquad
S_{\xi,d}=\spt\chi_{\xi,d}.
\]
Recall that \(S_{\xi,d}\) is contained in a bounded number of intervals, each of length at most \(Cd\).

\begin{lemma}
\label{lem:prefix-postbin-variance-budgets-paper}
Assume that the local-mass bounds defining \(\mathcal G_n^{\mathrm{pre}}\) hold up to the levels under consideration. If \(2^n\le\lvert\xi\rvert\le2^{n+1}\) and \(
d\in\mathfrak D_{\xi,n}^{\mathrm{osc}}\), then
\begin{equation}\label{eq:prefix-variance-budget-paper}
\sum_{\ell=0}^{m-1}
\sum_{I\in\mathcal D_\ell^\circ}
\sum_{\substack{J\in\mathcal D_{\ell+1}^\circ\\ J\subset I}}
\lvert c_J(\xi,d,\ell)\rvert^2M_IT_{\ell+1,n}
\le
C2^{2\varepsilon n}\lvert\xi\rvert^{-a},
\end{equation}
and
\begin{equation}\label{eq:postbin-variance-budget-paper}
\sum_{\ell=m}^{\infty}
\sum_{I\in\mathcal D_\ell^\circ}
\sum_{\substack{J\in\mathcal D_{\ell+1}^\circ\\ J\subset I}}
\lvert c_J(\xi,d,\ell)\rvert^2M_IT_{\ell+1,n}
\le
C2^{2\varepsilon n}\lvert\xi\rvert^{-a}.
\end{equation}
\end{lemma}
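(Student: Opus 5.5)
The plan is to bound both sums generation by generation, using the support of $\chi_{\xi,d}$ to restrict which intervals contribute and the local-mass bounds of $\mathcal G_n^{\mathrm{pre}}$ to control the masses; this is a deterministic computation. Throughout, $m=m_{\xi,d}$, $\Lambda=\lvert\xi\rvert$, and $S_{\xi,d}=\spt\chi_{\xi,d}$.

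\emph{Localization.} First, $c_J(\xi,d,\ell)=0$ unless $J$ meets $S_{\xi,d}$, and $S_{\xi,d}$ is a union of boundedly many intervals $B$ of length $\le Cd$. So at level $\ell$ only parents $I\in\mathcal D_\ell^\circ$ meeting some such $B$ contribute, and each parent has two children. Since $T_{\ell+1,n}=2^{\varepsilon n}2^{-a(\ell+1)}$, we apply \eqref{eq:prelimit-frostman-good-event-paper} to an enlarged interval of length $O(d+2^{-\ell})$, as in the proof of \eqref{eq:prelimit-square-budget-good-event-paper}. This gives
\[
\sum_{I\cap S_{\xi,d}\ne\varnothing}M_I\,T_{\ell+1,n}\le C2^{2\varepsilon n}2^{-a\ell}(d+2^{-\ell})^a .
\]
It therefore remains to multiply by $\sup\lvert c_J\rvert^2$ from Lemma~\ref{lem:prefix-postbin-coefficient-estimates-paper} and sum over $\ell$. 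The two facts used repeatedly are $2^{-m}\asymp(\Lambda d)^{-1}$ and $d\ge C^{-1}\Lambda^{-1/2}$ (Lemma~\ref{lem:stationary-derivative-band-partition-paper}(iv)).

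\emph{Prefix range $0\le\ell<m$.} Here $\lvert c_J\rvert^2\le C2^{2(\ell-m)}$, so the level-$\ell$ term is at most $C2^{2\varepsilon n}2^{2(\ell-m)}2^{-a\ell}(d+2^{-\ell})^a$. Let $\ell_d$ be defined by $2^{-\ell_d}\asymp d$, and split at $\ell_d$.
\begin{itemize}
\item For $\ell\le\ell_d$, the term is $\lesssim 2^{2(\ell-m)}2^{-2a\ell}$. Since $a<1$ this increases geometrically in $\ell$, so the sum is dominated by $\ell\approx\ell_d$. That endpoint value is $d^{-2}(\Lambda d)^{-2}d^{2a}=\Lambda^{-2}d^{2a-4}$. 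Using $d\gtrsim\Lambda^{-1/2}$ and $2a-4<0$ gives $d^{2a-4}\lesssim\Lambda^{2-a}$, so the contribution is $\lesssim\Lambda^{-a}$.
\item For $\ell_d<\ell<m$, the term is $\lesssim 2^{2(\ell-m)}2^{-a\ell}d^a$. This increases with ratio $2^{2-a}>1$, so it is dominated by $\ell\approx m$, where it equals $2^{-am}d^a\asymp(\Lambda d)^{-a}d^a=\Lambda^{-a}$.
\end{itemize}
If $\ell_d\ge m$, only the first case occurs, and the same bound holds. Together these give \eqref{eq:prefix-variance-budget-paper}.

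\emph{Post-bin range $\ell\ge m$.} Here $\lvert c_J\rvert\le C$. Since $2^{-\ell}\le2^{-m}\lesssim(\Lambda d)^{-1}\lesssim d$ (again by $d\gtrsim\Lambda^{-1/2}$), we have $(d+2^{-\ell})^a\lesssim d^a$. The level-$\ell$ term is then $\lesssim 2^{2\varepsilon n}2^{-a\ell}d^a$, which decays geometrically in $\ell$. Its sum is therefore $\lesssim 2^{2\varepsilon n}2^{-am}d^a\asymp2^{2\varepsilon n}\Lambda^{-a}$, which is \eqref{eq:postbin-variance-budget-paper}.

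The main obstacle is the prefix case $\ell\le\ell_d$. There the bound closes only because the stationary tube has been removed, which forces $d\gtrsim\Lambda^{-1/2}$; I would check carefully that the constants from the $C^{-1}$ in Lemma~\ref{lem:stationary-derivative-band-partition-paper}(iv) do not spoil this exponent bookkeeping. I would also confirm that the hypothesis "bounds hold up to the levels under consideration" suffices, since only $\widetilde\nu_\ell$-masses at the levels summed are used.
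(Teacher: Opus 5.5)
Your proposal is correct and follows essentially the same route as the paper. Both arguments localize to parents meeting an \(O(2^{-\ell})\)-neighborhood of \(\spt\chi_{\xi,d}\), combine the prelimit Frostman bound with \(T_{\ell+1,n}\le 2^{\varepsilon n}2^{-a\ell}\) and the coefficient bounds, split the prefix sum at \(2^{-\ell}\asymp d\), and close using \(2^{-m}\asymp(\lvert\xi\rvert d)^{-1}\) and \(d\gtrsim\lvert\xi\rvert^{-1/2}\); your endpoint value \(\Lambda^{-2}d^{2a-4}\) is exactly the paper's \(2^{-2m}d^{-(2-2a)}\), and the post-bin tail is handled identically.
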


\begin{proof}
Only intervals meeting a fixed \(C2^{-\ell}\)-neighborhood \(S_{\xi,d}^{(\ell)}\) of \(S_{\xi,d}\) can contribute.  For \(0\le\ell<m\), Lemma~\ref{lem:prefix-postbin-coefficient-estimates-paper} yields \(\lvert c_J\rvert\le C2^{\ell-m}\), and \eqref{eq:prelimit-frostman-good-event-paper} implies
\[
\sum_{\substack{I\in\mathcal D_\ell^\circ\\ I\cap S_{\xi,d}^{(\ell)}\neq\varnothing}}
M_I
\le
C2^{\varepsilon n}(d+2^{-\ell})^a.
\]
Since \(T_{\ell+1,n}\le C2^{\varepsilon n}2^{-a\ell}\), the prefix sum is at most
\[
C2^{2\varepsilon n}
\sum_{\ell=0}^{m-1}
2^{2(\ell-m)}2^{-a\ell}(d+2^{-\ell})^a.
\]
We claim that the last sum is \(O(\lvert\xi\rvert^{-a})\).
We split the sum into the two ranges \(2^{-\ell}\ge d\) and \(2^{-\ell}<d\).  In the first range,
\[
(d+2^{-\ell})^a\le C2^{-a\ell},
\]
so the summand is bounded by \(C2^{-2m}2^{(2-2a)\ell}\).  Since \(a<1\),
\[
\sum_{2^{-\ell}\ge d}
2^{-2m}2^{(2-2a)\ell}
\le
C2^{-2m}d^{-(2-2a)}.
\]
Using \(2^{-m}\asymp(\lvert\xi\rvert d)^{-1}\) and \(d\gtrsim\lvert\xi\rvert^{-1/2}\), this is bounded by \(C\lvert\xi\rvert^{-a}\).  In the second range,
\[
(d+2^{-\ell})^a\le Cd^a,
\]
so
\[
C2^{-2m}d^a\sum_{\ell<m}2^{(2-a)\ell}
\le
Cd^a2^{-am}
\asymp
C\lvert\xi\rvert^{-a}.
\]
This proves \eqref{eq:prefix-variance-budget-paper}.

For \(\ell\ge m\), we have \(\lvert c_J\rvert\le C\).
Since \(2^{-\ell}\le2^{-m}\lesssim(\lvert\xi\rvert d)^{-1}\lesssim d\), we have \(d+2^{-\ell}\le Cd\). Therefore the level-\(\ell\) contribution is bounded by \(C2^{2\varepsilon n}2^{-a\ell}d^a\).
After summing over \(\ell\ge m\), we obtain
\[
C2^{2\varepsilon n}d^a2^{-am}
\asymp
C2^{2\varepsilon n}\lvert\xi\rvert^{-a},
\]
which proves \eqref{eq:postbin-variance-budget-paper}.
\end{proof}

We shall use Freedman's inequality in the following normalized form from \cite{Freedman1975}.

\begin{theorem}[Classical Freedman inequality]
\label{thm:freedman-classical}
Let \((\mathcal F_n)_{n\ge0}\) be an increasing sequence of
\(\sigma\)-fields, and let \(X_1,X_2,\ldots\) be real-valued random variables
such that \(X_n\) is \(\mathcal F_n\)-measurable. Assume that
\[
\lvert X_n\rvert\le 1,
\qquad
\E\left(X_n\mid \mathcal F_{n-1}\right)=0
\]
almost surely for every \(n\ge1\). Define
\(S_0=0\), \(S_n=X_1+\cdots+X_n,\)
and
\[
V_n=\Var\left(X_n\mid\mathcal F_{n-1}\right),
\qquad
T_n=V_1+\cdots+V_n,
\qquad T_0=0.
\]
Then, for all \(a,b>0\),
\[
\Pbb\left(
S_n>a \ \text{and}\ T_n\le b
\ \text{for some } n
\right)
\le
\exp\left(
-\frac{a^2}{2(a+b)}
\right).
\]
\end{theorem}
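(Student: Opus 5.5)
The statement is Freedman's classical inequality, so I will not rederive any cascade input. I will follow the standard exponential-supermartingale argument. Fix \(\lambda>0\) and set \(g(\lambda)=e^{\lambda}-1-\lambda\).

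The first step is a one-step bound. For a mean-zero variable \(X\) with \(X\le1\) and conditional variance \(V\), I will show
\[
\E\bigl[e^{\lambda X}\mid\mathcal F\bigr]\le\exp\bigl(g(\lambda)V\bigr).
\]
This follows from the pointwise inequality \(e^{\lambda x}\le 1+\lambda x+g(\lambda)x^2\) for \(x\le1\). That inequality holds because \((e^{\lambda x}-1-\lambda x)/x^2\) is nondecreasing in \(x\). Taking conditional expectations gives \(1+g(\lambda)V\le e^{g(\lambda)V}\). Iterating, the process
\[
Z_n=\exp\bigl(\lambda S_n-g(\lambda)T_n\bigr)
\]
is a nonnegative supermartingale with \(Z_0=1\). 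Here \(T_n\) is predictable, so it factors out of the conditional expectation.

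The second step uses the stopping time \(\sigma=\inf\{n:S_n>a,\ T_n\le b\}\). On \(\{\sigma<\infty\}\) we have
\[
Z_\sigma\ge\exp\bigl(\lambda a-g(\lambda)b\bigr).
\]
Optional stopping applied to the bounded stopping times \(\sigma\wedge N\), together with Fatou's lemma, gives \(\E[Z_\sigma\one_{\{\sigma<\infty\}}]\le1\). Hence
\[
\Pbb(\sigma<\infty)\le\exp\bigl(-\lambda a+g(\lambda)b\bigr).
\]

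The third step optimizes over \(\lambda\). The choice \(\lambda=\log(1+a/b)\) gives the Bennett form \(\exp(-b\,h(a/b))\), with \(h(u)=(1+u)\log(1+u)-u\). The elementary inequality \(h(u)\ge u^2/(2(1+u))\) then yields
\[
\exp\left(-\frac{a^2}{2(a+b)}\right).
\]

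The only delicate point is the elementary calculus: the monotonicity of \((e^{\lambda x}-1-\lambda x)/x^2\) and the lower bound on \(h\). Both are routine, so I expect no real obstacle. Alternatively, one can simply cite \cite{Freedman1975}, as the paper does.
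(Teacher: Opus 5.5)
Your proof is correct and complete. The one-step bound via \(e^{\lambda x}\le 1+\lambda x+g(\lambda)x^2\) for \(x\le 1\), the supermartingale \(\exp(\lambda S_n-g(\lambda)T_n)\) with predictable \(T_n\), optional stopping at \(\sigma\), the choice \(\lambda=\log(1+a/b)\), and the bound \(h(u)\ge u^2/(2(1+u))\) all check out. The paper gives no proof here and simply cites Freedman (1975), whose original argument is this same exponential-supermartingale route; as your proof shows, only \(X_n\le 1\) is actually needed.
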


\begin{lemma}[Rescaled real-valued Freedman inequality]
\label{lem:real-freedman-rescaled}
Let \((\mathcal H_j)_{j=0}^N\) be a filtration, and let \((M_j,\mathcal H_j)_{j=0}^N\) be a real-valued martingale with \(M_0=0\) and martingale differences \(\xi_j=M_j-M_{j-1}\).  Let \(R,V\in(0,\infty)\).  Assume that \(
\lvert \xi_j\rvert\le R\) for every \(j\) and
\[
\sum_{j=1}^N
\E\left(\xi_j^2\mid \mathcal H_{j-1}\right)
\le V
\]
almost surely.  Then, for every \(u>0\),
\[
\Pbb\left(
\max_{0\le j\le N} M_j>u
\right)
\le
\exp\left(
-\frac{u^2}{2(V+Ru)}
\right).
\]
\end{lemma}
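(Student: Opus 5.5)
The plan is to reduce directly to Theorem~\ref{thm:freedman-classical} by rescaling. Set $X_j=\xi_j/R$ and $S_j=M_j/R$, with the filtration $\mathcal F_j=\mathcal H_j$ for $0\le j\le N$. For $j>N$ we put $X_j=0$ and $\mathcal F_j=\mathcal H_N$, which turns the finite array into an infinite sequence. Then $\lvert X_j\rvert\le 1$, and $\E(X_j\mid\mathcal F_{j-1})=0$ because $(M_j)$ is a martingale. The conditional variances are
\[
V_j=\Var(X_j\mid\mathcal F_{j-1})=R^{-2}\E(\xi_j^2\mid\mathcal H_{j-1}),
\]
where we use that the conditional mean is zero. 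The hypothesis therefore gives $T_n\le V/R^2$ almost surely for every $n$, since the padded terms contribute nothing.

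Next I apply Theorem~\ref{thm:freedman-classical} with $a=u/R$ and $b=V/R^2$. Since the constraint $T_n\le b$ holds automatically for every $n$, we have
\[
\{\max_{0\le j\le N}M_j>u\}\subseteq\{S_n>a\text{ and }T_n\le b\text{ for some }n\}.
\]
Note that $M_0=0<u$, so the index $j=0$ never contributes and the maximum is effectively over $n\ge1$, which matches the theorem's range. The resulting bound is $\exp\bigl(-a^2/(2(a+b))\bigr)$. Substituting,
\[
\frac{a^2}{a+b}=\frac{u^2/R^2}{u/R+V/R^2}=\frac{u^2}{Ru+V},
\]
and this is exactly the claimed exponent.

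There is no real obstacle here; the only point needing care is bookkeeping. The finite horizon has to be padded to fit the infinite-sequence statement of Freedman's inequality, and one must check that the conditional variance coincides with the conditional second moment. That identification is valid because the differences are centered.
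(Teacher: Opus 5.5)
Your proposal is correct and matches the paper's proof: both rescale by $R$ and apply Theorem~\ref{thm:freedman-classical} with $a=u/R$ and $b=V/R^2$, using that the conditional variance equals the conditional second moment for centered differences. The extra step of padding the finite array with zeros to fit the infinite-sequence form of Freedman's inequality is harmless bookkeeping that the paper leaves implicit.
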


\begin{proof}
Set \(X_j=\xi_j/R\).  Then \((X_j)\) is a martingale difference sequence with \(\lvert X_j\rvert\le1\), and for \(0\le n\le N\),
\[
\sum_{j=1}^n X_j=\frac{M_n}{R},
\qquad
\sum_{j=1}^n\Var(X_j\mid\mathcal H_{j-1})
\le
\frac{V}{R^2}.
\]
If \(\max_{0\le j\le N}M_j>u\), then for some \(1\le n\le N\),
\[
\sum_{j=1}^nX_j>\frac{u}{R},
\qquad
\sum_{j=1}^n\Var(X_j\mid\mathcal H_{j-1})\le\frac{V}{R^2}.
\]
Applying Theorem~\ref{thm:freedman-classical} with \(a=u/R\) and \(b=V/R^2\), we get
\[
\Pbb\left(
\max_{0\le j\le N}M_j>u
\right)
\le
\exp\left(
-\frac{(u/R)^2}{2(u/R+V/R^2)}
\right)
=
\exp\left(
-\frac{u^2}{2(V+Ru)}
\right).
\]
\end{proof}

\begin{lemma}[Complex Freedman inequality]
\label{lem:complex-freedman-paper}
Let \(
(S_j,\mathcal H_j)_{j=0}^{N}\) be a complex-valued martingale with \(S_0=0\) and differences
\(\Delta_j=S_j-S_{j-1}\), \(1\le j\le N.\)
Let \(R,V\in(0,\infty)\). Assume that \(\lvert\Delta_j\rvert\le R\)
almost surely for every \(j\), and
\[
\sum_{j=1}^{N}
\E\left(\lvert\Delta_j\rvert^2\mid\mathcal H_{j-1}\right)
\le V
\]
almost surely.  Then, for every \(t>0\), 
\[\Pbb\left(
\max_{0\le j\le N}\lvert S_j\rvert>t
\right)
\le
4\exp\left(
-\frac{t^2}{8(V+Rt)}
\right).\]

\end{lemma}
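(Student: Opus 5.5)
The plan is to reduce to the real-valued rescaled Freedman inequality, Lemma~\ref{lem:real-freedman-rescaled}, by splitting into real and imaginary parts. Write \(S_j=A_j+iB_j\) with \(A_j=\operatorname{Re}S_j\), \(B_j=\operatorname{Im}S_j\). Since conditional expectation commutes with taking real and imaginary parts, both \((A_j,\mathcal H_j)\) and \((B_j,\mathcal H_j)\) are real martingales with \(A_0=B_0=0\). Their differences \(\operatorname{Re}\Delta_j\) and \(\operatorname{Im}\Delta_j\) are bounded in absolute value by \(\lvert\Delta_j\rvert\le R\). The pointwise inequality \((\operatorname{Re}\Delta_j)^2\le\lvert\Delta_j\rvert^2\) gives
\[
\sum_j\E\bigl((\operatorname{Re}\Delta_j)^2\mid\mathcal H_{j-1}\bigr)\le V,
\]
and the same bound holds for the imaginary parts.

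Next, if \(\max_j\lvert S_j\rvert>t\), then for some \(j\) we have \(\lvert A_j\rvert>t/2\) or \(\lvert B_j\rvert>t/2\). Indeed, if both were at most \(t/2\), then \(\lvert S_j\rvert\le\lvert A_j\rvert+\lvert B_j\rvert\le t\), a contradiction. So the event is contained in the union of four events:
\[
\{\max_j A_j>t/2\},\quad \{\max_j(-A_j)>t/2\},\quad \{\max_j B_j>t/2\},\quad \{\max_j(-B_j)>t/2\}.
\]
Each of \(\pm A\) and \(\pm B\) is a real martingale satisfying the hypotheses of Lemma~\ref{lem:real-freedman-rescaled} with the same \(R\) and \(V\).

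Applying that lemma with \(u=t/2\) to each of the four martingales bounds each probability by
\[
\exp\left(-\frac{(t/2)^2}{2(V+Rt/2)}\right)=\exp\left(-\frac{t^2}{8V+4Rt}\right)\le\exp\left(-\frac{t^2}{8(V+Rt)}\right).
\]
A union bound over the four events then gives the factor \(4\) and the stated estimate.

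There is no real obstacle. The only points to check are that the conditional variance bound passes to the real and imaginary parts, which follows from the pointwise inequality, and that the constant \(8\) in the exponent absorbs the loss from halving \(t\), which the display above confirms since \(8V+4Rt\le 8(V+Rt)\).
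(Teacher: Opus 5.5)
Your proposal is correct and is essentially the paper's own proof: split $S_j$ into real and imaginary parts, apply Lemma~\ref{lem:real-freedman-rescaled} with $u=t/2$ to the four real martingales $\pm\operatorname{Re}S_j$, $\pm\operatorname{Im}S_j$, and take a union bound. You also make explicit two steps the paper leaves implicit: the containment $\{\lvert S_j\rvert>t\}\subset\{\lvert\operatorname{Re}S_j\rvert>t/2\}\cup\{\lvert\operatorname{Im}S_j\rvert>t/2\}$, and the comparison $8V+4Rt\le 8(V+Rt)$.
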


\begin{proof}
Write
\(S_j=X_j+iY_j\), where 
\(
X_j=\operatorname{Re}S_j
\) 
and 
\(
Y_j=\operatorname{Im}S_j.
\)
Then \(X_j\) and \(Y_j\) are real-valued martingales with differences
\(\operatorname{Re}\Delta_j\) and 
\(
\operatorname{Im}\Delta_j,
\) 
respectively. Moreover,
\(
\lvert\operatorname{Re}\Delta_j\rvert,\ \lvert\operatorname{Im}\Delta_j\rvert
\le \lvert\Delta_j\rvert\le R,
\)
and
\[
\sum_{j=1}^N
\E\bigl((\operatorname{Re}\Delta_j)^2\mid\mathcal H_{j-1}\bigr)
\le V,
\qquad
\sum_{j=1}^N
\E\bigl((\operatorname{Im}\Delta_j)^2\mid\mathcal H_{j-1}\bigr)
\le V.
\]
The same bounds also hold for \(-X_j\) and \(-Y_j\). Therefore, applying
Lemma~\ref{lem:real-freedman-rescaled} to
\(X_j,-X_j,Y_j,-Y_j\) with \(u=t/2\), and then using the union bound,
\[
\Pbb\left(
\max_{0\le j\le N}\lvert S_j\rvert>t
\right)
\le
4\exp\left(
-\frac{(t/2)^2}{2(V+Rt/2)}
\right)
\le
4\exp\left(
-\frac{t^2}{8(V+Rt)}
\right).
\]
This proves the lemma.
\end{proof}

\begin{definition}[Centered capped arrays]
\label{def:centered-capped-arrays-paper}
For \(d\in\mathfrak D_{\xi,n}^{\mathrm{osc}}\), 
set \(m=m_{\xi,d}\).
Define the stopped centered capped prefix array by
\[
\widehat F^{\mathrm{pre,cap}}_{\xi,d,n}
=
\sum_{\ell=0}^{m-1}
\one_{\{\tau_n>\ell\}}
\sum_{I\in\mathcal D_\ell^\circ}
\sum_{\substack{J\in\mathcal D_{\ell+1}^\circ\\J\subset I}}
X_{J,n}^{\mathrm{cap}}(\xi,d,\ell).
\]
For \(L>m\), define the stopped centered capped post-bin partial sum by
\[
\widehat F^{\mathrm{post,cap}}_{\xi,d,n,L}
=
\sum_{\ell=m}^{L-1}
\one_{\{\tau_n>\ell\}}
\sum_{I\in\mathcal D_\ell^\circ}
\sum_{\substack{J\in\mathcal D_{\ell+1}^\circ\\J\subset I}}
X_{J,n}^{\mathrm{cap}}(\xi,d,\ell).
\]
The corresponding unstopped arrays \(F^{\mathrm{pre,cap}}_{\xi,d,n}\) and \(F^{\mathrm{post,cap}}_{\xi,d,n,L}\) are obtained by removing the factors \(\one_{\{\tau_n>\ell\}}\).
\end{definition}

\begin{lemma}
\label{lem:stopped-capped-arrays-martingales-paper}
Fix \(
d\in\mathfrak D_{\xi,n}^{\mathrm{osc}}\), and set \(m=m_{\xi,d}\). Let
\[
\Gamma_{\mathrm{pre}}(d)
=
\left\{
(\ell,I,J):
0\le \ell<m,\ 
I\in\mathcal D_\ell^\circ,\ 
J\in\mathcal D_{\ell+1}^\circ,\ 
J\subset I
\right\},
\]
and choose a deterministic ordering \(\Gamma_{\mathrm{pre}}(d)=\{\gamma_1,\ldots,\gamma_N\}\), by increasing generation, with any deterministic order within each generation, where \(\gamma_r=(\ell_r,I_r,J_r)\). With respect to the filtration \((\mathcal H_r^{\mathrm{pre}})\) generated by revealing the variables \(U_{J_r}\) in this order,
\[
S_r^{\mathrm{pre}}
:=
\sum_{q=1}^r
\one_{\{\tau_n>\ell_q\}}
X_{J_q,n}^{\mathrm{cap}}(\xi,d,\ell_q),
\qquad
0\le r\le N,
\]
is a complex-valued martingale and \(S_N^{\mathrm{pre}}=\widehat F^{\mathrm{pre,cap}}_{\xi,d,n}\).

Moreover, for every \(L>m\), an analogous deterministic ordering of
\[
\Gamma_{\mathrm{post}}(d,L)
=
\left\{
(\ell,I,J):
m\le \ell<L,\ 
I\in\mathcal D_\ell^\circ,\ 
J\in\mathcal D_{\ell+1}^\circ,\ 
J\subset I
\right\}
\]
produces a complex-valued martingale whose terminal value is \(\widehat F^{\mathrm{post,cap}}_{\xi,d,n,L}\).
\end{lemma}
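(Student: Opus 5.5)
The plan is to verify the martingale property directly from the definitions, increment by increment. Take the prefix case first; the post-bin case is identical with the index range $m\le\ell<L$. With the deterministic ordering $\gamma_1,\dots,\gamma_N$ (increasing generation), let $\mathcal H_r^{\mathrm{pre}}=\sigma(U_{J_1},\dots,U_{J_r})$, where $\mathcal H_0^{\mathrm{pre}}$ is trivial. More precisely, I would take $\mathcal H_r^{\mathrm{pre}}=\mathcal F_{\ell_r}\vee\sigma(U_{J_q}:q\le r)$. Because the ordering is by generation, this equals $\sigma(U_{J_q}:q\le r)$ together with all weights of generations $\le\ell_r$. In any case it contains $\mathcal F_{\ell_r}$, since every edge weight of generation $\le\ell_r$ is revealed before any child at generation $\ell_r+1$.

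The first step is measurability. For the index $\gamma_r=(\ell_r,I_r,J_r)$, the quantities $M_{I_r}=\widetilde\nu_{\ell_r}(I_r)$, $C_{I_r,n}$, $\overline U_{I_r,n}$, the deterministic coefficient $c_{J_r}(\xi,d,\ell_r)$, and the indicator $\one_{\{\tau_n>\ell_r\}}$ are all $\mathcal F_{\ell_r}$-measurable. The indicator is covered by Lemma~\ref{lem:stopping-time-localization-paper}, since $\{\tau_n>\ell\}=\{\tau_n\le\ell\}^c\in\mathcal F_\ell$. Hence they are all $\mathcal H_{r-1}^{\mathrm{pre}}$-measurable. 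The $r$-th increment is then $\Delta_r=\one_{\{\tau_n>\ell_r\}}X^{\mathrm{cap}}_{J_r,n}$, a function of these predictable quantities and of the single new variable $U_{J_r}$. It is therefore $\mathcal H_r^{\mathrm{pre}}$-measurable and integrable, being bounded by \eqref{eq:capped-jump-bound-paper}.

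The second step is the centering, and this is the only point needing care. The fresh weight $U_{J_r}$ is independent of $\mathcal H_{r-1}^{\mathrm{pre}}$. The reason is that $\mathcal H_{r-1}^{\mathrm{pre}}$ is generated by weights on edges different from $J_r$: earlier generations, plus siblings and cousins at generation $\ell_r+1$ revealed earlier. All of these are independent of $U_{J_r}$ by the tree-indexed i.i.d.\ construction. It follows that
\[
\E\bigl[U^{\mathrm{cap}}_{J_r,n}\mid\mathcal H_{r-1}^{\mathrm{pre}}\bigr]=\int (u\wedge C_{I_r,n})\,d\Pbb_U(u)=\E\bigl[U^{\mathrm{cap}}_{J_r,n}\mid\mathcal F_{\ell_r}\bigr]=\overline U_{I_r,n},
\]
by the freezing lemma, since $C_{I_r,n}$ is $\mathcal H_{r-1}^{\mathrm{pre}}$-measurable. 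Pulling out the predictable factors $\one_{\{\tau_n>\ell_r\}}c_{J_r}M_{I_r}$ gives $\E[\Delta_r\mid\mathcal H_{r-1}^{\mathrm{pre}}]=0$. This is the strengthening of Lemma~\ref{lem:centered-capped-basic-bounds-paper} from $\mathcal F_\ell$ to the finer filtration. When $M_{I_r}=0$ the increment vanishes identically, so the cap $+\infty$ causes no problem.

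Finally, $S_N^{\mathrm{pre}}$ sums exactly the triples in $\Gamma_{\mathrm{pre}}(d)$. These are precisely the terms of $\widehat F^{\mathrm{pre,cap}}_{\xi,d,n}$, so the terminal identity holds by regrouping a finite sum. For the post-bin array the same argument applies verbatim on the finite set $\Gamma_{\mathrm{post}}(d,L)$. No limit is involved because $L$ is fixed.
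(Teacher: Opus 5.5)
Your argument is correct and follows the paper's proof. Every factor other than $U_{J_r}$ in the increment (namely $\one_{\{\tau_n>\ell_r\}}$, $c_{J_r}$, $M_{I_r}$, $C_{I_r,n}$, $\overline U_{I_r,n}$) is measurable with respect to the refined past, and $U_{J_r}$ is independent of that past, so $\E[U_{J_r}\wedge C_{I_r,n}\mid\mathcal H_{r-1}^{\mathrm{pre}}]=\overline U_{I_r,n}$, which centers each increment; the terminal identity then follows by regrouping the finite sum.

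Your explicit filtration $\mathcal H_r=\mathcal F_{\ell_r}\vee\sigma(U_{J_q}:q\le r)$ makes precise a point the paper leaves implicit. In the post-bin case it amounts to taking $\mathcal H_0=\mathcal F_m$, because the post-bin ordering by itself never reveals the weights of generations $\le m$.
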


\begin{proof}
We prove the pre-bin assertion; the post-bin assertion follows in the same way. Set
\[
\Delta_r^{\mathrm{pre}}
:=
\one_{\{\tau_n>\ell_r\}}
X_{J_r,n}^{\mathrm{cap}}(\xi,d,\ell_r).
\]
It is enough to prove \(
\E(\Delta_r^{\mathrm{pre}}\mid\mathcal H_{r-1}^{\mathrm{pre}})=0\).
By definition,
\[
\Delta_r^{\mathrm{pre}}
=
\one_{\{\tau_n>\ell_r\}}
c_{J_r}(\xi,d,\ell_r)M_{I_r}
\left(
U_{J_r}\wedge C_{I_r,n}
-
\overline U_{I_r,n}
\right).
\]
At the stage when \(U_{J_r}\) is revealed, the factors
\[
\one_{\{\tau_n>\ell_r\}},
\quad
c_{J_r}(\xi,d,\ell_r),
\quad
M_{I_r},
\quad
C_{I_r,n},
\quad
\overline U_{I_r,n}
\]
are \(\mathcal H_{r-1}^{\mathrm{pre}}\)-measurable.  Since \(U_{J_r}\) is independent of the refined past and \(C_{I_r,n}\) is already measurable with respect to that past,
\[
\E\left(
U_{J_r}\wedge C_{I_r,n}
\mid
\mathcal H_{r-1}^{\mathrm{pre}}
\right)
=
\overline U_{I_r,n}.
\]
Thus \(
\E(\Delta_r^{\mathrm{pre}}\mid\mathcal H_{r-1}^{\mathrm{pre}})=0\), and \(S_r^{\mathrm{pre}}\) is a complex-valued martingale. Its terminal value is
\[
S_N^{\mathrm{pre}}
=
\sum_{\ell=0}^{m-1}
\one_{\{\tau_n>\ell\}}
\sum_{I\in\mathcal D_\ell^\circ}
\sum_{\substack{J\in\mathcal D_{\ell+1}^\circ\\ J\subset I}}
X_{J,n}^{\mathrm{cap}}(\xi,d,\ell)
=
\widehat F^{\mathrm{pre,cap}}_{\xi,d,n}.
\]
The same conditional-expectation computation applied to the post-bin ordering proves the second assertion.
\end{proof}

To apply Lemma~\ref{lem:complex-freedman-paper}, we use the refined filtration constructed in Lemma~\ref{lem:stopped-capped-arrays-martingales-paper}.  Each martingale difference has the form
\[
\Delta
=
\one_{\{\tau_n>\ell\}}
X_{J,n}^{\mathrm{cap}}(\xi,d,\ell).
\]
For fixed \(n,\xi,d\), we introduce a jump budget \(R_{\xi,d,n}\) for \(\lvert\Delta\rvert\) and a quadratic-variation budget \(V_{\xi,d,n}\) for the corresponding predictable quadratic variation.  By Lemma~\ref{lem:centered-capped-basic-bounds-paper}, the predictable quadratic variation is controlled by
\[
CL_n
\sum \lvert c_J(\xi,d,\ell)\rvert^2M_IT_{\ell+1,n}.
\]

\begin{lemma}
\label{lem:jump-quadratic-variation-budgets-paper}
There exist constants \(\eta_R,\eta_V>0\) such that, for every \(n\), every \(2^n\le\lvert\xi\rvert\le2^{n+1}\), and every \(
d\in\mathfrak D_{\xi,n}^{\mathrm{osc}}\), the budgets can be chosen so that
\begin{equation}\label{eq:jump-budget-paper}
R_{\xi,d,n}
\le
C2^{-sn/2}2^{-\eta_R n},
\end{equation}
and
\begin{equation}\label{eq:quadratic-variation-budget-paper}
V_{\xi,d,n}
\le
C2^{-sn}2^{-\eta_V n}.
\end{equation}
\end{lemma}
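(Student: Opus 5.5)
The plan is to define the two budgets explicitly from the pointwise bounds of Lemma~\ref{lem:centered-capped-basic-bounds-paper}. The jump budget comes from the a.s. jump bound \eqref{eq:capped-jump-bound-paper}, and the quadratic-variation budget from the conditional variance bound \eqref{eq:capped-variance-bound-paper} summed with Lemma~\ref{lem:prefix-postbin-variance-budgets-paper}. Each is then converted to the annular scale using the phase-bin lower bound $m=m_{\xi,d}\ge\vartheta n-C_0$ of Lemma~\ref{lem:nonmass-phase-bin-lower-bound-paper} (for $R$) or $\lvert\xi\rvert\ge 2^n$ (for $V$), together with the parameter relation $20\varepsilon+\kappa<\delta_1/2$ of Lemma~\ref{lem:compatible-parameter-choice-paper}. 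I expect to take $\eta_R=\eta_V=\delta_1/2$ or something slightly smaller.

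\textbf{Jump budget.} Every difference satisfies $\lvert\Delta\rvert\le C\lvert c_J\rvert L_nT_{\ell+1,n}$, and this holds deterministically, regardless of the stopping indicator.
\begin{itemize}
\item In the pre-bin range $\ell<m$, the coefficient bound $\lvert c_J\rvert\le C2^{\ell-m}$ gives
\[
\lvert c_J\rvert T_{\ell+1,n}\le C2^{\varepsilon n}2^{(1-a)\ell}2^{-m}.
\]
Since $a<1$, this is increasing in $\ell$, so it is at most $C2^{\varepsilon n}2^{-am}$.
\item In the post-bin range $\ell\ge m$, the bound $\lvert c_J\rvert\le C$ gives $\lvert c_J\rvert T_{\ell+1,n}\le C2^{\varepsilon n}2^{-a\ell}\le C2^{\varepsilon n}2^{-am}$.
\end{itemize}
So I set $R_{\xi,d,n}=CL_n2^{\varepsilon n}2^{-am}$. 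Inserting $m\ge\vartheta n-C_0$ and $a\vartheta=a-s/2-8\varepsilon$ gives
\[
R_{\xi,d,n}\le C2^{(\kappa+9\varepsilon)n}2^{-an+sn/2}=C2^{-sn/2}2^{-(\delta_1-\kappa-9\varepsilon)n},
\]
using $a=s+\delta_1$. The exponent $\delta_1-\kappa-9\varepsilon$ exceeds $\delta_1/2$, which yields \eqref{eq:jump-budget-paper}.

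\textbf{Quadratic-variation budget.} Summing \eqref{eq:capped-variance-bound-paper} over the stopped array gives a predictable quadratic variation of at most
\[
CL_n\sum_\ell \one_{\{\tau_n>\ell\}}\sum_{I,J}\lvert c_J\rvert^2M_IT_{\ell+1,n}.
\]
On $\{\tau_n>\ell\}$, every level-$\ell$ prelimit mass $\widetilde\nu_\ell(J')$ with $J'\in\mathcal D_k^\circ$, $k\le\ell$, obeys the threshold $T_{k,n}$. This is exactly what the proof of \eqref{eq:prelimit-frostman-good-event-paper} uses at level $\ell$, so each level-$\ell$ summand obeys the same bound as in Lemma~\ref{lem:prefix-postbin-variance-budgets-paper}. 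The stopped sum is therefore bounded almost surely, not merely on $\mathcal G_n^{\mathrm{pre}}$, by
\[
CL_n2^{2\varepsilon n}\lvert\xi\rvert^{-a}\le C2^{(\kappa+2\varepsilon)n}2^{-an}=C2^{-sn}2^{-(\delta_1-\kappa-2\varepsilon)n}.
\]
I take $V_{\xi,d,n}$ equal to this, which gives \eqref{eq:quadratic-variation-budget-paper}. All constants are uniform in $\xi$ in the annulus and in $d$, since they come only from the absolute constants of Lemmas~\ref{lem:stationary-derivative-band-partition-paper} and~\ref{lem:prefix-postbin-coefficient-estimates-paper}.

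\textbf{Main obstacle.} The delicate point is making the quadratic-variation bound hold almost surely, which Freedman's inequality requires. Lemma~\ref{lem:prefix-postbin-variance-budgets-paper} is stated under the local-mass bounds "up to the levels under consideration". I need to check that the stopping indicator $\one_{\{\tau_n>\ell\}}$ supplies exactly these bounds at level $\ell$, including the enlarged-interval step in (iii) of Lemma~\ref{lem:frostman-consequences-good-event-paper}. The enlarged interval is covered by boundedly many coarser dyadic intervals whose level-$\ell$ masses are controlled when $\tau_n>\ell$. The check is then a level-by-level verification, since the bounds for different $\ell$ are independent of one another.
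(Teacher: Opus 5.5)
Your proposal is correct and follows the paper's proof essentially step for step. You build the jump budget $CL_n2^{\varepsilon n}2^{-am}$ from \eqref{eq:capped-jump-bound-paper} and the coefficient bounds; your use of $m\ge\vartheta n-C_0$ is equivalent to the paper's use of $2^{-m}\asymp(\lvert\xi\rvert d)^{-1}$ and $d^{-a}<2^{sn/2}2^{8\varepsilon n}$. You build the quadratic-variation budget $CL_n2^{2\varepsilon n}\lvert\xi\rvert^{-a}$ from Lemmas~\ref{lem:centered-capped-basic-bounds-paper} and~\ref{lem:prefix-postbin-variance-budgets-paper}, arriving at the same exponents $\delta_1-\kappa-9\varepsilon$ and $\delta_1-\kappa-2\varepsilon$.

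Your explicit check that $\one_{\{\tau_n>\ell\}}$ supplies the level-$\ell$ local-mass bounds is correct. It makes the quadratic-variation bound hold almost surely, as Freedman's inequality requires, and this is a point the paper leaves implicit.
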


\begin{proof}
By \eqref{eq:capped-jump-bound-paper},
\[
\lvert X_{J,n}^{\mathrm{cap}}\rvert\le C\lvert c_J\rvert L_nT_{\ell+1,n}.
\]
If \(0\le\ell<m\), then \(
\lvert c_J\rvert\le C2^{\ell-m}\),  
and since \(a<1\),
\[
\lvert c_J\rvert T_{\ell+1,n}
\le
C2^{\ell-m}2^{\varepsilon n}2^{-a\ell}
\le
C2^{\varepsilon n}2^{-am}.
\]
If \(\ell\ge m\), then the bound \(\lvert c_J\rvert\le C\) yields the same estimate.  Therefore
\[
\lvert X_{J,n}^{\mathrm{cap}}(\xi,d,\ell)\rvert
\le
CL_n2^{\varepsilon n}2^{-am}.
\]
Using \(L_n=2^{\kappa n}\), \(
2^{-m}\asymp(\lvert\xi\rvert d)^{-1}\), \(
d^{-a}<2^{sn/2}2^{8\varepsilon n}\), \(\lvert\xi\rvert\ge2^n\), and \(a=s+\delta_1\), we get
\[
\lvert X_{J,n}^{\mathrm{cap}}(\xi,d,\ell)\rvert
\le
C2^{-sn/2}2^{-(\delta_1-\kappa-9\varepsilon)n}.
\]
By the parameter choice, \(\delta_1-\kappa-9\varepsilon>0\), which proves \eqref{eq:jump-budget-paper}.

For the quadratic variation, Lemmas~\ref{lem:centered-capped-basic-bounds-paper} and~\ref{lem:prefix-postbin-variance-budgets-paper} give, uniformly for the prefix and all finite post-bin cutoffs,
\[
V_{\xi,d,n}
\le
CL_n2^{2\varepsilon n}\lvert\xi\rvert^{-a}
\le
C2^{-sn}2^{-(\delta_1-\kappa-2\varepsilon)n}.
\]
Since \(\delta_1-\kappa-2\varepsilon>0\) by the parameter choice, this proves \eqref{eq:quadratic-variation-budget-paper}.
\end{proof}

\begin{proposition} 
\label{prop:capped-martingale-estimate-paper}
There exist constants \(C<\infty\), \(c>0\), and \(\eta>0\) such that, for every \(n\ge1\), every \(2^n\le\lvert\xi\rvert\le2^{n+1}\), and every \(d\in\mathfrak D_{\xi,n}^{\mathrm{osc}}\),
\begin{equation}\label{eq:capped-prefix-estimate-paper}
\Pbb\left(
\left|
\widehat F^{\mathrm{pre,cap}}_{\xi,d,n}
\right|
>
2^{-sn/2}2^{-3\varepsilon n}
\right)
\le
C\exp(-c2^{\eta n}),
\end{equation}
and
\begin{equation}\label{eq:capped-postbin-estimate-paper}
\Pbb\left(
\sup_{L>m_{\xi,d}}
\left|
\widehat F^{\mathrm{post,cap}}_{\xi,d,n,L}
\right|
>
2^{-sn/2}2^{-3\varepsilon n}
\right)
\le
C\exp(-c2^{\eta n}).
\end{equation}
Consequently, on \(\mathcal G_n^{\mathrm{pre}}\), the corresponding unstopped centered capped arrays satisfy the same estimates.
\end{proposition}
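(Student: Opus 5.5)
The plan is to apply the complex Freedman inequality (Lemma~\ref{lem:complex-freedman-paper}) to the stopped martingales of Lemma~\ref{lem:stopped-capped-arrays-martingales-paper}, using the budgets of Lemma~\ref{lem:jump-quadratic-variation-budgets-paper}. For the prefix array, the differences are $\Delta_r=\one_{\{\tau_n>\ell_r\}}X^{\mathrm{cap}}_{J_r,n}$.

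By \eqref{eq:capped-jump-bound-paper}, $|\Delta_r|\le R:=C2^{-sn/2}2^{-\eta_R n}$ almost surely; this needs no good event, because the cap bound is deterministic. For the predictable quadratic variation, \eqref{eq:capped-variance-bound-paper} bounds it by $CL_n\sum|c_J|^2M_IT_{\ell+1,n}$, with the sum restricted to levels $\ell<\tau_n$. The key point is that on $\{\tau_n>\ell\}$ the local-mass bounds $\widetilde\nu_j(J')\le T_{k,n}$ hold for all $j\le\ell$. Consequently the Frostman-type estimate \eqref{eq:prelimit-frostman-good-event-paper} used in Lemma~\ref{lem:prefix-postbin-variance-budgets-paper} is available at exactly the levels that contribute. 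Level by level, the estimate of that lemma therefore applies, and almost surely
\[
\sum_r \E\bigl(|\Delta_r|^2\mid\mathcal H_{r-1}\bigr)\le V:=C2^{-sn}2^{-\eta_V n}.
\]
I expect this to be the main obstacle: one must check that the Frostman bound at level $\ell$ only needs masses $\widetilde\nu_\ell(\cdot)$ on coarser intervals at time $\ell$, which is exactly what $\tau_n>\ell$ guarantees. This is why the stopped array, not the raw one, is a martingale with deterministic budgets.

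With $t=2^{-sn/2}2^{-3\varepsilon n}$, Lemma~\ref{lem:complex-freedman-paper} gives a tail bound of $4\exp(-t^2/(8(V+Rt)))$. The two ratios are
\[
\frac{t^2}{V}\gtrsim 2^{(\eta_V-6\varepsilon)n},\qquad \frac{t}{R}\gtrsim 2^{(\eta_R-3\varepsilon)n}.
\]
From the proof of Lemma~\ref{lem:jump-quadratic-variation-budgets-paper}, $\eta_V=\delta_1-\kappa-2\varepsilon$ and $\eta_R=\delta_1-\kappa-9\varepsilon$. The parameter choice $20\varepsilon+\kappa<\delta_1/2$ makes both exponents positive, so we can take $\eta=\min\{\eta_V-6\varepsilon,\eta_R-3\varepsilon\}>0$. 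The constants are uniform in $\xi$ and $d$.

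For the post-bin array, fix $L>m$ and apply the same argument to the ordering of $\Gamma_{\mathrm{post}}(d,L)$. The maximal form of Freedman controls all partial sums up to $L$, in particular those ending at every generation $L'\le L$. The budgets in \eqref{eq:postbin-variance-budget-paper} are uniform in $L$, so letting $L\to\infty$ by monotone convergence of events yields the bound on $\sup_{L>m}$.

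Finally, on $\mathcal G_n^{\mathrm{pre}}=\{\tau_n=\infty\}$ (Lemma~\ref{lem:stopping-time-localization-paper}) all indicators equal one, so the stopped and unstopped arrays coincide. The unstopped arrays therefore satisfy the same estimates when intersected with $\mathcal G_n^{\mathrm{pre}}$.
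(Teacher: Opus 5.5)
Your proposal is correct and follows the paper's proof. You apply the complex Freedman inequality to the stopped martingales with the jump and quadratic-variation budgets, you obtain the same positive exponents \(\delta_1-\kappa-8\varepsilon\) and \(\delta_1-\kappa-12\varepsilon\), and you identify the stopped and unstopped arrays on \(\{\tau_n=\infty\}\). You also make explicit two steps the paper leaves implicit: the indicator \(\one_{\{\tau_n>\ell\}}\) supplies exactly the level-\(\ell\) Frostman bound needed for a deterministic quadratic-variation budget, and the bound on \(\sup_L\) follows from the maximal form of Freedman's inequality together with a monotone limit in \(L\).
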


\begin{proof}
We prove the prefix estimate; the post-bin estimate is obtained in the same way.  Set
\[
t_n=2^{-sn/2}2^{-3\varepsilon n}.
\]
By Lemmas~\ref{lem:stopped-capped-arrays-martingales-paper} and~\ref{lem:jump-quadratic-variation-budgets-paper}, Lemma~\ref{lem:complex-freedman-paper} applies with
\[
R\le C2^{-sn/2}2^{-(\delta_1-\kappa-9\varepsilon)n},
\qquad
V\le C2^{-sn}2^{-(\delta_1-\kappa-2\varepsilon)n}.
\]
Since \(t_n^2=2^{-sn}2^{-6\varepsilon n}\),
\[
V
\le
Ct_n^2\,2^{-(\delta_1-\kappa-8\varepsilon)n},
\qquad
Rt_n
\le
Ct_n^2\,2^{-(\delta_1-\kappa-12\varepsilon)n}.
\]
By the parameter choice, the exponents are positive. Hence, for some \(\eta>0\),
\[
V+Rt_n
\le
Ct_n^2\,2^{-\eta n},
\qquad
\frac{t_n^2}{V+Rt_n}\ge c2^{\eta n}.
\]
Lemma~\ref{lem:complex-freedman-paper} then yields \eqref{eq:capped-prefix-estimate-paper}.  Applying the same argument to the stopped post-bin martingales implies \eqref{eq:capped-postbin-estimate-paper}.  Finally, on \(\mathcal G_n^{\mathrm{pre}}\), Lemma~\ref{lem:stopping-time-localization-paper} implies \(\tau_n=\infty\), and hence the stopped and unstopped centered capped arrays agree.
\end{proof}

\begin{corollary}
\label{cor:one-band-centered-capped-estimate-paper}
There exist constants \(C<\infty\), \(c>0\), and \(\eta>0\) such that, for every \(n\ge1\), every \(2^n\le\lvert\xi\rvert\le2^{n+1}\), and every \(d\in\mathfrak D_{\xi,n}^{\mathrm{osc}}\), 
\[\Pbb\left(
\mathcal G_n^{\mathrm{pre}}
\cap
\left\{
\lvert F^{\mathrm{pre,cap}}_{\xi,d,n}\rvert
+
\sup_{L>m_{\xi,d}}
\lvert F^{\mathrm{post,cap}}_{\xi,d,n,L}\rvert
>
2^{-sn/2}2^{-2\varepsilon n}
\right\}
\right)
\le
C\exp(-c2^{\eta n}).\]
\end{corollary}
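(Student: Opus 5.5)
The plan is to deduce the corollary directly from Proposition~\ref{prop:capped-martingale-estimate-paper} by working on \(\mathcal G_n^{\mathrm{pre}}\), where the stopped and unstopped arrays agree. By Lemma~\ref{lem:stopping-time-localization-paper}, \(\mathcal G_n^{\mathrm{pre}}=\{\tau_n=\infty\}\). On this event every indicator \(\one_{\{\tau_n>\ell\}}\) in Definition~\ref{def:centered-capped-arrays-paper} equals \(1\), so
\[
F^{\mathrm{pre,cap}}_{\xi,d,n}=\widehat F^{\mathrm{pre,cap}}_{\xi,d,n}
\qquad\text{and}\qquad
F^{\mathrm{post,cap}}_{\xi,d,n,L}=\widehat F^{\mathrm{post,cap}}_{\xi,d,n,L}
\quad\text{for all } L>m_{\xi,d}.
\]
Hence the event in the corollary is contained in
\[
\mathcal G_n^{\mathrm{pre}}\cap\Bigl\{\lvert\widehat F^{\mathrm{pre,cap}}_{\xi,d,n}\rvert+\sup_{L>m_{\xi,d}}\lvert\widehat F^{\mathrm{post,cap}}_{\xi,d,n,L}\rvert>2^{-sn/2}2^{-2\varepsilon n}\Bigr\}.
\]

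Next I would split the threshold. If the sum of two nonnegative quantities exceeds \(2^{-sn/2}2^{-2\varepsilon n}\), then at least one of them exceeds half of it. For \(n\ge n_1\), where \(n_1\) is chosen so that \(2^{\varepsilon n}\ge 2\), half the threshold is at least \(2^{-sn/2}2^{-3\varepsilon n}\). Thus at least one of
\[
\lvert\widehat F^{\mathrm{pre,cap}}_{\xi,d,n}\rvert>2^{-sn/2}2^{-3\varepsilon n}
\qquad\text{or}\qquad
\sup_{L>m_{\xi,d}}\lvert\widehat F^{\mathrm{post,cap}}_{\xi,d,n,L}\rvert>2^{-sn/2}2^{-3\varepsilon n}
\]
must occur. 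A union bound, combined with \eqref{eq:capped-prefix-estimate-paper} and \eqref{eq:capped-postbin-estimate-paper}, then gives the bound \(2C\exp(-c2^{\eta n})\). I can drop the intersection with \(\mathcal G_n^{\mathrm{pre}}\) at this stage, because those estimates are stated for the stopped arrays unconditionally.

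For the finitely many \(n<n_1\), it suffices to enlarge \(C\) so that \(C\exp(-c2^{\eta n})\ge1\), which makes the bound trivial. The constants remain uniform in \(\xi\) and in \(d\in\mathfrak D_{\xi,n}^{\mathrm{osc}}\), since those of Proposition~\ref{prop:capped-martingale-estimate-paper} are.

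There is no real obstacle here. The only point needing care is the supremum over \(L\) in the post-bin term. It is already controlled inside \eqref{eq:capped-postbin-estimate-paper}, which rests on the maximal form of Lemma~\ref{lem:complex-freedman-paper} applied uniformly over finite cutoffs, via the \(L\)-uniform variance budget \eqref{eq:postbin-variance-budget-paper}. The identification of stopped and unstopped arrays holds simultaneously for all \(L\) on \(\{\tau_n=\infty\}\), so no further limiting argument is required.
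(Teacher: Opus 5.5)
Your proposal is correct and matches the paper's proof. On \(\mathcal G_n^{\mathrm{pre}}=\{\tau_n=\infty\}\) the stopped and unstopped capped arrays coincide, and a union bound over the prefix and post-bin estimates of Proposition~\ref{prop:capped-martingale-estimate-paper} then gives the claim. Your threshold-halving step is the paper's observation that \(2\cdot 2^{-3\varepsilon n}\le 2^{-2\varepsilon n}\) for large \(n\), and enlarging \(C\) handles the finitely many small \(n\).
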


\begin{proof}
On \(\mathcal G_n^{\mathrm{pre}}\), the stopped and unstopped arrays agree.  Proposition~\ref{prop:capped-martingale-estimate-paper} bounds both the prefix and post-bin bad events by \(C\exp(-c2^{\eta n})\).  The union bound, together with \(2\cdot2^{-3\varepsilon n}\le2^{-2\varepsilon n}\) for all sufficiently large \(n\), implies the claim after adjusting \(C\).
\end{proof}

The preceding corollary controls only the centered capped martingale contribution. On \(\mathcal G_n^{\mathrm{pre}}\),
Lemma~\ref{lem:raw-capped-compensator-identity-paper} implies the identities
\[
F^{\mathrm{pre}}_{\xi,d}
=
F^{\mathrm{pre,cap}}_{\xi,d,n}
+
D^{\mathrm{pre}}_{\xi,d,n},
\qquad
F^{\mathrm{post}}_{\xi,d,L}
=
F^{\mathrm{post,cap}}_{\xi,d,n,L}
+
D^{\mathrm{post}}_{\xi,d,n,L}.
\]
The compensators are sums of
\[
D_{J,n}
=
-c_JM_I
\E\left[
(U_J-C_{I,n})_+
\mid \mathcal F_\ell
\right],
\]
which are estimated in the next subsection.

\subsection{The \texorpdfstring{\(r\)}{r}-tail compensator}
\label{subsec:r-tail-compensator-paper}

We estimate the predictable drift arising from capping. Recall from
\eqref{eq:compensator-one-step-paper} that the one-step compensator is
\[
D_{J,n}
=
-c_JM_I
\E[(U_J-C_{I,n})_+\mid\mathcal F_\ell],
\]
where \(J\subset I\), \(I\in\mathcal D_\ell^\circ\), and
\[
C_{I,n}=L_n\frac{2T_{\ell+1,n}}{M_I}
\]
when \(M_I>0\).

\begin{lemma}
\label{lem:pointwise-r-tail-compensator-paper}
For every child \(J\subset I\), \(I\in\mathcal D_\ell^\circ\),
\begin{equation}\label{eq:pointwise-r-tail-compensator-paper}
\lvert D_{J,n}\rvert
\le
C\lvert c_J\rvert L_n^{1-r}T_{\ell+1,n}^{1-r}M_I^r\,\E[U^r],
\end{equation}
where \(C<\infty\) depends only on \(r\).
\end{lemma}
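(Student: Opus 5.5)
The bound is the elementary tail inequality \((x-K)_+\le x^rK^{1-r}\), valid for \(x\ge0\), \(K>0\), \(r\ge1\), applied conditionally on \(\mathcal F_\ell\). I would treat the degenerate case first and then the main case.

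\emph{Degenerate case.} If \(M_I=0\), then \(D_{J,n}=0\) by \eqref{eq:compensator-one-step-paper}, because of the factor \(M_I\). The right-hand side of \eqref{eq:pointwise-r-tail-compensator-paper} is also \(0\) there, since \(r>1\) forces \(M_I^r=0\). So there is nothing to prove.

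\emph{Main case.} Assume \(M_I>0\). The cap \(C_{I,n}=2L_nT_{\ell+1,n}/M_I\) is strictly positive and \(\mathcal F_\ell\)-measurable. The fresh weight \(U_J\) is independent of \(\mathcal F_\ell\) and has the law of \(U\). Hence the conditional expectation is the deterministic function \(g(K)=\E[(U-K)_+]\) evaluated at \(K=C_{I,n}\).

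\emph{Pointwise tail inequality.} Next I would bound \(g(K)\le K^{1-r}\E[U^r]\). On the event \(\{x>K\}\) we have \((x/K)^{r-1}\ge1\). Therefore
\[
(x-K)_+\le x\,\one_{\{x>K\}}\le x\,(x/K)^{r-1}=x^rK^{1-r}.
\]
Taking expectations gives \(g(K)\le K^{1-r}\E[U^r]\). This step is the Markov-type tail estimate that uses the finite \(r\)-th moment.

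\emph{Substituting the cap.} Inserting \(K=C_{I,n}\) gives
\[
\lvert D_{J,n}\rvert\le \lvert c_J\rvert M_I\,(2L_nT_{\ell+1,n})^{1-r}M_I^{r-1}\E[U^r]
= 2^{1-r}\lvert c_J\rvert L_n^{1-r}T_{\ell+1,n}^{1-r}M_I^{r}\E[U^r].
\]
This is exactly \eqref{eq:pointwise-r-tail-compensator-paper} with \(C=2^{1-r}\le 1\), a constant depending only on \(r\).

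\emph{Main obstacle.} There is essentially none. The only point to verify is that conditioning on \(\mathcal F_\ell\) legitimately freezes \(C_{I,n}\) while \(U_J\) stays fresh. That is the same predictability fact already used in Definition~\ref{def:predictable-cap-paper} and Lemma~\ref{lem:raw-capped-compensator-identity-paper}.
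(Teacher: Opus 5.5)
Your proposal is correct and follows the paper's proof essentially line for line. You use the same trivial case \(M_I=0\), the same tail bound \((U-K)_+\le U\one_{\{U>K\}}\le K^{1-r}U^r\) applied conditionally with the \(\mathcal F_\ell\)-measurable cap \(C_{I,n}\) frozen, and the same substitution \(M_IC_{I,n}^{1-r}=2^{1-r}L_n^{1-r}T_{\ell+1,n}^{1-r}M_I^r\), which gives the constant \(2^{1-r}\).
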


\begin{proof}
If \(M_I=0\), then \(D_{J,n}=0\).  It remains to consider the case \(M_I>0\).  For every \(K>0\),
\[
(U-K)_+\le U\one_{\{U>K\}}\le K^{1-r}U^r.
\]
Since \(C_{I,n}\) is \(\mathcal F_\ell\)-measurable and \(U_J\) is independent of \(\mathcal F_\ell\),
\[
\E[(U_J-C_{I,n})_+\mid\mathcal F_\ell]
\le
C_{I,n}^{1-r}\E[U^r].
\]
Hence
\[
\lvert D_{J,n}\rvert
\le
\lvert c_J\rvert M_IC_{I,n}^{1-r}\E[U^r].
\]
Substituting \(C_{I,n}=2L_nT_{\ell+1,n}/M_I\), we obtain
\[
M_IC_{I,n}^{1-r}
=
2^{1-r}L_n^{1-r}T_{\ell+1,n}^{1-r}M_I^r,
\]
which proves \eqref{eq:pointwise-r-tail-compensator-paper}.
\end{proof}

For \(\ell\ge0\), set 
\[R_{\ell,n}
=
L_n^{1-r}T_{\ell+1,n}^{1-r}\E[U^r]
\sum_{I\in\mathcal D_\ell^\circ}M_I^r.\]

\begin{lemma}
\label{lem:level-compensator-envelope-expectation-paper}
There exists \(C<\infty\) such that, for every \(\ell\ge0\) and \(n\ge1\),
\begin{equation}\label{eq:level-compensator-envelope-expectation-paper}
\E[R_{\ell,n}]
\le
C2^{-(\varepsilon+\kappa)(r-1)n}
2^{-\beta_{\mathrm{comp}}\ell}.
\end{equation}
\end{lemma}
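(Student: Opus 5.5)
The estimate follows directly from the definition of \(R_{\ell,n}\), since the only random factor is \(\sum_{I\in\mathcal D_\ell^\circ}M_I^r\). Recall that \(M_I=\widetilde\nu_\ell(I)\). The prelimit \(r\)-mass budget \eqref{eq:prelimit-r-mass-budget-paper} with \(k=\ell\) gives
\[
\E\Bigl[\sum_{I\in\mathcal D_\ell^\circ}M_I^r\Bigr]\le C_r2^{-r(s+\delta)\ell}.
\]
One can also see this directly: \(M_I=2^{-\ell}Q_v\) for \(|v|=\ell\), so the expectation equals \((2^{1-r}\E[U^r])^\ell\), and the witnessed hypothesis bounds this by \(2^{-r(s+\delta)\ell}\).

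Next we insert the deterministic prefactors. From \(L_n=2^{\kappa n}\) and \(T_{\ell+1,n}=2^{\varepsilon n}2^{-a(\ell+1)}\), and using \(r>1\), we get
\[
L_n^{1-r}T_{\ell+1,n}^{1-r}=2^{-(\kappa+\varepsilon)(r-1)n}\,2^{a(r-1)(\ell+1)}.
\]
The factor \(2^{a(r-1)}\) and \(\E[U^r]<\infty\) go into the constant \(C\).

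Multiplying the two bounds, the \(\ell\)-exponent becomes
\[
a(r-1)\ell-r(s+\delta)\ell=-\bigl[r(s+\delta)-(r-1)(s+\delta_1)\bigr]\ell=-\beta_{\mathrm{comp}}\ell,
\]
using \(a=s+\delta_1\). This is exactly \eqref{eq:level-compensator-envelope-expectation-paper}.

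There is no genuine obstacle here. The only point to check is the sign of the exponent \(1-r<0\): it turns the local-mass threshold \(T_{\ell+1,n}\) into a growth factor \(2^{a(r-1)\ell}\), and this growth is beaten by the moment decay \(2^{-r(s+\delta)\ell}\) because \(\delta_1<\delta\). That inequality is precisely what makes \(\beta_{\mathrm{comp}}>s>0\).
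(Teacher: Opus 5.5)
Your proof is correct and matches the paper's argument: you substitute \(L_n^{1-r}=2^{-\kappa(r-1)n}\) and \(T_{\ell+1,n}^{1-r}\le C2^{-\varepsilon(r-1)n}2^{a(r-1)\ell}\), apply the prelimit \(r\)-mass budget with \(k=\ell\), and collect exponents using \(a=s+\delta_1\). Your side observation is also valid and slightly sharper: at \(k=\ell\) the expectation is exactly \((2^{1-r}\E[U^r])^\ell\), since the descendant total mass at depth \(0\) equals \(1\), so no terminal-mass moment constant is needed.
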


\begin{proof}
We use
\[
L_n^{1-r}=2^{-\kappa(r-1)n},
\qquad
T_{\ell+1,n}^{1-r}
\le
C2^{-\varepsilon(r-1)n}2^{a(r-1)\ell},
\]
together with \eqref{eq:prelimit-r-mass-budget-paper}:
\[
\E\left[\sum_{I\in\mathcal D_\ell^\circ}M_I^r\right]
\le
C2^{-r(s+\delta)\ell}.
\]
It follows that
\[
\E[R_{\ell,n}]
\le
C2^{-(\varepsilon+\kappa)(r-1)n}
2^{(r-1)a\ell}
2^{-r(s+\delta)\ell}.
\]
Since \(a=s+\delta_1\) and \(r(s+\delta)-(r-1)(s+\delta_1)=\beta_{\mathrm{comp}}\), this is \eqref{eq:level-compensator-envelope-expectation-paper}.
\end{proof}

Let
\[
m_n^-=\max\{0, \lfloor\vartheta n-C_0\rfloor\},
\qquad
m_n^+=\lceil n+C_0\rceil,
\]
where \(C_0\) is the constant in Lemma~\ref{lem:nonmass-phase-bin-lower-bound-paper}.  Then every non-mass band in \(2^n\le\lvert\xi\rvert\le2^{n+1}\) satisfies \(m_{\xi,d}\in[m_n^-,m_n^+]\).

\begin{definition}
\label{def:absolute-compensators-paper}
Fix \(d\in\mathfrak D_{\xi,n}^{\mathrm{osc}}\), and write \(m=m_{\xi,d}\). Define
\[
\mathfrak C^{\mathrm{pre}}_{\xi,d,n}
=
\sum_{\ell=0}^{m-1}
\sum_{I\in\mathcal D_\ell^\circ}
\sum_{\substack{J\in\mathcal D_{\ell+1}^\circ\\J\subset I}}
\lvert D_{J,n}\rvert,
\]
and
\[
\mathfrak C^{\mathrm{post}}_{\xi,d,n,L}
=
\sum_{\ell=m}^{L-1}
\sum_{I\in\mathcal D_\ell^\circ}
\sum_{\substack{J\in\mathcal D_{\ell+1}^\circ\\J\subset I}}
\lvert D_{J,n}\rvert.
\]
Set
\[
\mathfrak C^{\mathrm{pre}}_{\xi,n}
=
\sum_{d\in\mathfrak D_{\xi,n}^{\mathrm{osc}}}
\mathfrak C^{\mathrm{pre}}_{\xi,d,n},
\qquad
\mathfrak C^{\mathrm{post}}_{\xi,n,L}
=
\sum_{d\in\mathfrak D_{\xi,n}^{\mathrm{osc}}}
\mathfrak C^{\mathrm{post}}_{\xi,d,n,L}.
\]
When the limit exists, set \(\mathfrak C^{\mathrm{post}}_{\xi,n}=\lim_{L\to\infty}\mathfrak C^{\mathrm{post}}_{\xi,n,L}\).
\end{definition}

\begin{lemma}
\label{lem:prefix-compensator-envelope-paper}
There exists a nonnegative random variable \(E^{\mathrm{pre}}_n\), not depending on \(\xi\), such that
\[
\sup_{2^n\le\lvert\xi\rvert\le2^{n+1}}\mathfrak C^{\mathrm{pre}}_{\xi,n}
\le
E^{\mathrm{pre}}_n,
\]
and, for some \(c>0\), 
\[\E[E^{\mathrm{pre}}_n]
\le
C2^{-sn/2}2^{-cn}.\]

\end{lemma}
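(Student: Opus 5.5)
The plan is to dominate \(\mathfrak C^{\mathrm{pre}}_{\xi,n}\) by a \(\xi\)-free sum of the level envelopes \(R_{\ell,n}\), and then take expectations using Lemma~\ref{lem:level-compensator-envelope-expectation-paper}.

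\textbf{Step 1: pointwise domination.} Fix \(\xi\) in the annulus and a non-mass band \(d\), and write \(m=m_{\xi,d}\). By Lemma~\ref{lem:pointwise-r-tail-compensator-paper} and the prefix coefficient bound \(|c_J|\le C2^{\ell-m}\le C\) for \(\ell<m\), each one-step compensator satisfies
\[
\lvert D_{J,n}\rvert\le C L_n^{1-r}T_{\ell+1,n}^{1-r}M_I^r\E[U^r].
\]
Each parent \(I\) has two children. Summing over \(I\in\mathcal D_\ell^\circ\) therefore gives at most \(CR_{\ell,n}\) per level. Since \(m\le m_n^+\), we get
\[
\mathfrak C^{\mathrm{pre}}_{\xi,d,n}\le C\sum_{\ell=0}^{m_n^+}R_{\ell,n}.
\]
Summing over the \(O(n)\) non-mass bands (Lemma~\ref{lem:stationary-derivative-band-partition-paper}(ii)), we may define
\[
E^{\mathrm{pre}}_n:=Cn\sum_{\ell=0}^{m_n^+}R_{\ell,n},
\]
which does not depend on \(\xi\).

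\textbf{Step 2: expectation.} By Lemma~\ref{lem:level-compensator-envelope-expectation-paper}, using \(\beta_{\mathrm{comp}}>0\) to sum the geometric series,
\[
\E E^{\mathrm{pre}}_n\le Cn\,2^{-(\varepsilon+\kappa)(r-1)n}.
\]
This needs to be compared with \(2^{-sn/2}\). The crude bound above is too weak, because the factor \(2^{-(\varepsilon+\kappa)(r-1)n}\) is small and need not beat \(2^{-sn/2}\). This is the main obstacle, so the gain from the coefficient \(|c_J|\le C2^{\ell-m}\) has to be kept rather than discarded.

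\textbf{Step 2, refined.} Retain the weight \(2^{\ell-m}\) and use \(m\ge m_n^-\), so that \(2^{\ell-m}\le 2^{\ell-m_n^-}\). Set
\[
E^{\mathrm{pre}}_n:=Cn\sum_{\ell\le m_n^+}\min\{1,2^{\ell-m_n^-}\}R_{\ell,n},
\]
which is legitimate because \(2^{\ell-m}\le\min\{1,2^{\ell-m_n^-}\}\) for \(\ell<m\). Its expectation is at most
\[
Cn\,2^{-(\varepsilon+\kappa)(r-1)n}\sum_\ell \min\{1,2^{\ell-m_n^-}\}\,2^{-\beta_{\mathrm{comp}}\ell}.
\]
The sum splits into two ranges.
\begin{itemize}
\item For \(\ell\ge m_n^-\) it is at most \(C2^{-\beta_{\mathrm{comp}}m_n^-}\).
\item For \(\ell<m_n^-\) it is \(2^{-m_n^-}\sum_{\ell<m_n^-}2^{(1-\beta_{\mathrm{comp}})\ell}\). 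This is \(\lesssim 2^{-\beta_{\mathrm{comp}}m_n^-}\) if \(\beta_{\mathrm{comp}}<1\), \(\lesssim m_n^-\,2^{-m_n^-}\) if \(\beta_{\mathrm{comp}}=1\), and \(\lesssim 2^{-m_n^-}\) if \(\beta_{\mathrm{comp}}>1\).
\end{itemize}
In every case the total is \(\lesssim n\,2^{-\gamma_{\mathrm{comp}}m_n^-}\). With \(m_n^-\ge\vartheta n-C_0-1\), this gives
\[
\E E^{\mathrm{pre}}_n\le Cn^2\,2^{-\gamma_{\mathrm{comp}}\vartheta n}.
\]
Lemma~\ref{lem:compatible-parameter-choice-paper} gives \(\gamma_{\mathrm{comp}}\vartheta>s/2\). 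Setting \(c\) slightly smaller than \(\gamma_{\mathrm{comp}}\vartheta-s/2\) absorbs the \(n^2\) factor and yields \(\E[E^{\mathrm{pre}}_n]\le C2^{-sn/2}2^{-cn}\).

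The delicate point is that Step 1 must retain the factor \(2^{\ell-m}\). Without it the compensator gains only in \(\ell\) and not in \(n\), and the bound fails.
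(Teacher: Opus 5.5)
Your refined argument is correct and is essentially the paper's proof. Both keep the prefix gain \(|c_J|\le C2^{\ell-m}\), bound each level by \(R_{\ell,n}\), apply Lemma~\ref{lem:level-compensator-envelope-expectation-paper} together with \(m_{\xi,d}\ge m_n^-\ge\vartheta n-C_0-1\), and close with \(\gamma_{\mathrm{comp}}\vartheta>s/2\) from a bound of order \(n^2 2^{-\gamma_{\mathrm{comp}}\vartheta n}\). The only cosmetic difference is how \(E^{\mathrm{pre}}_n\) is built: the paper sums over \(m\in[m_n^-,m_n^+]\), using that boundedly many bands share each value of \(m\), whereas you bound every band uniformly with the weight \(\min\{1,2^{\ell-m_n^-}\}\) and multiply by the \(O(n)\) band count.
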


\begin{proof}
For a non-mass band \(d\), write \(m=m_{\xi,d}\).  By Lemmas~\ref{lem:prefix-postbin-coefficient-estimates-paper} and~\ref{lem:pointwise-r-tail-compensator-paper},
\[
\mathfrak C^{\mathrm{pre}}_{\xi,d,n}
\le
C\sum_{\ell=0}^{m-1}2^{\ell-m}R_{\ell,n}.
\]
For fixed \(\xi\), only boundedly many dyadic bands \(d\) have the same value of \(m_{\xi,d}\).  Hence
\[
\mathfrak C^{\mathrm{pre}}_{\xi,n}
\le
C\sum_{m=m_n^-}^{m_n^+}
\sum_{\ell=0}^{m-1}2^{\ell-m}R_{\ell,n}.
\]
Let the right-hand side be \(E^{\mathrm{pre}}_n\).  Taking expectations and using Lemma~\ref{lem:level-compensator-envelope-expectation-paper},
\[
\E[E^{\mathrm{pre}}_n]
\le
C2^{-(\varepsilon+\kappa)(r-1)n}
\sum_{m=m_n^-}^{m_n^+}
\sum_{\ell=0}^{m-1}2^{\ell-m}2^{-\beta_{\mathrm{comp}}\ell}.
\]
For fixed \(m\),
\[
\sum_{\ell=0}^{m-1}2^{\ell-m}2^{-\beta_{\mathrm{comp}}\ell}
\le
C(1+m)2^{-\gamma_{\mathrm{comp}}m},
\qquad
\gamma_{\mathrm{comp}}=\min\{\beta_{\mathrm{comp}},1\}.
\]
Thus
\[
\E[E^{\mathrm{pre}}_n]
\le
C2^{-(\varepsilon+\kappa)(r-1)n}
\sum_{m=m_n^-}^{m_n^+}(1+m)2^{-\gamma_{\mathrm{comp}}m}.
\]
Since there are \(O(n)\) terms and \(m_n^-\ge\vartheta n-C_0-1\),
\[
\E[E^{\mathrm{pre}}_n]
\le
Cn^2
2^{-(\varepsilon+\kappa)(r-1)n}
2^{-\gamma_{\mathrm{comp}}\vartheta n}.
\]
By Lemma~\ref{lem:compatible-parameter-choice-paper}, \(\gamma_{\mathrm{comp}}\vartheta>s/2\), so the last expression is bounded by \(C2^{-sn/2}2^{-cn}\) after absorbing the polynomial factor.
\end{proof}

\begin{lemma}
\label{lem:postbin-compensator-envelope-paper}
There exists a nonnegative random variable \(E^{\mathrm{post}}_n\), not depending on \(\xi\), such that, for every finite truncation \(L\),
\[
\sup_{2^n\le\lvert\xi\rvert\le2^{n+1}}
\mathfrak C^{\mathrm{post}}_{\xi,n,L}
\le
E^{\mathrm{post}}_n,
\]
and, for some \(c>0\),
\begin{equation}\label{eq:postbin-compensator-envelope-expectation-paper}
\E[E^{\mathrm{post}}_n]
\le
C2^{-sn/2}2^{-cn}.
\end{equation}
\end{lemma}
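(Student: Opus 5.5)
The plan mirrors the prefix envelope of Lemma~\ref{lem:prefix-compensator-envelope-paper}, with the coefficient bound \(\lvert c_J\rvert\le C\) for \(\ell\ge m\) replacing the geometric factor \(2^{\ell-m}\). Fix \(\xi\) in the annulus and a non-mass band \(d\), and write \(m=m_{\xi,d}\).

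\emph{Step 1: pointwise bound.} Lemma~\ref{lem:prefix-postbin-coefficient-estimates-paper} gives \(\lvert c_J\rvert\le C\) for \(\ell\ge m\). Combining this with Lemma~\ref{lem:pointwise-r-tail-compensator-paper} and summing over the two children of each \(I\), we get
\[
\mathfrak C^{\mathrm{post}}_{\xi,d,n,L}\le C\sum_{\ell=m}^{L-1}R_{\ell,n}\le C\sum_{\ell=m}^{\infty}R_{\ell,n}.
\]
This bound is monotone in \(L\), so it holds uniformly over all finite truncations.

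\emph{Step 2: remove the dependence on \(\xi\).} For fixed \(\xi\), only boundedly many dyadic bands share a given phase-bin scale. Every non-mass band has \(m_{\xi,d}\in[m_n^-,m_n^+]\), and the tail sum \(\sum_{\ell\ge m}R_{\ell,n}\) decreases in \(m\). Hence
\[
\mathfrak C^{\mathrm{post}}_{\xi,n,L}\le C(m_n^+-m_n^-+1)\sum_{\ell\ge m_n^-}R_{\ell,n}=:E^{\mathrm{post}}_n .
\]
This random variable does not depend on \(\xi\) or \(L\). The prefactor is \(O(n)\), which suffices; a crude alternative is to use the \(O(n)\) bound on the total number of bands.

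\emph{Step 3: expectation.} Lemma~\ref{lem:level-compensator-envelope-expectation-paper} and monotone convergence give
\[
\E[E^{\mathrm{post}}_n]\le Cn\,2^{-(\varepsilon+\kappa)(r-1)n}\sum_{\ell\ge m_n^-}2^{-\beta_{\mathrm{comp}}\ell}\le Cn\,2^{-\beta_{\mathrm{comp}}m_n^-}.
\]
Since \(\beta_{\mathrm{comp}}>s>0\), the geometric series converges. Now use \(m_n^-\ge\vartheta n-C_0-1\) and the choice in Lemma~\ref{lem:compatible-parameter-choice-paper}, namely \(\beta_{\mathrm{comp}}\vartheta>s/2\). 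This yields \(Cn\,2^{-\beta_{\mathrm{comp}}\vartheta n}\le C2^{-sn/2}2^{-cn}\) after absorbing the factor \(n\), which is \eqref{eq:postbin-compensator-envelope-expectation-paper}. The factor \(2^{-(\varepsilon+\kappa)(r-1)n}\le1\) is simply discarded.

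\emph{Main obstacle.} The delicate point is the uniformity in \(\xi\). It requires that the bound depend on \(\xi\) only through \(m_{\xi,d}\) and that \(\sum_{\ell\ge m}R_{\ell,n}\) be monotone in \(m\); both hold because \(R_{\ell,n}\ge0\) and does not depend on \(\xi\). The existence of the limit \(\mathfrak C^{\mathrm{post}}_{\xi,n}\) follows from monotonicity and the finiteness of \(E^{\mathrm{post}}_n\), which holds almost surely since its expectation is finite.
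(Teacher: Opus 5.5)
Your proposal is correct and follows the paper's proof. You combine \(\lvert c_J\rvert\le C\) for \(\ell\ge m\) with the pointwise \(r\)-tail bound to get \(\mathfrak C^{\mathrm{post}}_{\xi,d,n,L}\le C\sum_{\ell\ge m}R_{\ell,n}\), remove the \(\xi\)-dependence using \(m_{\xi,d}\in[m_n^-,m_n^+]\), and finish with the level-envelope expectation bound and \(\beta_{\mathrm{comp}}\vartheta>s/2\). The only cosmetic difference is that you bound the sum over \(m\) by \((m_n^+-m_n^-+1)\) copies of the tail from \(m_n^-\), whereas the paper sums the tails over \(m\) directly; both give the same \(O(n)\,2^{-\beta_{\mathrm{comp}}m_n^-}\) estimate.
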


\begin{proof}
For \(\ell\ge m_{\xi,d}\), we have \(\lvert c_J\rvert\le C\).  Hence
Lemma~\ref{lem:pointwise-r-tail-compensator-paper} implies
\[
\mathfrak C^{\mathrm{post}}_{\xi,n,L}
\le
C\sum_{m=m_n^-}^{m_n^+}\sum_{\ell=m}^{\infty}R_{\ell,n}.
\]
Let the right-hand side be \(E^{\mathrm{post}}_n\). Taking expectations and using Lemma~\ref{lem:level-compensator-envelope-expectation-paper},
\[
\E[E^{\mathrm{post}}_n]
\le
C2^{-(\varepsilon+\kappa)(r-1)n}
\sum_{m=m_n^-}^{m_n^+}\sum_{\ell=m}^{\infty}2^{-\beta_{\mathrm{comp}}\ell}.
\]
Since \(\beta_{\mathrm{comp}}>0\),
\[
\sum_{\ell=m}^{\infty}2^{-\beta_{\mathrm{comp}}\ell}
\le
C2^{-\beta_{\mathrm{comp}}m}.
\]
Thus
\[
\E[E^{\mathrm{post}}_n]
\le
Cn2^{-(\varepsilon+\kappa)(r-1)n}
2^{-\beta_{\mathrm{comp}}m_n^-}.
\]
By Lemma~\ref{lem:compatible-parameter-choice-paper}, \(\beta_{\mathrm{comp}}\vartheta>s/2\), 
and since \(m_n^-\ge\vartheta n-C_0-1\), \eqref{eq:postbin-compensator-envelope-expectation-paper}
follows after reducing \(c>0\).
\end{proof}

\begin{proposition}[\(r\)-tail compensator estimate]
\label{prop:r-tail-compensator-paper}
There exist constants \(C<\infty\) and \(c>0\) such that, for every \(n\ge1\),
\begin{equation}\label{eq:r-tail-compensator-probability-paper}
\Pbb\left(
\sup_{2^n\le\lvert\xi\rvert\le2^{n+1}}
\left(
\mathfrak C^{\mathrm{pre}}_{\xi,n}
+
\sup_{L\ge1}\mathfrak C^{\mathrm{post}}_{\xi,n,L}
\right)
>
2n^{-2}2^{-sn/2}
\right)
\le
C2^{-cn}.
\end{equation}
Consequently, the probabilities in \eqref{eq:r-tail-compensator-probability-paper} are summable in 
\(n\).
\end{proposition}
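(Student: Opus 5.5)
The estimate follows from the two envelope lemmas already proved, combined with Markov's inequality. By Lemma~\ref{lem:prefix-compensator-envelope-paper}, the random variable \(E^{\mathrm{pre}}_n\) does not depend on \(\xi\) and dominates \(\sup_{2^n\le|\xi|\le2^{n+1}}\mathfrak C^{\mathrm{pre}}_{\xi,n}\). Likewise, by Lemma~\ref{lem:postbin-compensator-envelope-paper}, \(E^{\mathrm{post}}_n\) dominates \(\mathfrak C^{\mathrm{post}}_{\xi,n,L}\) uniformly in \(\xi\) in the annulus and in the truncation \(L\). It therefore dominates \(\sup_{L\ge1}\mathfrak C^{\mathrm{post}}_{\xi,n,L}\).

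For \(L\le m_{\xi,d}\) the post-bin sum is empty or zero. Since the summands are nonnegative, the sup over \(L\) is a monotone limit and the bound passes through.

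Consequently the event in \eqref{eq:r-tail-compensator-probability-paper} is contained in
\[
\bigl\{E^{\mathrm{pre}}_n+E^{\mathrm{post}}_n>2n^{-2}2^{-sn/2}\bigr\}.
\]
This event involves no supremum over \(\xi\), so no grid argument or union bound over frequencies is needed. All uniformity in \(\xi\) was built into the envelopes, because they sum over the whole deterministic range \(m\in[m_n^-,m_n^+]\) of phase-bin scales allowed by Lemma~\ref{lem:nonmass-phase-bin-lower-bound-paper}. Markov's inequality then gives
\[
\Pbb(\cdots)\le \frac{n^2 2^{sn/2}}{2}\bigl(\E E^{\mathrm{pre}}_n+\E E^{\mathrm{post}}_n\bigr)\le C n^2 2^{-cn}.
\]
The polynomial factor \(n^2\) is absorbed by replacing \(c\) with \(c/2\) and enlarging \(C\). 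This gives the bound \(C2^{-cn}\), and summability in \(n\) is immediate.

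The only point that needs checking is measurability and the passage to \(\sup_L\) inside the probability. Both are handled by the fact that the envelopes are explicit countable sums of the nonnegative random variables \(R_{\ell,n}\). The real work, namely the generation gain \(\gamma_{\mathrm{comp}}\vartheta>s/2\) and \(\beta_{\mathrm{comp}}\vartheta>s/2\), was already done in the envelope lemmas via Lemma~\ref{lem:compatible-parameter-choice-paper}.
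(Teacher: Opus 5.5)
Your proposal is correct and follows the paper's proof. You dominate the supremum by \(E^{\mathrm{pre}}_n+E^{\mathrm{post}}_n\) using the two envelope lemmas, apply Markov's inequality with the threshold \(2n^{-2}2^{-sn/2}\), and absorb the resulting \(n^2\) factor by reducing \(c\).
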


\begin{proof}
By Lemmas~\ref{lem:prefix-compensator-envelope-paper} and~\ref{lem:postbin-compensator-envelope-paper},
\[
\sup_{2^n\le\lvert\xi\rvert\le2^{n+1}}
\left(
\mathfrak C^{\mathrm{pre}}_{\xi,n}
+
\sup_{L\ge1}\mathfrak C^{\mathrm{post}}_{\xi,n,L}
\right)
\le
E^{\mathrm{pre}}_n+E^{\mathrm{post}}_n,
\]
and
\[
\E[E^{\mathrm{pre}}_n+E^{\mathrm{post}}_n]
\le
C2^{-sn/2}2^{-cn}.
\]
Markov's inequality therefore yields 
\[
\Pbb\left(
E^{\mathrm{pre}}_n+E^{\mathrm{post}}_n
>
2n^{-2}2^{-sn/2}
\right)
\le
Cn^2 2^{-cn}
\le
C2^{-cn}
\]
after reducing \(c\).  Summability follows.
\end{proof}

\begin{lemma}
\label{lem:raw-arrays-controlled-compensators-paper}
Assume \(\mathcal G_n^{\mathrm{pre}}\) holds. For 
\(d\in\mathfrak D_{\xi,n}^{\mathrm{osc}},\)
define
\[
D^{\mathrm{pre}}_{\xi,d,n}
=
\sum_{\ell=0}^{m_{\xi,d}-1}
\sum_{I\in\mathcal D_\ell^\circ}
\sum_{\substack{J\in\mathcal D_{\ell+1}^\circ\\J\subset I}}
D_{J,n},
\]
and
\[
D^{\mathrm{post}}_{\xi,d,n,L}
=
\sum_{\ell=m_{\xi,d}}^{L-1}
\sum_{I\in\mathcal D_\ell^\circ}
\sum_{\substack{J\in\mathcal D_{\ell+1}^\circ\\J\subset I}}
D_{J,n}.
\]
Then
\[
F^{\mathrm{pre}}_{\xi,d}
=
F^{\mathrm{pre,cap}}_{\xi,d,n}
+
D^{\mathrm{pre}}_{\xi,d,n},
\]
and, for every \(L>m_{\xi,d}\),
\[
F^{\mathrm{post}}_{\xi,d,L}
=
F^{\mathrm{post,cap}}_{\xi,d,n,L}
+
D^{\mathrm{post}}_{\xi,d,n,L}.
\]
Moreover,
\[
\lvert D^{\mathrm{pre}}_{\xi,d,n}\rvert
\le
\mathfrak C^{\mathrm{pre}}_{\xi,d,n},
\qquad
\lvert D^{\mathrm{post}}_{\xi,d,n,L}\rvert
\le
\mathfrak C^{\mathrm{post}}_{\xi,d,n,L}.
\]
If the absolute post-bin compensator is finite, then the signed post-bin compensator converges as 
\(L\to\infty,\) 
and the same identity holds for the limiting post-bin arrays.
\end{lemma}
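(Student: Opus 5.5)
The argument is term-by-term bookkeeping; it rests on Lemma~\ref{lem:raw-capped-compensator-identity-paper} together with the triangle inequality. We work throughout on \(\mathcal G_n^{\mathrm{pre}}\) and fix a non-mass band \(d\) with \(m=m_{\xi,d}\).

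\textbf{Step 1: the per-child identity.} For each generation \(\ell<m\) (respectively \(m\le\ell<L\)), each parent \(I\in\mathcal D_\ell^\circ\) and each child \(J\subset I\), we split into two cases.

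If \(M_I>0\), Lemma~\ref{lem:raw-capped-compensator-identity-paper} gives
\[
c_JM_I(U_J-1)=X^{\mathrm{cap}}_{J,n}+D_{J,n}.
\]

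If \(M_I=0\), both sides vanish. Indeed, \(c_JM_I(U_J-1)=0\), \(X^{\mathrm{cap}}_{J,n}=0\) by definition, and \(D_{J,n}=c_JM_I(\overline U_{I,n}-1)=0\). Note that \(\overline U_{I,n}=1\) in this case because \(C_{I,n}=+\infty\), though the factor \(M_I=0\) already suffices.

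\textbf{Step 2: summation over the finite index sets.} Sum the identity over the finite set \(\Gamma_{\mathrm{pre}}(d)\), respectively \(\Gamma_{\mathrm{post}}(d,L)\). Comparing with Definition~\ref{def:raw-prefix-postbin-arrays-paper} and with the unstopped arrays of Definition~\ref{def:centered-capped-arrays-paper}, this gives exactly the two finite identities
\[
F^{\mathrm{pre}}_{\xi,d}=F^{\mathrm{pre,cap}}_{\xi,d,n}+D^{\mathrm{pre}}_{\xi,d,n},
\qquad
F^{\mathrm{post}}_{\xi,d,L}=F^{\mathrm{post,cap}}_{\xi,d,n,L}+D^{\mathrm{post}}_{\xi,d,n,L}.
\]
The bounds \(\lvert D^{\mathrm{pre}}_{\xi,d,n}\rvert\le\mathfrak C^{\mathrm{pre}}_{\xi,d,n}\) and its post-bin analogue follow from the triangle inequality, since the absolute compensators of Definition~\ref{def:absolute-compensators-paper} are the sums of the \(\lvert D_{J,n}\rvert\) over the same index sets.

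\textbf{Step 3: the limit \(L\to\infty\).} Suppose \(\sup_L\mathfrak C^{\mathrm{post}}_{\xi,d,n,L}<\infty\). The partial sums \(\mathfrak C^{\mathrm{post}}_{\xi,d,n,L}\) are nondecreasing in \(L\), so the series \(\sum_{\ell\ge m}\sum_{I,J}D_{J,n}\) converges absolutely, being ordered by generation. Hence \(D^{\mathrm{post}}_{\xi,d,n,L}\) converges to a limit \(D^{\mathrm{post}}_{\xi,d,n}\) with \(\lvert D^{\mathrm{post}}_{\xi,d,n}\rvert\le\mathfrak C^{\mathrm{post}}_{\xi,d,n}\).

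On the weak-convergence event, Lemma~\ref{lem:exact-martingale-decomposition-band-paper} shows that \(F^{\mathrm{post}}_{\xi,d,L}\) converges. The finite identity then forces
\[
F^{\mathrm{post,cap}}_{\xi,d,n,L}=F^{\mathrm{post}}_{\xi,d,L}-D^{\mathrm{post}}_{\xi,d,n,L}
\]
to converge as well. Passing to the limit gives \(F^{\mathrm{post}}_{\xi,d}=F^{\mathrm{post,cap}}_{\xi,d,n}+D^{\mathrm{post}}_{\xi,d,n}\).

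\textbf{Main obstacle.} The only delicate point is this limiting step. One must make sure the limiting capped array is defined as the limit of its partial sums, and not assumed to exist independently. The difference argument above settles this without needing any separate convergence result for the capped martingale.
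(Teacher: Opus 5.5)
Your proposal is correct and follows essentially the same route as the paper: apply the per-child identity of Lemma~\ref{lem:raw-capped-compensator-identity-paper} on \(\mathcal G_n^{\mathrm{pre}}\), sum it over the finite pre-bin and post-bin index sets, use the triangle inequality for the compensator bounds, and pass to the limit via absolute convergence of the signed compensator. Two steps the paper leaves implicit are made explicit in your version: you treat the case \(M_I=0\) separately, and you obtain the limiting capped array as a difference of two convergent sequences, using Lemma~\ref{lem:exact-martingale-decomposition-band-paper} for the raw post-bin limit.
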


\begin{proof}
On \(\mathcal G_n^{\mathrm{pre}}\), Lemma~\ref{lem:raw-capped-compensator-identity-paper} yields the identity
\[
c_JM_I(U_J-1)=X_{J,n}^{\mathrm{cap}}+D_{J,n}.
\]
Summing over \(0\le\ell<m_{\xi,d}\) proves the prefix identity, while summing over
\(m_{\xi,d}\le\ell<L\) proves the finite post-bin identity. The absolute-value bounds are immediate from the definitions of \(\mathfrak C^{\mathrm{pre}}\) and \(
\mathfrak C^{\mathrm{post}}\). 
If the absolute post-bin compensator is finite, the signed compensator converges absolutely, so passing to the limit as \(L\to\infty\) proves the limiting identity.
\end{proof}

\subsection{Annular assembly}
\label{subsec:annular-assembly-paper}

We now combine the local-mass good event, the deterministic reduction, the capped martingale estimate, and the compensator estimate.

\begin{lemma}
\label{lem:annular-frequency-grid-paper}
For every \(n\ge1\), there exists a finite set \(\mathcal N_n\subset\{\xi\in\R^2:2^n\le\lvert\xi\rvert\le2^{n+1}\}\) such that for every \(\xi\) with \(2^n\le\lvert\xi\rvert\le2^{n+1}\), there exists \(\eta\in\mathcal N_n\) satisfying
\begin{equation}\label{eq:annular-frequency-grid-mesh-paper}
\lvert\xi-\eta\rvert\le \frac{1}{8\pi}2^{-(s/2+\varepsilon)n}.
\end{equation}
Moreover,
\begin{equation}\label{eq:annular-frequency-grid-cardinality-paper}
\#\mathcal N_n\le C2^{(2+s+2\varepsilon)n}.
\end{equation}
\end{lemma}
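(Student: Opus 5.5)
The grid will be a plain lattice intersected with the annulus; the mesh bound and the cardinality bound are then elementary area counts. Set
\[
h_n=\frac{1}{8\pi\sqrt2}\,2^{-(s/2+\varepsilon)n},
\]
and consider the square lattice $h_n\Z^2$. Every point of $\R^2$ lies within Euclidean distance $h_n\sqrt2/2\le h_n\sqrt2$ of some lattice point. So the distance to the lattice is at most $\frac{1}{8\pi}2^{-(s/2+\varepsilon)n}$, which is exactly the mesh required in \eqref{eq:annular-frequency-grid-mesh-paper}.

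The only subtlety is that the statement asks for $\mathcal N_n$ to lie inside the closed annulus $A_n=\{2^n\le\lvert\xi\rvert\le2^{n+1}\}$. A lattice point nearest to some $\xi\in A_n$ may fall slightly outside $A_n$.

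\begin{itemize}
\item The first remedy is to project. For each lattice point $p$ within distance $h_n\sqrt2$ of $A_n$, choose a point $\pi(p)\in A_n$ with $\lvert p-\pi(p)\rvert\le h_n\sqrt2$. Radial projection onto the nearer boundary circle works when $p$ lies outside $A_n$; take $\pi(p)=p$ when $p\in A_n$.
\item Let $\mathcal N_n$ be the set of all such $\pi(p)$. Then for $\xi\in A_n$ there is a lattice point $p$ with $\lvert\xi-p\rvert\le h_n\sqrt2/2$, and the triangle inequality gives $\lvert\xi-\pi(p)\rvert\le \tfrac32 h_n\sqrt2$.
\item To absorb the factor $3/2$, shrink $h_n$ by a further absolute constant, for instance replace $\sqrt2$ by $3$ in the definition of $h_n$. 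This keeps the mesh bound exact.
\end{itemize}

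For the cardinality bound, the selected lattice points all lie in the disc of radius $2^{n+1}+1$. The number of lattice points there is at most
\[
C\frac{(2^{n+1}+1)^2}{h_n^2}+C\frac{2^{n+1}+1}{h_n}+C\le C'\,2^{2n}\,2^{(s+2\varepsilon)n},
\]
using $h_n\le1$. This gives \eqref{eq:annular-frequency-grid-cardinality-paper}.

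There is no real obstacle here. The constants are absolute; $C$ may depend on nothing, or at most on $s$ and $\varepsilon$, which is harmless.
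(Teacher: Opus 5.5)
Your proof is correct and follows the paper's argument: a square lattice with mesh comparable to $(8\pi)^{-1}2^{-(s/2+\varepsilon)n}$, with the count taken inside a region of size $O(2^n)$. Your radial-projection step is a useful addition, since it makes explicit the containment $\mathcal N_n\subset\{2^n\le|\xi|\le2^{n+1}\}$. The paper's phrase ``retain the points needed to cover the annulus'' leaves this implicit, yet it is used later when Proposition~\ref{prop:stationary-tube-phase-bin-reduction-paper} is applied at grid points. Also, radial projection realizes the distance to the annulus, so $|p-\pi(p)|\le|p-\xi|$ and your extra shrinking of $h_n$ is unnecessary.
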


\begin{proof}
Let \(\rho_n=(8\pi)^{-1}2^{-(s/2+\varepsilon)n}\). Take a square lattice with mesh comparable to \(\rho_n\), and retain the points needed to cover the annulus. Since the annulus is contained in a square of side length \(O(2^n)\),
\[
\#\mathcal N_n
\le
C\left(\frac{2^n}{\rho_n}\right)^2
\le
C2^{(2+s+2\varepsilon)n}.
\]
\end{proof}

\begin{lemma}
\label{lem:annular-grid-passage-paper}
Assume that \(\mathcal G_n^{\mathrm{lim}}\) holds. If
\[
\max_{\eta\in\mathcal N_n}\lvert\wh{\nu_\circ}(\eta)\rvert
\le
A2^{-sn/2},
\]
then
\[
\sup_{2^n\le\lvert\xi\rvert\le2^{n+1}}\lvert\wh{\nu_\circ}(\xi)\rvert
\le
(A+1)2^{-sn/2}.
\]
\end{lemma}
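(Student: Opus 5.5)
This is a direct Lipschitz interpolation, and we will argue deterministically on the event \(\mathcal G_n^{\mathrm{lim}}\). Fix \(\xi\) with \(2^n\le\lvert\xi\rvert\le2^{n+1}\). By Lemma~\ref{lem:annular-frequency-grid-paper} there is a grid point \(\eta\in\mathcal N_n\) with
\[
\lvert\xi-\eta\rvert\le \frac{1}{8\pi}2^{-(s/2+\varepsilon)n}.
\]
By Lemma~\ref{lem:good-event-total-mass-lipschitz-paper}, on \(\mathcal G_n^{\mathrm{lim}}\) the Fourier transform is Lipschitz with constant \(2\pi 2^{\varepsilon n}\), because the total mass is at most \(2^{\varepsilon n}\). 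Combining the two gives
\[
\lvert\wh{\nu_\circ}(\xi)-\wh{\nu_\circ}(\eta)\rvert
\le 2\pi 2^{\varepsilon n}\cdot\frac{1}{8\pi}2^{-(s/2+\varepsilon)n}
=\frac14\,2^{-sn/2}.
\]

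The triangle inequality and the hypothesis on grid points then yield
\[
\lvert\wh{\nu_\circ}(\xi)\rvert\le A2^{-sn/2}+\tfrac14 2^{-sn/2}\le (A+1)2^{-sn/2}.
\]
Since \(\xi\) in the annulus was arbitrary, we can take the supremum and obtain the claim.

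There is no real obstacle here. The only point to verify is that the mesh exponent \(s/2+\varepsilon\) of the grid exactly offsets the \(2^{\varepsilon n}\) growth of the Lipschitz constant coming from the mass bound. The factor \(1/(8\pi)\) was chosen so that the resulting constant is at most \(1\).
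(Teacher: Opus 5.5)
Your proposal is correct and is the same argument as the paper's: you take the nearest grid point from Lemma~\ref{lem:annular-frequency-grid-paper}, apply the Lipschitz bound \eqref{eq:good-event-fourier-lipschitz-paper} valid on \(\mathcal G_n^{\mathrm{lim}}\) to get a discrepancy of at most \(\tfrac14 2^{-sn/2}\), and finish with the triangle inequality. Your arithmetic showing that the mesh exponent \(s/2+\varepsilon\) cancels the \(2^{\varepsilon n}\) mass growth is exactly what the paper uses.
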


\begin{proof}
For \(2^n\le\lvert\xi\rvert\le2^{n+1}\), choose 
\(\eta\in\mathcal N_n\) 
satisfying \eqref{eq:annular-frequency-grid-mesh-paper}. By \eqref{eq:good-event-fourier-lipschitz-paper},
\[
\lvert\wh{\nu_\circ}(\xi)-\wh{\nu_\circ}(\eta)\rvert
\le
2\pi2^{\varepsilon n}\lvert\xi-\eta\rvert
\le
\frac14\,2^{-sn/2}.
\]
Thus \(\lvert\wh{\nu_\circ}(\xi)\rvert\le (A+1)2^{-sn/2}\).
\end{proof}

\begin{definition}
\label{def:capped-compensator-exceptional-events-paper}
Define the capped-martingale exceptional event 
\(\mathcal E_n^{\mathrm{cap}}\) 
to be the event that there exist
\[
\xi\in\mathcal N_n
\qquad\text{and}\qquad
d\in\mathfrak D_{\xi,n}^{\mathrm{osc}}
\]
such that 
\(\mathcal G_n^{\mathrm{pre}}\) 
holds and
\[
\lvert F^{\mathrm{pre,cap}}_{\xi,d,n}\rvert
+
\sup_{L>m_{\xi,d}}
\lvert F^{\mathrm{post,cap}}_{\xi,d,n,L}\rvert
>
2^{-sn/2}2^{-2\varepsilon n}.
\]
Define the compensator exceptional event 
\(\mathcal E_n^{\mathrm{comp}}\) 
by
\[
\mathcal E_n^{\mathrm{comp}}
=
\left\{
\sup_{2^n\le\lvert\xi\rvert\le2^{n+1}}
\left(
\mathfrak C^{\mathrm{pre}}_{\xi,n}
+
\sup_{L\ge1}\mathfrak C^{\mathrm{post}}_{\xi,n,L}
\right)
>
2n^{-2}2^{-sn/2}
\right\}.
\]
\end{definition}

\begin{lemma}
\label{lem:capped-exceptional-probability-paper}
There exist constants \(C,c,\eta>0\) such that
\[
\Pbb(\mathcal E_n^{\mathrm{cap}})
\le
C\exp(-c2^{\eta n}).
\]
\end{lemma}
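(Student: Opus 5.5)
The plan is a union bound over the finite frequency grid \(\mathcal N_n\) and the finitely many oscillatory bands, fed by Corollary~\ref{cor:one-band-centered-capped-estimate-paper}. By Definition~\ref{def:capped-compensator-exceptional-events-paper},
\[
\mathcal E_n^{\mathrm{cap}}
\subseteq
\bigcup_{\xi\in\mathcal N_n}\ \bigcup_{d\in\mathfrak D_{\xi,n}^{\mathrm{osc}}}
\Bigl(\mathcal G_n^{\mathrm{pre}}\cap\Bigl\{\lvert F^{\mathrm{pre,cap}}_{\xi,d,n}\rvert+\sup_{L>m_{\xi,d}}\lvert F^{\mathrm{post,cap}}_{\xi,d,n,L}\rvert>2^{-sn/2}2^{-2\varepsilon n}\Bigr\}\Bigr).
\]
The index set is deterministic. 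The grid \(\mathcal N_n\) is a fixed set from Lemma~\ref{lem:annular-frequency-grid-paper}. For each \(\xi\), the family \(\mathfrak D_{\xi,n}^{\mathrm{osc}}\) is determined by \(\xi\), \(n\), \(s\), \(a\), \(\varepsilon\) through Definition~\ref{def:mass-only-oscillatory-bands-paper} and Lemma~\ref{lem:stationary-derivative-band-partition-paper}, so it involves no randomness. The union bound is therefore legitimate without any conditioning.

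Each event in the union is handled by Corollary~\ref{cor:one-band-centered-capped-estimate-paper}, which gives the bound \(C\exp(-c2^{\eta n})\) with constants uniform in \(n\), \(\xi\) in the annulus, and \(d\). Since \(\#\mathcal N_n\le C2^{(2+s+2\varepsilon)n}\) by \eqref{eq:annular-frequency-grid-cardinality-paper}, and \(\#\mathfrak D_{\xi,n}^{\mathrm{osc}}\le\#\mathfrak D_\xi\le C(1+\log_2(2+2^{n+1}))\le Cn\) by Lemma~\ref{lem:stationary-derivative-band-partition-paper}(ii), we get
\[
\Pbb(\mathcal E_n^{\mathrm{cap}})\le C n\,2^{(2+s+2\varepsilon)n}\exp(-c2^{\eta n}).
\]
The polynomial-times-exponential prefactor is absorbed by halving \(c\): for large \(n\), \(Cn2^{3n}\le\exp(\tfrac c2 2^{\eta n})\). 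Enlarging \(C\) covers the finitely many small \(n\), and this gives the claim with \((C,c/2,\eta)\).

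The only step needing care is uniformity. The constants in Proposition~\ref{prop:capped-martingale-estimate-paper}, and hence in the corollary, must not depend on \(\xi\) or \(d\). I would check this by noting that the jump and quadratic-variation budgets of Lemma~\ref{lem:jump-quadratic-variation-budgets-paper} use only \(\lvert\xi\rvert\ge 2^n\), the non-mass lower bound on \(d\), and the coefficient bounds of Lemma~\ref{lem:prefix-postbin-coefficient-estimates-paper}, all of which are uniform. The post-bin supremum over \(L\) is already included inside the corollary, through the maximal form of Freedman's inequality applied to the stopped martingale, so no extra union over \(L\) is needed. I expect no genuine obstacle here; the stretched-exponential tail simply swamps the polynomial number of grid points and bands.
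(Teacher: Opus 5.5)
Your proposal is correct and is essentially the paper's own argument: a union bound over the deterministic family of pairs \((\xi,d)\) with \(\xi\in\mathcal N_n\) and \(d\in\mathfrak D_{\xi,n}^{\mathrm{osc}}\). Each event is controlled by Corollary~\ref{cor:one-band-centered-capped-estimate-paper}, which gives \(Cn2^{(2+s+2\varepsilon)n}\exp(-c2^{\eta n})\), and the stretched-exponential factor absorbs the prefactor after shrinking \(c\). Your remarks on the index set being deterministic and on the one-band constants being uniform make explicit what the paper leaves implicit.
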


\begin{proof}
For fixed \(\xi\) and \(d\), Corollary~\ref{cor:one-band-centered-capped-estimate-paper} yields the bound \(C\exp(-c2^{\eta n})\). For each \(\xi\), the number of derivative bands is \(O(n)\), and by \eqref{eq:annular-frequency-grid-cardinality-paper},
\[
\#\mathcal N_n\le C2^{(2+s+2\varepsilon)n}.
\]
The union bound therefore yields
\[
\Pbb(\mathcal E_n^{\mathrm{cap}})
\le
Cn2^{(2+s+2\varepsilon)n}\exp(-c2^{\eta n}).
\]
The stretched-exponential factor dominates the ordinary exponential prefactor, and the claim follows after decreasing \(c\) and \(\eta\), if necessary.
\end{proof}

\begin{lemma}
\label{lem:grid-annular-estimate-paper}
There exist constants \(C,c,\eta>0\) such that, for every \(n\ge1\),
\begin{equation}\label{eq:grid-annular-estimate-paper}
\Pbb\left(
\max_{\xi\in\mathcal N_n}
\lvert\wh{\nu_\circ}(\xi)\rvert
>
C2^{-sn/2}
\right)
\le
C\exp(-c2^{\eta n})+C2^{-cn}.
\end{equation}
\end{lemma}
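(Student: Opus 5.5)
The plan is to assemble the pieces already in place: the good event \(\mathcal G_n\), the deterministic reduction of Proposition~\ref{prop:stationary-tube-phase-bin-reduction-paper}, the capped martingale exceptional event \(\mathcal E_n^{\mathrm{cap}}\), and the compensator exceptional event \(\mathcal E_n^{\mathrm{comp}}\). Let \(\mathcal W\) denote the full-probability weak-convergence event for \(\widetilde\nu\). I will show that on
\[
\mathcal G_n\cap\mathcal W\cap(\mathcal E_n^{\mathrm{cap}})^c\cap(\mathcal E_n^{\mathrm{comp}})^c
\]
one has \(\max_{\xi\in\mathcal N_n}\lvert\wh{\nu_\circ}(\xi)\rvert\le C2^{-sn/2}\) for a deterministic constant \(C\). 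Then the probability in \eqref{eq:grid-annular-estimate-paper} is bounded by
\[
\Pbb(\mathcal G_n^c)+\Pbb(\mathcal E_n^{\mathrm{cap}})+\Pbb(\mathcal E_n^{\mathrm{comp}}).
\]
By Proposition~\ref{prop:local-mass-good-events-paper}, Lemma~\ref{lem:capped-exceptional-probability-paper} and Proposition~\ref{prop:r-tail-compensator-paper}, this sum is at most \(C2^{-cn}+C\exp(-c2^{\eta n})+C2^{-cn}\), which gives the claim after adjusting constants.

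For the deterministic bound, fix \(\xi\in\mathcal N_n\), so \(2^n\le\lvert\xi\rvert\le2^{n+1}\). Proposition~\ref{prop:stationary-tube-phase-bin-reduction-paper} writes \(\wh{\nu_\circ}(\xi)\) as \(E^{\mathrm{safe}}_{\xi,n}\) plus a sum over oscillatory bands \(d\) of \(F^{\mathrm{pre}}_{\xi,d}+F^{\mathrm{post}}_{\xi,d}\), with \(\lvert E^{\mathrm{safe}}_{\xi,n}\rvert\le C2^{-sn/2}\).

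On \(\mathcal G_n^{\mathrm{pre}}\), Lemma~\ref{lem:raw-arrays-controlled-compensators-paper} splits each array into a capped part and a compensator part. On \((\mathcal E_n^{\mathrm{comp}})^c\) the absolute post-bin compensator is finite, so the signed compensators converge and the limiting identity holds. Since \(F^{\mathrm{post}}_{\xi,d}\) exists by Lemma~\ref{lem:exact-martingale-decomposition-band-paper}, the capped post-bin partial sums also converge. Their limit is bounded by \(\sup_{L>m_{\xi,d}}\lvert F^{\mathrm{post,cap}}_{\xi,d,n,L}\rvert\).

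On \((\mathcal E_n^{\mathrm{cap}})^c\), for each oscillatory band the capped contribution is at most \(2^{-sn/2}2^{-2\varepsilon n}\). There are \(O(n)\) bands by Lemma~\ref{lem:stationary-derivative-band-partition-paper}, so these sum to at most \(Cn2^{-2\varepsilon n}2^{-sn/2}\le C2^{-sn/2}\).

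The compensator contributions over all bands are bounded by \(\mathfrak C^{\mathrm{pre}}_{\xi,n}+\sup_L\mathfrak C^{\mathrm{post}}_{\xi,n,L}\). On \((\mathcal E_n^{\mathrm{comp}})^c\) this is at most \(2n^{-2}2^{-sn/2}\). Summing the three contributions gives the uniform bound \(C2^{-sn/2}\) over \(\mathcal N_n\).

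The main subtlety is bookkeeping rather than estimation: the identity of Lemma~\ref{lem:raw-capped-compensator-identity-paper} requires \(\mathcal G_n^{\mathrm{pre}}\) to hold at all levels. The passage \(L\to\infty\) in the post-bin arrays must be justified before the capped supremum bound is applied. Both points are handled by working on the intersection event above and using the absolute convergence of the compensator on \((\mathcal E_n^{\mathrm{comp}})^c\).
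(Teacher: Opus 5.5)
Your proposal is correct and is essentially the paper's own proof. You bound $\Pbb(\mathcal G_n^c)$, $\Pbb(\mathcal E_n^{\mathrm{cap}})$ and $\Pbb(\mathcal E_n^{\mathrm{comp}})$ by the three cited results. On the complementary good event you combine the stationary-tube/phase-bin reduction with the capped-plus-compensator splitting on $\mathcal G_n^{\mathrm{pre}}$, use absolute convergence of the post-bin compensator to pass to $L\to\infty$, and sum over the $O(n)$ oscillatory bands. Your explicit intersection with the weak-convergence event is a harmless bookkeeping step that the paper leaves implicit.
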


\begin{proof}
By Proposition~\ref{prop:local-mass-good-events-paper}, Lemma~\ref{lem:capped-exceptional-probability-paper}, and Proposition~\ref{prop:r-tail-compensator-paper},
\[
\Pbb(\mathcal G_n^c)\le C2^{-cn},
\qquad
\Pbb(\mathcal E_n^{\mathrm{cap}})\le C\exp(-c2^{\eta n}),
\qquad
\Pbb(\mathcal E_n^{\mathrm{comp}})\le C2^{-cn}.
\]
It remains to prove that, on
\[
\mathcal G_n\cap(\mathcal E_n^{\mathrm{cap}})^c\cap(\mathcal E_n^{\mathrm{comp}})^c,
\]
one has 
\(
\max_{\xi\in\mathcal N_n}\lvert\wh{\nu_\circ}(\xi)\rvert\le C2^{-sn/2}.
\)

Fix \(\xi\in\mathcal N_n\). By Proposition~\ref{prop:stationary-tube-phase-bin-reduction-paper},
\[
\wh{\nu_\circ}(\xi)
=
E^{\mathrm{safe}}_{\xi,n}
+
\sum_{d\in\mathfrak D_{\xi,n}^{\mathrm{osc}}}
\left(
F^{\mathrm{pre}}_{\xi,d}
+
F^{\mathrm{post}}_{\xi,d}
\right),
\]
with 
\(
\lvert E^{\mathrm{safe}}_{\xi,n}\rvert\le C2^{-sn/2}2^{-c_{\mathrm{safe}}n}.
\) 
On \(\mathcal G_n^{\mathrm{pre}}\), Lemma~\ref{lem:raw-arrays-controlled-compensators-paper} decomposes each raw array into a centered capped part and a compensator. Since 
\(
(\mathcal E_n^{\mathrm{cap}})^c
\) 
holds, for every non-mass band \(d\),
\[
\lvert F^{\mathrm{pre,cap}}_{\xi,d,n}\rvert
+
\sup_{L>m_{\xi,d}}\lvert F^{\mathrm{post,cap}}_{\xi,d,n,L}\rvert
\le
2^{-sn/2}2^{-2\varepsilon n}.
\]
The raw post-bin limit exists by Lemma~\ref{lem:exact-martingale-decomposition-band-paper}; on 
\(
(\mathcal E_n^{\mathrm{comp}})^c,
\) 
the post-bin compensator converges absolutely, and hence the limiting centered capped post-bin term exists. Therefore,
\[
\lvert F^{\mathrm{pre,cap}}_{\xi,d,n}\rvert
+
\lvert F^{\mathrm{post,cap}}_{\xi,d,n}\rvert
\le
2^{-sn/2}2^{-2\varepsilon n}.
\]
Since there are \(O(n)\) derivative bands,
\[
\sum_{d\in\mathfrak D_{\xi,n}^{\mathrm{osc}}}
\left(
\lvert F^{\mathrm{pre,cap}}_{\xi,d,n}\rvert
+
\lvert F^{\mathrm{post,cap}}_{\xi,d,n}\rvert
\right)
\le
Cn2^{-2\varepsilon n}2^{-sn/2}
\le
C2^{-sn/2}.
\]
On 
\(
(\mathcal E_n^{\mathrm{comp}})^c,
\)
\[
\sum_{d\in\mathfrak D_{\xi,n}^{\mathrm{osc}}}
\left(
\lvert D^{\mathrm{pre}}_{\xi,d,n}\rvert
+
\lvert D^{\mathrm{post}}_{\xi,d,n}\rvert
\right)
\le
2n^{-2}2^{-sn/2}.
\]
Combining the safe term, the centered capped terms, and the compensators, we obtain
\[
\lvert\wh{\nu_\circ}(\xi)\rvert
\le
C2^{-sn/2}.
\]
Taking the union of the exceptional probabilities proves \eqref{eq:grid-annular-estimate-paper}.
\end{proof}

\begin{proof}[Proof of Theorem~\ref{thm:finite-r-annular}]
By Lemma~\ref{lem:grid-annular-estimate-paper},
\[
\Pbb\left(
\max_{\xi\in\mathcal N_n}
\lvert\wh{\nu_\circ}(\xi)\rvert
>
C2^{-sn/2}
\right)
\le
C\exp(-c2^{\eta n})+C2^{-cn}.
\]
By Proposition~\ref{prop:local-mass-good-events-paper},
\[
\Pbb\bigl((\mathcal G_n^{\mathrm{lim}})^c\bigr)
\le
C2^{-cn}.
\]
On \(\mathcal G_n^{\mathrm{lim}}\), Lemma~\ref{lem:annular-grid-passage-paper} upgrades the grid estimate to the full annulus. Therefore, after increasing \(C\),
\[
\Pbb\left(
\sup_{2^n\le\lvert\xi\rvert\le2^{n+1}}
\lvert\wh{\nu_\circ}(\xi)\rvert
>
C2^{-sn/2}
\right)
\le
C\exp(-c2^{\eta n})+C2^{-cn}.
\]
This is \eqref{eq:finite-r-annular-theorem-paper}. The right-hand side is summable in \(n\). Hence, by the Borel--Cantelli lemma, almost surely, for all sufficiently large \(n\),
\[
\sup_{2^n\le\lvert\xi\rvert\le2^{n+1}}
\lvert\wh{\nu_\circ}(\xi)\rvert
\le
C2^{-sn/2}.
\]
This completes the proof.
\end{proof}

\section{Endpoint lower bound and circle theorem}
\label{sec:endpoint-lower-bound-circle-theorem}

This section completes the proof of the circle endpoint theorem. Corollary~\ref{cor:endpoint-upper-bound-paper} yields
\[
\dimF(\mu_\circ)\le \Aloc(W)
\qquad
\text{almost surely on }\mathcal S_\circ.
\]
The reverse inequality follows from Theorem~\ref{thm:finite-r-annular}: every strict subendpoint exponent \(s<\Aloc(W)\) is witnessed by a finite moment of some order \(r>1\), and hence yields the corresponding Fourier decay.

Throughout this section, \(W\) is assumed to be in the minimal Kahane--Peyri\`ere regime:
\[
W\ge0,
\qquad
\E W=1,
\qquad
\E[W\log_2^+W]<\infty,
\qquad
\E[W\log_2W]<1.
\]
Let \(\mu_\circ\) be the circle cascade generated by \(W\), and set
\[
\mathcal S_\circ=\{\mu_{\circ}(\mathbb S^1)>0\}.
\]

\begin{theorem}[Endpoint Fourier dimension formula on the circle]
\label{thm:endpoint-circle-equality}
Assume the minimal Kahane--Peyri\`ere regime. Then, almost surely on \(\mathcal S_\circ\),
\begin{equation}\label{eq:endpoint-circle-equality-paper}
\dimF(\mu_\circ)=\Aloc(W).
\end{equation}
\end{theorem}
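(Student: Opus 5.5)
The upper bound $\dimF(\mu_\circ)\le\Aloc(W)$ on $\mathcal S_\circ$ is Corollary~\ref{cor:endpoint-upper-bound-paper}, so only the lower bound needs proof. If $\Aloc(W)=0$ there is nothing to show. Otherwise, fix a countable sequence $s_j\uparrow\Aloc(W)$ with $0<s_j<\Aloc(W)$. For each $j$ we will produce almost sure decay $|\wh{\mu_\circ}(\xi)|=O(|\xi|^{-s_j/2})$, intersect the resulting full-probability events, and let $j\to\infty$.

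\emph{Witness step.} Fix $s=s_j$. By the definition of $\Aloc(W)$ there is $q>1$ with $\E[W^q]<\infty$ and $(q-1-\log_2\E[W^q])/q>s$. Choose $\delta>0$ so that $(q-1-\log_2\E[W^q])/q\ge s+\delta$. This rearranges to
\[
2^{1-q}\E[W^q]\le 2^{-q(s+\delta)}.
\]
Thus $U=W$ satisfies the finite-$r$ witnessed hypothesis of Definition~\ref{def:finite-r-witnessed-hypothesis} with $r=q$. We also need $s<1$; this holds because $\Aloc(W)\le1$, which follows from Jensen's inequality.

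\emph{Identification step.} The measure $\nu_\circ=f_\#\widetilde\nu$ generated by $U=W$ is exactly $\mu_\circ$. To check this, observe that $\mu_{\circ,n}$ is the pushforward under $f$ of $\widetilde\nu_n$, since $\sigma=f_\#(dt)$ and $\mathcal I_v=f(J_v)$. Pushforward under the continuous map $f$ preserves weak convergence, so the almost sure weak limits agree when both are built on the same weights $W_v$. Theorem~\ref{thm:finite-r-annular} therefore gives, almost surely, $|\wh{\mu_\circ}(\xi)|\le C2^{-sn/2}$ on every annulus $2^n\le|\xi|\le2^{n+1}$ with $n$ large. This is $O(|\xi|^{-s/2})$, so $s$ is an admissible exponent and $\dimF(\mu_\circ)\ge s$ almost surely. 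On $\mathcal S_\circ$ the measure is nonzero, so the Fourier dimension is well defined there.

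\emph{Main obstacle.} All the analytic difficulty sits in Theorem~\ref{thm:finite-r-annular}, which we take as given. The only delicate points left are the strictness of the gap $\delta>0$, which requires taking $s$ strictly below the supremum, and the identification of the two cascade constructions. Combining with Corollary~\ref{cor:endpoint-upper-bound-paper} yields \eqref{eq:endpoint-circle-equality-paper}. The decay statement of Theorem~\ref{thm:main-circle-endpoint-formula} for $\sigma<\Aloc(W)$ follows from the same argument with $s=\sigma$.
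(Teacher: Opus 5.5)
Your proposal is correct and follows the paper's proof essentially step for step. Like the paper, you witness each strict subendpoint exponent $s<\Aloc(W)$ by a finite moment $r>1$ with a gap $\delta>0$, apply Theorem~\ref{thm:finite-r-annular} with $U=W$, intersect over countably many exponents, and combine the result with Corollary~\ref{cor:endpoint-upper-bound-paper}. Your checks that $s<1$ (via $\Aloc(W)\le 1$) and that $f_\#\widetilde\nu=\mu_\circ$ spell out two small points the paper asserts without detail.
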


\begin{proof}
We prove the lower bound. If \(\Aloc(W)=0\), there is nothing to prove. Assume \(\Aloc(W)>0\), and fix \(0<\sigma<\Aloc(W)\). By the definition of \(\Aloc(W)\), there exists \(r>1\) such that
\[
\E[W^r]<\infty,
\qquad
\frac{r-1-\log_2\E[W^r]}{r}>\sigma.
\]
Choose \(\delta>0\) such that
\[
\sigma+\delta
<
\frac{r-1-\log_2\E[W^r]}{r}.
\]
Equivalently,
\[
1-r+\log_2\E[W^r]<-r(\sigma+\delta),
\]
and therefore 
\[
2^{1-r}\E[W^r]\le 2^{-r(\sigma+\delta)}.
\]

Apply Theorem~\ref{thm:finite-r-annular} with \(U=W\) and \(s=\sigma\). The interval cascade generated by \(U\), pushed forward by \(f(t)=(\cos 2\pi t,\sin 2\pi t)\), is precisely \(\mu_\circ\). Hence Theorem~\ref{thm:finite-r-annular} provides a finite random constant \(C_\sigma(\omega)<\infty\) such that \(
\lvert\wh{\mu_\circ}(\xi)\rvert
\le
C_\sigma(\omega)\lvert\xi\rvert^{-\sigma/2}\) for all sufficiently large \(\lvert\xi\rvert\), almost surely. This is exactly the Fourier decay required for the exponent \(\sigma\); no endpoint estimate at \(\sigma=\Aloc(W)\) is needed. Consequently, \(\dimF(\mu_\circ)\ge \sigma\) almost surely on \(\mathcal S_\circ\).

Applying the preceding argument to every rational \(\sigma\in\mathbb Q\cap(0,\Aloc(W))\) and intersecting the corresponding probability-one events, we obtain \(\dimF(\mu_\circ)\ge \Aloc(W)\) almost surely on \(\mathcal S_\circ\). The countability of the chosen exponents is the only measurability point in the passage to the endpoint. The opposite inequality is Corollary~\ref{cor:endpoint-upper-bound-paper}. Hence Theorem~\ref{thm:finite-r-annular} completes the proof.
\end{proof}

\begin{corollary} 
\label{cor:circle-positive-zero-branches-paper}
Assume the minimal Kahane--Peyri\`ere regime. Then the following holds.

\begin{enumerate}[label=\textup{(\roman*)}]
\item If there exists \(q>1\) such that \(\E[W^q]<2^{q-1}\), then \(
\dimF(\mu_\circ)>0\) almost surely on \(\mathcal S_\circ\).

\item If \(\E[W^q]=\infty\) for every \(q>1\), then \(\dimF(\mu_\circ)=0\) almost surely on \(\mathcal S_\circ\).
\end{enumerate}
\end{corollary}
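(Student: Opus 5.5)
Both parts are direct consequences of Theorem~\ref{thm:endpoint-circle-equality}, so the plan is to verify the two statements about $\Aloc(W)$ and then invoke that theorem. Throughout we work almost surely on $\mathcal S_\circ$, where $\dimF(\mu_\circ)=\Aloc(W)$.

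For (i), fix $q>1$ with $\E[W^q]<2^{q-1}$. Then $\E[W^q]<\infty$, and
\[
\log_2\E[W^q]<q-1,
\]
so $\tau(q)=q-1-\log_2\E[W^q]>0$. The $q$-term in Definition~\ref{def:circle-endpoint-exponent} is therefore $\tau(q)/q>0$, which gives $\Aloc(W)\ge\tau(q)/q>0$. Theorem~\ref{thm:endpoint-circle-equality} then yields $\dimF(\mu_\circ)=\Aloc(W)>0$ almost surely on $\mathcal S_\circ$.

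For (ii), suppose $\E[W^q]=\infty$ for every $q>1$. By the convention in Definition~\ref{def:circle-endpoint-exponent}, every term in the supremum is $0$, so $\Aloc(W)=0$. Theorem~\ref{thm:endpoint-circle-equality} gives $\dimF(\mu_\circ)=0$ on $\mathcal S_\circ$. Alternatively, one can bypass the lower bound here and use only Corollary~\ref{cor:endpoint-upper-bound-paper} together with $\dimF\ge0$.

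The argument has no genuine obstacle. The only point needing care is that the hypothesis in (i) automatically makes the moment finite, so the positive branch of the definition of $\Aloc$ applies. No null-set bookkeeping is needed beyond what is already contained in Theorem~\ref{thm:endpoint-circle-equality}.
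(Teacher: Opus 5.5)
Your proposal is correct and matches the paper's proof: in (i) the hypothesis gives $q-1-\log_2\E[W^q]>0$, so $\Aloc(W)>0$, and in (ii) every term in the definition of $\Aloc(W)$ vanishes; Theorem~\ref{thm:endpoint-circle-equality} then gives both conclusions. Your side remark is also valid: part (ii) needs only the upper bound of Corollary~\ref{cor:endpoint-upper-bound-paper} together with $\dimF\ge 0$.
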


\begin{proof}
For (i), the assumption ensures that \(q-1-\log_2\E[W^q]>0\) for some \(q>1\), hence \(\Aloc(W)>0\). For (ii), every \(q>1\) term in the definition of \(\Aloc(W)\) is interpreted as \(0\), so \(\Aloc(W)=0\). Both conclusions follow from Theorem~\ref{thm:endpoint-circle-equality}.
\end{proof}

\begin{example}[A Bernoulli zero-weight cascade]
\label{ex:bernoulli-zero-weight-cascade}
Let
\[
W=
\begin{cases}
0, & \text{with probability }1-p,\\
p^{-1}, & \text{with probability }p,
\end{cases}
\qquad
\frac12<p\le1.
\]
Then
\[
\E W=1,
\qquad
\E[W\log_2 W]=\log_2(p^{-1})<1,
\]
so \(W\) is in the minimal Kahane--Peyri\`ere regime. For every \(q>1\),
\[
\E[W^q]=p^{1-q},
\qquad
q-1-\log_2\E[W^q]
=
(q-1)(1+\log_2 p).
\]
Since \(p>1/2\),
\[
\Aloc(W)
=
\sup_{q>1}
\frac{(q-1)(1+\log_2 p)}{q}
=
1+\log_2 p.
\]
By Theorem~\ref{thm:main-circle-endpoint-formula},
\[
\dimF(\mu_\circ)=1+\log_2 p
\qquad
\text{almost surely on }\{\mu_\circ(\mathbb S^1)>0\}.
\]

For comparison, for the scalar interval cascade generated by the same law,
\[
D^+(W)
=
\sup_{1<q<2}
2\,\frac{(q-1)(1+\log_2 p)}{q}
=
1+\log_2 p,
\]
where the last equality follows by letting \(q\uparrow2\). Thus, by the scalar interval theorem,
\[
\dimF(\mu)=\dimE(\mu)=\dimtwo(\mu)=1+\log_2 p
\qquad
\text{almost surely on }\{M>0\}.
\]
In this special example,
\[
D^+(W)=\Aloc(W)=1+\log_2 p.
\]
The coincidence is caused by the linear moment profile and should not be expected in general.
\end{example}

\section*{Notation Index}

\small
\renewcommand{\arraystretch}{1.35}
\setlength{\tabcolsep}{2pt}

\begin{longtable}{@{}
>{\centering\arraybackslash}p{0.17\textwidth}
>{\centering\arraybackslash}p{0.53\textwidth}
>{\centering\arraybackslash}p{0.24\textwidth}
@{}}
\hline
\textbf{Notation} & \textbf{Meaning} & \textbf{Place defined/remarks}\\
\hline\hline
\endfirsthead

\hline
\textbf{Notation} & \textbf{Meaning} & \textbf{Place defined/remarks}\\
\hline\hline
\endhead

\hline
\endfoot

\(\prec, \preceq\) & Prefix relation. & Section~\ref{subsec:binary-tree-dyadic-intervals-circle-arcs}.\\

\(\alphamin(\nu)\) & Minimum lower local dimension. & Section~\ref{subsec:endpoint-local-exponent-paper}.\\

\(\beta_X(q)\) & \(-(1/q)\log_2\rho(q)\). & Equation~\eqref{eq:beta-X-q-paper}.\\

\(\kappa(q)\) & \(2^{1-q}\E[W^q]\). & Equation~\eqref{E:light-tail}.\\

\(\rho(q)\)
& Vector \(q\)-moment profile.
& Definition~\ref{def:vector-energy-parameter}.\\

\(\tau(q)\) & \(q-1-\log_2\E[W^q]\). & Section~\ref{subsec:endpoint-local-exponent-paper}.\\

\(\mu_n,\mu,\mu^{(u)}\)
& Level-\(n\) interval cascade measure, weak limit, and descendant limiting measure below \(u\).
& Equation~\eqref{eq:vector-level-measure-definition};

Theorem~\ref{thm:vector-limiting-measure-construction-paper};

Section~\ref{subsec:probability-conventions-descendants}.\\

\(\mu_{\circ,n},\mu_\circ,\mu_\circ^{(v)}\)
& Level-\(n\) circle cascade measure, weak limit, and descendant circle cascade below \(v\).
& Equation~\eqref{eq:circle-level-measure-paper};

Proposition~\ref{prop:circle-cascade-construction-paper};

Section~\ref{subsec:circle-cascade-construction-paper}.\\

\(\widetilde\nu_\ell,\widetilde\nu,\nu_\circ\)
& Level-\(\ell\) parameter cascade measure on \([0,1)\), its weak limit, and the circle pushforward.
& Section~\ref{sec:finite-r-annular-theorem}.\\

\(\Sigma_n\) & \(\sum_{\lvert u \rvert=n}C_u^2\). & Definition~\ref{def:tree-cylinder-square-sums-paper}.\\

\(\Sigma_n(\nu)\) &
\(\sum_{\lvert u \rvert=n}\nu(I_u)^2\). &
Section~\ref{subsec:fourier-energy-dimension-definitions}.\\

\(\Gamma_n(\alpha)\) & Dense grid in \([1,2]\) with mesh at most \(2^{-\alpha n}\). & Definition~\ref{def:dense-grids-paper}.\\

\(\Aloc(W)\) & Endpoint local exponent. & Definition~\ref{def:circle-endpoint-exponent}.\\

\(C_u\) & Limiting tree-cylinder mass associated with \(u\). & Remark~\ref{rem:tree-cylinder-masses-versus-euclidean-intervals}.\\

\(C_{I,n}\), \(U_{J,n}^{\mathrm{cap}}\), \(\overline U_{I,n}\)
& Predictable cap, capped child weight, and capped mean.
& Definition~\ref{def:predictable-cap-paper}.\\

\(\mathfrak C^{\mathrm{pre}}_{\xi,d,n}\),
\(\mathfrak C^{\mathrm{post}}_{\xi,d,n,L}\),
\(\mathfrak C^{\mathrm{pre}}_{\xi,n}\),
\(\mathfrak C^{\mathrm{post}}_{\xi,n,L}\),
\(\mathfrak C^{\mathrm{post}}_{\xi,n}\)
& One-band and summed prefix/post-bin compensators.
& Definition~\ref{def:absolute-compensators-paper}.\\

\(\mathcal D_n,\mathcal D_{\circ,n}\)
& Level-\(n\) dyadic interval/arc partitions.
& Section~\ref{subsec:binary-tree-dyadic-intervals-circle-arcs}.\\

\(D_E(X)\) & Energy parameter of the interval vector law. & Definition~\ref{def:vector-energy-parameter}.\\

\(\mathfrak D_\xi\), \(\chi_{\xi,d}\) & Derivative-band scales and corresponding cutoffs. & Lemma~\ref{lem:stationary-derivative-band-partition-paper}.\\

\(\mathcal E_n^{\mathrm{cap}},\calE_n^{\mathrm{comp}}\) & Capped-martingale and compensator exceptional events. & Definition~\ref{def:capped-compensator-exceptional-events-paper}.\\

\(\mathcal F_n^X\) & Natural filtration of the interval vector cascade. & Section~\ref{subsec:probability-conventions-descendants}.\\

\(\mathcal F_n^W\), \(\mathcal S_\circ\) & Natural filtration of the circle cascade and non-extinction event \(\{Y>0\}\). & Section~\ref{subsec:circle-cascade-construction-paper}.\\

\(F^{\mathrm{pre}}_{\xi,d}\), \(F^{\mathrm{post}}_{\xi,d,L}\), \(F^{\mathrm{post}}_{\xi,d}\)
& Prefix array, truncated post-bin array, and limiting post-bin array.
& Definition~\ref{def:raw-prefix-postbin-arrays-paper}.\\

\(\mathcal G_n^{\mathrm{pre}},\mathcal G_n^{\mathrm{lim}},\mathcal G_n\) & Prelimit, limiting, and full local-mass good events. & Definition~\ref{def:local-mass-good-events-paper}.\\

\(I_X\) & \(\{q\in(1,2):\rho(q)<1\}\). & Lemma~\ref{lem:rho-convexity-subcritical-interval-paper}.\\

\(I_u,J_u,\mathcal I_u\)
& Dyadic interval/arc associated with \(u\).
& Section~\ref{subsec:binary-tree-dyadic-intervals-circle-arcs}.\\

\(L_u\) & Path weight along \(u\). & Equation~\eqref{eq:vector-path-weight-definition}.\\

\(L_n\) &\(2^{\kappa n}\), cap-growth factor. & Equation~\eqref{eq:cap-growth-factor-paper}.\\

\(m_s(q)\) & \(2^{qs/2}\rho(q)\). & Equation~\eqref{eq:ms-profile}.\\

\(m_{\xi,d}\) & Phase-bin scale determined by \(2^{m_{\xi,d}-1}<\lvert\xi\rvert d\le2^{m_{\xi,d}}\). & Definition~\ref{def:phase-bin-scale-paper}.\\

\(M_n,M,M^{(u)}\)
& Total mass martingale, terminal total mass, and terminal descendant mass below \(u\).
& Equation~\eqref{eq:vector-total-mass-definition};

Lemma~\ref{lem:vector-total-mass-martingale-paper};

Section~\ref{subsec:probability-conventions-descendants}.\\

\(T_{k,n}\) & \(2^{\varepsilon n}2^{-ak}\), local-mass threshold. & Equation~\eqref{eq:local-mass-threshold-paper}.\\

\(\mathcal{T}\) & \(\bigcup_{n\ge0}\{0,1\}^n\), rooted binary tree. & Section~\ref{subsec:binary-tree-dyadic-intervals-circle-arcs}.\\

\(\partial\mathcal T\), \([u]_\partial\), \(\pi\) & Boundary of the binary tree, boundary cylinder, and coding map to \([0,1]\). & Section~\ref{subsec:deterministic-square-sums-energy}.\\

\(Y_n,Y,Y^{(v)}\)
& Circle total mass martingale, terminal total mass, and terminal descendant mass below \(v\).
& Section~\ref{subsec:circle-cascade-construction-paper}.\\

\(Z_n^{(s)}\) & \(2^{sn}\Sigma_n\). & Equation~\eqref{eq:normalized-square-sum-recursion-paper}.\\

\end{longtable}

\section*{Acknowledgments}

G. C. was partially supported by the National Natural Science Foundation of China
(NSFC), grant no. 12371126. X. F. was partially supported by the National Science
and Technology Council, Taiwan, grant no. 114-2115-M-A49-003-MY3.

The authors used an artificial-intelligence tool for language editing and
\LaTeX\ formatting; all mathematical content was written, checked, and approved
by the authors.

\end{document}